\def\R{\mathbb{R}}
\def\Div{\textup{div}}
\def\e{\varepsilon}
\def\de{\delta}
\newcommand{\medint}{-\kern -,375cm\int}
\renewcommand{\H}{\mathcal{H}}
\newcommand{\beq}{\begin{equation}}
\newcommand{\eeq}{\end{equation}}
\newcommand{\pa}{\partial}
\newcommand{\Sp}{\mathfrak M}
\newcommand{\noop}[1]{}
\theoremstyle{plain}
\newtheorem{theorem}{Theorem}[section]
\newtheorem{proposition}[theorem]{Proposition}
\newtheorem{corollary}[theorem]{Corollary}
\newtheorem{lemma}[theorem]{Lemma}
\newtheorem{definition}[theorem]{Definition}
\theoremstyle{remark}
\newtheorem{remark}[theorem]{Remark}
\newcommand{\Z}{\mathbb Z}
\newcommand{\N}{\mathbb N}
\newcommand{\Om}{\Omega}
\newcommand{\wto}{\rightharpoonup}
\newcommand{\eps}{\varepsilon}
\numberwithin{equation}{section}
\title[Domain walls in ferromagnetic strips]{Transverse domain walls
  in thin ferromagnetic strips}
\author{M. Morini} \address[M. Morini]{Dipartimento di Scienze
  Matematiche, Fisiche e Informatiche, Universit\`a degli Studi di
  Parma, via Università 12, 43121 Parma, Italy}
\email{massimiliano.morini@unipr.it} 
\author{C. B. Muratov} \address[C. B. Muratov]{Department of
  Mathematical Sciences, New Jersey Institute of Technology, Newark NJ
  07102, USA} \email{muratov@njit.edu}
\author{M. Novaga} \address[M. Novaga]{Dipartimento di Matematica,
  Universit\`a di Pisa, Largo Bruno Pontecorvo 5, 56127 Pisa, Italy}
\email{matteo.novaga@unipi.it}
\author{V. V. Slastikov} \address[V. V. Slastikov]{School of
  Mathematics, University of Bristol, Bristol BS8 1UG, United Kingdom}
\email{valeriy.slastikov@bristol.ac.uk}
\begin{document}

\maketitle

\begin{abstract}
  We present a characterization of the domain wall solutions arising
  as minimizers of an energy functional obtained in a suitable
  asymptotic regime of micromagnetics for infinitely long thin film
  ferromagnetic strips in which the magnetization is forced to lie in
  the film plane. For the considered energy, we provide existence,
  uniqueness, monotonicity, and symmetry of the magnetization profiles
  in the form of 180$^\circ$ and 360$^\circ$ walls. We also
  demonstrate how this energy arises as a $\Gamma$-limit of the
  reduced two-dimensional thin film micromagnetic energy that captures
  the non-local effects associated with the stray field, and
    characterize its respective energy minimizers.
\end{abstract}

\tableofcontents

\section{Introduction}
\label{sec:introduction}

Advances in nanofabrication techniques have enabled an unprecedented
degree of precision and control in producing a wide variety of solid
state materials and devices in the form of atomically thin films and
multilayers \cite{stepanova}. For ferromagnetic materials, this
control offers opportunities to develop novel principles of
information processing and storage based on {\em spintronics} -- an
emergent discipline of electronics in which both the electric charge
and the quantum mechanical spin of an electron are harnessed
\cite{bader10}. In addition to the present day use of spin valves as
magnetic field sensors in hard-disk drive read heads \cite{zhu06rev},
some more recent applications of spintronic technology include domain
wall logic and computing \cite{allwood05,sharard12,manipatruni18},
magnetoresistive random access memory
\cite{apalkov16,fukami09,ross_patent05,zhu03,mo:ieeetm09} and
racetrack memory \cite{parkin08}.

In a typical domain wall device, a bit of information is encoded using
the position and polarity of a head-to-head wall along a thin, long
ferromagnetic nanostrip. By ``head-to-head'', one understands a
magnetization configuration in which the magnetization points along
the strip axis, but in the opposite directions at the opposite
extremes of the strip \cite{dennis02}. The structure of such a domain
wall in soft ferromagnets rather sensitively depends on the ratio of
the strip thickness and width to the characteristic length scale of
the ferromagnetic material (the exchange length
$\ell_\mathrm{ex} = \sqrt{2 A / (\mu_0 M_s^2)}$, where $A$ is the
exchange stiffness, $M_s$ is the saturation magnetization and $\mu_0$
is vacuum permeability \cite{hubert}). Depending on the film
thickness, one observes two basic types of walls -- the transverse and
the vortex wall -- for thinner and thicker films, respectively. This
picture was first established numerically by McMichael and Donahue via
micromagnetic simulations \cite{mcmichael97}, and later corroborated
by Kl\"aui {\em et al.}  through experimental studies in ferromagnetic
nanorings \cite{klaui04,laufenberg06} (for reviews, see
\cite{klaui08b,thiaville09}). Furthermore, as was shown numerically by
Nakatani, Thiaville and Miltat \cite{nakatani05}, there exist at least
two types of transverse domain walls: symmetric and asymmetric
walls. Finally, winding domain walls in which the magnetization
rotates by a 360-degree angle in the film plane are also known to
exist in ferromagnetic nanostrips \cite{kunz09,jang12,zhang15}. These
types of domain wall profiles, obtained numerically using the method
from \cite{mo:jcp06}, are illustrated in Fig. \ref{f:dw}.

\begin{figure}
  \centering
  \includegraphics[width=12cm]{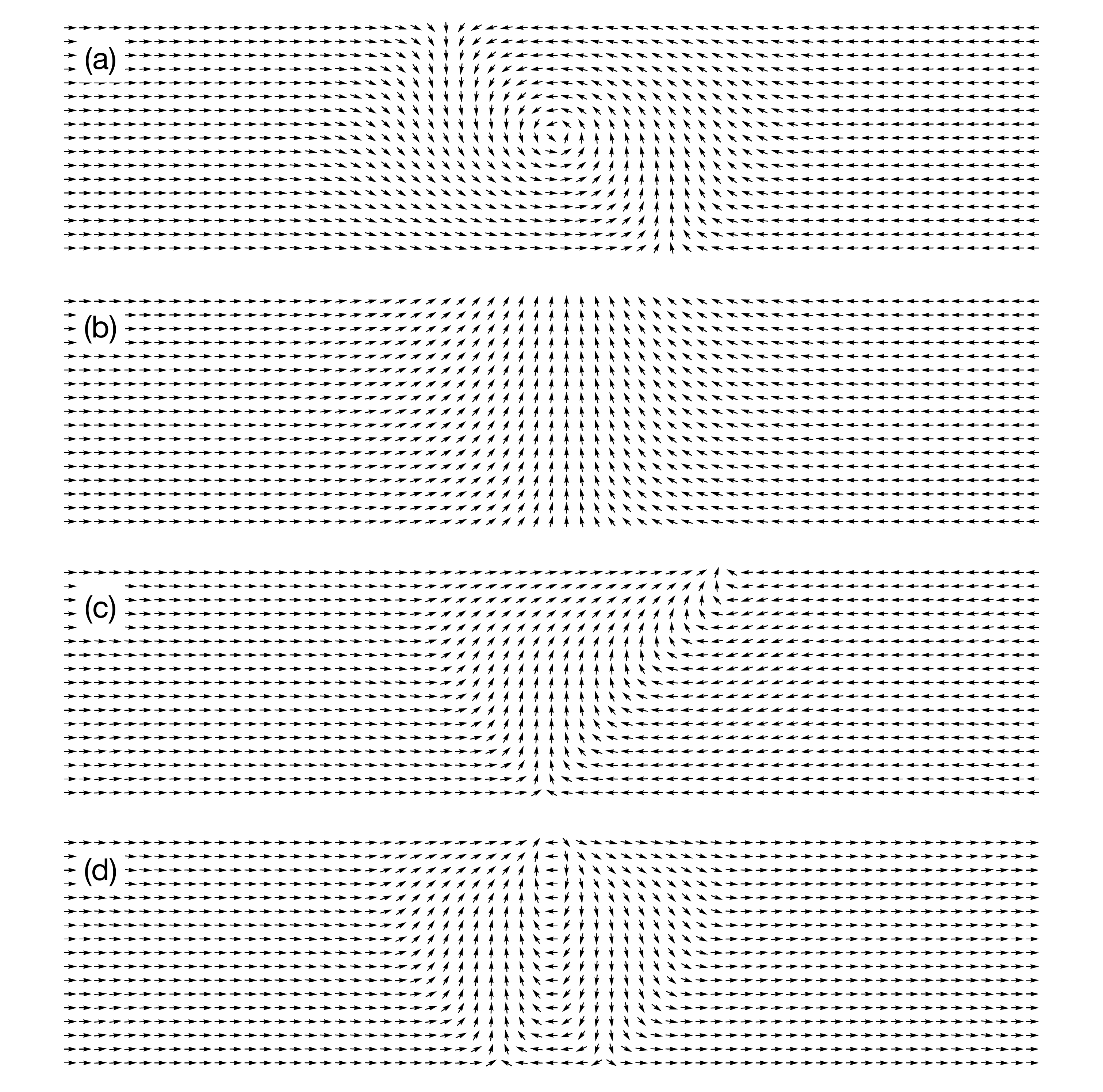}
  \caption{Domain wall profiles in the numerical simulations of
    amorphous cobalt nanostrips: (a) vortex head-to-head wall in a 100
    nm wide and 5 nm thick strip; (b) symmetric transverse
    head-to-head wall in a 50 nm wide and 2 nm thick strip; (c)
    asymmetric head-to-head wall in a 400 nm wide and 5 nm thick
    strip; (d) a winding transverse domain wall in a 400 nm wide and 5
    nm thick strip. The material parameters are: exchange constant
    $A = 1.4 \times 10^{-11}$ J/m, saturation magnetization
    $M_s = 1.4 \times 10^6$ A/m, and zero magnetocrystalline
    anisotropy or applied magnetic field \cite{li01}. For this
    material, the exchange length is $\ell_\mathrm{ex} = 3.37$ nm.}
  \label{f:dw}
\end{figure}

The mathematical understanding of domain wall profiles in ferromagnets
rests on the micromagnetic modeling framework, whereby the
magnetization configurations representing these profiles are viewed as
local or global minimizers of the micromagnetic energy functional
\cite{hubert,landau8}. This framework has been successfully used to
characterize a great variety of domain walls and other magnetization
configurations (for an overview, see \cite{desimone06r}; for some more
recent developments, see
\cite{ignat10,cm:non13,doering14,ignat17,ms:prsla17,lms:non18,
  kmn:arma19 ,lms:jns20}). However, head-to-head domain walls pose a
fundamental challenge to micromagnetic modeling and analysis, since
these magnetization configurations carry a non-zero magnetic charge,
which may lead to divergence of the wall energy in infinite samples
due to singular behaviors of the stray field \cite{ lms:non18}. To
date, there have been only a handful of micromagnetic studies of such
charged domain walls \cite{kuehn06, kuehn07, harutyunyan14,
  harutyunyan16, lms:non18, lms:jns20,knupfer20}.

In \cite{kuehn06,kuehn07}, K\"uhn studied head-to-head domain walls in
cylindrical nanowires of radius $R > 0$. These walls are viewed as
global minimizers of the energy
\begin{align}
  \label{eq:1}
  \mathcal E(m) := \frac12 \int_\Sigma |\nabla m|^2 d^3r + \frac12
  \int_{\mathbb R^3} |\nabla u|^2 d^3r,
\end{align}
where $m \in H^1_{loc}(\Sigma; \mathbb S^2)$,
$\Sigma = \Sigma_R := \mathbb R \times B_R(0) \subset \mathbb R^3$,
and $u \in \mathring{H}^1(\mathbb R^3)$ is the magnetostatic potential
solving
\begin{align}
  \label{eq:2}
  \Delta u = \nabla \cdot m
\end{align}
distributionally in $\mathbb R^3$, with $m$ extended by zero to
$\mathbb R^3 \backslash \Sigma$. The magnetization $m$ is subject to
the condition at infinity
\begin{align}
  \label{eq:3}
  m(x, y, z) \to (\pm 1, 0, 0) \qquad \text{as} \qquad x  \to \pm
  \infty,  
\end{align}
in some average sense (for a recent discussion of variational
principles of micromagnetics, see \cite{dmrs:sima20}). K\"uhn
considered existence of minimizers of $\mathcal E$ in a suitable class
of magnetizations $m$ for which \eqref{eq:3} holds, as well as a
number of their characteristics depending on $R$. In particular, she
showed that as $R \to 0$ the domain wall profile is expected to
converge, in an appropriate sense, to that of a one-dimensional
transverse wall, which is given explicitly, up to translations along
the $x$-axis and rotations in the $yz$-plane, by
\begin{align}
  \label{eq:4}
  m(x, y, z) = \left(  \tanh (x /
  \sqrt{2}), \text{sech} (x / \sqrt{2}), 0 \right).
\end{align}
Existence and convergence of minimizers were later established by
Harutyunyan for general cylindrical domains
$\Sigma = \R \times \Omega$, where $\Omega \subset \mathbb R^2$ is a
bounded domain with a $C^1$ boundary \cite{harutyunyan16} (see also
\cite{slastikov12}). In \cite{harutyunyan14}, Harutyunyan also studied
the behavior of the limit energy when $\Omega$ is a rectangle with a
large aspect ratio and obtained an additional logarithmic factor in
the scaling of the optimal energy (for sharp asymptotics, see
\cite{gaididei17}).

In the case of $\Sigma = \Sigma_R$ with $R > 0$ sufficiently small,
the analysis mentioned above is enabled by the fact that as $R \to 0$
the magnetization becomes essentially constant in the $yz$-plane,
allowing to asymptotically reduce the energy to
$\mathcal E(m) \simeq \mathcal E_0^\mathrm{1d}(\bar m)$, where
$\bar m(x, y, z) := \displaystyle\lim_{R \to 0} \left( {1 \over \pi
    R^2} \int_{B_R(0)} m(x, y', z') \, dy' dz' \right)$ and
\begin{align}
  \label{eq:5}
  \mathcal E_0^\mathrm{1d}(\bar m) := \int_{\Sigma_R} \left( \frac12
  |\nabla \bar m|^2 + \frac14 \left( 1 - \bar m_1^2 \right)
  \right) d^3r,  
\end{align}
whose minimizers among all
$\bar m \in \mathring{H}^1(\Sigma_R; \mathbb S^2)$ with
$\bar m = \bar m(x)$ satisfying \eqref{eq:3} are given by
\eqref{eq:4}, up to translations and rotations in the $yz$-plane. The
latter follows from the fact that the limit energy
$\mathcal E_0^\mathrm{1d}$ in \eqref{eq:5} is fully local, and its
minimizers satisfy a simple ordinary differential equation that can be
solved explicitly. The situation becomes much more complicated for
general values of $R \gtrsim 1$ or for general cross-sections
$\Omega$, since in that case the Euler-Lagrange equation for the
minimizers of $\mathcal E$ is a system of nonlinear partial
differential equations whose explicit solution is no longer
available. In particular, it is not known whether or not the
minimizers could exhibit winding, whereby the magnetization rotates by
an integer multiple of 360$^\circ$ along the axis of the wire, as,
e.g., in Fig. \ref{f:dw}(d).

In the absence of exact solutions and in view of the interest from
applications, one can alternatively focus on the case of
asymptotically thin films, i.e., for $\delta \ll 1$ to consider the
energy $\mathcal E_\delta(m)$ given by $\mathcal E(m)$ in
\eqref{eq:1}, in which
$\Sigma = \Sigma_\delta := \mathbb R \times (0, \delta) \times (0,
w_\delta)$. Here $\delta > 0$ is the film thickness and $w_\delta > 0$
is the film width, respectively, both in the units of the exchange
length, with the dependence of $w_\delta$ on $\delta$ as
$\delta \to 0$ to be specified. Notice that if $\Sigma_\delta$ were a
bounded domain with the lateral extent of order $w_\delta$, then from
the results of Kohn and Slastikov \cite{kohn05arma} one could conclude
that the full micromagnetic energy $\mathcal E$ asymptotically reduces
to $\mathcal E(m) \simeq \mathcal E_0^\mathrm{2d}(\bar m)$, where
$\bar m(x, y, z) := \displaystyle \lim_{\delta \to 0} \left( {1 \over
    \delta} \int_0^\delta m(x, y, z') \, dz' \right)$ such that
$\bar m_3 = 0$ and
\begin{align}
  \label{eq:6}
  \mathcal E_0^\mathrm{2d}(\bar m) := \frac12 \int_{\Sigma_\delta} 
  |\nabla \bar m|^2 d^3r + {\gamma \over w_\delta} \int_{\Gamma_\delta}
  (\bar m \cdot \nu)^2 d \mathcal H^2,
\end{align}
where $\Gamma_\delta$ is the portion of the boundary of
$\Sigma_\delta$ associated with the film edge and $\nu$ is the outward
unit normal to $\Gamma_\delta$, provided
\begin{align}
  \label{eq:7}
  w_\delta = {4 \pi \gamma \over \delta \ln \delta^{-1}},
\end{align}
for some $\gamma > 0$ fixed, as $\delta \to 0$.

Rescaling all lengths {in the film plane} with $w_\delta$ and
writing $\bar m = (\cos \theta, \sin \theta)$, we then formally have
$\mathcal E(m) \simeq \mathcal F(\theta) \delta$, where
\begin{align}
  \label{eq:8}
  \mathcal F(\theta) := \frac12 \int_{\Sigma_0} |\nabla
  \theta|^2 d^2r + \gamma \int_{\partial \Sigma_0}
  \sin^2 \theta \, d\mathcal H^1,
\end{align}
$\Sigma_0 := \mathbb R \times (0,1) $ denotes an infinite strip of
unit width, and $\theta \in C^1(\overline\Sigma_0)$, for example. As
expected, in this scaling regime the contribution of the stray field
to the energy localizes to become a nonlinear boundary penalty term,
greatly simplifying the otherwise highly nonlocal problem for the
domain wall profiles.

Note, however, that finding the profile in this case does not reduce
to solving an ordinary differential equation for the magnetization
angle, as in the case of thin ferromagnetic wires discussed
earlier. Instead, the problem may be reduced to a one-dimensional
fractional differential equation. To see this, let us formally reduce
the minimization problem for $\mathcal F$ to the problem for the trace
of $\theta$ on $\partial \Sigma_0$ (for details, see Appendix
\ref{s:app}). It is easy to see that any minimizer of $\mathcal F$ in
the form of a domain wall must be reflection-symmetric with respect to
the midline of $\Sigma$. Hence for a given trace
$\bar \theta \in C^\infty(\R)$ of $\theta$ on $\partial \Sigma_0$ such
that
\begin{align}
  \label{eq:36}
  \bar\theta(x) = k_1 \pi \quad \forall x < -R, \qquad \qquad \bar
  \theta(x) = k_2 \pi \quad \forall x > R,
\end{align}
for some $R > 0$ and $k_1, k_2 \in \mathbb Z$ we can minimize the
Dirichlet integral by choosing $\theta$ to be the harmonic extension
of $\bar \theta$. A direct computation then shows that
$\mathcal F(\theta) = 2 \bar{\mathcal F}(\bar \theta)$, where
\begin{align}
  \label{eq:9}
  \bar{\mathcal F}(\bar \theta) := \frac14 \int_{\mathbb R} \int_{\mathbb R}
  K(x - x') (\bar \theta(x) - \bar \theta(x'))^2 dx \, dx' + \gamma
  \int_{\mathbb R} \sin^2 \bar\theta(x) \, dx,
\end{align}
in which the symmetric, positive definite kernel
\begin{align}
  \label{eq:10}
  K(x) := {\pi \cosh ( \pi x) \over \sinh^2 (\pi x)} 
\end{align}
has the same singularity at the origin as the kernel generating
$(-d^2/dx^2)^{1/2}$ \cite{dinezza12} and decays exponentially at
infinity.

The Euler-Lagrange equation corresponding to $\bar {\mathcal F}$ reads
\begin{align}
  \label{eq:11}
  \frac12 \int_{\mathbb R} \big( 2 \bar \theta(x) - \bar \theta(x - \xi)
  - \bar \theta (x + \xi) \big) K(\xi) d \xi + \gamma \sin 2 \bar
  \theta(x) = 0  \qquad \forall x \in \mathbb R.
\end{align}
This equation is reminiscent of the fractional Ginzburg-Landau
equation studied in \cite{cabre05,palatucci13,cmy:mmnp17}, which is
known to exhibit transition layer profiles connecting the limits at
infinity that differ by $\pm \pi$ corresponding to the adjacent minima
of the wells of the potential appearing in the last term in
\eqref{eq:9}. Contrary to the problem in
\cite{cabre05,palatucci13,cmy:mmnp17}, however, the infimum of the
energy in \eqref{eq:9} is finite, making it amenable to analysis via
direct minimization. Note that when $\gamma \gg 1$, minimizers of
$\bar {\mathcal F}$ are expected to concentrate on the
$O(\gamma^{-1})$ length scale (for a closely related problem, see
\cite{kurzke06}). In this case one can approximate
$K(x) \simeq {1 \over \pi x^2}$, for which all domain wall type
solutions of \eqref{eq:11} are \cite{toland97}
\begin{align}
  \label{eq:12}
  \bar \theta(x) = \pm \arctan 2 \gamma x + {\pi \over 2},
\end{align}
up to translations and additions of integer multiples of $\pi$. Thus,
the head-to-head domain wall profiles minimizing $\mathcal E$ with
$\Sigma = \mathbb R \times (0, \delta) \times (0, w_\delta) $ in the
regime of $\delta \ll 1$ and $w_\delta$ given by~\eqref{eq:7} with
$\gamma \gg 1$ are expected to consist of magnetizations rotating in
the film plane in the form of two symmetric boundary vortices on the
opposite sides of the strip, consistently with the heuristics
presented in \cite{tchernyshyov05}. Alternatively, when
$\gamma \ll 1$, one would expect the minimizers of $\bar {\mathcal F}$
to vary on an $O(\gamma^{-1/2})$ scale, for which one can approximate
$x^2 K(x) \simeq \delta(x)$, where $\delta(x)$ is the Dirac
delta-function (cf. also \cite{bourgain01}). In this case
\eqref{eq:11} would reduce to an ordinary differential equation
\begin{align}
  \label{eq:26}
  {d^2 \bar \theta(x) \over dx^2} = 2 \gamma \sin 2 \bar \theta(x)
  \qquad \forall x \in \mathbb R, 
\end{align}
whose all domain wall type solutions are
$\bar \theta(x) = \pm 2 \, \arctan \left( e^{2 \sqrt{\gamma} \, x}
\right)$, up to translations and additions of integer multiples of
$\pi$. After a suitable rescaling and a possible reflection, these
correspond to the profile in \eqref{eq:4}.

The minimization of the energy \eqref{eq:9} could in principle be
carried out directly, yielding existence and properties of minimizers
for \eqref{eq:8}. The situation becomes more complicated, however, in
the presence of an applied external field $h > 0$ along the strip,
which amounts to an extra Zeeman term \cite{hubert} added to the
energy in \eqref{eq:1}:
\begin{align}
  \label{eq:23}
  \mathcal E(m) := \frac12 \int_\Sigma |\nabla m|^2 d^3r + h
  \int_\Sigma \left( 1 - m_1 \right) d^3 r + \frac12
  \int_{\mathbb R^3} |\nabla u|^2 d^3r,
\end{align}
after subtracting a suitable additive constant. At the level of the
limit thin film energy in \eqref{eq:8}, this translates into
\begin{align}
  \label{eq:24}
  \mathcal F(\theta) := \frac12 \int_{\Sigma_0} |\nabla
  \theta|^2 d^2r + h \int_{\Sigma_0} (1 - \cos \theta) \, d^2 r + \gamma
  \int_{\partial \Sigma_0} \sin^2 \theta \, d\mathcal H^1,
\end{align}
and clearly one could no longer explicitly minimize the first two
terms in the energy for a given trace $\bar\theta$, as this would
involve solving a nonlinear partial differential equation for $\theta$
in $\Sigma_0$. Instead, we will work directly with the energy in
\eqref{eq:24} and study its minimizers for $h \geq 0$ that connect
distinct equilibrium solutions $\theta = \mathrm{const}$ as
$x \to \pm \infty$.

We first focus on \eqref{eq:24} and establish existence of energy
minimizers that connect distinct equilibria at $x = \pm \infty$, using
the direct method of calculus of variations. The difficulty here is
the fact that the problem is posed on an unbounded domain and,
therefore, a priori minimizing sequences may fail to converge to a
function that has the right behavior at infinity. We overcome this
difficulty by proving monotonicity of the minimizers on larger and
larger truncated domains with prescribed Dirichlet data at the left
and the right ends of the truncated strip. Taking the limit of the
sequence of truncated minimizers, after suitable translations, we
obtain a limiting monotone function. Combining this fact with the
knowledge of the behavior at infinity for functions with bounded
energy \eqref{eq:24} (see Lemma~\ref{lm:zagara}), we show that this
limiting function is non-trivial and has the appropriate behavior at
infinity. By lower semicontinuity of the energy, we subsequently
conclude that the obtained limit is the desired minimizer.

Notice that the Euler-Lagrange equation form the energy in
\eqref{eq:24} is reminiscent of problems arising in the studies of
front solutions in infinite cylinders, on which there exists an
extensive literature. For example, when $\gamma = 0$ and $h > 0$ the
existence and qualitative properties of such solutions were
established in \cite{berestycki92}. A novel aspect of the considered
problem is the fact that the bistable nonlinearity enabling existence
of the front solutions is concentrated on the domain boundary (for
several studies of problems of this kind, see
e.g. \cite{cabre05,consul96,arrieta99,heinze}; this list is certainly
not exhaustive). Our contribution here is to develop a set of tools to
address the problems with boundary nonlinearities based on maximum and
comparison principles and the sliding method.  Using these tools, we
completely classify the critical points corresponding to domain wall
solutions and establish regularity, symmetry, uniqueness, monotonicity
and decay properties of the domain wall profiles. In particular, we
show that after reflections, translations and shifts in $\theta$, all
domain wall solutions associated with \eqref{eq:24} are the energy
minimizers that connect two distinct equilibria at infinity with no
winding for $h = 0$ (symmetric $180^\circ$ walls) or the same
equilibrium at infinity with exactly one rotation for $h > 0$
(symmetric $360^\circ$ walls). We also establish the explicit limit
behaviors of the minimizers in the limiting regimes of $\gamma \to 0$
and $\gamma \to \infty$ when $h = 0$.

We finally relate the minimization problem associated with
\eqref{eq:24} with that of the original micromagnetic problem
associated with \eqref{eq:23}. To this end, we introduce a reduced
thin film micromagnetic energy functional that is appropriate for
modeling ultrathin ferromagnetic films in which the ferromagnetic
layer has thicknesses down to a few atomic layers and, strictly
speaking, the macroscopic energy functional in \eqref{eq:23} is no
longer applicable. This two-dimensional reduced thin film energy
functional retains the nonlocal character of the micromagnetic energy
in \eqref{eq:23} in the ultrathin film regime and was introduced by us
earlier in the studies of exchange biased films \cite{lms:jns20}. It
represents an intermediate level of modeling between the full
three-dimensional micromagnetic energy in \eqref{eq:23} and the
two-dimensional thin film limit energy in \eqref{eq:24}. Notice that
the latter formally coincides with the one identified by Kohn and
Slastikov in their studies of thin film limits of ferromagnets of
finite lateral extent \cite{kohn05arma}. However, their analysis is no
longer applicable in our setting due to the loss of compactness
associated with the unbounded domain occupied by the ferromagnet.  For
this reason, we had to develop a series of new tools to tackle these
issues in order to be able to prove the $\Gamma$-convergence of the
reduced thin film energy (to be introduced in the following section,
see \eqref{epsilone}) to the limit thin film energy in \eqref{eq:24},
together with compactness and convergence of the respective energy
minimizers as the film thickness goes to zero. Importantly, we also
prove that at small but finite film thickness the non-trivial energy
minimizers of the reduced thin-film energy \eqref{epsilone} remain
close in a certain sense to the unique minimizers of the limit problem
associated with \eqref{eq:24}. In particular, they exhibit the same
head-to-head (for $h=0$) or winding (for $h>0$) behavior.

%  *** Any more technical details/outline of the proofs? ***

  Our paper is organized as follows. In
  Sec. \ref{sec:statement-results}, we state precisely the variational
  problems to be analyzed and the main results of the paper. In
  particular, the basic existence and qualitative properties of the
  domain wall profiles for the limit thin film problem are presented
  in Theorem \ref{t:exist}, a complete characterization of all domain
  wall profiles of the limit problem is given in Theorem
  \ref{th:uniqueness}, and convergence of the minimizers in the
  regimes of large and small values of $\gamma$ for $h = 0$ is
  presented in Theorem \ref{th:glargesmall}. Finally, a
  characterization and the asymptotic behavior of minimizers of the
  reduced thin film energy as the film thickness vanishes is presented
  in Theorem \ref{th:micromin}. In Sec. \ref{sec:thin}, we present the
  treatment of the limit thin film energy, in which the existence
  result for the minimizers is given by Theorem \ref{th:k=2} and the
  rest of the section is devoted to the proofs of Theorem
  \ref{t:exist}, Theorem \ref{th:uniqueness} and Theorem
  \ref{th:glargesmall}.  We also characterize the infimum energy for
  the limit thin film energy in the classes of configurations with
  prescribed winding in Corollary \ref{cor:infk0}. Finally, in
  Sec. \ref{sec:mag} we prove a $\Gamma$-convergence result for the
  reduced micromagnetic thin film energy to the limit energy analyzed
  in Sec. \ref{sec:thin} in Theorem \ref{th:gamma}, and then establish
  Theorem \ref{th:micromin} via a sequence of corollaries.

  \paragraph{\bf Acknowledgements} The work of CBM was supported, in
  part, by NSF via grants DMS-1614948 and DMS-1908709.  MN was
  supported by the PRIN Project 2019/24. MM and MN are members of the
  INDAM/GNAMPA.  VS acknowledges support by Leverhulme grant
  RPG-2018-438 and would like to thank the Max Planck Institute for
  Mathematics in the Sciences in Leipzig for support and hospitality.

\section{Statement of results}
\label{sec:statement-results}

We now turn to the precise statements of the main results of our
paper. We begin by simplifying some of the notation. For the limit
thin film energy, we drop the subscript ``0'' from the definition of
the two-dimensional strip domain and simply write
$\Sigma := \R \times (0,1)\subset \R^2$. By
$\mathbf r = (x, y) \in \Sigma$ we denote a generic point in the
strip, with $x \in \R$ and $y \in (0,1)$. On the strip $\Sigma$ we
introduce a local space $H^1_{l}(\Sigma)$ consisting of functions
whose restrictions to truncated strips $Q_R := (-R, R) \times (0,1)$
belong to $H^1(Q_R)$ for any $R > 0$. We equip $H^1_{l}(\Sigma)$ with
the notion of convergence corresponding to the $H^1(Q_R)$ convergence
of the restrictions to $Q_R$. This space plays the role of the local
space $H^1_{loc}(\Sigma)$ that allows to make sense of the traces of
functions on $\partial \Sigma$ in the $L^2_{loc}(\partial \Sigma)$
sense.

For $h \geq 0$, $\gamma > 0$ and $\theta \in H^1_{l}(\Sigma)$ the thin
film limit energy
\begin{align}
  \label{eq:25}
  F(\theta):=\int_\Sigma\left(\frac12|\nabla
  \theta|^2+h(1-\cos\theta)\right)\, d^2 r +\gamma\int_{\pa
  \Sigma}\sin^2\theta\, d\H^1
\end{align}
defines a map $F: H^1_{l}(\Sigma) \to [0, +\infty]$, provided the last
term in \eqref{eq:25} is understood in the sense of trace. Notice that
the Euler-Lagrange equation associated with \eqref{eq:25} is
\beq\label{limitEL2}
\begin{cases}
  \Delta\theta=h\sin\theta & \text{in }\Sigma\,,\\
  \pa_\nu\theta=-\gamma \sin(2\theta)& \text{on }\pa \Sigma\,,
\end{cases}
\eeq where $\partial_\nu \theta$ denotes the derivative of $\theta$ in
the direction of the outward normal $\nu$ to $\partial \Sigma$. The
weak form of \eqref{limitEL2} is \beq\label{wEL2} \int_{\Sigma}(\nabla
\theta\cdot \nabla \varphi+ h\sin(\theta)\, \varphi)\, d^2
r+\gamma\int_{\pa\Sigma}\sin(2\theta) \, \varphi\, d\mathcal H^1=0
\qquad \forall \varphi\in H^1_l({\Sigma}) \text{ with bounded
  support}.  \eeq
\begin{remark}\label{rm:wvss}
  By Lemma~\ref{lm:elem} below, any {bounded} weak solution to
  \eqref{limitEL2}, i.e., any
  $\theta\in H^1_l(\Sigma) {\cap L^\infty(\Sigma)}$ satisfying
  \eqref{wEL2} belongs to $C^\infty(\overline\Sigma)$ and thus is a
  classical solution{ of \eqref{limitEL2}}. Therefore, throughout
  the paper we will not distinguish between weak and
  strong formulations of the problem.
\end{remark}
Next, for $k \in \mathbb Z$ we introduce a class of functions 
\begin{multline}
  \mathcal{A}_k:=\Bigl\{\theta\in H^1_{l}(\Sigma) :\,
   \lim_{x\to+\infty}\|\theta(x,
  \cdot)\|_{L^2(0,1)}=0, \lim_{x\to-\infty}\|\theta(x,
  \cdot)-k\pi\|_{L^2(0,1)}=0 \Bigr\}\,,
\end{multline}
{where $\theta(x, \cdot)$ is understood as a
  trace. These functions} correspond to the in-plane magnetization
profiles $m = (\cos \theta, \sin \theta)$ connecting
$\theta(x, y) = 0$ at $x = +\infty$ with $\theta(x, y)= k \pi$ at
$x = -\infty$ in an average sense. For the limit energy $F$, we are
then interested in the following variational problem: \beq\label{mink}
\text{minimize} \ F(\theta)\ \text{over} \ \theta \in \mathcal A_k \
\text{with} \ k \not=0 \ \text{fixed}. \eeq

\begin{remark}\label{rm:mink}
Note that if $\theta\in \mathcal{A}_k$, then $-\theta\in
\mathcal{A}_{-k}$ with  
$F(\theta)=F(-\theta)$. In particular, for every $k\in \N$ we have
$$
\inf_{\theta\in \mathcal{A}_k}F(\theta)= 
\inf_{\theta\in \mathcal{A}_{-k}}F(\theta)\,.
$$
\end{remark}

\noindent In view of the previous remark, we may restrict ourselves to
the case $k\in \N$ in \eqref{mink}.

Our first result concerns existence, uniqueness and qualitative
properties of the minimizers of $F$ in $\mathcal A_k$.

\begin{theorem}
  \label{t:exist}
  Let $\gamma > 0$, $h \geq 0$ and $k \in \N$. Then a minimizer
  $\theta_{min}$ of $F$ over $\mathcal A_k$ exists if and only if
  $k = 1$ for $h = 0$, or if and only if $k = 2$ for $h > 0$. The
  minimizer is unique up to translations along the $x$ direction,
  belongs to $C^\infty(\overline\Sigma)$ with derivatives of all
  orders bounded and satisfies \eqref{limitEL2} classically.  In
  addition, for all $(x, y) \in \overline \Sigma$ the minimizer
  $\theta_{min}$ satisfies:
  \begin{itemize}    
  \item [a)] (strict monotone decrease)
    $\partial_x \theta_{min}(x, y) < 0$;
  \item [b)] (symmetry) $\theta_{min}(x,y)=\theta_{min}(x,1-y)$ and
    $\theta_{min}(x,y)=k \pi-\theta_{min}(a-x,y)$ for some $a \in \R$;
  \item [b)] (exponential decay at infinity) for every $m\in \N$ there
    exist positive constants $\alpha_m$, $\beta_m$ such that
    $$
    \|\theta_{min}-k \pi\|_{C^m((-\infty, -t] \times [0,1])}\leq
    \alpha_m \mathrm{e}^{-\beta_m t}\quad\text{and}\quad
    \|\theta_{min}\|_{C^m([t, +\infty) \times [0,1])}\leq \alpha_m
    \mathrm{e}^{-\beta_m t}
    $$ 
    for all $t>0$ sufficiently large.
  \end{itemize}
\end{theorem}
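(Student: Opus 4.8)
The plan is to prove Theorem~\ref{t:exist} by combining a truncated existence construction, monotonicity via the sliding method, and a characterization of the behavior at infinity. The first step is existence on bounded domains: on the truncated strip $Q_R$ with Dirichlet data $\theta = k\pi$ at $x = -R$ and $\theta = 0$ at $x = R$, the direct method immediately yields a minimizer $\theta_R$ of $F$ restricted to $Q_R$ (lower semicontinuity of the Dirichlet term is standard, the boundary penalty term is lower semicontinuous under weak $H^1(Q_R)$ convergence by compactness of the trace operator, and the Zeeman term is continuous), and Lemma~\ref{lm:elem} gives smoothness up to $\overline{Q_R}$. The key qualitative input is to show these truncated minimizers are monotone decreasing in $x$ and symmetric in $y$: I would obtain $\partial_x\theta_R \le 0$ and $\theta_R(x,y)=\theta_R(x,1-y)$ by the sliding method, comparing $\theta_R$ with its horizontal translates and with its reflection across the midline $y=1/2$, using the maximum principle for $\Delta\theta = h\sin\theta$ in the interior together with the Hopf-type boundary condition $\partial_\nu\theta = -\gamma\sin(2\theta)$ on $\partial\Sigma$; one also needs $0 \le \theta_R \le k\pi$, which follows by comparison since $\theta=0$ and $\theta=k\pi$ are (sub/super)solutions when $h\ge 0$ (note $\sin(2\theta)$ vanishes there). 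The main technical obstacle is precisely making the sliding method rigorous at the boundary, where the nonlinearity lives: the comparison principle must be applied on the strip with the nonlinear Robin-type boundary condition, and one must rule out interior and boundary touching points using a strong maximum principle and a Hopf lemma adapted to this boundary operator. I expect this to be the heart of the argument, and presumably it is handled by the ``set of tools based on maximum and comparison principles and the sliding method'' advertised in the introduction.

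The second step is to pass to the limit $R\to\infty$. After translating each $\theta_R$ so that, say, its value at a fixed reference point equals $k\pi/2$ (using monotonicity to normalize the wall position), the sequence is locally bounded in $H^1$ and, by elliptic estimates up to the boundary, in $C^{m}_{loc}(\overline\Sigma)$; a diagonal extraction produces a limit $\theta_{\min}\in H^1_l(\Sigma)\cap L^\infty(\Sigma)$ which is monotone non-increasing in $x$, symmetric in $y$, takes values in $[0,k\pi]$, and solves \eqref{limitEL2} classically (by Remark~\ref{rm:wvss}). Monotonicity forces the pointwise limits $\theta_{\min}(\pm\infty,y)$ to exist, and they must be constant solutions of the ODE/boundary system, hence integer multiples of $\pi$ in $[0,k\pi]$. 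To show the limit is non-trivial and has the correct endpoints, I would invoke Lemma~\ref{lm:zagara} on the behavior at infinity of functions with bounded energy \eqref{eq:25} (finiteness of the energy of $\theta_{\min}$ follows from lower semicontinuity and the fact that the $\theta_R$ have uniformly bounded energy, which in turn follows from producing an explicit competitor of finite energy in $\mathcal A_k$), combined with the normalization that fixes the wall at a finite location. Lower semicontinuity of $F$ on all of $\Sigma$ then shows $\theta_{\min}$ minimizes $F$ over $\mathcal A_k$ — but this argument only works when a finite-energy competitor with the right winding exists.

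The third step pins down for which $k$ this happens. For $h=0$ the potential $1-\cos\theta$ is absent and a boundary vortex of winding $\pi$ has finite energy, so a competitor exists in $\mathcal A_1$; I would show no minimizer exists for $|k|\ge 2$ by a cut-and-paste/energy comparison argument — a configuration with winding $k\pi$ can be strictly lowered by splitting it into well-separated $\pi$-walls sent to infinity, so the infimum over $\mathcal A_k$ equals $k$ times that over $\mathcal A_1$ but is not attained (this should be the content referenced as Corollary~\ref{cor:infk0}). For $h>0$ the Zeeman term $h\int_\Sigma(1-\cos\theta)$ is infinite for any configuration connecting $0$ to an \emph{odd} multiple of $\pi$ (since $1-\cos\theta \to 2$ at one end), so $\mathcal A_1$ contains only infinite-energy functions, whereas $\mathcal A_2$ (connecting $0$ to $2\pi$, i.e.\ a $360^\circ$ wall) admits finite-energy competitors; again higher even $k$ are excluded by splitting. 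Finally, uniqueness up to $x$-translation and the strict inequalities $\partial_x\theta_{\min}<0$ and strict symmetry $\theta_{\min}(x,y)=k\pi - \theta_{\min}(a-x,y)$: strictness of monotonicity comes from the strong maximum principle and Hopf lemma applied to the derivative $v:=\partial_x\theta_{\min}$ (which satisfies the linearized equation $\Delta v = h\cos\theta\, v$ with $\partial_\nu v = -2\gamma\cos(2\theta)v$ on $\partial\Sigma$ and is $\le 0$ by the previous step, so either $v\equiv 0$ — impossible by the endpoint conditions — or $v<0$ everywhere); the symmetry about a point $a$ follows by applying the sliding method to compare $\theta_{\min}$ with $k\pi-\theta_{\min}(a-\cdot,\cdot)$ and showing the two orderings force equality; and uniqueness follows from monotonicity plus a sliding comparison between any two minimizers. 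The exponential decay in $C^m$ is then obtained by linearizing near the constant limits $0$ and $k\pi$: the linearized operator has a spectral gap (the boundary term $-2\gamma\cos(2k\pi)=-2\gamma<0$ gives a coercive quadratic form, and for $h>0$ the bulk term helps too), so a barrier argument with exponential supersolutions $Ce^{-\beta|x|}$ yields the $C^0$ bound, and elliptic estimates up to the boundary bootstrap it to all $C^m$.
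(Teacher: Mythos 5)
Your proposal follows essentially the same route as the paper: truncate to $Q_M$ with Dirichlet data $\{0,k\pi\}$ at $x=\pm M$, obtain smoothness via Lemma~\ref{lm:elem}, get $0\le\theta_M\le k\pi$ and monotonicity by the sliding method adapted to the nonlinear Robin boundary condition, normalize the wall position, extract a limit as $M\to\infty$, identify the limits at infinity via Lemma~\ref{lm:zagara} and monotonicity, pass the strict inequality $\partial_x\theta_{\min}<0$ and the symmetries through a strong maximum principle/Hopf lemma for the linearization, and finish with a barrier argument for the exponential decay. All of this matches the paper (Theorem~\ref{th:k=2}, Corollary~\ref{cor:uniq}, Corollary~\ref{cor:decay}).

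Two points deserve flagging. First, a small but genuine gap: you conclude that the limiting function is a minimizer over $\mathcal A_k$ ``by lower semicontinuity.'' Lower semicontinuity only gives $F(\theta_\infty)\le\liminf F(\theta_{M_n})$; you also need $\liminf F(\theta_{M_n})\le\inf_{\mathcal A_k}F$, and this is not automatic because a competitor in $\mathcal A_k$ need not lie in any truncated class $\mathcal A_{k,M}$ (its traces only converge in $L^2$ at $\pm\infty$). The paper supplies this as \eqref{remains}--\eqref{finalclaim}: by a slicing argument one selects good slices $x_n^\pm$ and linearly interpolates to constants, showing that any finite-energy competitor can be approximated in energy by one with compactly supported transition. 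Your sentence ``this argument only works when a finite-energy competitor with the right winding exists'' gestures at the issue but does not name or solve it. Second, a difference of route rather than a gap: for non-existence when $k\neq 1$ (resp.\ $k\neq 2$ for $h>0$), you propose the energy-splitting equality (the content of Corollary~\ref{cor:infk0}) plus a non-attainment argument, whereas the paper obtains non-existence as a consequence of the full classification of domain wall solutions in Theorem~\ref{th:uniqueness}. Your route can be closed: the cutting argument forces each piece to be a minimizer in $\mathcal A_1$, hence strictly monotone by the existence theorem, contradicting the fact that the extended-by-constant pieces are locally constant past the cut. But this should be said explicitly, since knowing $\inf_{\mathcal A_k}F=k\min_{\mathcal A_1}F$ does not by itself rule out attainment.
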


Our next result characterizes all domain wall type solutions {for
  the limit thin film model}, i.e., {all} bounded solutions of
\eqref{limitEL2} that attain distinct pointwise limits as
$x \to \pm \infty$. More precisely, we introduce the following
definition.

 \begin{definition}\label{def:dws}
   Let
   $\theta\in C^2(\Sigma)\cap C^1(\overline\Sigma) \cap
   L^\infty(\Sigma)$ be a solution of \eqref{limitEL2}.  We say that
   $\theta$ is a \emph{domain wall solution} if there exist
   $\ell^-, \ell^+\in \R$, $\ell^->\ell^+$, such that
   \beq\label{binfty} \lim_{x\to-\infty}\theta(x,
   y)=\ell^-\quad\text{and}\quad \lim_{x\to+\infty}\theta(x, y)=\ell^+
   \qquad\text{ for all } y\in (0,1)\,.  \eeq
 \end{definition}

 \begin{remark}\label{rm:dws}
   We make several observation regarding the above definition:
 \begin{itemize}
 \item [a)] The condition $\ell^->\ell^+$ is assumed without loss of
   generality, as otherwise we can replace $\theta(x, y)$ with
   $\theta(-x, y)$ in all the statements.
 \item[b)] If $\theta$ is a domain wall solution in the sense of
   Definition \ref{def:dws} and $k$ any integer, then $\theta+2k\pi$
   is a domain wall solution as well. If additionally $h=0$, so is
   also $\theta+k\pi$.
 \item[c)] By Lemma~\ref{lm:elem}, any bounded weak solution
   \eqref{wEL2} is smooth up to the boundary with derivatives of all
   orders bounded, and, therefore, it solves \eqref{limitEL2}
   classically. In particular, domain wall solutions in the sense of
   Definition~\ref{def:dws} belong to $C^\infty(\overline \Sigma)$, and
   their derivatives of all orders are bounded. Moreover, by the same
   lemma, the convergence to $\ell^\pm$ in \eqref{binfty} holds in
   fact in a much stronger sense, namely uniformly with respect to the
   $C^m$-norm, for every $m\in \N$, see \eqref{inftypm}.
 \item[d)] If $h=0$, or if $h>0$ and $F(\theta)<+\infty$, then
   condition \eqref{binfty} can be replaced (see Lemma~\ref{lm:equiv})
   by the following  one: \beq\label{wbinfty}
   \lim_{x\to-\infty}\theta(x, 0)= \lim_{x\to-\infty}\theta(x,
   1)=\ell^-\quad\text{and}\quad \lim_{x\to+\infty}\theta(x, 0)=
   \lim_{x\to+\infty}\theta(x, 1) =\ell^+\,. \eeq 
 \end{itemize}
\end{remark}

\noindent {We also note that in view of Remark \ref{rm:dws}-c) the
  functions $\theta(x, y) = \ell^\pm$ must themselves solve
  \eqref{limitEL2}. Hence, a priori we should have
  $\ell^\pm \in {\pi \over 2} \mathbb Z$ when $h = 0$ and
  $\ell^\pm \in \pi \mathbb Z$ when $h > 0$. }

 We now state the theorem about domain wall type solutions. In
 essence, our next result shows that the only domain wall type
 critical points of $F$ are the minimizers obtained in Theorem
 \ref{t:exist}, up to a reflection and an addition of a multiple of
 $\pi$.

\begin{theorem}\label{th:uniqueness}
  Let {$\gamma > 0$ and $h \geq 0$, let} $\theta$ be a domain wall
  solution in the sense of Definition~\ref{def:dws}, and let
  $\theta_{min}$ be as in Theorem~\ref{t:exist}. Then the following
  uniqueness properties hold true:
  \begin{itemize}
  \item[a)] If $h=0$, then there exist $k\in \Z$ and $\lambda\in \R$
    such that $\ell^+=k\pi$, $\ell^-=(k+1)\pi$, and {for every $(x,
      y) \in \overline \Sigma$}
    $$
    \theta{(x, y)}=\theta_{min}({x + \lambda, y})+k\pi\,;
    $$
  \item [b)] If $h>0$, then there exist $k\in \Z$ and $\lambda\in \R$
    such that $\ell^+=2k\pi$, $\ell^-=(2k+2)\pi$, and {for every $(x,
      y) \in \overline \Sigma$} 
    $$
    \theta{(x, y)} =\theta_{min}({x + \lambda, y})+2k\pi\,.
    $$ 
  \end{itemize}
\end{theorem}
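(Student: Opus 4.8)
The plan is to prove Theorem~\ref{th:uniqueness} by a combination of the sliding method applied to the boundary nonlinearity and a rigidity argument that forces the limits $\ell^\pm$ to lie in the right lattice and differ by exactly $\pi$ (when $h=0$) or $2\pi$ (when $h>0$). First I would pin down the limits. By Remark~\ref{rm:dws}-c) the constants $\ell^\pm$ solve \eqref{limitEL2}, so $\ell^\pm\in\frac\pi2\Z$ when $h=0$ and $\ell^\pm\in\pi\Z$ when $h>0$; moreover $\ell^->\ell^+$. One then uses the energy/decay structure: for $h>0$ the only constants with finite ``potential density'' $1-\cos\theta$ are in $\pi\Z$, and a linearization of the boundary condition $\pa_\nu\theta=-\gamma\sin2\theta$ around $\ell^\pm$ shows that the half-integer values $\ell\in\frac\pi2+\pi\Z$ are exponentially unstable (the boundary coefficient $2\gamma\cos2\ell<0$), so a bounded solution cannot decay to such a value — this is where the monotonicity coming next, or a direct barrier argument using the harmonic/Helmholtz kernel, rules out ``landing'' on an unstable equilibrium. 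This reduces matters, after subtracting a multiple of $\pi$ (for $h=0$) or $2\pi$ (for $h>0$), to $\ell^+=0$ and $\ell^-=\pi$ (resp. $\ell^-=2\pi$), i.e. to $\theta\in\mathcal A_1$ (resp. $\mathcal A_2$) after the shift — at which point it is natural to invoke the \emph{gap}: one must also exclude $\ell^--\ell^+$ being a larger multiple, e.g. $\theta$ connecting $0$ to $2\pi$ when $h=0$, which is handled by the strict monotonicity established below together with the symmetry/classification, or by showing such a configuration is not a solution because it would have to pass monotonically through the unstable value $\pi$ in a way incompatible with the boundary ODE.

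Next I would establish \textbf{strict monotonicity} $\pa_x\theta<0$ on $\overline\Sigma$. The tool is the sliding method adapted to the boundary condition: consider $\theta_t(x,y):=\theta(x+t,y)$ and compare $\theta$ with $\theta_t$ for large $t>0$. Both solve the same semilinear problem \eqref{limitEL2}; their difference $w=\theta-\theta_t$ satisfies a linear equation $\Delta w=c(x,y)w$ in $\Sigma$ with $c\ge0$ possible issues, but on the boundary $\pa_\nu w = -\gamma\big(\sin2\theta-\sin2\theta_t\big)=-\gamma\,\tilde c\, w$ with $\tilde c = 2\cos(2\xi)$ for some intermediate $\xi$. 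Using the uniform convergence to $\ell^\pm$ from Remark~\ref{rm:dws}-c) and the fact that $\cos2\ell^\pm>0$ at the \emph{stable} endpoints, one gets $\pa_\nu w \le -c_0 w$ near $x=\pm\infty$ for a constant $c_0>0$, so a maximum principle on the truncated strip (Hopf lemma at the lateral boundary, using that $w\to0$ at $x=\pm\infty$) shows $w\ge0$, i.e. $\theta\ge\theta_t$, for all large $t$. Then one decreases $t$ and uses a continuity/connectedness argument (the standard sliding step: the set of $t>0$ for which $\theta\ge\theta_t$ is open and closed, and if at some $t_0$ equality is touched either in $\Sigma$ or on $\pa\Sigma$, the strong maximum principle plus Hopf forces $\theta\equiv\theta_{t_0}$, contradicting the distinct limits) to conclude $\theta\ge\theta_t$ for \emph{all} $t>0$, hence $\pa_x\theta\le0$; strictness then follows from the strong maximum principle/Hopf lemma applied to $\pa_x\theta$, which solves the linearized equation. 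The symmetry statements of Theorem~\ref{t:exist}-b) for $\theta_{min}$ come from a parallel sliding/reflection argument (comparing $\theta(x,y)$ with $\theta(x,1-y)$ and with $k\pi-\theta(a-x,y)$), which I would also invoke here.

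With strict monotonicity in hand, the endpoints are forced: $\theta$ is a strictly decreasing (in $x$) profile from $\ell^-$ to $\ell^+$, and the exclusion of intermediate unstable plateaus together with the finite-energy characterization (Lemma~\ref{lm:zagara} and the finiteness of $F$ along such monotone profiles — note a monotone bounded solution automatically has $\nabla\theta\in L^2$ and, for $h>0$, controlled potential term, so $F(\theta)<\infty$) shows $\ell^--\ell^+=\pi$ for $h=0$ and $=2\pi$ for $h>0$. So, after the shift by $k\pi$ (resp. $2k\pi$), $\theta\in\mathcal A_1$ (resp. $\mathcal A_2$) is a monotone critical point with finite energy. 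The \textbf{final uniqueness} step is then: any such critical point is a minimizer, hence equals $\theta_{min}$ up to $x$-translation by Theorem~\ref{t:exist}. I would prove ``critical $\Rightarrow$ minimizer'' by the classical sliding comparison with the minimizer itself: let $\theta_{min}$ be the (unique up to translation) minimizer and slide it against $\theta$; monotonicity of both, the Hopf lemma, and the matching limits at $\pm\infty$ give $\theta\ge\theta_{min}(\cdot+\lambda_-,\cdot)$ and $\theta\le\theta_{min}(\cdot+\lambda_+,\cdot)$ for suitable shifts, and then pushing the shifts to a common value $\lambda$ (using that the family of translates foliates the region between the two barriers, again a connectedness argument terminated by the strong maximum principle) yields $\theta=\theta_{min}(\cdot+\lambda,\cdot)$.

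The main obstacle I expect is the boundary maximum principle / Hopf lemma in the \emph{unbounded} strip with the nonlinear Neumann-type condition $\pa_\nu\theta=-\gamma\sin2\theta$: one cannot simply apply textbook results, and the sign of the linearized boundary coefficient $2\gamma\cos2\xi$ is only favorable near the stable equilibria, not globally, so the sliding argument must be set up carefully on truncated strips $Q_R$ with the right behavior at $x=\pm R$ controlled by the uniform exponential decay from Remark~\ref{rm:dws}-c), and then $R\to\infty$. Getting the decay rates uniform enough to close the comparison near infinity — and simultaneously ruling out that $w=\theta-\theta_t$ attains a negative interior infimum that ``escapes to $x=+\infty$'' — is the delicate technical heart; everything else (the algebraic classification of $\ell^\pm$, the strong maximum principle steps, the reduction to $\theta_{min}$) is then fairly standard elliptic machinery.
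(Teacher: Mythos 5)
Your overall toolbox — sliding, comparison/maximum principles adapted to the boundary nonlinearity, and a final comparison of $\theta$ with $\theta_{min}$ — is the right one, and it is also the paper's. But the architecture differs, and the place where you deviate from the paper is exactly where a concrete gap appears.

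The paper does \emph{not} first prove strict monotonicity of a general domain wall solution $\theta$ by sliding $\theta$ against its own translates; that monotonicity is only established for $\theta_{min}$ in Theorem~\ref{th:k=2}, via minimization on truncated strips with Dirichlet data and passage to the limit. For an arbitrary critical point the paper instead compares $\theta$ directly with $\theta_{min}$ (Lemma~\ref{lm:sliding}) and with $\theta+\pi$, using only the strong maximum principle (Lemma~\ref{lm:smp}) and the comparison principle (Lemma~\ref{lm:cp}). Your route is plausible in spirit, but the step you would need to make rigorous — that the set of slides $t>0$ with $\theta\ge\theta(\cdot+t,\cdot)$ is open — runs into precisely the difficulty the paper sidesteps: the boundary coefficient $2\gamma\cos 2\xi$ is favorable only near the equilibria, and $\theta$ sweeps through an interval where it changes sign, so a naive comparison on the full strip does not close.

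The genuine error is in how you exclude $\ell^--\ell^+>\pi$. You write that a profile connecting $0$ to $2\pi$ ``would have to pass monotonically through the unstable value $\pi$ in a way incompatible with the boundary ODE.'' For $h=0$ the constant $\pi$ is a \emph{stable} boundary equilibrium (indeed $\theta_{min}$ itself takes the value $\pi$ at $x=-\infty$), so there is nothing locally unstable about passing through it, and monotonicity plus ``classification'' does not rule out a monotone solution from $2\pi$ to $0$. The paper's exclusion of this case is entirely different and quite short: since $\theta+\pi$ is also a domain wall solution (Remark~\ref{rm:dws}-b)) with $\tilde\ell^+=\ell^++\pi<\ell^-$, one can slide $\theta$ against $\theta+\pi$, force a touching point, and invoke Lemma~\ref{lm:smp} to conclude $\theta(\cdot+\lambda,\cdot)\equiv\theta+\pi$ for some $\lambda$, which is absurd. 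Similarly, your proposed ``linearization argument'' to discard half-integer limits $\ell\in\frac\pi2+\pi\Z$ is only heuristic: linear instability of an equilibrium does not by itself preclude a bounded solution approaching it along its stable manifold. In the paper the half-integer case $\ell^+=-\frac\pi2$ is not excluded a priori; it is eliminated a posteriori, because the sliding comparison with $\theta_{min}$ in Lemma~\ref{lm:sliding} forces $\theta(\cdot+\bar\lambda,\cdot)=\theta_{min}$, whose limits are $\pi$ and $0$. So you should replace both of these heuristic exclusions by the paper's comparison-based arguments (or by an equally rigorous alternative, e.g.\ an integral/flux identity); as written those two steps do not hold up.
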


Before turning to the relation between the thin limit model in
\eqref{eq:25} and the micromagnetic energy, we also consider the
asymptotic behavior of the domain wall solutions for both
$\gamma \ll 1$ and $\gamma \gg 1$. In view of Theorem
\ref{th:uniqueness}, it is sufficient to consider the minimizers of
$F$ in the appropriate function classes. For simplicity of
presentation, we will only consider the most interesting case $h = 0$,
as the case $h > 0$ may be treated analogously, albeit without an
explicit limiting solution when $\gamma \to \infty$.

\begin{theorem}
  \label{th:glargesmall}
  For $\gamma > 0$ and $h = 0$, let {$\theta_{min,\gamma}$} be the
  unique minimizer of $F$ over $\mathcal A_1$ satisfying
  ${\theta_{min,\gamma}}(0,\cdot) = {\pi \over 2}$. Then
  \begin{itemize}
  \item [a)]
    ${\theta_{min,\gamma}}(x / \sqrt{\gamma},y) \to \pi - 2 \arctan (e^{2x})$
    as $\gamma \to 0$; 

  \item [b)]
    ${\theta_{min,\gamma}}(x, y) \to {\pi \over 2} - \arctan \left(
      {\sinh (\pi x) \over \sin (\pi y)} \right)$ as
    $\gamma \to \infty$,
  \end{itemize}
  { locally uniformly in $\Sigma$.}
\end{theorem}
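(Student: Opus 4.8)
\medskip

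\noindent\emph{Proof proposal.}
The plan for both limits follows the same four steps. First I would derive an a priori bound on $F(\theta_{min,\gamma})$ by comparison with an explicit competitor adapted to the relevant length scale. Second, I would combine this bound with the strict monotonicity, the reflection symmetry and the uniform regularity supplied by Theorem~\ref{t:exist} to extract a locally uniformly convergent subsequence; here I use repeatedly that the normalization $\theta_{min,\gamma}(0,\cdot)=\tfrac{\pi}{2}$ together with the symmetry $\theta_{min}(x,y)=\pi-\theta_{min}(a-x,y)$ forces $a=0$, i.e.\ $\theta_{min,\gamma}(x,y)=\pi-\theta_{min,\gamma}(-x,y)$, that $\theta_{min,\gamma}(x,y)=\theta_{min,\gamma}(x,1-y)$, and that $0<\theta_{min,\gamma}<\pi$ on $\Sigma$. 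Third, I would identify the limit with the claimed profile. Finally, since the limit turns out to be independent of the subsequence, the whole family converges.

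\smallskip

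\noindent For part (a), set $\tilde\theta_\gamma(x,y):=\theta_{min,\gamma}(x/\sqrt\gamma,y)$, so that
\[
F(\theta_{min,\gamma})=\sqrt\gamma\left(\tfrac12\int_\Sigma|\partial_x\tilde\theta_\gamma|^2\, d^2 r+\tfrac1{2\gamma}\int_\Sigma|\partial_y\tilde\theta_\gamma|^2\, d^2 r+\int_\R\big(\sin^2\tilde\theta_\gamma(x,0)+\sin^2\tilde\theta_\gamma(x,1)\big)\,dx\right).
\]
Using the $y$-independent heteroclinic $\phi_\ast(x):=\pi-2\arctan(e^{2x})$ (which solves $\phi_\ast''=2\sin2\phi_\ast$ and makes the $\partial_y$-term vanish) as competitor gives $F(\theta_{min,\gamma})\le C\sqrt\gamma$, hence $\int_\Sigma|\partial_y\tilde\theta_\gamma|^2\le C\gamma\to0$ and $\int_\Sigma|\partial_x\tilde\theta_\gamma|^2\le C$. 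Since moreover $\theta_{min,\gamma}(\cdot,0)=\theta_{min,\gamma}(\cdot,1)$ is strictly decreasing and $\sin^2\ge\tfrac12$ on $[\tfrac\pi4,\tfrac{3\pi}4]$, the lower bound $F(\theta_{min,\gamma})\ge\gamma\int_{\partial\Sigma}\sin^2\theta_{min,\gamma}\, d\H^1$ forces the (rescaled) transition set $\{x:\tfrac\pi4<\tilde\theta_\gamma(x,0)<\tfrac{3\pi}4\}$ to have length $\le C$; by the reflection symmetry it is centered at the origin, so $\tilde\theta_\gamma(x,0)\ge\tfrac{3\pi}4$ for $x\le-C/2$. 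Now $\tilde\theta_\gamma$ is bounded in $L^\infty\cap H^1_{l}(\Sigma)$, hence, up to a subsequence, $\tilde\theta_\gamma\to\tilde\theta_\ast$ in $L^2_{loc}$ and weakly in $H^1_{l}$; since $\partial_y\tilde\theta_\gamma\to0$ in $L^2$, $\tilde\theta_\ast=\phi(x)$ is $y$-independent and non-increasing, and the traces at $y=0,1$ converge to $\phi$ in $L^2_{loc}(\R)$ because each differs from the $y$-average $\overline{\tilde\theta}_\gamma$ by at most $\|\partial_y\tilde\theta_\gamma(x,\cdot)\|_{L^2(0,1)}$ pointwise in $x$. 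Passing to the limit in the weak form \eqref{wEL2} (with $h=0$) tested against $y$-independent functions yields $\phi''=2\sin2\phi$ in $\R$; since $\phi(0)=\tfrac\pi2$ and $\phi\ge\tfrac{3\pi}4$ on $(-\infty,-C/2]$, the phase portrait of $\phi''=2\sin2\phi$ (whose bounded monotone orbits are the equilibria and the heteroclinics between consecutive ones) forces $\phi=\phi_\ast$. To upgrade to locally uniform convergence, put $g_\gamma(x):=\|\partial_y\theta_{min,\gamma}(x,\cdot)\|_{L^2(0,1)}$; using the uniform-in-$\gamma$ Schauder bound $\|\theta_{min,\gamma}\|_{C^2(\overline\Sigma)}\le C$ (valid for $\gamma$ small since the Neumann datum in \eqref{limitEL2} is $O(\gamma)$) one gets $|(g_\gamma^2)'|\le2Cg_\gamma$, so $g_\gamma$ is $C$-Lipschitz, and $\|g_\gamma\|_{L^2(\R)}^2=\|\partial_y\theta_{min,\gamma}\|_{L^2(\Sigma)}^2\le C\sqrt\gamma$ then forces $\|g_\gamma\|_{L^\infty(\R)}\to0$. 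Thus $\tilde\theta_\gamma$ differs from its $y$-average by $o(1)$ uniformly, and the $y$-average, being non-increasing and $L^2_{loc}$-convergent to the continuous function $\phi_\ast$, converges locally uniformly; (a) follows.

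\smallskip

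\noindent For part (b) no rescaling is needed. On any $K\Subset\Sigma$, $\theta_{min,\gamma}$ is harmonic with $|\theta_{min,\gamma}|\le\pi$, so interior estimates give uniform-in-$\gamma$ $C^m$ bounds on slightly smaller sets, hence $\theta_{min,\gamma}\to\theta_\ast$ in $C^m_{loc}(\Sigma)$ along a subsequence, with $\theta_\ast$ harmonic, $0\le\theta_\ast\le\pi$, $\partial_x\theta_\ast\le0$, $\theta_\ast(x,y)=\theta_\ast(x,1-y)=\pi-\theta_\ast(-x,y)$ and $\theta_\ast(0,\cdot)=\tfrac\pi2$. To identify the boundary behaviour of $\theta_\ast$ I would use an energy bound of logarithmic order: taking as competitor the harmonic field $v(x,y):=\tfrac\pi2-\arctan\!\big(\tfrac{\sinh\pi x}{\sin\pi y}\big)$—which has boundary trace $\pi\,\mathbf 1_{\{x<0\}}$ and fails to lie in $H^1_{l}$ only through a logarithmic divergence at the two boundary vortices $(0,0)$, $(0,1)$—truncated and regularized in the half-discs of radius $\gamma^{-1}$ about the vortices (so the boundary penalty contributes $O(1)$, the regularized core contributes $O(1)$ by scale invariance of the Dirichlet integral, and the annulus $\gamma^{-1}<r<1$ contributes $O(\log\gamma)$), one gets $F(\theta_{min,\gamma})\le C\log\gamma=o(\gamma)$. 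Hence $\int_{\partial\Sigma}\sin^2\theta_{min,\gamma}\, d\H^1\to0$, so $\sin\theta_{min,\gamma}\to0$ in $L^2(\partial\Sigma)$; as $\theta_{min,\gamma}(\cdot,0)$ is monotone with values in $(0,\pi)$ and equals $\tfrac\pi2$ at $x=0$, its limit is the step function $\pi\,\mathbf 1_{\{x<0\}}$, and likewise at $y=1$. Representing each $\theta_{min,\gamma}$ as the Poisson integral over $\partial\Sigma$ of its (smooth, bounded) trace—legitimate because a bounded harmonic function on the strip vanishing on $\partial\Sigma$ vanishes identically—and passing to the limit by dominated convergence (the strip Poisson kernel decays exponentially), we obtain that $\theta_\ast$ is the bounded harmonic extension of $\pi\,\mathbf 1_{\{x<0\}}$; via the conformal map $\zeta=e^{\pi(x+iy)}$ onto the upper half-plane (under which the datum becomes $\pi$ on $(-1,1)$ and $0$ elsewhere, with harmonic extension $\arg\tfrac{\zeta-1}{\zeta+1}$) one checks this extension is exactly $v$. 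Since the limit is subsequence-independent, $\theta_{min,\gamma}\to v$ in $C^m_{loc}(\Sigma)$, proving (b).

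\smallskip

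\noindent I expect the main obstacles to be the following. In (a), the rescaling is anisotropic, so the rescaled Euler--Lagrange equation degenerates as $\gamma\to0$ and one has no uniform interior (Schauder) estimates in the rescaled variables; this forces one to pass to the limit only in the $y$-averaged, integrated equation, to control the $y$-oscillation separately through the Lipschitz-plus-small-$L^2$ argument for $g_\gamma$, and to extract the no-escape-to-infinity of the transition from the energy rather than from elliptic estimates. In (b) the difficulty is of a different nature: the energy of the minimizers blows up (like $\log\gamma$), so this is a boundary-vortex desingularization limit—convergence holds only in the \emph{open} strip, the $O(\log\gamma)$ competitor near the vortices has to be built by hand, and the step boundary data for the limit must be recovered indirectly (through the trace bound and the Poisson representation) rather than from continuity of the convergence up to $\partial\Sigma$. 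I regard the competitor construction and this boundary-data transfer in (b) as the most delicate point.
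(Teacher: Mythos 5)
Your proposal is correct and follows the same overall architecture as the paper (energy-scaling competitor, compactness, identification of the unique $y$-independent/harmonic limit, upgrade to locally uniform convergence), but three of the individual steps are carried out differently. In part (a), you identify the limit by passing to the limit in the weak Euler--Lagrange equation tested against $y$-independent functions and then reading off the heteroclinic from the phase portrait of $\phi''=2\sin 2\phi$; the paper instead shows directly that the $y$-independent limit minimizes the 1d functional $G$ over $\mathcal A_1^{1d}$ via a $\liminf/\limsup$ comparison. Your route needs slightly more regularity bookkeeping (strong trace convergence) but avoids invoking minimality and may generalize more readily to critical-point rather than minimizer settings. Also in part (a), your upgrade to locally uniform convergence via the Lipschitz bound on $g_\gamma(x)=\|\partial_y\theta_{min,\gamma}(x,\cdot)\|_{L^2(0,1)}$ plus a Dini-type argument for the monotone $y$-average is a genuinely different (and arguably more elementary) mechanism than the paper's, which uses the rescaled Poisson kernel $P_\gamma$ concentrating to a Dirac mass; both legitimately exploit harmonicity ($h=0$), yours through a uniform Schauder bound in the unscaled variables and the paper's through an explicit integral representation. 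In part (b), the paper's $O(\log\gamma)$ energy bound comes from the explicit smooth family $\theta_\eps$ in \eqref{eq:27} with a closed-form asymptotic, whereas you build the competitor by truncating/regularizing $v$ at scale $\gamma^{-1}$; both give the same order, the paper's being more explicit, yours more robustly transferable. The remainder of your part (b) (monotone trace forces a step, Poisson representation on the strip, conformal identification of $v$) matches the paper's argument modulo cosmetic differences.
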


\begin{remark}
  {As may be seen from the proof, the result in part a) of Theorem
    \ref{th:glargesmall} also holds with respect to the
    $H^1_{l}(\Sigma)$ convergence. However, the latter does not hold
    for part b), as the limit function fails to be in
    $H^1_{l}(\Sigma)$. Finally, in the case $h > 0$ and $\gamma \to 0$
    the limit solution is easily seen to be that of \eqref{limitEL2}
    with $\gamma = 0$ and is, once again, one-dimensional, while as
    $\gamma \to \infty$ the solution is expected to converge to a
    solution of the first equation in \eqref{limitEL2} with Dirichlet
    boundary condition in the form of a piecewise-constant function
    taking values $0$ and $2 \pi$.}
\end{remark}

Notice that the result in part b) of Theorem \ref{th:glargesmall}
provides a rigorous basis for the physical picture presented in
\cite{tchernyshyov05}. Also, Theorem \ref{th:glargesmall} provides a
rigorous counterpart for the discussion in the introduction regarding
the limiting behavior of the magnetization in the strip in the limits
of large and small values of $\gamma$.

We finally turn to the relationship of the results obtained by us for
the limit thin film energy in \eqref{eq:25} with those for the
micromagnetic energy. Notice that in the regime of interest the film
thickness reaches an order of only a few atomic layers, making the use
of the full three-dimensional micromagnetic energy problematic. As was
argued previously, a model that is more appropriate for such {\em
  ultrathin} films is the reduced micromagnetic thin film energy (for
a detailed discussion, see \cite{lms:jns20,dms21}).

Let
{$d_{\Sigma}(\mathbf r) := \mathrm{dist}(\mathbf r, \R^2 \backslash
  \Sigma)$.}  For $\e > 0$ sufficiently, small we consider the family
of cutoff functions
\begin{equation}
  \eta_{\e} (\mathbf r) = \eta \left( d_{\Sigma} (\mathbf r) / \e
  \right), 
  \label{mdelta}
\end{equation}
where {$\eta \in C^1([0, +\infty))$ is such that $\eta (0) = 0$,
  $\eta'(t) \geq 0$ for all $t \geq 0$ and $\eta (t) = 1$ for
  $t \geq 1$.}
%\begin{figure}[h]
%\centering
%\includegraphics{Fig.pdf}
%  \caption{The function $v$ used to cutoff near the boundary of $D$.}
%\end{figure}
%In particular, for $\delta$ sufficiently small, we have $\nabla \eta_{\delta}
%\in C_c \left( \R^2, \R^2 \right)$ because $D$ is bounded. Also we have
%that if $\delta$ is sufficiently small and we set $D_{\delta} =
%\left\{ x \in D \,:\, \left| d_{\partial D} (x) \right| < \delta \right\}$,
%then for every point $x \in D_{\delta}$ there exists a unique point $\pi (x)
%\in \partial D$ such that $x = \pi (x) - d_{\partial D} (x) n (\pi (x))$.
%Moreover
%\begin{equation}
%  \nabla d_{\partial D} (x) = -n (\pi (x))
%\end{equation}
%where $n$ is the (outer) normal vector field to $\partial D$, and $\left|
%\nabla d_{\partial D} (x) \right| = 1$. We shall make use of the following
%identities:
%\begin{equation}
%  \nabla \eta_{\delta} (x) = - \frac{1}{\delta} v' \left(d_{\partial D}
%  (x) / \delta \right) n (\pi (x)) = - \left| \nabla \eta_{\delta} (x)
%  \right| n (\pi (x)) . \label{eq:derud}
%\end{equation}
{Then for \beq m: \Sigma \to \mathbb S^1, \quad m = (m_1 (x,y),
  m_2(x,y)), \eeq such that $m \in C^\infty(\overline\Sigma; \R^2)$
  and $m_2$ vanishes outside a compact set,} we define the following
reduced micromagnetic energy: \beq\label{epsilone} E_\e(m) =
\frac{1}{2} \int_{\Sigma} |\nabla m|^2\, d^2r + \frac{\gamma}{2 |\ln
  \e|} \int_{\Sigma} \int_{\Sigma} \frac{\Div (\eta_\e m)
  (\mathbf r) \, \Div (\eta_\e m) (\mathbf r')}{| \mathbf r - \mathbf
  r'|} \, d^2r \, d^2 r' + h \int_{\Sigma} (1- m_1)\, d^2r, \eeq where
%We define a function $\chi: \R \to \R^3$ as 
%\beq
%\chi(x) = \begin{cases}
%x e_1, & \hbox{ if } |x| \leq 1 \\
%\hbox{sign}(x) e_1, & \hbox{ otherwise}
%\end{cases}
%\eeq
%and would like to impose the boundary conditions on $m$ in the following way
%\beq
%m \in X=\{ m: \Sigma \to \mathbb S^2 \cap \{ z=0\}\, :\, m - \chi \in H^1(\Sigma) \}.
%\eeq
%The first step is to obtain the compactness of a sequence $m_\e \in X$ when $E_\e(m_\e) \leq C$. 
%We will do vit in the following way: 
%\begin{enumerate}
%\item we first show that if $E_\e(m_\e) \leq C$ then $E_\e(\bar m_\e) \leq C$, where $\bar m_\e(x) = \int_0^1 \int_0^1 m(x,y)\, dy$ is an average of $m_\e$ over a cross section at $x \in \R$;
%\item we then deduce that $\| \bar m_\e - \chi \| \leq C$ and show that it implies $\| m_\e - \chi \|_{H^1} \leq C$.
%\end{enumerate}
$\gamma>0$ is a fixed parameter, which may be obtained from the
  full three-dimensional micromagnetics via a formal asymptotic
  reduction and a suitable rescaling of the strip width
  \cite{lms:jns20,dms21}.  The conditions on $m$, which we are going
  to relax shortly, are needed to ensure convergence of all the
  integrals in \eqref{epsilone}. In particular, it ensures that
  $m_1(x, y) = \pm 1$ for all $|x|$ large enough, corresponding to the
  head-to-head or winding domain wall configurations.

  In \eqref{epsilone}, the parameter $\eps$ represents the effective
dimensionless film thickness measured relative to the strip width, and
$\gamma$ is an effective stray field strength normalized by
$|\ln \eps|$ (compare with \eqref{eq:6}). {As was already
  mentioned,} this energy is somewhat intermediate in the hierarchy of
multiscale micromagnetic energies between the full three-dimensional
micromagnetic energy in \eqref{eq:1} (with the Zeeman term added) and
the limit thin film energy in \eqref{eq:25}.

{The assumptions about $m$ above are clearly too restrictive for
  the existence of unconstrained minimizers of $E_\e$. To find a more
  appropriate functional setting to seek the energy minimizers in the
  form of head-to-head or winding domain walls, we pass to the Fourier
  space in the nonlocal term and introduce the transform
  $\mathscr F(\mathrm{div}(\eta_\eps m))$ of
  $\mathrm{div}(\eta_\eps m) \in C^\infty_c(\R^2)$:
\begin{align}
  \label{eq:38}
  \mathscr F(\mathrm{div}(\eta_\eps m)) (k_1, k_2) = \int_0^1 \int_\R
  e^{-i k_1 x - i k_2 y} \mathrm{div} (\eta_\eps(y) m(x, y)) \, dx \, dy,
\end{align}
where $\mathrm{div}(\eta_\eps m)$ was extended by zero outside
$\Sigma$.  Clearly, under our assumption we have {\cite[Theorem
  5.9]{lieb-loss}}
\begin{align}
  \label{eq:39}
  \int_\Sigma \int_\Sigma  \frac{\Div (\eta_\e m)
  (\mathbf r) \, \Div (\eta_\e m) (\mathbf r')}{2 \pi | \mathbf r -
  \mathbf r'|} \, d^2r \, d^2 r' = \int_{\R^2} {|\mathscr
  F(\mathrm{div}(\eta_\eps m)) |^2 \over |\mathbf k|} {d^2 k \over (2
  \pi)^2},
\end{align}
which is nothing but the $\mathring{H}^{-1/2}(\R^2)$ norm squared of
$\Div (\eta_\e m)$.  Thus, under the above assumptions about $m$ the
energy $E_\e(m)$ may be alternatively written in the form
\beq\label{epsiloneF} E_\e(m) = \frac{1}{2} \int_{\Sigma} |\nabla
m|^2\, d^2r + \frac{\gamma}{2 |\ln \e|} \int_{\R^2} {|\mathscr
  F(\Div(\eta_\eps m)) |^2 \over 2 \pi |\mathbf k|} \, d^2 k + h
\int_{\Sigma} (1- m_1)\, d^2r. \eeq

We now wish to relax the assumptions of smoothness of $m$ and of $m_2$
having compact support and introduce a more natural class of
magnetizations for which the energy in \eqref{epsiloneF} remains
valid, taking advantage of positivity of the nonlocal energy term
written in the Fourier space. Clearly, for $m \in H^1_{l}(\Sigma)$ all
the local terms in the energy are well defined (possibly taking the
value $+\infty$). It remains to make sense of the nonlocal term. For
that purpose, observe that for $m \in H^1_{l}(\Sigma)$ we have
{(with a slight abuse of notation)} 
\begin{align}
  \label{eq:40}
  \Div(\eta_\eps m)(x, y) = \eta_\eps(y) \partial_x m_1(x, y) + \eta_\eps(y)
  \partial_y m_2(x, y) + \eta_\eps'(y) m_2(x, y)
\end{align}
distributionally. Therefore, under a natural condition that
$\nabla m \in L^2(\Sigma; \R^2)$ the first two terms in the right-hand
side of \eqref{eq:40}, extended by zero outside $\Sigma$, belong to
$L^2(\R^2)$ and thus have a well-defined Fourier transform in the
$L^2$-sense. To make sense of the third term, we additionally assume
that $m_2 \in L^2(\Sigma)$. Thus, we introduce the class
\begin{align}
  \label{eq:41}
  \Sp := \left\{ m \in H^1_{l}(\Sigma; \mathbb S^1) \, : \, \nabla m
  \in L^2(\Sigma; \R^2), \, m_2 \in L^2(\Sigma) \right\}, 
\end{align}
on which $E_\e : \Sp \to [0, +\infty]$ is now well defined for all
$\e \in (0, \frac12)$. Note that the assumption $m_2 \in L^2(\Sigma)$
for all $m \in \Sp$ forces $m_1(x, \cdot)$ to approach $\pm 1$ in some
average sense as $x \to \pm \infty$, thus selecting the magnetization
profiles in the form of head-to-head or winding walls. }

We will show the $\Gamma$-convergence {as $\e \to 0$ of the energy
  $E_\e$ defined on $\Sp$} to the following reduced energy (see
Sec. \ref{sec:mag}): \beq\label{limitingE} E_0(m) = \frac{1}{2}
\int_{\Sigma} |\nabla m|^2\, d^2 r + h \int_{\Sigma} (1- m_1)\, d^2 r
+ \gamma \int_{\partial \Sigma} m^2_2\, d\mathcal H^1.  \eeq {With
  a slight abuse of notation, when talking about the limit $\e \to 0$
  we will always imply taking a sequence of $\e_k \to 0$ as
  $k \to \infty$.}

Associated with the energy in \eqref{limitingE}, we have the following
minimization problem:
\begin{multline}\label{mineps0}
  \text{minimize} \ E_0(m) \ \text{among} \\ \ m=(\cos\theta,
  \sin\theta)\in H^1_{l}(\Sigma; \mathbb S^1) \text{ with }\theta
  \text{ satisfying \eqref{lemma1} for some }k_1,\, k_2\in \Z,\,
  k_1\neq k_2.
\end{multline}
Notice that for $m = (\cos \theta, \sin \theta)$, this energy
coincides precisely with that in \eqref{eq:25}, and such a lifting is
always possible for any $m \in H^1_{l}(\Sigma; \mathbb S^1)$ (see, for
instance, \cite{bourgain05}), making the energies $E_0(m)$ and
$F(\theta)$ equivalent. The $\Gamma$-convergence result, stated in
Theorem \ref{th:gamma}, is with respect to the strong
$L^2_{loc}(\Sigma)$ convergence of maps
$m_\e : \Sigma \to \mathbb S^1$. Using this $\Gamma$-convergence
result, we can then establish {existence and a} characterization of
the minimizers of $E_\e$ in the form of domain walls in terms of those
of $E_0$ {for all small enough $\eps$}. Note that the existence and
properties of the latter are established by Theorem
\ref{t:exist}. {Also note that by Theorem \ref{t:exist} the
  minimizers of $E_0$ over $H^1_{l}(\Sigma; \mathbb S^1)$ with
  suitable behaviors at infinity belong to $\Sp$.}

\begin{theorem}
  \label{th:micromin}
  Let $\gamma > 0$, $h \geq 0$ and $k \in \mathbb N$. Then there
  exists $\eps_0 > 0$ such that for all $\eps \in (0, \eps_0)$ there
  exists a minimizer $m = (\cos \theta, \sin \theta)$ of $E_\eps$ over
  all $m \in \Sp$ {with} $\theta \in \mathcal A_k$ if and only if
  $k = 1$ when $h = 0$, or if and only if $k = 2$ when $h > 0$. As
  $\eps \to 0$, every minimizer of $E_\eps$ above converges {in
    $H^1_{l}(\Sigma; \R^2)$}, after a suitable translation, to
  {the} corresponding minimizer of $E_0$.
\end{theorem}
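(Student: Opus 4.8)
The plan is to combine the $\Gamma$-convergence result (Theorem \ref{th:gamma}) with the existence and uniqueness theory for the limit energy (Theorem \ref{t:exist}) via a standard $\Gamma$-convergence-plus-compactness argument, but carried out carefully on the unbounded strip where compactness is delicate. First I would address the direction "if $k=1$ for $h=0$ or $k=2$ for $h>0$, then a minimizer of $E_\eps$ over $\Sp\cap\{\theta\in\mathcal A_k\}$ exists for all small $\eps$". Fix such a $k$ and let $\theta_{min}$ be the unique minimizer of $F$ over $\mathcal A_k$ given by Theorem \ref{t:exist}; by the remark preceding the statement, $m_0:=(\cos\theta_{min},\sin\theta_{min})\in\Sp$ and $F(\theta_{min})=E_0(m_0)<+\infty$. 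Using the $\limsup$ (recovery sequence) half of $\Gamma$-convergence I would produce $m_\eps\to m_0$ in $L^2_{loc}$ with $\limsup_\eps E_\eps(m_\eps)\le E_0(m_0)$; one must check these recovery configurations can be taken in the correct class $\mathcal A_k$ (this should follow because the recovery sequence differs from $m_0$ only near $\partial\Sigma$ or on a compact set, so the behavior at $x\to\pm\infty$ is preserved). Hence the infimum of $E_\eps$ over the admissible class is $\le E_0(m_0)+o(1)$ and in particular finite.

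To actually obtain a minimizer of $E_\eps$ one then runs the direct method \emph{within} the class $\Sp\cap\{\theta\in\mathcal A_k\}$ for each fixed small $\eps$: take a minimizing sequence, use the coercivity coming from $\tfrac12\int_\Sigma|\nabla m|^2$ together with the (nonnegative) nonlocal term and, for $h>0$, the Zeeman term, extract $H^1_{l}$-weak/$L^2_{loc}$-strong limits, and use lower semicontinuity of each term (the nonlocal term is lower semicontinuous in $L^2_{loc}$ because its Fourier representation \eqref{epsiloneF} is a supremum of continuous functionals — the same mechanism used in the $\liminf$ inequality of Theorem \ref{th:gamma}). The subtle point, exactly as in the limit problem treated in Section \ref{sec:thin}, is that the weak limit might "escape to infinity" or fail to lie in $\mathcal A_k$; I expect this is handled by the monotonicity/translation argument already developed there (or, more cheaply for fixed $\eps>0$, by the fact that $E_\eps$ dominates $F$ up to controlled errors via \eqref{eq:40}, so the characterization of behavior at infinity for finite-energy configurations, Lemma \ref{lm:zagara}, applies). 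This gives existence of $m_\eps$ minimizing $E_\eps$ in the admissible class, for every $\eps<\eps_0$.

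For the converse ("only if") direction and for the convergence statement, suppose along some sequence $\eps\to0$ there exist minimizers $m_\eps=(\cos\theta_\eps,\sin\theta_\eps)\in\Sp$ with $\theta_\eps\in\mathcal A_k$. From $E_\eps(m_\eps)\le E_\eps(m_{\eps,rec})\le C$ (the recovery sequence for any admissible competitor) one gets a uniform energy bound; hence $\nabla m_\eps$ is bounded in $L^2(\Sigma)$, $\{m_{2,\eps}\}$ is bounded in $L^2(\Sigma)$, and (for $h>0$) $\int_\Sigma(1-m_{1,\eps})\,d^2r$ is bounded. The compactness half of Theorem \ref{th:gamma} then yields, up to translations in $x$ and a subsequence, convergence $m_\eps\to m_*$ in $L^2_{loc}(\Sigma;\R^2)$ with $m_*=(\cos\theta_*,\sin\theta_*)$ admissible for the limit problem, and by the $\liminf$ inequality $E_0(m_*)\le\liminf_\eps E_\eps(m_\eps)$. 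Comparing with the recovery upper bound $\liminf_\eps E_\eps(m_\eps)\le\limsup_\eps E_\eps(m_{\eps,rec})\le E_0(m_0)=\min_{\mathcal A_j}F$ for the admissible $j$, one concludes $m_*$ is a minimizer of $E_0$ in its class; but by Theorem \ref{t:exist} such a minimizer exists only for $j=1$ ($h=0$) or $j=2$ ($h>0$), which forces $k$ to take that value and shows no minimizer of $E_\eps$ in $\mathcal A_k$ can exist for other $k$ (here one must argue that the translated limit stays in the \emph{same} class $\mathcal A_k$ — this is where the monotonicity of $\theta_\eps$, or at least the rigidity of the limits at infinity, is needed, so that winding number is preserved under the weak limit).

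Finally, for the convergence of the minimizers: once $m_*$ is identified as \emph{the} minimizer of $E_0$ in $\mathcal A_k$ (unique up to $x$-translation by Theorem \ref{t:exist}), the whole sequence converges after translation, by the usual subsequence argument. To upgrade $L^2_{loc}$ convergence to $H^1_{l}$ convergence one uses convergence of the energies: $E_\eps(m_\eps)\to E_0(m_*)$ (the $\liminf$ and $\limsup$ bounds squeeze together), which, since the nonlocal and Zeeman terms are lower semicontinuous, forces $\int_{Q_R}|\nabla m_\eps|^2\to\int_{Q_R}|\nabla m_*|^2$ on every truncation $Q_R$ together with weak $H^1(Q_R)$ convergence, hence strong $H^1(Q_R)$ convergence; lifting back to $\theta_\eps\to\theta_*$ in $H^1_{l}$ is then immediate since $m\mapsto\theta$ is locally bi-Lipschitz on $\mathbb S^1$ away from a point and one can fix the branch. \textbf{The main obstacle} I anticipate is not the soft functional-analytic machinery but the "no loss of winding" issue: ensuring that the (possibly translated) $L^2_{loc}$-limit of minimizers genuinely lies in $\mathcal A_k$ rather than in some $\mathcal A_{k'}$ with $|k'|<|k|$ — i.e., that mass/winding does not escape to $x=\pm\infty$ — which on the unbounded strip must be controlled using either the monotonicity of minimizers established en route to Theorem \ref{t:exist} or a quantitative use of the decay/behavior-at-infinity lemma (Lemma \ref{lm:zagara}) uniform in $\eps$.
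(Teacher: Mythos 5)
Your high-level plan (combine the $\Gamma$-convergence of Theorem~\ref{th:gamma} with compactness and the uniqueness theory of Theorem~\ref{t:exist}) is the right starting point, but the two places where you flag ``the main obstacle'' --- preservation of winding under weak limits --- are genuine gaps, and your suggested fixes do not close them. You propose to run the direct method \emph{within} a fixed class $\mathcal A_k$ for each small $\eps$ and control winding loss via ``the monotonicity/translation argument already developed there''. But the sliding and monotonicity machinery of Sec.~\ref{sec:thin} is built entirely on the \emph{local} Euler--Lagrange boundary value problem \eqref{limitEL2}; nothing analogous is established, nor readily available, for critical points of the genuinely nonlocal energy $E_\eps$, so this cannot be invoked. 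The paper instead sidesteps the problem altogether: Proposition~\ref{prop:mineps} minimizes $E_\eps$ over the \emph{union} of all nontrivial winding classes (problem \eqref{mineps}), so that a drop in winding under the weak $H^1_l$-limit is harmless as long as it does not hit zero; total loss is ruled out not by monotonicity but by the elementary normalization $\bar\theta_n(0)=-\tfrac{\pi}{2}$ together with the growth bound $|\bar\theta_n(x)-\bar\theta_n(0)|\le C\sqrt{|x|}$ and Lemma~\ref{lm:zagara}, which forces the two limits at $x\to\pm\infty$ to be integer multiples of $\pi$ straddling $-\tfrac{\pi}{2}$ and hence distinct. Your proposal contains no substitute for this step.

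The identification of the limiting winding (and with it the ``only if'' part) also uses a mechanism absent from your sketch. Once a minimizer over all windings exists and the energies converge (Corollary~\ref{cor:mincon}), the paper applies the additivity $\inf_{\mathcal A_k}F = k\min_{\mathcal A_1}F$ from Corollary~\ref{cor:infk0} together with the asymptotic matching $\inf_{\mathcal A_k^0}E_\eps\to\inf_{\mathcal A_k}F$ from Corollary~\ref{cor:infkeps} to show that a minimizer of \eqref{mineps} with finite winding $n>1$ would cost asymptotically $nF(\theta_{min})>F(\theta_{min})$, and rules out unbounded winding by the truncation $\min\{n\pi,\theta_\eps\}$, which lowers $F$ and lands in $\mathcal A_n$, again yielding an energy contradiction. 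Your compactness argument --- pass to a translated $L^2_{loc}$-limit and appeal to Theorem~\ref{t:exist} --- cannot conclude on its own, because (as you correctly suspect) the translated limit may land in $\mathcal A_{k'}$ with $|k'|<|k|$, and the $\liminf$ inequality $E_0(m_*)\le kF(\theta_{min})$ then provides no contradiction. So as written the proposal establishes neither direction of the ``iff''; the missing ingredients are the free-winding minimization of Proposition~\ref{prop:mineps} and the energy-additivity comparison of Corollaries~\ref{cor:infk0} and~\ref{cor:infkeps}, not an (unavailable) monotonicity for $E_\eps$.
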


{The above result shows that, in the considered regime of ultrathin
  ferromagnetic films, the domain wall-like ground states of the
  micromagnetic energy are head-to-head walls with no winding
  ($180^\circ$ walls) in the absence of the applied field
  ($h=0$). When an applied field is present ($h>0$), the only domain
  wall-like ground states are winding domain walls with a single
  rotation ($360^\circ$ walls). Furthermore, as the film thickness
  tends to zero these ground state profiles converge to the uniquely
  defined energy minimizing profiles for the limit energy $E_0$ (up to
  translations). Thus, in particular our results provide a
  mathematical understanding for the symmetric head-to-head domain
  wall profiles in the absence of the applied field observed in
  experiments and numerical simulations of sufficiently thin
  nanostrips (see Fig. \ref{f:dw}(b)) and the discussion in
  Sec. \ref{sec:introduction}. At the same time, our analysis does not
  capture the asymmetric head-to-head walls observed in wider
  nanostrips (see Fig. \ref{f:dw}(c)). The analysis of the latter
  would require to consider a regime in which the stray field effect
  does not reduce to a purely local penalty term at the sample
  boundary, and is outside the regime studied in this
  paper. Similarly, our regime excludes the appearance of the vortex walls
  shown in Fig. \ref{f:dw}(a).  }

\section{Analysis of the thin film limit model} \label{sec:thin} 

We start by recalling that for every
$m\in H^1_{l}(\Sigma; \mathbb S^1)$ there exists
$\theta\in H^1_{l}(\Sigma)$ such that $m=(\cos\theta, \sin\theta)$
(see, for instance, \cite{bourgain05}), and the energy
\eqref{limitingE} may be rewritten as \beq\label{limitingtheta}
E_0(m)=F(\theta).  \eeq In what follows, we identify any
$\theta\in H^1_{l}(\Sigma)$ with the precise representative such that
for every $x_0\in \R$, $\theta(x_0, \cdot)$ coincides a.e. with the
trace of $\theta$ on the {vertical} line $ x=x_0$.
\begin{lemma}\label{lm:zagara}
  Let $\theta\in H^1_{l}(\Sigma)$ be such that
  $F(\theta)<+\infty$. Then there exist $k_1, k_2\in \Z$ such that
  \beq\label{lemma1}
  \lim_{x\to-\infty}\|\theta(x,\cdot)-k_1\pi\|_{L^{2}(0,1)}=0 \text{
    and }
  \lim_{x\to+\infty}\|\theta(x,\cdot)-k_2\pi\|_{L^{2}(0,1)}=0\,.  \eeq
  Furthermore, if $h>0$ we have $k_1, k_2\in 2\Z$.
\end{lemma}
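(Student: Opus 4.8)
The plan is to extract the limiting behavior at $x \to \pm\infty$ from the finiteness of the energy $F(\theta)$. Since $F(\theta) < +\infty$, in particular $\int_\Sigma |\nabla\theta|^2 \, d^2r < +\infty$ and $\gamma\int_{\partial\Sigma}\sin^2\theta\,d\mathcal H^1 < +\infty$. First I would work with the function $g(x) := \|\theta(x,\cdot)\|_{L^2(0,1)}$ (or rather with $\theta(x,\cdot)$ as an $L^2(0,1)$-valued curve): the bound on the Dirichlet energy says $\int_\R \|\partial_x\theta(x,\cdot)\|_{L^2(0,1)}^2\, dx < +\infty$, so $x\mapsto\theta(x,\cdot)$ is (after modification on a null set) a continuous, indeed $H^1_{loc}$, curve in $L^2(0,1)$ whose derivative is $L^2(\R; L^2(0,1))$; hence $\|\theta(x_2,\cdot)-\theta(x_1,\cdot)\|_{L^2(0,1)} \le |x_2-x_1|^{1/2}\big(\int_{x_1}^{x_2}\|\partial_x\theta\|_{L^2(0,1)}^2\big)^{1/2} \to 0$ as $x_1,x_2\to+\infty$, so the curve is Cauchy and $\theta(x,\cdot)$ converges in $L^2(0,1)$ to some limit function $\theta^+\in L^2(0,1)$ as $x\to+\infty$, and similarly to some $\theta^-$ as $x\to-\infty$.

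Next I would identify these limits. The control on $\int_\Sigma|\partial_y\theta|^2$ and the trace term forces the limit profiles to be constant in $y$ and to take values in $\pi\mathbb Z$. Concretely: consider a sequence $x_n\to+\infty$ and the translates $\theta_n(x,y) := \theta(x+x_n,y)$ on, say, $Q_1 = (-1,1)\times(0,1)$. Since $\int_{Q_1}|\nabla\theta_n|^2 = \int_{(x_n-1,x_n+1)\times(0,1)}|\nabla\theta|^2 \to 0$ (tail of a convergent integral) and $\theta_n(0,\cdot)\to\theta^+$ in $L^2(0,1)$, a Poincaré-type inequality on $Q_1$ gives $\theta_n \to c^+$ strongly in $H^1(Q_1)$ for a constant $c^+$, with $\theta^+ = c^+$ a.e. Moreover the trace term: $\int_{\partial\Sigma}\sin^2\theta\,d\mathcal H^1 < \infty$ means $\int_\R \sin^2\theta(x,0)\,dx + \int_\R\sin^2\theta(x,1)\,dx < \infty$, and since the traces $\theta_n(\cdot,0)\to c^+$ in $L^2_{loc}$, we must have $\sin^2 c^+ = 0$, i.e.\ $c^+ = k_2\pi$ for some $k_2\in\mathbb Z$. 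The same argument at $-\infty$ gives $c^- = k_1\pi$. Finally, when $h>0$ the extra term $h\int_\Sigma(1-\cos\theta)\,d^2r < +\infty$ forces $\int_\R\|1-\cos\theta(x,\cdot)\|_{L^1(0,1)}\,dx < \infty$, hence $1-\cos c^\pm = 0$, which upgrades $k_1,k_2$ to even integers.

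The main obstacle is the first step: justifying that $x\mapsto\theta(x,\cdot)$ has a genuine pointwise-in-$x$ continuous $L^2(0,1)$-valued representative and that the convergence is along {\em all} $x\to\pm\infty$ and not merely along subsequences. This is where I would invoke the identification made just before the lemma (the precise representative of $\theta\in H^1_l(\Sigma)$ whose slices agree with traces), together with the standard fact that an $H^1(a,b; L^2(0,1))$ function has an absolutely continuous representative with the fundamental-theorem-of-calculus formula $\theta(x_2,\cdot)-\theta(x_1,\cdot) = \int_{x_1}^{x_2}\partial_x\theta(s,\cdot)\,ds$ as a Bochner integral; the Cauchy estimate above then makes the limit honest. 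The remaining steps (Poincaré on $Q_1$, passing the trace and potential terms to the limit) are routine. Note the constants $k_1,k_2$ need not be equal here; equality or not is decided later, not in this lemma.
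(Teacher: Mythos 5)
Your argument has a genuine gap at the first step, where you claim that
$$
\|\theta(x_2,\cdot)-\theta(x_1,\cdot)\|_{L^2(0,1)}
\;\le\;|x_2-x_1|^{1/2}\Bigl(\int_{x_1}^{x_2}\|\partial_x\theta\|_{L^2(0,1)}^2\,ds\Bigr)^{1/2}
\;\longrightarrow 0
\quad\text{as } x_1,x_2\to+\infty.
$$
The inequality itself is correct, but the conclusion is not: although the second factor vanishes (tail of a convergent integral), the first factor $|x_2-x_1|^{1/2}$ is unbounded, so the product need not tend to zero, and the curve $x\mapsto\theta(x,\cdot)$ is \emph{not} Cauchy just from $\nabla\theta\in L^2(\Sigma)$. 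A concrete counterexample is $\theta(x,y)=\log\log(e+|x|)$ (constant in $y$): then $\partial_x\theta\in L^2(\R)$, but $\theta(x,\cdot)\to\infty$, and the slope $\int_{x_1}^{x_2}|\partial_x\theta|\,ds$ diverges even though $\int_{x_1}^{x_2}|\partial_x\theta|^2\,ds\to0$. The finiteness of the Dirichlet energy alone cannot prevent a slow, unbounded drift of the $x$-slices.

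What is missing is exactly the interplay between the gradient term and the boundary (or bulk) potential term, which is the heart of the paper's proof. After passing to the $y$-average $\bar\theta(x):=\int_0^1\theta(x,y)\,dy$ and showing (via the Poincar\'e bound $|\bar\theta(x)-\theta(x,0)|^2\le\int_0^1|\partial_y\theta|^2\,dy$) that $\tfrac12\int_\R|\bar\theta'|^2+2\gamma\int_\R\sin^2\bar\theta\le(5+8\gamma)F(\theta)$, the paper applies a Modica--Mortola-type inequality
$$
\frac12\int_\alpha^\beta|\bar\theta'|^2+2\gamma\int_\alpha^\beta\sin^2\bar\theta
\;\ge\;2\sqrt\gamma\int_\alpha^\beta|\sin\bar\theta|\,|\bar\theta'|
\;\ge\;2\sqrt\gamma\,\bigl|\cos\bar\theta(\beta)-\cos\bar\theta(\alpha)\bigr|,
$$
which bounds the total variation of $\cos\bar\theta$ on $[\alpha,\beta]$ by a quantity that vanishes as $\alpha,\beta\to\infty$. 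This is what produces the Cauchy condition (for the \emph{bounded} observable $\cos\bar\theta$, not for $\bar\theta$ itself), and combined with $\sin\bar\theta\to0$ (which also uses the potential term) it rules out oscillation and gives $\bar\theta(x)\to k_i\pi$. Your later steps (the Poincar\'e inequality on unit squares $Q^t$ to go from the average to the full slice, the trace theorem, and using $\int_\Sigma(1-\cos\theta)<\infty$ when $h>0$ to upgrade to $2\Z$) are essentially the paper's steps and are fine; but they come \emph{after} convergence has been established, so the gap in the first step is not remedied by them.
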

\begin{proof}
Set 
$$
\bar\theta(x):=\int_0^1\theta(x, y)\, dy\,.
$$
and note that $\bar\theta\in H^1_{loc}(\R)$ and thus, in particular,
it is continuous.  We claim that \beq\label{claim}
\frac12\int_{\R}|\bar\theta'(x)|^2\,
dx+2\gamma\int_{\R}\sin^2\bar\theta(x)\, dx\leq (5+8\gamma)
F(\theta)\,.  \eeq

We start by observing that
\begin{align*}
  2\gamma\int_{\R}\sin^2\bar\theta(x)\, dx
  &=
    2\gamma\int_{\R}\sin^2\theta(x,0)\,
    dx+2\gamma
    \int_{\R}(\sin^2\bar\theta(x)-
    \sin^2\theta(x,0))\, dx\\ 
  &\leq  4\gamma\int_{\R}\sin^2\theta(x,0)\, dx+
    4\gamma\int_{\R}|\sin\bar\theta(x)- \sin\theta(x,0)|^2\, dx\\ 
  &\leq   4\gamma\int_{\R}\sin^2\theta(x,0)\, dx+
    4\gamma\int_{\R}|\bar\theta(x)- \theta(x,0)|^2\, dx\\ 
  &\leq   4\gamma\int_{\R}\sin^2\theta(x,0)\, dx+
    4\gamma\int_{\R}\int_0^1|\partial_y \theta(x, y)|^2\, dy\, dx\leq
    (4+8\gamma) F(\theta)\,. 
\end{align*}
% Since we obviously have
% $$
% \int_{\R}|\bar\theta'(x)|^2\, dx\leq \|\nabla \theta\|_{L^2(\Sigma)}^2\,,
% $$
{Equation \eqref{claim} then follows.}

Note that for every $\alpha<\beta$ we have \beq\label{mmtrick}
\frac12\int_{\alpha}^\beta|\bar\theta'|^2\,
dx+2\gamma\int_{\alpha}^\beta\sin^2\bar\theta\, dx\geq
2\sqrt{\gamma}\int_{\alpha}^\beta |\sin\bar\theta||\bar\theta'|\,
dx\geq
2\sqrt{\gamma}\big|\cos(\bar\theta(\beta))-\cos(\bar\theta(\alpha))\big|\,.
\eeq In particular, recalling \eqref{claim}, $\cos\bar\theta$
satisfies the Cauchy condition for $x\to+\infty$, that is
$$
\lim_{\alpha,
  \beta\to+\infty}\big|\cos(\bar\theta(\beta))-\cos(\bar\theta(\alpha))\big|=0\,, 
$$
and thus $\cos\bar\theta$, and in turn $\sin^2\bar\theta$, admit a
limit as $x\to+\infty$. Clearly the same is true for
$x\to-\infty$.

Recalling \eqref{claim}, we conclude that \beq\label{idonno}
\sin\bar\theta(x)\to 0 \text{ and } \cos^2\bar\theta(x)\to1 \text{ as
} |x|\to+\infty\,.  \eeq We now claim that there exist
$k_1, k_2\in \Z$ such that \beq\label{limbartheta}
\lim_{x\to-\infty}\bar\theta(x)=k_1\pi \text{ and }
\lim_{x\to+\infty}\bar \theta(x)=k_2\pi\,.  \eeq Let us show only the
second limit. We argue by contradiction assuming that there exist two
sequences $x_n<x'_n$ both diverging to $+\infty$ such that
$\liminf_{n \to \infty} |\bar\theta(x_n)- \bar\theta(x'_n)|\geq\pi$.
But then, by the continuity of $\bar\theta$ it is clear that we may
also find $x''_n\in (x_n, x'_n)$ such that
$ \cos^2\bar\theta(x''_n)\to 0$, which contradicts
\eqref{idonno}. Thus, \eqref{limbartheta} holds.

Denote $Q^t:=(t-\frac12,t+\frac12)\times(0,1)$ and note that
$\lim_{t\to\pm \infty}\|\nabla \theta\|_{L^2(Q^t)}=0$. In turn, by a
Poincar\'e-type inequality we have
$$
\|\theta- \bar\theta(t)\|^2_{H^1(Q^t)}\leq C \|\nabla
\theta\|^2_{L^2(Q^t)}
$$
and thus, taking into account \eqref{limbartheta} we conclude that
$$
\lim_{t\to+\infty}\|\theta-k_2\pi \|^2_{H^1(Q^t)}=0 \text{ and }
\lim_{t\to-\infty}\|\theta-k_1\pi \|^2_{H^1(Q^t)}=0\,.
$$
By an application of the Trace Theorem we obtain \eqref{lemma1}. If
$h>0$, then the fact that
  $$
  \int_{\Sigma}(1-\cos\theta)\, dx<+\infty
  $$
  implies that $k_1$, $k_2\in 2\Z$.
\end{proof}

Note that given $m\in H^1_{l}(\Sigma; \mathbb S^1)$, the corresponding
phase function $\theta$ is determined up to an additive constant of
the form $k\pi$, where $k\in \Z$ if $h=0$ or $k\in2\Z$ if $h>0$. In
view of Lemma~\ref{lm:zagara} we may additionally require that
\beq\label{thetau}
\lim_{x\to+\infty}\|\theta(x,\cdot)\|_{L^2(0,1)}=0\,.  \eeq Clearly by
enforcing such a condition the phase function $\theta$ is uniquely
determined.

In the next two subsections we address the existence of minimizers and
the classification of domain wall solutions in the sense of
Definition~\ref{def:dws}, respectively.

\subsection{Existence of minimizers}

We prove the following existence result.

\begin{theorem}\label{th:k=2}
  If $h=0$ then the minimization problem \eqref{mink} admits a
  solution for $k=1$. If $h>0$ then the minimization problem
  \eqref{mink} admits a solution for $k=2$. In both cases, a solution
  $\theta_{min}$ can be found satisfying
  $\displaystyle\int_0^1\theta_{min}(0,y)\,dy=\frac{k\pi}2$. Moreover,
  $\theta_{min}\in C^{\infty}(\overline\Sigma)$, with derivatives of
  all order bounded, and $\partial_x \theta_{min} <0$ in
  $\overline{\Sigma}$.
\end{theorem}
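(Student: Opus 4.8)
The plan is to prove existence via the direct method of the calculus of variations, but carried out on truncated domains $Q_R$ first, because minimizing sequences on the whole strip $\Sigma$ may "slide off to infinity" and lose the correct boundary behavior. So I would fix $k\in\{1\}$ if $h=0$ or $k\in\{2\}$ if $h>0$, and for each large $R>0$ consider the constrained problem of minimizing $F$ over $\theta\in H^1(Q_R)$ subject to the Dirichlet data $\theta=k\pi$ on $\{-R\}\times(0,1)$ and $\theta=0$ on $\{R\}\times(0,1)$ (with the boundary penalty integrated only over the horizontal parts of $\partial Q_R$). Here the energy is coercive and lower semicontinuous with respect to weak $H^1(Q_R)$ convergence (the boundary term is weakly continuous by compactness of the trace operator), so a minimizer $\theta_R$ exists. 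The natural competitor interpolating the two boundary values shows the minimal energy is bounded uniformly in $R$ by a constant depending only on $\gamma$ and $h$ (e.g.\ $\le C(\gamma,h)$ coming from a fixed-length transition layer near $x=0$).

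Next I would establish qualitative properties of $\theta_R$. First, symmetry: by reflecting in $y\mapsto 1-y$ and using uniqueness/convexity of the quadratic Dirichlet part (or a standard reflection argument plus strong maximum principle) one gets $\theta_R(x,y)=\theta_R(x,1-y)$; similarly the odd symmetry $\theta_R(x,y)=k\pi-\theta_R(-x,y)$ under a suitable centering. Second, and most importantly, monotonicity in $x$: this is where the \emph{sliding method} enters. Comparing $\theta_R(x,y)$ with its translate $\theta_R(x+s,y)$ for $s>0$ and using the maximum principle for $\Delta\theta=h\sin\theta$ together with the boundary condition $\partial_\nu\theta=-\gamma\sin2\theta$—treated via a Hopf-lemma argument on the nonlinear Robin condition after linearizing $\sin$—one shows $\theta_R(\cdot+s,\cdot)\le\theta_R(\cdot,\cdot)$, hence $\partial_x\theta_R\le 0$, and then $\partial_x\theta_R<0$ strictly by the strong maximum principle since $\theta_R$ is nonconstant. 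One must also record the pointwise bounds $0\le\theta_R\le k\pi$, again by truncation/comparison, which keeps everything in a bounded range.

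Then I would pass to the limit $R\to\infty$. Using interior elliptic estimates and boundary regularity (Lemma~\ref{lm:elem}) together with the uniform energy bound and the uniform $L^\infty$ bound, the $\theta_R$ are bounded in $C^m_{loc}(\overline\Sigma)$ for every $m$; after translating each $\theta_R$ so that, say, $\int_0^1\theta_R(0,y)\,dy=\tfrac{k\pi}{2}$ (possible by monotonicity and continuity of this average in the translation parameter, since it runs from near $k\pi$ to near $0$), extract a subsequence converging in $C^\infty_{loc}(\overline\Sigma)$ to some $\theta_{min}$. This limit is a bounded, monotone ($\partial_x\theta_{min}\le 0$, then $<0$ by the strong maximum principle) classical solution of \eqref{limitEL2} with $\int_0^1\theta_{min}(0,y)\,dy=\tfrac{k\pi}{2}$, and by Fatou/weak lower semicontinuity $F(\theta_{min})\le\liminf F(\theta_R)\le C$. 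Monotonicity forces $\theta_{min}(x,\cdot)$ to have limits as $x\to\pm\infty$; these limits must be constant equilibria, and the finite-energy fact from Lemma~\ref{lm:zagara} pins them to $k\pi$ and $0$ respectively, while also ruling out the degenerate case where both limits coincide (that would force $\theta_{min}\equiv$ const, contradicting the average being $\tfrac{k\pi}{2}$ with $k\ge 1$). Hence $\theta_{min}\in\mathcal A_k$ with finite energy; a further comparison with the truncated minimizers (any $\mathcal A_k$ competitor can be cut off and used as a competitor on $Q_R$ up to vanishing error) gives $F(\theta_{min})\le\inf_{\mathcal A_k}F$, so $\theta_{min}$ is the desired minimizer. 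The nonexistence for the other values of $k$ is deferred to Theorem~\ref{t:exist}/\ref{th:uniqueness} and need not be addressed here.

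The main obstacle I anticipate is the \emph{monotonicity via sliding} at the nonlinear Robin boundary: one cannot directly apply the classical sliding method since the nonlinearity sits on $\partial\Sigma$, so I expect to need a careful maximum-principle argument for the linearized problem $\Delta w=h\,c_1(x,y)w$ in $\Sigma$, $\partial_\nu w=-\gamma\,c_2(x)\,w$ on $\partial\Sigma$ with bounded coefficients (arising from $w=\theta_R(\cdot+s,\cdot)-\theta_R$ and the mean-value theorem), together with a Hopf-type lemma at corner/boundary points to upgrade $w\le 0$ to strict inequality or $w\equiv 0$. Getting the signs and the corner behavior of $Q_R$ right is the delicate bookkeeping; the choice of the comparison cutoff in the passage to the limit is routine by comparison.
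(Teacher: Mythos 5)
Your proposal follows essentially the same route as the paper: minimize on truncated strips with Dirichlet data at the ends, prove $0\le\theta_R\le k\pi$ by truncation, prove monotonicity by the sliding method adapted to the nonlinear Robin boundary (strong maximum principle in the interior, Hopf's lemma on the lateral boundary), normalize the translate via the average condition, pass to the limit using the regularity of Lemma~\ref{lm:elem} and weak lower semicontinuity, identify the limits at infinity via Lemma~\ref{lm:zagara} plus monotonicity, and conclude optimality by the cutoff argument showing any $\mathcal A_k$ competitor is approximated by truncated ones. The only cosmetic deviation is your claim of $y\mapsto 1-y$ symmetry and odd symmetry already at the level of $\theta_R$ (which would require uniqueness of the truncated minimizer and is not needed); the paper deduces symmetry only for $\theta_{min}$ later, from the uniqueness Theorem~\ref{th:uniqueness}.
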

\begin{proof} 
  We provide the proof only in the case $h>0$, as the case $h = 0$ can
  be treated analogously and is simpler.
 To this end, for $M>0$ let
 $$
 \mathcal{A}_{2,M}:=\big\{\theta\in \mathcal{A}_2:\, \theta=0 \text{
   in }\{(x,y)\in \Sigma:\, x\geq M\}\text{ and } \theta=2\pi \text{
   in }\{(x,y)\in \Sigma:\, x\leq -M\}\big\},
 $$ 
 and note that by standard arguments there exists a minimizer
 $\theta_M$ of $F$ over $\mathcal{A}_{2,M}$.  Throughout the proof for
 every $M>0$ we set $\mathcal R_M:=(-M, M)\times (0,1)$.

 We claim that \beq\label{claim1} \theta_M(x, y) \in
 (0,2\pi)\quad\text{ for all } (x, y) \in \mathcal R_M\,.  \eeq This
 follows by first observing that by an easy truncation procedure we
 may conclude that $\theta_M$ satisfies \beq\label{claim1.1} 0\leq
 \theta_M\leq 2\pi\,.  \eeq Moreover, by a standard first variation
 argument $\theta_M$ is a weak solution to the following
 Euler-Lagrange problem \beq\label{EL0}
\begin{cases}
  \Delta \theta_M=h\sin\theta_M & \text{in }  \mathcal R_M\,,\\
  \pa_{\nu}\theta_M=-\gamma \sin(2\theta_M) & \text{on } \pa \mathcal
  R_M\cap \pa \Sigma \,,\\ 
  \theta_M=0      &\text{on }  \{M\}\times (0,1)\,,\\
  \theta_M=2\pi &\text{on } \{-M\}\times (0,1)\,,
\end{cases}
\eeq that is, \beq\label{wthetaM} \int_{ \mathcal R_M}(\nabla
\theta_M\cdot \nabla \varphi+ h\sin(\theta_M)\, \varphi)\, d^2
r+\gamma\int_{ \pa \mathcal R_M\cap \pa
  \Sigma}\sin(2\theta_M)\varphi\, d\mathcal H^1=0 \eeq for all {
  $\varphi\in H^1(\mathcal R_M)$ s.t. $\varphi=0$ on
  $\{-M, M\}\times (0,1)$. }
%    $\{(x,y)\in \Sigma:\, x\geq M\}\cup \{(x,y)\in \Sigma:\, x\leq -M\}$.

Consider now the reflected function $\widetilde \theta_M$ defined on
$\mathcal R_{3M}$ by
$$
\widetilde \theta_M(x,y):=
\begin{cases}
  -\theta_M(-x-2M,y)+4\pi & \text{if }x{\in (-3M, -M)},\\
  \theta_M(x,y) & \text{if } x\in (-M,M)\,,\\
  -\theta_M(-x+2M,y) & \text{if }x{ \in (M, 3 M)} \,.
\end{cases}
$$
Using the weak formulation \eqref{wthetaM}, one can immediately check
that $\widetilde \theta_M$ is in turn a weak solution; that is,
$$
\int_{ \mathcal R_{3M}}(\nabla \widetilde \theta_M\cdot \nabla
\varphi+ h\sin(\widetilde \theta_M)\, \varphi)\, d^2 r+\gamma\int_{
  \pa \mathcal R_{3M}\cap \pa \Sigma}\sin(2\widetilde
\theta_M)\varphi\, d\mathcal H^1=0
$$
for all $\varphi\in H^1(\mathcal R_{3M})$ s.t. $\varphi=0$ on
$\{-3M, 3M\}\times (0,1)$.  We may then apply the very same arguments
of Lemma~\ref{lm:elem}-a) below (clearly, we can, since the regularity
argument is local) to conclude that for every $0<M'<3M$,
$\widetilde \theta_M\in C^{\infty}(\overline{\mathcal R_{M'}})$. In
particular, $\theta_M\in C^{\infty}(\overline{\mathcal R_{M}})$,  and
\eqref{EL0} holds classically.

Note that we can write 
 $$
 \begin{cases}
 \Delta \theta_M= c(x,y)\theta_M & \text{in $(-M,M)\times (0,1)$,}\\
 \pa_\nu \theta_M=-\gamma \sin(2\theta_M) &\text{on }(-M, M)\times \{0,1\},
 \end{cases}
 $$
 where we set 
 $$c(x,y):=
 \begin{cases}
 h\frac{\sin\theta_M(x,y)}{\theta_M(x,y)} & \text{if } \theta_M(x,y)>0\,, \\
 h & \text{if }\theta_M(x,y)=0\,.
 \end{cases}
 $$
 In order to prove \eqref{claim1}, recall \eqref{claim1.1} and assume
 by contradiction that $\theta_M=0$ at some point in
 $(-M,M)\times (0,1)$. But then the Strong Maximum Principle
 \cite[Theorem 2.2]{pucci04} applies and yields that
 $\theta_M\equiv 0$ in $(-M,M)\times (0,1)$, a contradiction to the
 fact that $\theta_M\in \mathcal{A}_{2,M}$. If instead $\theta_M=0$ at
 some point of the {horizontal} boundary $ (-M, M)\times \{0,1\}$, then
 thanks to the Neumann condition in \eqref{EL0} also
 $\pa_\nu \theta_M$ vanishes at the same point and thus the
 contradiction follows from Hopf's Lemma \cite[Lemma
 3.4]{gilbarg}. Hence, we have shown that $ \theta_M>0$ in
 $ (-M, M)\times [0,1]$.  Replacing $\theta_M$ by $2\pi-\theta_M$ and
 arguing as before, we complete the proof of \eqref{claim1}.
 
 We now show that $\theta_M$ is monotone non-increasing in the
 $x$-direction. To this aim, we adapt the classical sliding 
   method of Berestycki and Nirenberg \cite{berestycki91} (see also
   \cite{berestycki92}) to the problem on the strip with nonlinear
   boundary conditions. Set
 $$
 \bar\lambda:=\inf\{\lambda>0:\, \theta_M(\cdot+\mu, \cdot)\leq
 \theta_M\text{ in $\Sigma$  for all }\mu\geq\lambda\},
 $$
 and observe that necessarily $\bar\lambda\in [0, 2M)$. Indeed,
 clearly $\theta_M(\cdot+\mu, \cdot)\leq \theta_M$ for all
 $\mu\geq 2M$. Moreover, since
 $ \theta_M(\cdot+2M, \cdot)=0<2\pi=\theta_M$ on $\{-M\}\times[0,1]$,
 by continuity we may find $\e>0$ so small that
 $ \theta_M(\cdot+2M-s, \cdot)<\theta_M$ on $[-M, -M+s]\times[0,1]$
 for all $s\in (0, \e]$, which in turn easily implies
 $ \theta_M(\cdot+2M-s, \cdot)\leq \theta_M$ for the same $s$. Thus
 $\bar \lambda\leq 2M-\e$.
 
 Note that $\bar \lambda=0$ if and only if $\theta_M$ is monotone
 non-increasing in the $x$-direction.  Assume by contradiction that
 $\bar\lambda>0$. This means that
 $\theta_M(\cdot+\bar \lambda, \cdot)\leq \theta_M$ and we claim that
 there exists $(\bar x, \bar y)\in [-M, M-\bar\lambda]\times [0,1]$
 such that
 $\theta_M(\bar x+\bar \lambda, \bar y)= \theta_M(\bar x, \bar
 y)$. Indeed, if not then we would have
 $\theta_M(\cdot+\bar \lambda, \cdot)< \theta_M$ in
 $[-M, M-\bar\lambda]\times [0,1]$ and in turn, arguing as above,
 $\theta_M(\cdot+\bar \lambda-\e, \cdot)\leq \theta_M$ in
 $[-M, M-\bar\lambda+\e]\times [0,1]$ for all $\e$ small enough,
 contradicting the minimality of $\bar\lambda$. We claim now that
 $\bar x\in ({-M}, M-\bar\lambda)$. Indeed, if $\bar x=-M$, then
 $\theta_M(\bar x+\bar \lambda, \bar y)= \theta_M(\bar x, \bar
 y)=2\pi$ which is impossible thanks to \eqref{claim1} since
 $\bar x+\bar \lambda\in (-M, M)$. If instead $\bar x=M-\bar\lambda$,
 then $ \theta_M(\bar x, \bar y)=\theta_M(M, \bar y)=0$, which is
 again impossible by \eqref{claim1} since $\bar x<M$.

 We now set $u:=\theta_M-\theta_M(\cdot+\bar\lambda, \cdot)$. Note
 that $u$ satisfies \beq\label{u}
  \begin{cases}
    \Delta u=\tilde c u & \text{in }(-M, M-\bar\lambda)\times (0,1)\,,\\
    \pa_\nu u=-\gamma
    (\sin(2\theta_M)-\sin(2\theta_M(\cdot+\bar\lambda, \cdot))) &
    \text{on } (-M, M-\bar\lambda)\times \{0,1\}\,,\\ 
    u\geq 0\,,\\
    u(\bar x, \bar y)=0\,,
  \end{cases}
  \eeq
  where 
  $$\tilde c:=
  \begin{cases}
    h\frac{\sin(\theta_M)-\sin(\theta_M(\cdot+\bar\lambda,
      \cdot))}{\theta_M-\theta_M(\cdot+\bar\lambda, \cdot)} & \text{in
    }\{\theta_M>\theta_M(\cdot+\bar\lambda, \cdot)\}\,,\\ 
    h & \text{in }\{\theta_M=\theta_M(\cdot+\bar\lambda, \cdot)\}\,.
  \end{cases}
  $$ 
  Now if $\bar y\in (0,1)$, then we can invoke again the Strong
  Maximum Principle \cite[Theorem 2.2]{pucci04} to conclude that
  $u\equiv 0$ in $[-M, M-\bar\lambda]\times [0,1]$, and in particular
  that $\theta_M(M-\bar\lambda, y)=\theta_M(M, y)=0$, which is a
  contradiction to \eqref{claim1}.  If instead $\bar y\in \{0,1\}$,
  then by Hopf's Lemma \cite[Lemma 3.4]{gilbarg} we have
  $\pa_\nu u(\bar x, \bar y)\neq0$, which contradicts the
   boundary condition in \eqref{u}. This concludes
  the proof of the fact that $\bar \lambda=0$ and thus that $\theta_M$
  is monotone non-increasing in the $x$-direction.
  
  We now set $\bar\theta_M(x):=\int_0^1\theta_M(x,y)\, dy$. Note that
  $\bar\theta_M$ is continuous on $\R$ and that $\bar\theta_M(x)=0$
  for $x\geq M$ and $\bar\theta_M(x)=2\pi$ for $x\leq-M$. Thus, we may
  find $x_M$ such that $\bar\theta_M(x_M)= \pi$.  We set
  $\tilde \theta_M:=\theta_M(\cdot+x_M, \cdot)$.  Observing that
  $F(\theta_M)$ is non-increasing in $M$, we easily see that
  $\{\tilde\theta_M\}_{M\geq 1}$ is equibounded in $H^1_{l}(\Sigma)$.
  Thus, we may find a {sequence} $M_n\to +\infty$ and a function
  $ \theta_\infty\in H^1_{l}(\Sigma)$ such that
  $\tilde \theta_{M_n}\rightharpoonup \theta_\infty$ weakly in
  $H^1_{l}(\Sigma)$, and \beq\label{semi} F(\theta_\infty)\leq
  \liminf_{n}F(\tilde\theta_{M_n})<+\infty\,.  \eeq Moreover,
  $0\leq\theta_\infty\leq2\pi$, $\theta_\infty$ is monotone
  non-increasing in the $x$-direction, satisfies \beq\label{pi2}
  \int_0^1\theta_\infty(0,y)\, dy=\pi \eeq and \beq\label{limitEL}
   \begin{cases}
     \Delta\theta_\infty=h\sin\theta_\infty & \text{in }\Sigma\,,\\
     \pa_\nu\theta_\infty=-\gamma \sin(2\theta_\infty)& \text{on }\pa
     \Sigma\,.
   \end{cases}
   \eeq in the weak sense.  Again by Lemma~\ref{lm:elem},
   $\theta_\infty\in C^\infty(\overline\Sigma)$, with derivatives of
   all orders bounded, and thus it satisfies \eqref{limitEL}
   classically.
   
   We claim that $\theta_\infty\in \mathcal{A}_2$. To this aim, in
   view of \eqref{semi} and Lemma \ref{lm:zagara}, and recalling that
   $0\leq\theta_\infty\leq2\pi$, we have
  $$ 
  \lim_{x\to-\infty}\|\theta_\infty(x,\cdot)-k_1\pi\|_{L^{2}(0,1)}=0
  \text{ and }
  \lim_{x\to+\infty}\|\theta_\infty(x,\cdot)-k_2\pi\|_{L^{2}(0,1)}=0\,,
  $$
  with $k_1, k_2\in \{0,2\}$. Now, by monotonicity and \eqref{pi2} we
  infer that necessarily $k_1=2$ and $k_2=0$.  This shows that
  $\theta_\infty\in \mathcal{A}_2$.
    
  In order to conclude that $\theta_\infty$ is a minimizer, in view of
  \eqref{semi} it remains to show that \beq\label{remains} \liminf_{n
    \to \infty}F(\tilde\theta_{M_n})=\inf_{\mathcal{A}_2}F\,.  \eeq To
  this aim, it is clearly enough to show that \beq\label{finalclaim}
  \text{for $\theta\in \mathcal{A}_2$ with $F(\theta)<+\infty$ and
    $\e>0$ there exists $M>0$ and $\tilde\theta\in \mathcal{A}_{2,M}$
    such that $F(\tilde\theta)\leq F(\theta)+\e$.}  \eeq In order to
  show this, we select two sequences, $x_n^+\to+\infty$ and
  $x_n^-\to-\infty$, such that \beq\label{1} \theta(x_n^+, \cdot)\to 0
  \quad\text{and}\quad \theta(x_n^-, \cdot)\to
  2\pi\qquad\text{uniformly in }[0,1]\,, \eeq and \beq\label{2}
  \limsup_{n \to \infty} \|\theta(x_n^\pm,
  \cdot)\|_{H^1(0,1)}<+\infty\,.  \eeq This is possible {by a
    simple slicing argument} thanks to the fact that
  $|\nabla\theta|\in L^2(\Sigma)$.  At this point, for every $n\in \N$
  we define 
  $$
  \theta_n(x,y):=
  \begin{cases}
    \theta(x,y) & \text{if }x\in (x_n^-, x_n^+)\,,\\
    \theta(x^+_n,y)\left[\left(1-\frac{x-x_n^+}{\|\theta(x_n^+,
          \cdot)\|_\infty}\right)\lor 0\right] & \text{if }x\geq
    x^+_n\,,\\ 
    2\pi-(2\pi-\theta(x_n^-,
    y))\left[\left(1-\frac{x_n^--x}{\|2\pi-\theta(x_n^-,
          \cdot)\|_\infty}\right)\lor 0\right] & \text{if }x\leq
    x^-_n\,,
  \end{cases}
  $$
  with the understanding that $\theta_n\equiv 0$ for $x\geq x_n$ if
  $\|\theta(x_n^+, \cdot)\|_\infty=0$, and $\theta_n\equiv 2\pi$ for
  $x\leq x_n$ if $\|2\pi-\theta(x_n^-, \cdot)\|_\infty=0$.  Clearly
  each $\theta_n$ belongs to $\mathcal{A}_{2,M_n}$ for some $M_n>0$
  sufficiently large. Moreover, using \eqref{1} and \eqref{2}, it is
  easy to check that $F(\theta_n)-F(\theta)\to 0$ as $n\to\infty$,
  thus establishing \eqref{finalclaim} and finishing the proof of
  existence.
  
  We are left with showing that $\partial_x \theta_\infty <0$ in
  $\overline{\Sigma}$. We already know that $\theta_\infty$ is a
  smooth function, with $\partial_x \theta_\infty \leq 0$
  everywhere. Differentiating \eqref{limitEL} with respect to $x$ we
  obtain \beq\label{limitELLE}
   \begin{cases}
     \Delta (\partial_x \theta_\infty)=h\cos\theta_\infty  \partial_x
     \theta_\infty& \text{in }\Sigma\,,\\ 
     \pa_\nu(\partial_x \theta_\infty)=-2\gamma \cos(2\theta_\infty)
     \partial_x \theta_\infty & \text{on }\pa \Sigma\,.
   \end{cases}
   \eeq Assume $\partial_x \theta_\infty(\bar x, \bar y) =0$ at some
   point $(\bar x, \bar y) \in \overline\Sigma$. If
   $(\bar x, \bar y) \in \Sigma$, then using the Strong Maximum
   Principle \cite[Theorem 2.2]{pucci04} we obtain a contradiction. If
   instead $(\bar x, \bar y) \in \partial\Sigma$, then also
   $\pa_\nu (\partial_x\theta_\infty)$ vanishes at the same point and
   thus the contradiction follows from Hopf's Lemma \cite[Lemma
   3.4]{gilbarg}.
 \end{proof}

 \begin{corollary}\label{cor:infk0}
If $h>0$ then for every $k \in 2\N$ we have 
$$
\inf_{\theta \in \mathcal A_k} F(\theta) = \frac{k}{2} \min_{\theta
  \in \mathcal A_2} F(\theta).
$$
If $h=0$ then for every $k \in \N$ we have 
$$
\inf_{\theta \in \mathcal A_k} F(\theta) = k \min_{\theta \in \mathcal
  A_1} F(\theta).
$$
\end{corollary}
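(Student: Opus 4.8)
The claim is a pair of inequalities. Write $p=1$, $\mathcal A_\star=\mathcal A_1$ if $h=0$, and $p=2$, $\mathcal A_\star=\mathcal A_2$ if $h>0$, and set $m_\star:=\min_{\mathcal A_\star}F$, which is finite and attained by Theorem~\ref{th:k=2}. Since $k/p\in\mathbb N$ (when $h>0$ only $k\in 2\mathbb N$ is relevant, as $\mathcal A_k$ contains no finite‑energy function for odd $k$ by Lemma~\ref{lm:zagara}), both displayed equalities read $\inf_{\mathcal A_k}F=(k/p)\,m_\star$. The whole proof rests on one principle: cutting a profile at a level $N\pi$ that annihilates every non‑gradient term of $F$ splits the energy \emph{exactly}, while concatenating widely separated copies of a single wall is cheap.

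\emph{Upper bound.} Let $\theta_{min}\in\mathcal A_\star$ be the minimizer of Theorem~\ref{th:k=2}. For $L>0$ glue $k/p$ translated copies of it: the competitor $\theta_L$ equals the constants $0,p\pi,2p\pi,\dots,k\pi$ on successive ``plateaus'', equals $\theta_{min}(\cdot-x_j,\cdot)+(j-1)p\pi$ on $O(1)$‑windows around points $x_1>\dots>x_{k/p}$ with gaps $\ge 2L$, and interpolates linearly over a unit interval at each wall endpoint between $\theta_{min}$ and the adjacent plateau value, where by the exponential decay in Theorem~\ref{t:exist}~c) the discrepancy of $\theta_{min}$ and its first derivatives is $O(\mathrm e^{-\beta L})$. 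Every plateau value is an integer (resp.\ even integer) multiple of $\pi$, on which $\sin^2(\cdot)$ and $h(1-\cos(\cdot))$ vanish, so the plateaus cost nothing and each interpolation window costs $O(\mathrm e^{-2\beta L})$. Hence $\theta_L\in\mathcal A_k$ and $F(\theta_L)\le (k/p)\,m_\star+C\mathrm e^{-2\beta L}$, so $\inf_{\mathcal A_k}F\le (k/p)\,m_\star$ upon letting $L\to\infty$.

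\emph{Lower bound.} We prove by induction on $j$ that $\inf_{\mathcal A_j}F\ge j\,m_\star$ for all $j\in\mathbb N$ when $h=0$, resp.\ $\inf_{\mathcal A_{2j}}F\ge j\,m_\star$ for all $j\in\mathbb N$ when $h>0$, the case $j=1$ being the definition of $m_\star$. For the step, fix any $\theta\in\mathcal A_k$ with $k\ge 2$ (resp.\ $k=2j\ge 4$), put $N:=p$, and set $\theta^\vee:=\max(\theta,N\pi)$ and $\theta^\wedge:=\min(\theta,N\pi)$. Both lie in $H^1_{l}(\Sigma)$; since truncation by a constant is $1$‑Lipschitz and $\theta(x,\cdot)\to k\pi$, resp.\ $0$, in $L^2(0,1)$ as $x\to\mp\infty$, it follows that $\theta^\vee-N\pi\in\mathcal A_{k-N}$ and $\theta^\wedge\in\mathcal A_N$. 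Splitting $\Sigma$ into $\{\theta>N\pi\}$, $\{\theta<N\pi\}$ and $\{\theta=N\pi\}$ (on the last of which $\nabla\theta=0$ a.e.), and using $\sin^2(N\pi)=0$ together with $1-\cos(N\pi)=0$ when $h>0$, one checks the exact identity
\[
 F(\theta)=F(\theta^\vee)+F(\theta^\wedge)\qquad\text{in }[0,+\infty].
\]
Moreover $F(\theta^\vee)=F(\theta^\vee-N\pi)$ because $F$ is invariant under adding $p\pi$. Therefore $F(\theta)\ge \inf_{\mathcal A_{k-N}}F+\inf_{\mathcal A_N}F$, and taking the infimum over $\theta$ and applying the inductive hypothesis to $k-N$ and to $N$ gives $\inf_{\mathcal A_k}F\ge (k/p)\,m_\star$. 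Together with the upper bound, this is the corollary.

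\emph{Where the care goes.} The single substantive point is the exact additivity $F(\theta)=F(\theta^\vee)+F(\theta^\wedge)$: it forces one to cut at a level that kills \emph{all} the potential contributions, hence at an even multiple of $\pi$ once the Zeeman term is present — precisely the restriction $k\in 2\mathbb N$ in the statement for $h>0$. No delicate selection of ``good'' cut‑slices is needed, because truncating a Sobolev function by a constant never creates energy; everything else (the glueing in the upper bound, the membership $\theta^\vee-N\pi\in\mathcal A_{k-N}$, $\theta^\wedge\in\mathcal A_N$) is routine. Consistently with Theorem~\ref{t:exist}, this infimum is not attained for $k\ge 2$ ($h=0$) or $k\ge 4$ ($h>0$): a minimizer would be a smooth, strictly $x$‑monotone wall, but then $\theta^\wedge=\min(\theta,N\pi)$ would be a corner‑carrying minimizer of $F$ over $\mathcal A_\star$, contradicting its uniqueness.
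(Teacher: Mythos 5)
Your proof is correct, and the lower bound is established by a genuinely different and in some ways cleaner decomposition than the one in the paper. The paper first replaces $\inf_{\mathcal A_k}F$ by $\lim_j F(\theta_{M_j})$, where $\theta_{M_j}$ is the minimizer over the truncated class $\mathcal A_{k,M_j}$; it then uses the strict $x$-monotonicity and smoothness of $\theta_{M_j}$ to write its level sets $\{\theta_{M_j}=2\pi i\}$ as graphs $x=g_i^j(y)$, and cuts along these curves to split the energy into $k/2$ pieces, each of which, after a shift by $2\pi(i-1)$, lies in $\mathcal A_2$. You instead work with an arbitrary competitor $\theta\in\mathcal A_k$ and cut it \emph{algebraically} at the constant level $N\pi$ (with $N=p=1$ or $2$), using the exact additivity $F(\theta)=F(\theta^\vee)+F(\theta^\wedge)$ for $\theta^\vee=\max(\theta,N\pi)$, $\theta^\wedge=\min(\theta,N\pi)$; the identity holds because $\sin^2$ and $1-\cos$ vanish at $N\pi$, $\nabla\theta=0$ a.e.\ on $\{\theta=N\pi\}$, and truncation commutes with traces. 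An induction then finishes the argument. This avoids any appeal to truncated minimizers and their structural properties, and is valid for every admissible $\theta$; the paper's argument needs the qualitative properties (i)--(iii) of $\theta_M$ to even define the cutting curves. The upper bounds are conceptually the same in both proofs (concatenating widely separated translates of a single-wall profile), but the paper uses the compactly supported truncated minimizers $\theta_{2,M_j}$ so the energies sum exactly, while you use the true minimizer $\theta_{min}$ with its exponential tails and pay a vanishing interpolation error as the separation $L\to\infty$; either route works, though your phrase ``$O(1)$-windows'' should read windows of width of order $L$, since the $O(e^{-\beta L})$ discrepancy is only available at distance $\sim L$ from the wall center. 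Your closing remark about non-attainment, while plausible in spirit, is not needed for the corollary and silently assumes a hypothetical minimizer of $F$ over $\mathcal A_k$ ($k\ge 2$, $h=0$) would be smooth and strictly monotone, which is not a priori available; the non-attainment is in fact a consequence of Theorem~\ref{th:uniqueness}.
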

\begin{proof}
  We provide the proof only for the case $h>0$, the other one being
  analogous. As in the previous proof, we fix $M>0$ and let
$$
\mathcal{A}_{k,M}:=\{\theta\in \mathcal{A}_k:\, \theta(x,y)=0 \text{
  if }x\geq M\text{ and } \theta(x,y)=k\pi \text{ if }x\leq -M\}.
 $$ 
 It is clear that there exists a minimizer \beq\label{pr:1}
 {\theta_M = \text{argmin}}_{\theta\in
   \mathcal{A}_{k,M}}F(\theta)\,.  \eeq By the same arguments and with
 the same notation used in the proof of Theorem~\ref{th:k=2} we obtain
\begin{itemize}
\item[i)] $ \theta_M\in (0,k\pi)\quad\text{ in } \mathcal R_M\,$;
\item[ii)] $\theta_M\in C^\infty(\overline{\mathcal R_M})$;
\item[iii)] $\theta_M$ has negative derivative in $x$-direction everywhere in 
  $(-M,M)\times[0,1]$.
\end{itemize}
Arguing as in the proof of Theorem~\ref{th:k=2}, we can show that
\beq\label{corik} \inf_{\theta\in \mathcal{A}_{k}}F(\theta) = \lim_{j
  \to \infty} F(\theta_{M_j}), \eeq where
$\theta_{M_j} \in \mathcal{A}_{k,M_j}$ is a minimizer of the
corresponding problem \eqref{pr:1} and $\{M_j\}$ is any sequence of
positive numbers such that $M_j \to \infty$.

Now observe that by the properties stated above, for every $j\in \N$
we may find smooth functions $g^j_i\in C^{\infty}([0,1])$,
$i=1,\dots, k/2-1$, such that
$$
M_j>g^j_1>g^j_2>\cdots>g^j_{k/2-1}>-M_j \qquad\text{and}\qquad
\theta_{M_j}(g^j_i(y), y)=2 \pi i \quad\text{for all }y\in [0,1]\,.
$$
Setting also $g^j_0:=M_j$, $g^{j}_{k/2}:=-M_j$ and
$\Sigma^j_i:=\{(x,y):\, g^j_{i-1}(y)>x>g^j_{i}(y)\}$, we clearly have
\beq\label{cutting}
\begin{split}
  F(\theta_{M_j})&=\sum_{i=1}^{k/2}
  \left(\int_{\Sigma^j_i}\left(\frac12|\nabla
      \theta_{M_j}|^2+h(1-\cos\theta)\right)\, d^2 r+\gamma\int_{\pa
      \Sigma\cap\pa \Sigma^j_i}\sin^2\theta_{M_j}\, d\H^1\right)\\
  &= \sum_{i=1}^{k/2}F(\zeta^j_{i})\,,
\end{split}
\eeq
where we set
$$
\zeta^j_i(x,y):=
\begin{cases}
  2(i-1)\pi & \text{if }x\geq 	g^j_{i-1}(y)\,,\\
  \theta_{M_j}(x,y) & \text{if } g^j_{i-1}(y)>x>g^j_{i}(y)\,,\\
  2\pi i & \text{if } x\leq g^j_i(y)\,.
\end{cases}
$$
Note that $\zeta^j_i- 2(i-1)\pi\in \mathcal{A}_2$ and
$F(\zeta^j_i- 2(i-1)\pi)=F(\zeta^j_i)$, and thus
$F(\zeta^j_i) \geq \min_{\theta \in \mathcal A_2} F(\theta)$. In turn,
by combining \eqref{corik} and \eqref{cutting}, we deduce that
$$
\inf_{\theta\in \mathcal{A}_{k}}F(\theta) = \lim_{j \to \infty}
F(\theta_{M_j}) \geq \frac{k}{2} \min_{\theta \in \mathcal A_2}
F(\theta)\,.
$$

In order to obtain the reverse inequality, we start from the minimizer
$\theta_{2,M_j}$ of the problem \eqref{pr:1}, with $k=2$ and $M=M_j$,
and define the function $\xi_j \in \mathcal A_{k}$ as
$$
\xi_j(x,y): =\sum_{i=0}^{k/2-1} \theta_{2,M_j}(x+2iM_j,y)\,,
$$ 
so that $F(\xi_j) = \frac{k}{2} F(\theta_{2,M_j})$. Then, we have
$$
\inf_{\theta\in \mathcal{A}_{k}}F(\theta) \leq \lim_{j \to \infty}
F(\xi_j) = \frac{k}{2} \lim_{j \to \infty}F(\theta_{2,M_j}) =
\frac{k}{2} \min_{\theta \in \mathcal A_2} F(\theta),
$$
where the last equality follows from the proof of
Theorem~\ref{th:k=2}.
\end{proof}

\subsection{Uniqueness of minimizers and classification of critical
  points} \label{sec:unique}
Next we address uniqueness of minimizers for the problem \eqref{mink}.
In fact, we will classify all the critical points subject to constant
boundary conditions at infinity; i.e., domain wall solutions to the
boundary reaction-diffusion {type} problem of the form in
\eqref{limitEL2} satisfying \eqref{binfty}. 
%\ed{*** should we separate
%  the regularity result into a separate proposition? also, we could
%  first state a regularity result for solutions in $H^1_{l}$, and then
%  in Lemma 3.4 improve to uniform bounds on derivatives and
%  convergence at infinity for solutions in $H^1_{l} \cap L^\infty$
%  ***}

We start by showing that such critical points are smooth up to the
boundary, with uniform estimates at infinity.  To this aim, given
$t\in \R$ we denote \beq\label{ot} \Sigma_t^\pm:=\{(x,y)\in \Sigma:\,
x\gtrless t\}\,, \eeq and we recall that given an open set
$\Om\subset\R^2$ with Lipschitz boundary the trace space
$H^{1/2}(\pa\Om)$ of $H^1(\Om)$ may be equipped with the norm
$\|w\|^2_{H^{1/2}(\pa\Om)}:=\|w\|^2_{L^2(\pa\Om)}+
[w]^2_{\mathring{H}^{1/2}(\pa\Om)}$, where
$[w]^2_{\mathring{H}^{1/2}(\pa\Om)}$ stands for the squared Gagliardo
seminorm \beq\label{Gsemi}
[w]^2_{\mathring{H}^{1/2}(\pa\Om)}:=\int_{\pa\Om}
\int_{\pa\Om}\frac{|w(\mathbf{r})-w(\mathbf{r'})|^2}{|\mathbf{r}-
  \mathbf{r'}|^2}\, d\H^1({\mathbf r})d\H^1({\mathbf r'})\,.  \eeq
Moreover, with a slight abuse of notation, for any subset
$\Gamma\subset\pa\Om$ (and for $w\in H^{1/2}(\pa\Om)$) we will denote
$\|w\|^2_{H^{1/2}(\Gamma)}:=
\|w\|^2_{L^2(\Gamma)}+[w]^2_{\mathring{H}^{1/2}(\Gamma)}$, where
$[w]^2_{\mathring{H}^{1/2}(\Gamma)}$ is defined as in \eqref{Gsemi},
with $\pa\Om$ replaced by $\Gamma$.
  
\begin{lemma}\label{lm:elem} Let $\theta\in H^1_{l}(\Sigma) \cap
  L^\infty(\Sigma)$ be a solution of \eqref{wEL2}.  Then, up to
  choosing a representative, the following statements hold true:
\begin{itemize}
\item[a)] $\theta\in C^\infty(\overline\Sigma)$, {and} for {every}
  $k\in \N$ there exists a constant
  $C_k = {C_k(\gamma, h, \|\theta\|_\infty)} >0$ such that
  \beq\label{ofallorder} \|\theta\|_{C^k(\overline\Sigma)}\leq C_k\,;
  \eeq
\item [b)] if in addition $\theta$ satisfies \eqref{binfty}, then the
  convergence at infinity is uniform with respect to the $C^k$-norm
  for any $k\in \N${, i.e.,} \beq\label{inftypm}
  \lim_{t\to-\infty}\|\theta-\ell^-\|_{C^k(\overline\Sigma^-_t)}=0
  \quad\text{and}\quad
  \lim_{t\to+\infty}\|\theta-\ell^+\|_{C^k(\overline\Sigma^+_t)}=0\,.
  \eeq Moreover, if $h=0$, then $\ell^-, \ell^+\in \frac\pi2\Z$, while
  if $h>0$, then $\ell^-, \ell^+\in \pi\Z $.
	\end{itemize}
\end{lemma}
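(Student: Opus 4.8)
The plan is to read \eqref{wEL2} as a linear Neumann problem $\Delta\theta = f$ in $\Sigma$, $\partial_\nu\theta = g$ on $\partial\Sigma$, with bounded data $f:=h\sin\theta$ and $g:=-\gamma\sin(2\theta)$, and to bootstrap the regularity; part b) will then follow from compactness of the $x$-translates of $\theta$.

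For a): first, testing \eqref{wEL2} against $\varphi=\zeta^2\theta$ with a cutoff $\zeta$ supported in $Q_{R+1}$ and equal to $1$ on $Q_R$ yields a Caccioppoli-type bound $\|\nabla\theta\|_{L^2(Q_R)}\le C(R,\gamma,h,\|\theta\|_\infty)$; by invariance of $\Sigma$ under translations in $x$ this makes $\theta$ uniformly bounded in $H^1$ on unit squares. Next, since $\theta\in H^1_l(\Sigma)$ has a trace in $H^{1/2}_{loc}(\partial\Sigma)$ and $t\mapsto\sin(2t)$ is Lipschitz, $g\in H^{1/2}_{loc}(\partial\Sigma)\cap L^\infty(\partial\Sigma)$ (composition with a Lipschitz map is bounded in the Gagliardo seminorm), the boundary $\partial\Sigma$ being two parallel straight lines, in particular smooth and without corners. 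Local $H^2$-estimates up to the boundary — obtained by tangential difference quotients in the $x$-direction, the boundary contribution being controlled because $g$ depends on $\theta$ in a Lipschitz way — then give $\theta\in H^2_{loc}(\overline\Sigma)$, hence $\theta\in C^{0,\alpha}_{loc}(\overline\Sigma)$ for every $\alpha\in(0,1)$ by the Sobolev embedding in dimension two. From here one bootstraps with Schauder estimates for the Neumann problem: if $\theta\in C^{k,\alpha}_{loc}(\overline\Sigma)$ then $f=h\sin\theta$ and $g=-\gamma\sin(2\theta)$ lie in $C^{k,\alpha}_{loc}$, hence $\theta\in C^{k+2,\alpha}_{loc}(\overline\Sigma)$. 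Iterating gives $\theta\in C^\infty(\overline\Sigma)$, and since each estimate has a constant depending only on the previous norm together with $\gamma$ and $h$, unwinding the chain and using translation invariance once more produces \eqref{ofallorder} with $C_k=C_k(\gamma,h,\|\theta\|_\infty)$.

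For b): assume \eqref{binfty}. Given any $t_j\to+\infty$, the shifts $\theta_j:=\theta(\cdot+t_j,\cdot)$ are, by part a), bounded in $C^k(\overline\Sigma)$ uniformly in $j$ for every $k$; by Arzel\`a--Ascoli a subsequence converges in $C^k_{loc}(\overline\Sigma)$ for all $k$ to some $\psi$ which again solves \eqref{limitEL2}. Since $\theta(x,y)\to\ell^+$ pointwise, $\theta_j(x,y)\to\ell^+$ for each fixed $(x,y)$, so $\psi\equiv\ell^+$; as the limit is independent of the subsequence, the whole family $\theta(\cdot+t,\cdot)$ converges to $\ell^+$ in $C^k([0,1]\times[0,1])$ as $t\to+\infty$. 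Writing $x\ge t$ as $x=n+s$ with $n\in\Z$ and $s\in[0,1)$, this is precisely $\|\theta-\ell^+\|_{C^k(\overline{\Sigma_t^+})}\to0$; the analogous statement at $-\infty$ follows with $t_j\to-\infty$, proving \eqref{inftypm}. Finally, the constant $\psi\equiv\ell^+$ solves \eqref{limitEL2}, so $0=h\sin\ell^+$ in $\Sigma$ and $0=-\gamma\sin(2\ell^+)$ on $\partial\Sigma$; since $\gamma>0$ the second relation forces $\ell^+\in\frac\pi2\Z$, while if $h>0$ the first forces in addition $\sin\ell^+=0$, i.e. $\ell^+\in\pi\Z$ — and likewise for $\ell^-$.

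The main obstacle is the very first regularity gain at the boundary: the weak solution only has an $H^{1/2}$ trace, so Schauder cannot be invoked directly, and one must first run the $L^2/H^2$ (or, alternatively, a De Giorgi--Nash--Moser) argument for the Neumann problem with merely bounded, $\theta$-dependent boundary datum; once H\"older continuity up to $\partial\Sigma$ is in hand, the composition $\sin(2\theta)$ is again H\"older and the Schauder bootstrap is routine, as are the Caccioppoli estimate, the translation-invariance globalization, and the compactness argument in part b).
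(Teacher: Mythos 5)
Your proof is correct and follows the same overall strategy as the paper, but replaces the middle of the regularity argument with a genuinely different route. After the common Caccioppoli step, the paper stays entirely in the $L^2$-Sobolev scale: it applies the trace theorem to get $\sin(2\theta)\in H^{1/2}$ on $\partial\Sigma$, invokes a lifting result (Grisvard) to produce $\tilde g\in H^2$ with $\partial_\nu\tilde g=-\gamma\sin(2\theta)$, subtracts $\tilde g$ from $\theta$ to obtain a \emph{homogeneous} Neumann problem with $L^2$ right-hand side, and then iterates: $H^2$ bound $\Rightarrow$ $H^{3/2}$ trace of $\sin(2\theta)$ $\Rightarrow$ $H^3$ lifting $\Rightarrow$ $H^3$ bound, and so on. You instead obtain $H^2_{loc}$ by tangential difference quotients in $x$ (valid, since the boundary datum depends Lipschitz-continuously on $\theta$) and then switch to the H\"older scale via $H^2\hookrightarrow C^{0,\alpha}$ in dimension two, bootstrapping with Schauder. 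Both routes work; the paper's has the advantage of producing uniform $H^k$ bounds at every stage of the iteration, which is convenient for tracking the dependence of the constants in \eqref{ofallorder}, whereas yours is arguably closer to the textbook template. Your part b), via Arzel\`a--Ascoli applied to the translates $\theta(\cdot+t_j,\cdot)$, is the precise form of what the paper summarizes in a single sentence, and the identification of the admissible values of $\ell^\pm$ is the same argument.

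One small slip worth flagging in your Schauder bootstrap: for the Neumann problem $\Delta u=f$ in $\Omega$, $\partial_\nu u=g$ on $\partial\Omega$, the estimate that gains two derivatives requires $f\in C^{k,\alpha}(\overline\Omega)$ \emph{and} $g\in C^{k+1,\alpha}(\partial\Omega)$, the boundary operator being of order one. From $\theta\in C^{k,\alpha}(\overline\Sigma)$ you only get $g=-\gamma\sin(2\theta)\in C^{k,\alpha}(\partial\Sigma)$, so the Schauder step yields $\theta\in C^{k+1,\alpha}$, not $C^{k+2,\alpha}$: the nonlinear boundary condition is the bottleneck and you gain one derivative per pass, not two. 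Iterating still produces $\theta\in C^\infty(\overline\Sigma)$ with uniform bounds, so the conclusion stands, but the stated gain should be corrected.
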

\begin{proof} In what follows, for all $t\in \R$ and $R>0$ we set
  $Q^t_R:=(t-R, t+R)\times (0,1)$; moreover, $C$ will denote a
  positive constant depending only on $R$ that may change from line to
  line.

  We first observe that by a standard Caccioppoli Inequality type
  argument, that is, testing \eqref{wEL2} with
  $\varphi=\eta^2 \theta$, where $\eta\in C^\infty$ is with compact
  support in $\overline\Sigma$, we may infer from the boundedness of
  $\theta$ that $\nabla\theta$ is uniformly locally bounded with
  respect to the $L^2$-norm. More precisely, for every $R>0$ there
  exists $C_1=C_1(\gamma, h, \|\theta\|_\infty, R)>0$ such that
  $\sup_{t\in \R}\|\theta\|_{H^1(Q^t_R)}\leq C_1$. In turn, by the
  Trace Theorem, see for instance \cite[Theorem 5.5]{necas}, we have
  $\|\theta\|_{H^{1/2}(\pa Q^t_R\cap \pa\Sigma)}\leq
  \|\theta\|_{H^{1/2}(\pa Q^t_R)}\leq C\|\theta\|_{H^1(Q^t_R)}\leq
  CC_1$ and, in turn, using the definition \eqref{Gsemi} of the
  Gagliardo seminorm we may check that
  $\|\sin (2\theta)\|_{H^{1/2}(\pa Q^t_R\cap \pa\Sigma)}\leq
  C\|\theta\|_{H^{1/2}(\pa Q^t_R\cap \pa\Sigma)}\leq CC_1$. Thus,
  \beq\label{C'(R)} \sup_{t\in \R}\|\gamma \sin
  (2\theta)\|_{H^{1/2}(\pa Q^t_R\cap \pa\Sigma)}\leq \gamma C C_1\,.
  \eeq
  
  Fix $t\in \R$ and a cut-off function $\zeta\in C^\infty_c(-3R, 3R)$,
  $0\leq\zeta\leq 1$, and $\zeta\equiv 1$ in $[-2R, 2R]$.  Let
  $\Omega^0\subset \R^2$ be a bounded domain with boundary of class
  $C^\infty$ such that $Q^0_{3R}\subset\Om^{0} {\subset \Sigma}$, and
  {let} $\Om^t:=\{(x,y):\, (x-t, y)\in \Om^0\}$. Finally, denote by
  $g$ the function defined for $\H^1$-a.e.  $(x,y)\in \pa\Om^t$ by
$$
g(x,y):=
\begin{cases}
  -\gamma\, \zeta(x-t)\sin(2\theta(x,y)) & \text{if }(x,y)\in
  \pa\Om^t\cap \pa\Sigma\,,\\ 
  0 & \text{otherwise.}
\end{cases}
$$
Using again \eqref{Gsemi}, one can check that
$g\in H^{1/2}(\pa\Om^t)$,
% \ed{*** not
  % true: suppose $\partial_\nu \theta = 1$ on $\partial \Sigma$, then
  % $g$ is discontinuous, jumping from 1 on
  % $\partial \Omega_t \cap \partial \Sigma$ to 0 at
  % $\partial \Omega_t \cap \Sigma$ ***} 
with \beq\label{prep-lift} \|g\|_{H^{1/2}(\pa\Om^t)} \leq \gamma C
\|\sin(2\theta)\|_{H^{1/2}(\pa Q^t_{2R}\cap\pa\Sigma)}\,, \eeq where
$C>0$ depends only on $\zeta$ and $\Om^0$ and thus, ultimately, only
on $R$. In turn, by \cite[Theorem~1.5.1.2]{grisvard} there exists a
lifting function $\tilde g\in H^2(\Om^t)$ such that
$\pa_\nu \tilde g=g$ on $\pa \Om^t $ and \beq\label{g} \|\tilde
g\|_{H^2 (\Om^t)}\leq C\|g\|_{H^{1/2}(\pa\Om^t)} \leq\gamma C'
\|\sin(2\theta)\|_{H^{1/2}(\pa Q^t_{2R}\cap\pa\Sigma)}\,, \eeq where
we used \eqref{prep-lift} (and, again, the constants $C, C'$ depend
only on $R$).  Since $\pa_\nu \tilde g=g=-\gamma\sin(2\theta)$ on
$ \pa Q^t_{2R}\cap\pa\Sigma$, integration by parts yields
$$
\int_{\Sigma}(\nabla \tilde g\cdot \nabla \varphi+ \Delta\tilde g\,
\varphi)\, d^2 r+\gamma\int_{\pa\Sigma}\sin(2\theta)\varphi\,
d\mathcal H^1=0 \qquad \forall \varphi\in H^1_l({\Sigma}) \text{ with
  supp$\,\varphi\subset \overline{Q^t_{2R}}$}.
$$
Subtracting the above identity from \eqref{wEL2} and setting
$w:=\theta-\tilde g$, we get
$$
\int_{\Sigma}\big(\nabla w\cdot \nabla \varphi+
(h\sin\theta-\Delta\tilde g)\, \varphi\big)\, d^2 r=0 \qquad \forall
\varphi\in H^1_l({\Sigma}) \text{ with
  supp$\,\varphi\subset \overline{Q^t_{2R}}$}\,,
$$
that is $w$ is a weak solution to
$$
\begin{cases}
   \Delta w=h\sin\theta-\Delta \tilde g & \text{in } Q^t_{2R}\,,\\
   \pa_\nu w= 0& \text{on } \pa Q^t_{2R}\cap\pa\Sigma\,.
   \end{cases}
$$   
Thus, by standard $H^2$-estimates (see for instance \cite{gilbarg})
and taking into account \eqref{C'(R)} and \eqref{g}, we get
\beq\label{E2} \|\theta\|_{H^2(Q^t_R)}\leq \|w\|_{H^2(Q^t_R)}+
\|\tilde g\|_{H^2(Q^t_{2R})}\leq C\big(\| h\sin\theta-\Delta \tilde
g\|_{L^2 (Q^t_{2R})}+\|w\|_{H^1 (Q^t_{2R})}+ \gamma C C_1\big)\leq
C_2\, \eeq for a suitable positive constant $C_2$ depending only on
$R$, $\|\theta\|_\infty$, $\gamma$, and $h$.

We can now start a bootstrap argument in order to obtain uniform
estimates also with respect to higher norms. Owing to \eqref{E2} and
to the fact that $\|\sin(2\theta)\|_{H^2(Q^t_R)} \leq M $, with
$M=M \big(\|\theta\|_{H^2(Q^t_R)}\big) $ (and thus ultimately
depending only $R$, $\|\theta\|_\infty$, $\gamma$, and $h$), by
applying the Trace Theorem again we can improve \eqref{C'(R)} to
obtain for all $t\in \R$
   $$
   \sup_{t\in \R}\|\sin (2\theta)\|_{H^{3/2}(\pa Q^t_R\cap
     \pa\Sigma)}\leq\gamma C M\,.
   $$ 
   Now, arguing as above and relying again on \cite[Theorem
   1.5.1.2]{grisvard} we may find a ``lifting'' function
   $\tilde g\in H^3(\Om^t)$ such that
   $\pa_\nu \tilde g=-\gamma\sin(2\theta)$ on
   $\pa Q^t_{2R}\cap \pa\Sigma $ and
  $$
  \|\tilde g\|_{H^3 (\Om^t)}\leq \gamma C
  \|\sin(2\theta)\|_{H^{3/2}(\pa Q^t_{2R}\cap\pa\Sigma)}\,.
  $$
  Thus, defining $w$ as before and arguing similarly, we clearly may
  improve estimate \eqref{E2} to obtain for every $R>0$
$$
\sup_{t\in \R} \|\theta\|_{H^3(Q^t_R)}\leq C_3
$$
for a suitable positive constant $C_3$ depending only on $R$,
$\|\theta\|_\infty$, $\gamma$, and $h$. We can now iterate this
argument to show that for every $k\in \N$ there exists a positive
constant $C_k$ depending only on $R$, $\|\theta\|_\infty$, $\gamma$,
and $h$ such that \beq\label{Ek} \sup_{t\in \R}
\|\theta\|_{H^k(Q^t_R)}\leq C_k \eeq for all $R>0$. In turn,
\eqref{Ek} combined with the Sobolev Embedding Theorem yields
\eqref{ofallorder}.

The uniform bounds \eqref{ofallorder}, together with the convergence
condition in Definition~\ref{def:dws} give \eqref{inftypm}. The latter
in particular implies that both $\Delta \theta$ and $\pa_\nu\theta$
vanish at infinity. Thus, from \eqref{limitEL2} we deduce that
$\sin(2\ell^\pm)=0$ and that also $\sin (\ell^\pm)=0$ when $h>0$.  The
last part of statement b) readily follows.
 \end{proof}

% \begin{remark}\label{rm:wb}
% Note that in the proof of Lemma~\ref{lm:elem} the boundedness of $\theta$ has been used solely to show that the  bound on  $\|\theta\|_{H^1(Q^t_R)}$ is uniform in $t$. Therefore it is clear that if we remove the boundedness assumption, that is if $\theta$ is just a function in $ H^1_{l}(\Sigma)$ satisfying \eqref{wEL2},  then  we can still repeat the above argument to show that $\theta$ is smooth up to the boundary, but we lose the uniformity of the bounds on 
%  $\|\theta\|_{H^k(Q^t_R)}$ with respect to $t$.
% \end{remark}

 In the next lemma we show that in the case $h=0${, or} $h>0$ and
 $F(\theta)<+\infty$, condition \eqref{wbinfty} is equivalent to
 \eqref{binfty}.

\begin{lemma}\label{lm:equiv} 
  Let
  $\theta\in C^2(\Sigma)\cap C^1(\overline\Sigma) \cap
  L^\infty(\Sigma)$ be a solution of \eqref{limitEL2} such that
  \eqref{wbinfty} holds. Assume that either $h=0$, or $h>0$ and
  $F(\theta)<+\infty$.  Then also \eqref{binfty} holds true.
\end{lemma}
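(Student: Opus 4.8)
The plan is to propagate the convergence at infinity from the two horizontal edges of the strip into the interior by a translation--compactness argument, relying on the uniform regularity estimates of Lemma~\ref{lm:elem}. It is enough to treat the limit $x\to+\infty$, the case $x\to-\infty$ being entirely symmetric. First I would record that, being a solution of \eqref{limitEL2} in $C^2(\Sigma)\cap C^1(\overline\Sigma)\cap L^\infty(\Sigma)$, $\theta$ is in particular a bounded weak solution of \eqref{wEL2} (it lies in $H^1_{l}(\Sigma)$ because $C^1(\overline{Q_R})\subset H^1(Q_R)$ for every $R$); hence Lemma~\ref{lm:elem}-a) yields $\theta\in C^\infty(\overline\Sigma)$ together with the bounds $\|\theta\|_{C^k(\overline\Sigma)}\le C_k$ for every $k\in\N$, and these bounds are invariant under horizontal translations of $\theta$.

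Next I would fix an arbitrary sequence $x_n\to+\infty$ and set $\theta_n:=\theta(\cdot+x_n,\cdot)$. By the translation--invariant bounds above and Arzel\`a--Ascoli, a diagonal extraction produces a subsequence (not relabelled) converging in $C^k_{loc}(\overline\Sigma)$ for every $k$ to some $\theta_\infty\in C^\infty(\overline\Sigma)$, which is a classical solution of \eqref{limitEL2} on all of $\Sigma$ and satisfies $\|\theta_\infty\|_{L^\infty(\Sigma)}\le\|\theta\|_{L^\infty(\Sigma)}$. For each fixed $x\in\R$ one has $x+x_n\to+\infty$, so letting $n\to\infty$ in $\theta_n(x,0)=\theta(x+x_n,0)$ and $\theta_n(x,1)=\theta(x+x_n,1)$ and invoking \eqref{wbinfty} gives $\theta_\infty(x,0)=\theta_\infty(x,1)=\ell^+$ for all $x\in\R$.

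The core of the argument is to show $\theta_\infty\equiv\ell^+$, and this rigidity step is where I expect the main difficulty to lie. If $h=0$, then $\theta_\infty$ is bounded and harmonic in $\Sigma$ and $\theta_\infty-\ell^+$ vanishes on both edges; extending it to $\R^2$ by repeated odd Schwarz reflection in the $y$--variable produces a bounded harmonic function on $\R^2$, hence a constant by Liouville, so $\theta_\infty\equiv\ell^+$ (this is just uniqueness of bounded harmonic functions on the strip with prescribed continuous boundary data; alternatively, one may argue by a Phragm\'en--Lindel\"of comparison using the harmonic barriers $\pm\,e^{-\pi x}\sin(\pi y)$). If $h>0$, I would use the hypothesis $F(\theta)<+\infty$, which forces $\int_\Sigma(1-\cos\theta)\,d^2r<+\infty$; hence, with $Q_R=(-R,R)\times(0,1)$, the identity $\int_{Q_R}(1-\cos\theta_n)\,d^2r=\int_{(x_n-R,\,x_n+R)\times(0,1)}(1-\cos\theta)\,d^2r$ shows $\int_{Q_R}(1-\cos\theta_n)\,d^2r\to0$ as $n\to\infty$, and by local uniform convergence $\int_{Q_R}(1-\cos\theta_\infty)\,d^2r=0$ for every $R>0$, i.e.\ $\cos\theta_\infty\equiv1$ on $\Sigma$; being continuous on the connected set $\overline\Sigma$ with values in $2\pi\Z$, $\theta_\infty$ must be a single constant, equal to $\ell^+$ by its boundary values. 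For $h>0$ the finiteness of the energy is genuinely used here, since otherwise one cannot a priori exclude non-constant bounded solutions of $\Delta u=h\sin u$ in the strip with constant boundary data.

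Finally, since the limit $\theta_\infty\equiv\ell^+$ does not depend on the chosen subsequence, the whole family $\theta(\cdot+x_n,\cdot)$ converges to the constant $\ell^+$ in $C^0_{loc}(\overline\Sigma)$; in particular $\sup_{y\in[0,1]}|\theta(x_n,y)-\ell^+|\to0$. As $x_n\to+\infty$ was arbitrary, this shows $\theta(x,y)\to\ell^+$ as $x\to+\infty$ uniformly in $y\in[0,1]$, which yields the second limit in \eqref{binfty}; the first is obtained symmetrically. As a remark, in the case $h>0$ one can bypass the compactness argument entirely: Lemma~\ref{lm:zagara} already furnishes $k_2\in2\Z$ with $\|\theta(x,\cdot)-k_2\pi\|_{L^2(0,1)}\to0$ as $x\to+\infty$, and the uniform bound $\|\partial_y\theta\|_{L^\infty(\Sigma)}\le C_1$ from Lemma~\ref{lm:elem}-a) upgrades this $L^2$--convergence of the slices $\theta(x,\cdot)$ to uniform convergence (an equi--Lipschitz family converging in $L^2$ to a constant converges to it uniformly), whence $\ell^+=k_2\pi$ and \eqref{binfty} follows at once.
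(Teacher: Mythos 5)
Your proof is correct. For $h=0$ it follows the paper's own argument verbatim: translation compactness via Lemma~\ref{lm:elem}-a), odd reflection of the bounded harmonic limit across the horizontal lines, and Liouville. For $h>0$, however, you take a genuinely different and more elementary route to the rigidity of the blown-up limit $\theta_\infty$. The paper first deduces $\pa_\nu\theta_\infty=0$ from the Neumann condition (since $\sin(2\ell^+)=0$), even-reflects $\theta_\infty$ to a tripled strip, invokes analyticity of solutions of $\Delta u=h\sin u$ via Morrey's theorem, and then applies the Cauchy--Kovalevskaya theorem to the resulting overdetermined Cauchy data ($\theta_\infty=\ell^+$, $\pa_\nu\theta_\infty=0$ on $\pa\Sigma$) to force $\theta_\infty\equiv\ell^+$. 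You instead exploit $F(\theta)<+\infty$ directly: the tails $\int_{(x_n-R,x_n+R)\times(0,1)}(1-\cos\theta)\,d^2r$ of an integrable nonnegative function vanish, so by local uniform convergence $\int_{Q_R}(1-\cos\theta_\infty)\,d^2r=0$ for every $R$, whence $\cos\theta_\infty\equiv1$ and $\theta_\infty$ is a $2\pi\Z$-valued continuous function on a connected set, hence the constant $\ell^+$. This bypasses both analyticity and Cauchy--Kovalevskaya, at the price of using the energy finiteness more aggressively; it is arguably cleaner. Your closing remark — that in the $h>0$ case one can avoid the compactness argument entirely by combining Lemma~\ref{lm:zagara} (which gives $L^2(0,1)$ convergence of the slices $\theta(x,\cdot)$ to a constant in $\pi\Z$) with the equi-Lipschitz bound $\|\partial_y\theta\|_\infty\le C_1$ from Lemma~\ref{lm:elem}-a) — is also valid and yields the sharpest proof for that case, though it does not extend to $h=0$ where one has no a priori energy control.
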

\begin{proof}
  Consider first the case $h=0$. Let $\{\lambda_n\}$ be a sequence
  such that $\lambda_n\to+\infty$ and set
  $\theta_n:=\theta(\cdot+\lambda_n, \cdot)$. By statement a) of
  Lemma~\ref{lm:elem} we have that for every $k\in\N$ the sequence
  $\{\theta_n\}$ is uniformly bounded with respect to the $C^k$-norm
  on $\overline\Sigma$. Therefore, we may find a subsequence
  $\{\theta_{n_k}\}$ and a bounded function $\theta_\infty$ solving
  \eqref{limitEL2} such that $\theta_{n_k}\to\theta_{\infty}$ in $C^k$
  on the compact subsets of $\overline{\Sigma}$ for every $k\in
  \N$. Moreover, in view of \eqref{wbinfty} we also have
  $\theta_\infty=\ell^+$ on $\pa \Sigma$.  In particular,
  $\theta_\infty$ is a bounded harmonic function in $\Sigma$, which is
  constant on $\pa \Sigma$. It easily follows that
  $\theta_\infty\equiv \ell^+$. One way to see this is to extend the
  harmonic function $\theta_\infty-\ell^+$ to the whole plane by
  repeated odd reflections across the lines $\{x=j\}$, $j\in \Z$, thus
  getting an entire bounded harmonic function $w$, vanishing on such
  lines. Liouville's Theorem implies that $w\equiv 0$ in $\R^2$ and
  thus, in particular, $\theta_\infty\equiv \ell^+$ in $\Sigma$. In
  turn, this implies that
  $\theta(\lambda_{n_k}, y) \to \theta_\infty(0, y)=\ell^+$ as
  $k\to\infty$ for all $y\in [0,1]$. By the arbitrariness of
  $\{\lambda_n\}$ we have shown that the second condition in
  \eqref{binfty} is satisfied. A similar argument shows that also the
  first one holds true.
 
  Assume now that $h>0$ and $F(\theta)<+\infty$ and note that the
  latter condition immediately implies that both
  $\ell^-, \ell^+\in\pi\Z$. We may now run a similar argument as in
  the $h=0$ case.  Let $\{\lambda_n\}$, $\{\theta_n\}$ be as before
  and let $\theta_\infty$ be the limit (up to a subsequence) of
  $\theta_n$. One can show that in this case $\theta_\infty$ solves
$$
\begin{cases}
   \Delta \theta_\infty=h\sin\theta_\infty & \text{in }\Sigma\,,\\
   \pa_\nu\theta_\infty=0& \text{on }\pa \Sigma\,,\\
   \theta_\infty=\ell^+ & \text{on }\pa \Sigma\,.
   \end{cases}
 $$
 Even reflections with respect to $\pa \Sigma$ allow one to extend
 $\theta_\infty$ to a function $\tilde \theta_\infty$ defined on the
 ``tripled'' stripe $\tilde\Sigma:=\R\times (-1, 2)$ still solving the
 same equation
 $$
 \Delta \tilde \theta_\infty=h\sin\tilde\theta_\infty \quad\text{in }\tilde\Sigma\,.
 $$ 
 By classical results, see for instance \cite[Theorem
 6.8.2]{morrey66}, we infer that $\tilde \theta_\infty$ is analytic in
 $\tilde\Sigma$ and thus, in particular, $ \theta_\infty$ is analytic
 in $\Sigma$ up to the boundary. But then, owing to the overdetermined
 boundary conditions on $\pa \Sigma$, by the Cauchy-Kovalevskaya
 Theorem (see for instance \cite{evans}) it follows that
 $\theta_\infty\equiv \ell^+$ in a neighborhood of $\pa\Sigma$ and
 thus, by analyticity, everywhere in $\Sigma$. This establishes the
 second condition in \eqref{binfty} and the first one can be proven
 similarly.
\end{proof} 

We now start paving the way for the application of the sliding method
to our situation. We recall that owing to Lemma~\ref{lm:elem},
bounded weak solutions to \eqref{limitEL2} are in fact smooth
classical solutions and thus, in what follows, we will not distinguish
between weak and strong formulations.  We begin with the following
comparison principle for problem \eqref{limitEL2}, where we will be
using notation \eqref{ot}.

  \begin{lemma}\label{lm:cp}
    Let $t\in \R$ and let
    $\theta_1, \theta_2$ be domain wall solutions to \eqref{limitEL2} according
    to Definition~\ref{def:dws}, with $\theta_1\leq \theta_2$ on
    $\Gamma_t:=\{x=t\}\cap \Sigma$. Denote by $\ell^-_i$, $\ell^+_i$,
    $i=1, 2$, the boundary conditions at infinity of $\theta_i$
    according to \eqref{binfty} and assume also that
    $\ell^+_1\leq\ell^+_2$.  Assume also that there exists an interval
    $J {= (\theta^-, \theta^+)}$ such that \beq\label{inJ}
    \sup_{\Sigma_t^+}\theta_1<{\theta^+} \text{ and }
    \inf_{\Sigma_t^+}\theta_2>{\theta^-} \,, \eeq and
    $\theta\mapsto\sin(2\theta)$ is strictly increasing in $J$,
    {together with} $\theta \mapsto\sin(\theta)$ if $h>0$. Then,
    $\theta_1\leq\theta_2$ in $\overline \Sigma_t^+$. The same
    statement holds true with $\ell_i^+$ and $\Sigma_t^+$ replaced by
    $\ell_i^-$ and $\Sigma_t^-$, respectively.
  \end{lemma}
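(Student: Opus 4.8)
The plan is to argue by contradiction using the strong maximum principle and Hopf's lemma, after reducing the problem on the unbounded half-strip $\Sigma_t^+$ to a compact subset via the uniform behaviour at infinity. Set $w:=\theta_1-\theta_2$. By integration by parts, each $\theta_i$ is a bounded weak solution of \eqref{wEL2}, so Lemma~\ref{lm:elem} applies: $\theta_i\in C^\infty(\overline\Sigma)$, and $\theta_i\to\ell_i^+$ uniformly in $\overline{\Sigma_s^+}$ (in every $C^k$ norm) as $s\to+\infty$. Subtracting the equations \eqref{limitEL2} for $\theta_1$ and $\theta_2$, we find that $w$ solves
\[
  \Delta w = h(\sin\theta_1-\sin\theta_2)\ \text{ in }\Sigma_t^+, \qquad
  \pa_\nu w = -\gamma(\sin 2\theta_1-\sin 2\theta_2)\ \text{ on }\pa\Sigma\cap\overline{\Sigma_t^+},
\]
with $w\le 0$ on $\Gamma_t$ by hypothesis and $w\to\ell_1^+-\ell_2^+\le 0$ uniformly as $x\to+\infty$.

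Suppose, for contradiction, that $m:=\sup_{\overline{\Sigma_t^+}}w>0$. The uniform limit at infinity forces $w<m/2$ for all $x\ge R$ with $R$ large enough, so $m$ is attained at some point $(\bar x,\bar y)$ in the compact set $[t,R]\times[0,1]$; since $w\le 0<m$ on $\Gamma_t$, necessarily $\bar x>t$. On the relatively open set $\{w>0\}\subset\overline{\Sigma_t^+}$ we have $\theta_1>\theta_2$, and hypothesis \eqref{inJ} gives $\theta^-<\theta_2<\theta_1<\theta^+$ there, so \emph{both} $\theta_1$ and $\theta_2$ lie in $J$. Hence, using the strict monotonicity of $\theta\mapsto\sin 2\theta$ on $J$ — and of $\theta\mapsto\sin\theta$ on $J$ when $h>0$ — we get on $\{w>0\}$ that $\Delta w=h(\sin\theta_1-\sin\theta_2)\ge 0$ (strictly, if $h>0$) and $\pa_\nu w=-\gamma(\sin 2\theta_1-\sin 2\theta_2)<0$ on $\pa\Sigma$. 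In particular $w$ is subharmonic on $\{w>0\}$, and the point $(\bar x,\bar y)$ belongs to $\{w>0\}$.

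If $(\bar x,\bar y)\in\Sigma$, then for $h>0$ the strict inequality $\Delta w>0$ there contradicts the necessary condition $\Delta w\le 0$ at an interior maximum, while for $h=0$ the strong maximum principle \cite[Theorem 2.2]{pucci04} forces $w\equiv m$ on the connected component $\Omega$ of $\{w>0\}$ through $(\bar x,\bar y)$; such $\Omega$ must be unbounded (its relative boundary cannot meet $\Sigma_t^+\setminus\Gamma_t$, where $w$ would be simultaneously $0$ and $m$), contradicting $w\to\ell_1^+-\ell_2^+\le 0$ at infinity. Hence the maximum is attained at a boundary point $(\bar x,\bar y)\in\pa\Sigma$ with $\bar x>t$, away from the two corners of $\Sigma_t^+$, so the interior ball condition holds there. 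Excluding as above the degenerate case $w\equiv m$ on the component, Hopf's lemma \cite[Lemma 3.4]{gilbarg} yields $\pa_\nu w(\bar x,\bar y)>0$, contradicting $\pa_\nu w<0$ on $\{w>0\}\cap\pa\Sigma$. Therefore $m\le 0$, i.e. $\theta_1\le\theta_2$ in $\overline{\Sigma_t^+}$. The statement on $\Sigma_t^-$ follows by applying this to $\theta_i(-\cdot,\cdot)$ on the reflected strip.

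The main obstacle is that $\Sigma_t^+$ is unbounded, so the comparison is not immediate: one must first use the uniform decay toward $\ell_i^+$ from Lemma~\ref{lm:elem} to localize the would-be maximum to a compact set, and then verify that at such a maximum both $\theta_1$ and $\theta_2$ lie in the interval $J$ on which $\sin 2\theta$ (and $\sin\theta$, when $h>0$) is increasing — this is exactly the purpose of hypothesis \eqref{inJ} combined with the positivity of $w$ on $\{w>0\}$. Once this is in place, the argument is a standard strong maximum principle / Hopf lemma dichotomy, with only the mild care of ruling out the case where $w$ is constant on a connected component of $\{w>0\}$.
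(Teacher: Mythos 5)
Your proof is correct, but it takes a genuinely different route from the paper's. The paper argues at the level of the weak formulation: it tests \eqref{wEL2} for $\theta_1$ and $\theta_2$ against the Stampacchia truncation $\varphi_\e:=(\theta_1-\theta_2-\e)^+\chi_{\Sigma_t^+}$ (which has bounded support because of the decay at infinity and the inequality on $\Gamma_t$), subtracts, observes that on $\{\theta_1-\theta_2>\e\}$ both $\theta_i$ lie in $J$ so the interior and boundary integrands are nonnegative, and concludes that $\nabla\varphi_\e\equiv 0$ and $\varphi_\e=0$ on $\partial\Sigma_t^+$, hence $\varphi_\e\equiv 0$; letting $\e\to 0$ finishes. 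Your proof instead works pointwise: you set $w=\theta_1-\theta_2$, localize a would-be positive supremum $m$ to a compact set using the uniform $C^k$ decay from Lemma~\ref{lm:elem}, observe that on $\{w>0\}$ both $\theta_i$ lie in $J$, and then run the strong maximum principle / Hopf lemma dichotomy to contradict $m>0$. Both are standard comparison techniques; the paper's energy argument is slightly slicker in that it bypasses any case distinction between interior and boundary maxima and never invokes the strong maximum principle or Hopf's lemma, while your approach gives the same conclusion by classical pointwise reasoning and makes the role of the interior-ball and sign conditions explicit. One small simplification you could make: for $h=0$ the equation $\Delta w=0$ holds in all of $\Sigma$, so an interior max forces $w\equiv m$ on the connected set $\Sigma$ itself, contradicting the decay at $+\infty$ without the detour through the component $\Omega$ of $\{w>0\}$.
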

  \begin{proof} We prove the statement only for $\Sigma^+_t$, the
    other case being analogous. For any fixed $\e>0$ set
    $\varphi_\e:=(\theta_1-\theta_2-\e)^+\chi_{\Sigma_t^+}$ and note
    that from the assumptions $\theta_1\leq\theta_2$ on $\Gamma_t$ and
    $\ell^+_1\leq\ell^+_2$, taking into account part b) of
    Lemma~\ref{lm:elem}, we conclude that the function $\varphi_\e$ is
    in $H^1(\Sigma)$ with bounded support contained in
    $\overline\Sigma^+_t$. Testing \eqref{wEL2} for $\theta_i$ with
    $\varphi_\e$ and subtracting the two resulting equations we get
  \begin{multline*}
    \int_{\Sigma_t^+}|\nabla \varphi_\e|^2\, d^2 r+
    h\int_{\{\theta_1-\theta_2>\e\}\cap
      \Sigma^+_t}(\sin(\theta_1)-\sin(\theta_2))\varphi_\e\,
    d^2 r\\
    +\gamma\int_{\{\theta_1-\theta_2>\e\}\cap
      (\pa\Sigma^+_t\setminus\Gamma_t)}(\sin(2\theta_1)
    -\sin(2\theta_2))\varphi_\e\, d\mathcal H^1=0\,.
  \end{multline*}
  Note that {$\theta_1(\cdot)$, $\theta_2(\cdot)\in J$  in
    $\{\theta_1-\theta_2>\e\}\cap \Sigma^+_t$}, thanks to
  \eqref{inJ}. Using now the monotonicity assumptions on {
    $\sin(2\theta)$ and $\sin(\theta)$ for $\theta \in J$}, we may
  conclude from the above integral identity that
  $\nabla\varphi_\e\equiv 0$ and that $\theta_1-\theta_2\leq \e$, or
  equivalently $\varphi_\e=0$ on $\pa\Sigma_t^+$. Thus,
  $\varphi_\e\equiv 0$, that is, $\theta_1-\theta_2\leq \e$ in
  $\overline\Sigma_t^+$. The conclusion follows from the arbitrariness
  of $\e$.  \end{proof}

In the lemma below, we write down a version of the Strong Maximum
Principle which works {for}~\eqref{limitEL2}. Note that a similar
principle (and the argument behind) has been used already in the proof
of Theorem~\ref{th:k=2}.
 \begin{lemma}\label{lm:smp}
   Let $U\subset \R^2$ be a connected open set and let $\theta_1$,
   $\theta_2\in C^2(\Sigma) \cap C^1(\overline \Sigma)$ be solutions
   of \eqref{limitEL2} such that $\theta_1\leq\theta_2$ in
   $U\cap \Sigma$. Assume that
   $\theta_1(\bar x,\bar y)=\theta_2(\bar x,\bar y)$ for some point
   $(\bar x,\bar y) \in U\cap \overline \Sigma$. Then
   $\theta_1=\theta_2$ in $U\cap\overline \Sigma$.
 \end{lemma}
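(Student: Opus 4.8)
The plan is to reduce the statement to the classical strong maximum principle and Hopf's lemma applied to the difference $w:=\theta_2-\theta_1$, localized to $U$, much as in the comparison/monotonicity argument already used in the proof of Theorem~\ref{th:k=2}. First I would linearize: with $w:=\theta_2-\theta_1$, which by hypothesis and continuity satisfies $w\ge 0$ in $U\cap\overline\Sigma$, one writes
\[
b(x,y):=\int_0^1\cos\big(\theta_1+s w\big)\,ds,\qquad \beta(x,y):=2\int_0^1\cos\big(2\theta_1+2s w\big)\,ds,
\]
so that $\sin\theta_2-\sin\theta_1=b\,w$ and $\sin(2\theta_2)-\sin(2\theta_1)=\beta\,w$ with $b,\beta$ continuous and bounded; subtracting the two copies of \eqref{limitEL2} then shows that $w$ solves the \emph{linear} problem
\[
\begin{cases}
\Delta w=h\,b\,w & \text{in } U\cap\Sigma,\\
\partial_\nu w=-\gamma\,\beta\,w & \text{on } U\cap\partial\Sigma,
\end{cases}
\]
with bounded coefficients; crucially, the signs of $hb$ and $-\gamma\beta$ will play no role.

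I would then show that the coincidence set $Z:=\{(x,y)\in U\cap\overline\Sigma:\ w(x,y)=0\}$ is relatively open in $U\cap\overline\Sigma$; since it is also relatively closed and nonempty (it contains $(\bar x,\bar y)$), it must equal $U\cap\overline\Sigma$, which is the claim (should $U\cap\overline\Sigma$ be disconnected, one argues on the connected component of $(\bar x,\bar y)$). To prove openness at $z_0\in Z$, I distinguish two cases. If $z_0\in\Sigma$, a ball $B\subset U\cap\Sigma$ around $z_0$ carries $\Delta w=hb\,w$ with $hb$ bounded, $w\ge 0$, $w(z_0)=0$; the strong maximum principle \cite[Theorem~2.2]{pucci04} — after absorbing the possibly wrong-signed zero-order term so that $w$ is a nonnegative supersolution attaining an interior minimum — forces $w\equiv 0$ on $B$, i.e.\ $B\subset Z$. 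If $z_0\in\partial\Sigma$, pick $r>0$ with $\overline{B_r(z_0)}\subset U$ and set $D:=B_r(z_0)\cap\Sigma$; the strong maximum principle in the connected domain $D$ gives a dichotomy: either $w\equiv 0$ on $D$, and then $w\equiv 0$ on $B_r(z_0)\cap\overline\Sigma$ by continuity, so $z_0$ is interior to $Z$; or $w>0$ throughout $D$, in which case Hopf's lemma \cite[Lemma~3.4]{gilbarg} applies at $z_0$ (the boundary is flat, hence satisfies the interior ball condition, and since $w(z_0)=0$ the sign of the zero-order coefficient is immaterial) and yields $\partial_\nu w(z_0)<0$. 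The latter contradicts the Neumann condition $\partial_\nu w(z_0)=-\gamma\,\beta(z_0)\,w(z_0)=0$. Hence only the first alternative can occur, and $Z$ is relatively open.

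I expect the treatment of the boundary points to be the only genuinely delicate step, since that is where the nonlinear reaction is concentrated: one must rule out the Hopf scenario, and the mechanism is precisely that $w$ vanishing at a boundary point forces its outward normal derivative there to vanish through the Robin-type boundary condition, in direct conflict with the strict sign supplied by Hopf's lemma. Everything else is the standard propagation-of-zeros proof of the strong maximum principle, carried out locally in $U$.
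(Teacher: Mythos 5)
Your proof is correct and follows essentially the same approach as the paper: linearize the difference $w=\theta_2-\theta_1$ into a scalar elliptic problem with bounded coefficients, then deduce $w\equiv 0$ from the strong maximum principle in the interior together with Hopf's lemma at a boundary touching point, exactly as the paper does by referring to \cite[Theorem~2.2]{pucci04} and \cite[Lemma~3.4]{gilbarg}. The one place where you are more careful than the printed proof is worth noting: the paper simply asserts that if $(\bar x,\bar y)\in\partial\Sigma$ then Hopf's lemma gives $\partial_\nu u(\bar x,\bar y)\neq 0$, whereas Hopf's lemma in the form cited requires $w>0$ in an interior neighborhood, which is not granted a priori. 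Your explicit dichotomy on a half-ball $D=B_r(z_0)\cap\Sigma$ — either $w\equiv 0$ on $D$ by the strong maximum principle, or $w>0$ on $D$ so that Hopf's lemma applies and contradicts the Robin-type boundary condition — closes this small gap. You also correctly observe that the propagation argument a priori only yields $w\equiv 0$ on the connected component of $U\cap\overline\Sigma$ containing $(\bar x,\bar y)$, which is all that is needed in the paper's applications of the lemma (there $U\cap\Sigma$ is connected).
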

 
 \begin{proof}
   % Recall that by Lemma~\ref{lm:elem} and Remark~\ref{rm:wb} the
   % functions $\theta_i$ are smooth up to the boundary.
   We can argue similarly as in the proof of
   Theorem~\ref{th:k=2}. Indeed, setting $u:=\theta_2-\theta_1$, we
   note that $u$ is smooth up to $U\cap \pa\Sigma$ and satisfies
   \beq\label{u3}
  \begin{cases}
    \Delta u=\tilde c u & \text{in }U\cap \Sigma\,,\\
    \pa_\nu u=-\gamma (\sin(2\theta_2)-\sin(2\theta_1)) & \text{on }
    U\cap \pa\Sigma\,,\\ 
    u\geq 0 & \text{in }U\cap \Sigma\,,\\
    u(\bar x, \bar y)=0\,,
  \end{cases}
  \eeq
  where now
  $$\tilde c:=
  \begin{cases}
    h\frac{\sin(\theta_2)-\sin(\theta_1)}{\theta_2-\theta_1} &
    \text{in  }U\cap \{\theta_2>\theta_1\}\,,\\ 
    h & \text{in }U\cap \{\theta_2=\theta_1\}\,.
  \end{cases}
  $$ 
  Notice that if $\bar y\in \{0,1\}$, then by Hopf's Lemma \cite[Lemma
  3.4]{gilbarg} we have $\pa_\nu u(\bar x, \bar y)\neq0$, which
  contradicts the Neumann boundary condition in \eqref{u3}.  Thus,
  necessarily $\bar y\in (0,1)$.  We may then invoque the Strong
  Maximum Principle \cite[Theorem 2.2]{pucci04} to conclude that
  $u\equiv 0$ and in turn $\theta_2=\theta_1$ in $U\cap \Sigma$.
  \end{proof} 	
  
%  As a useful consequence of the above Strong Maximum Principle we
%  have the following observation, which, combined with the sliding
%  method, will be crucial in establishing uniqueness.
%
%\begin{lemma}\label{lm:sliding}
%  Let $\theta_1$, $\theta_2$ be domain wall solutions to
%  \eqref{limitEL2} according to Definition~\ref{def:dws}, and denote
%  by $\ell^-_i$, $\ell^+_i$, $i=1, 2$, the boundary conditions at
%  infinity of $\theta_i$ according to \eqref{binfty}. Assume that
%  $\theta_1\leq \theta_2$ in $\Sigma$ and that
%  $\ell^-_1>\ell^+_2$. Then, there exists $\lambda\leq 0$ such that
%  $\theta_1(\cdot+\lambda, \cdot)\equiv \theta_2$.
%\end{lemma}
%\begin{proof}
%  Since $\ell^-_1>\ell^+_2$, there exists $\lambda\leq 0$ such that
%  $\theta_1(\cdot+\lambda, \cdot)\leq \theta_2$ and
%  $\theta_1(\bar x+\lambda,\bar y)=\theta_2(\bar x,\bar y)$ for some
%  point $(\bar x,\bar y) \in \overline \Sigma$. \ed{*** why? consider
%    $\theta_1(x) = x e^{-x}/(x + 1)$ and $\theta_2(x) = e^{-x}$ with
%    $x > 1$, for which this statement is false.  ***} The conclusion
%  then follows from Lemma~\ref{lm:smp}.
%\end{proof}

  We continue now with some elementary considerations, showing in
  particular that only some specific values are admissible for
  $\ell_1$ and $\ell_2$.

  As a consequence of the Strong Maximum Principle and of the
  comparison Lemma~\ref{lm:cp} we have the following observation,
  which will be instrumental in the implementation of the sliding
  method.

\begin{lemma}\label{lm:sliding}
  Let
  $\theta_1, \theta_2$
  be domain wall solutions to \eqref{limitEL2} according to
  Definition~\ref{def:dws}, and denote by $\ell^-_i$, $\ell^+_i$,
  $i=1, 2$, the boundary conditions at infinity of $\theta_i$
  according to \eqref{binfty}. Assume that $\theta_1\leq \theta_2$ in
  $\Sigma$ and that $\ell^-_1>\ell^+_2$. Assume also that there exist
  two open intervals $J^+$, $J^-$ where $\theta\mapsto\sin(2\theta)$
  is strictly increasing and so is $\theta \mapsto\sin(\theta)$ if
  $h>0$, and such that $\ell^\pm_2\in J^\pm$.  Then, there exists
  $\lambda\in \R$ such that
  $\theta_1(\cdot+\lambda, \cdot)\equiv \theta_2$.
\end{lemma}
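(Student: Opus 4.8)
The plan is to run the sliding method of Berestycki and Nirenberg, working with the horizontal translates $\theta_1^\lambda := \theta_1(\cdot+\lambda,\cdot)$, which are again domain wall solutions of \eqref{limitEL2} with the \emph{same} limits $\ell^\pm_1$. I would introduce $\Lambda := \{\lambda\in\R : \theta_1^\lambda\le\theta_2 \text{ in }\Sigma\}$, which is nonempty since $0\in\Lambda$ by hypothesis. The first task is to show $\Lambda$ is bounded below, and this is where $\ell^-_1>\ell^+_2$ enters: by Lemma~\ref{lm:elem}-b) the convergences $\theta_1^\lambda\to\ell^-_1$ (as $\lambda\to-\infty$) and $\theta_2\to\ell^+_2$ (as $x\to+\infty$) are uniform, so fixing $t$ with $\theta_2<\tfrac12(\ell^-_1+\ell^+_2)$ on $\Sigma_t^+$ and then $\lambda$ so negative that $\theta_1^\lambda>\tfrac12(\ell^-_1+\ell^+_2)$ on all of $\overline\Sigma$ forces $\theta_1^\lambda>\theta_2$ on $\Sigma_t^+$, i.e.\ $\lambda\notin\Lambda$. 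Hence $\lambda_0:=\inf\Lambda$ is finite, and passing to the pointwise limit along a sequence in $\Lambda$ converging to $\lambda_0$ (using continuity of $\theta_1$) gives $\lambda_0\in\Lambda$, i.e.\ $\theta_1^{\lambda_0}\le\theta_2$ in $\Sigma$.

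It then suffices to prove $\theta_1^{\lambda_0}\equiv\theta_2$, for then $\lambda=\lambda_0$ is the desired shift. Suppose not. Since $\theta_1^{\lambda_0}$ and $\theta_2$ are ordered solutions of \eqref{limitEL2}, Lemma~\ref{lm:smp} applied on $U=\R^2$ upgrades this to the strict inequality $\theta_1^{\lambda_0}<\theta_2$ on $\overline\Sigma$. I would then reach a contradiction by showing $\theta_1^{\lambda_0-\e}\le\theta_2$ in $\Sigma$ for all sufficiently small $\e>0$, which contradicts $\lambda_0=\inf\Lambda$. To this end, fix a large $T$ (to be chosen) and split $\Sigma$ into the compact slab $\overline{Q_T}$ and the two half-strips $\Sigma_T^+$ and $\Sigma_{-T}^-$.

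On $\overline{Q_T}$, the strict inequality on a compact set together with the uniform convergence $\theta_1^{\lambda_0-\e}\to\theta_1^{\lambda_0}$ as $\e\to 0$ (immediate from the bound on $\partial_x\theta_1$ in Lemma~\ref{lm:elem}-a)) yields $\theta_1^{\lambda_0-\e}\le\theta_2$ there, hence in particular on the vertical segments $\Gamma_T$ and $\Gamma_{-T}$, for all small $\e$. On $\Sigma_T^+$ I would invoke the comparison principle Lemma~\ref{lm:cp}: one has $\theta_1^{\lambda_0-\e}\le\theta_2$ on $\Gamma_T$, and $\ell^+_1\le\ell^+_2$ by letting $x\to+\infty$ in $\theta_1\le\theta_2$. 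If $\ell^+_1=\ell^+_2$, then for $T$ large Lemma~\ref{lm:elem}-b) confines both $\theta_1^{\lambda_0-\e}$ and $\theta_2$ to a small neighbourhood of $\ell^+_2$, which lies in $J^+$ since $J^+$ is open and $\ell^+_2\in J^+$; thus \eqref{inJ} holds with $J=J^+$ (on which $\sin 2\theta$, and $\sin\theta$ when $h>0$, are increasing), and Lemma~\ref{lm:cp} gives $\theta_1^{\lambda_0-\e}\le\theta_2$ on $\overline{\Sigma_T^+}$. If instead $\ell^+_1<\ell^+_2$, there is nothing to do near $+\infty$: enlarging $T$ so that $\theta_1^{\lambda_0-\e}<\ell^+_1+\delta$ and $\theta_2>\ell^+_2-\delta$ on $\Sigma_T^+$ with $2\delta<\ell^+_2-\ell^+_1$ gives $\theta_1^{\lambda_0-\e}<\theta_2$ there directly, the (now larger) transition region having been absorbed into the compact slab. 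The half-strip $\Sigma_{-T}^-$ is handled symmetrically, using $\ell^-_1\le\ell^-_2$ and $\ell^-_2\in J^-$. Combining the three regions yields $\theta_1^{\lambda_0-\e}\le\theta_2$ in $\Sigma$, the sought contradiction, so $\theta_1^{\lambda_0}\equiv\theta_2$ and the proof is complete.

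The main obstacle is precisely the passage at the two ends: on the unbounded strip the gap $\theta_2-\theta_1^{\lambda_0}$ need not stay bounded away from zero (it decays to $0$ at an end whenever the corresponding limits of $\theta_1$ and $\theta_2$ coincide), so one cannot slide further by the compactness argument that suffices on bounded domains. The remedy, and the reason the hypotheses on $J^\pm$ are imposed, is to replace the strong maximum principle near infinity by the half-strip comparison principle of Lemma~\ref{lm:cp}, which applies exactly because, near a given end, the solutions take values in an interval of strict monotonicity of the boundary nonlinearity $\sin 2\theta$ (and of $\sin\theta$ when $h>0$). A minor technical point to keep track of is that $T$ must be chosen uniformly in $\e$ over a bounded range, which is harmless since translating $\theta_1$ by $\lambda_0-\e$ with $\e\in(0,1)$ displaces its ``settled'' region by at most $1$.
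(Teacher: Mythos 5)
Your plan is the sliding method exactly as in the paper, using Lemma~\ref{lm:cp} at the two ends and Lemma~\ref{lm:smp} at a touching point, and the overall structure is sound; the paper's proof does the same thing, merely organizing the argument into an explicit case analysis on whether $\ell^\pm_1=\ell^\pm_2$ before sliding, which you instead handle uniformly by branching only inside the half-strip estimates. That is a legitimate (and arguably cleaner) presentation.

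There is, however, one slip in the step that establishes the lower bound on $\Lambda$. You claim that, by Lemma~\ref{lm:elem}-b), ``$\theta_1^\lambda\to\ell^-_1$ uniformly as $\lambda\to-\infty$'' and hence that $\lambda$ can be chosen so negative that $\theta_1^\lambda>\tfrac12(\ell^-_1+\ell^+_2)$ \emph{on all of} $\overline\Sigma$. This is false: for every $\lambda$, the translate $\theta_1^\lambda$ still satisfies $\theta_1^\lambda(x,\cdot)\to\ell^+_1$ as $x\to+\infty$, and $\ell_1^+\le\ell_2^+<\tfrac12(\ell^-_1+\ell^+_2)$, so no translate can dominate $\tfrac12(\ell^-_1+\ell^+_2)$ on the whole strip. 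Lemma~\ref{lm:elem}-b) gives uniform convergence as $x\to\pm\infty$, not uniform (in $(x,y)$) convergence of the translates as $\lambda\to-\infty$; these are not the same thing. The intended conclusion — that $\Lambda$ is bounded below — is nonetheless correct, and the repair is elementary: to show $\lambda\notin\Lambda$ for $\lambda$ very negative you only need $\theta_1^\lambda>\theta_2$ at a \emph{single} point. Fix any $(x_0,y_0)$ with $x_0$ large enough that $\theta_2(x_0,y_0)<\ell^-_1$ (possible since $\theta_2(x,\cdot)\to\ell^+_2<\ell^-_1$); then $\theta_1^\lambda(x_0,y_0)=\theta_1(x_0+\lambda,y_0)\to\ell^-_1$ as $\lambda\to-\infty$, so $\theta_1^\lambda(x_0,y_0)>\theta_2(x_0,y_0)$ for $\lambda$ sufficiently negative. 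With this correction, the rest of the argument (closure of $\Lambda$ at $\lambda_0$, strict separation via Lemma~\ref{lm:smp}, the three-region slide with uniform-in-$\e$ choice of $T$ and Lemma~\ref{lm:cp} on the tails when the limits coincide, direct separation when they do not) goes through and matches the paper's reasoning.
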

\begin{proof} Let us first show that it is impossible to have
  $\ell_2^+>\ell_1^+$ or $\ell_2^->\ell_1^-$. To this aim we argue by
  contradiction.

  Assume first that $\ell_2^\pm>\ell_1^\pm$. Since also
  $\ell^-_1>\ell^+_2$, there exists $\lambda\in \R$ such that
  $\theta_1(\cdot+\lambda, \cdot)\leq \theta_2$ and
  {$\theta_1(\bar x + \lambda, \bar y)=\theta_2(\bar x + \lambda,
    \bar y)$} for some point
  ${(\bar x, \bar y)} \in \overline \Sigma$.  Thus by
  Lemma~\ref{lm:smp} the two solutions coincide which contradicts our
  initial assumption $\ell_2^\pm>\ell_1^\pm$.
  
  Assume now that $\ell_2^->\ell_1^-$ but $\ell_2^+=\ell_1^+=:\ell^+$.
  Owing to Lemma~\ref{lm:elem}-b) and the fact that $\ell^+\in J^+$,
  we may choose $t^+$ such that \beq\label{tbar} \inf J^+<
  \inf_{\Sigma_{t^+}^+}\theta_2\leq \sup_{\Sigma_{t^+}^+}\theta_2<\sup
  J^+\,.  \eeq Set now
 $$
 \lambda_0:=\inf\{\lambda\leq0:\, \theta_1(\cdot+ \lambda, \cdot)\leq
 \theta_{2}\}\,.
$$
Note that thanks to the assumption $\ell_1^->\ell_2^+$ we have
$\lambda_0\in \R$.  Moreover, clearly
$\theta_1(\cdot+ \lambda_0, \cdot)\leq \theta_{2}$ and thus, in
particular, recalling \eqref{tbar}, we have \beq\label{tbar2}
\sup_{\Sigma_{ t^+}^+}\theta_1(\cdot+ \lambda_0, \cdot)<\sup J^+.
\eeq We claim that $\theta_1(\cdot+ \lambda_0, \cdot)$ and $\theta_2$
coincide at some point in $\overline\Sigma$.  Indeed if by
contradiction $\theta_1(\cdot+ \lambda_0, \cdot) < \theta_{2}$
everywhere, then, using also that $\ell_2^->\ell_1^-$, we have
$\min_{\overline\Sigma^-_{t^+}}(\theta_2-\theta_1(\cdot+ \lambda_0,
\cdot))>0$. By uniform continuity, recalling \eqref{tbar2}, we may
find $\e>0$ so small that \beq\label{piccolo}
\min_{\overline\Sigma^-_{t^+}}(\theta_2-\theta_1(\cdot+ \lambda_0-\e,
\cdot))>0 \quad\text{and}\quad \sup_{\Sigma_{t^+}^+}\theta_1(\cdot+
\lambda_0-\e, \cdot)<\sup J^+\,.  \eeq Recalling also \eqref{tbar}, we
are in a position to apply Lemma~\ref{lm:cp} to infer that
$\theta_1(\cdot+ \lambda_0-\e, \cdot)\leq \theta_2$ in
$\Sigma_{t^+}^+$ and in turn, thanks to the first condition in
\eqref{piccolo}, $\theta_1(\cdot+ \lambda_0-\e, \cdot)\leq \theta_2$
in $\Sigma$. This contradicts the minimality of
$\lambda_0$. Therefore, $\theta_1(\cdot+ \lambda_0, \cdot)$ and
$\theta_2$ must coincide at some point in $\overline\Sigma$ and thus
everywhere thanks to the Strong Maximum Principle. This again leads to
a contradiction. The case where $\ell_2^+>\ell_1^+$ but
$\ell_2^-=\ell_1^-$ is clearly analogous.

It remains to consider the case $\ell^\pm_1=\ell^\pm_2$. In this case
choose $t^+$ as before.  Arguing similarly as before and recalling
that $\ell^-_2\in J^-$, we may also find $t^-<t^+$ such that
\beq\label{tbar-} \inf J^-< \inf_{\Sigma_{t^-}^-}\theta_2\leq
\sup_{\Sigma_{t^-}^-}\theta_2<\sup J^-\,.  \eeq Let $\lambda_0$ be as
before. We are going to show that in this case
$\theta_1(\cdot+ \lambda_0, \cdot)$ and $\theta_2$ coincide at some
point in $\overline\Sigma$ and thus everywhere by Lemma~\ref{lm:smp}.
Indeed otherwise
 $$
 \min_{\overline\Sigma^+_{t^-}\cap
   \overline\Sigma^-_{t^+}}(\theta_2-\theta_1(\cdot+ \lambda_0,
 \cdot))>0\,.
 $$ 
 Then, recalling \eqref{tbar2} and noticing also that
 $\sup_{\Sigma_{ t^-}^-}\theta_1(\cdot+ \lambda_0, \cdot)\leq
 \sup_{\Sigma_{ t^-}^-}\theta_2<\sup J^-$ by \eqref{tbar-}, we may
 find $\e>0$ so small that \beq\label{piccolo2}
 \min_{\overline\Sigma^+_{t^-}\cap
   \overline\Sigma^-_{t^+}}(\theta_2-\theta_1(\cdot+ \lambda_0-\e,
 \cdot))>0\,,\,\,\sup_{\Sigma_{t^-}^-}\theta_1(\cdot+ \lambda_0-\e,
 \cdot)<\sup J^-\text{ and } \sup_{\Sigma_{t^+}^+}\theta_1(\cdot+
 \lambda_0-\e, \cdot)<\sup J^+\,.  \eeq Taking into account also
 \eqref{tbar} and \eqref{tbar2}, we may apply Lemma~\ref{lm:cp} to
 infer that $\theta_1(\cdot+ \lambda_0-\e, \cdot)\leq \theta_2$ in
 $\Sigma_{t^\pm}^\pm$ and in turn, thanks to the first condition in
 \eqref{piccolo}, $\theta_1(\cdot+ \lambda_0-\e, \cdot)\leq \theta_2$
 in $\Sigma$. This contradicts the minimality of $\lambda_0$ and the
 conclusion follows.\end{proof}

We are now ready to prove the main result of this section, showing
that domain wall solutions in the sense of Definition~\ref{def:dws}
are unique up to horizontal translations and addition of integer
multiples of $\pi$, and coincide with the global minimizer constructed
in Theorem~\ref{th:k=2}, which is in turn unique.

\begin{proof}[Proof of Theorem~\ref{th:uniqueness}] We only consider
  the case $h=0$, the other one being analogous. We recall that by
  Lemma~\ref{lm:elem} $\ell^-$, $\ell^+\in \frac\pi2\Z$, hence there
  are three possible cases: $\ell^--\ell^+>\pi$, $\ell^--\ell^+=\pi$,
  and $\ell^--\ell^+=\frac\pi2$.

  We start by showing that the first case cannot occur. Indeed, assume
  by contradiction that $\ell^--\ell^+>\pi$ and recall that
  $\tilde\theta:=\theta+\pi$ is also a domain wall solution thanks to
  Remark~\ref{rm:dws}-b). Moreover,
  $\ell^->\ell^++\pi=\tilde\ell^+$. Then, arguing as at the beginning
  of the proof of Lemma~\ref{lm:sliding} we may find $\lambda\leq 0$
  such that $\theta(\cdot+\lambda, \cdot)$ and $\theta+\pi$ coincide
  at some point in $\overline\Sigma$ and thus everywhere by the Strong
  Maximum Principle Lemma~\ref{lm:smp}. This is clearly impossible.

  Let us now assume $\ell^--\ell^+\leq \pi$.  First of all note that
  since $\ell^+\in \frac\pi2\Z$, upon replacing $\theta$ by
  $\theta+k\pi$ for a suitable $k\in \Z$, we may assume thanks to
  Remark~\ref{rm:dws}-b) that either $\ell^+=0$ or
  $\ell^+=-\frac\pi 2$. Let us consider first the case $\ell^+=0$ and
  thus $\ell^-\in\{\frac\pi 2, \pi\}$.  Note that by the Strong
  Maximum Principle (Lemma~\ref{lm:smp}) we may easily infer that
  $\theta<\theta(\cdot+\lambda, \cdot)+\pi$ for all $\lambda\in
  \R$. Indeed if not, it would be possible to find $\lambda_0\in \R$
  such that $\theta\leq\theta(\cdot+\lambda, \cdot)+\pi$, with the two
  functions coinciding at some point and therefore everywhere by
  Lemma~\ref{lm:smp}, which is clearly impossible. In turn,
  \beq\label{pi} \theta\leq
  \lim_{\lambda\to+\infty}\theta(\cdot+\lambda,
  \cdot)+\pi=\ell^++\pi=\pi\,, \eeq and in fact the inequality is
  strict thanks to Lemma~\ref{lm:smp} and the fact that the constant
  function $\pi$ is also a solution to \eqref{limitEL2}.
%Recalling that $\ell^-=\frac\pi 2$ we may thus infer that  
%\beq\label{emme}
%\sup_{\Sigma}\theta=:M<\pi\,
%\eeq

Now recall that $\theta_{min}${, the minimizer from Theorem
  \ref{th:k=2},} vanishes at ${x =} +\infty$ and converges to $\pi$
at ${x = } -\infty$. In particular, thanks to Lemma~\ref{lm:elem}
we have \beq\label{limthetamin}
\lim_{t\to-\infty}\|\theta_{min}-\pi\|_{L^\infty(\Sigma^-_t)}=0\quad\text{and}\quad
\lim_{t\to+\infty}\|\theta_{min}\|_{L^\infty(\Sigma^+_t)}=0\,; \eeq
moreover, $0<\theta_{min}<\pi$ in $\overline\Sigma$. Thus, we may find
$t^-<t^+$ such that \beq\label{tsegnato}
\frac34\pi<\theta_{min}<\pi\quad\text{in
}\overline\Sigma^-_{t^-}\qquad\text{and}\qquad
0<\theta_{min}<\frac\pi4\quad\text{in }\overline\Sigma^+_{t^+}\,.
\eeq Clearly, we also have that \beq\label{mtsegnato}
m:=\min_{\overline\Sigma^-_{t^+}}\theta_{min}>0\,.  \eeq Since by
Lemma~\ref{lm:elem} we also have
$$
\lim_{t\to+\infty}\|\theta\|_{L^\infty(\Sigma^+_t)}=0\,,
$$	
we may now find $\lambda>0$ so large that \beq\label{lam} -\frac \pi4
< -m<\theta(\cdot+\lambda, \cdot)<m<\frac\pi4 \quad\text{in }
\overline\Sigma^+_{t^-}\,, \eeq where $m$ is the constant in
\eqref{mtsegnato}. We claim that \beq\label{cl1} \theta(\cdot+\lambda,
\cdot)\leq \theta_{min} \quad\text{in }\Sigma\,.  \eeq Indeed,
\eqref{mtsegnato} and \eqref{lam} imply that the inequality holds in
$\Sigma^+_{t^-}\cap \Sigma^-_{t^+}$. It remains to show that the
inequality $\theta(\cdot+\lambda, \cdot)\leq \theta_{min} $ holds also
in $\Sigma^\pm_{t^\pm}$. Let us start with $\Sigma^+_{t^+}$. Recall
that $\theta(\cdot+\lambda, \cdot)< \theta_{min}$ on
$\{(x,y):x=t^+\}\cap \overline\Sigma$ thanks to \eqref{mtsegnato} and
\eqref{lam}. Note also that \eqref{lam}) implies
$\sup_{\Sigma^+_{t^+}}\theta(\cdot+\lambda, \cdot)<\frac\pi4$. As
clearly $\inf_{\Sigma^+_{t^+}}\theta_{min}=0$, we may apply
Lemma~\ref{lm:cp} with $\theta_1= \theta(\cdot+\lambda, \cdot)$,
$\theta_2= \theta_{min}$, $J=(-\frac\pi4, \frac\pi4)$, to infer
$\theta(\cdot+\lambda, \cdot)\leq \theta_{min} $ in $\Sigma^+_{t^+}$.
Concerning $\Sigma^-_{t^-}$, observe that
$\sup_{\Sigma^-_{t^-}}\theta(\cdot+\lambda, \cdot)\leq\pi$ and
$\inf_{\Sigma^-_{t^-}}\theta_{min}>\frac34\pi$ by \eqref{pi} and
\eqref{tsegnato}, respectively. Moreover,
$\theta(\cdot+\lambda, \cdot)< \theta_{min}$ on
$\{(x,y):x=t^-\}\cap \overline\Sigma$ thanks to \eqref{mtsegnato} and
\eqref{lam}. Thus we may apply again Lemma~\ref{lm:cp} with
$\theta_1$, $\theta_2$ as before and $J=(\frac34\pi, \frac54\pi)$ to
conclude that the inequality holds also in $\Sigma^-_{t^-}$ and thus
\eqref{cl1} is proven.

 We are now in a position to apply Lemma~\ref{lm:sliding} to deduce
 that there exists $\bar\lambda\in \R$ such that
 $\theta(\cdot+\bar \lambda, \cdot)= \theta_{min}$ in $\Sigma$.

 Finally, the case $\ell^+=-\frac\pi2$ can be dealt with similarly by
 finding $\lambda>0$ such that \eqref{cl1} holds and then by applying
 Lemma~\ref{lm:sliding} to conclude. The argument to show the
 existence of a such a $\lambda$ is similar as before, and in fact
 easier as we may take advantage of the fact that both limits at
 ${x = } \pm\infty$ of $\theta{(x, \cdot)}$ are strictly smaller
 than the corresponding limits of $\theta_{min}$. The details are left
 to the reader.
\end{proof}
 
We now collect several corollaries.  The first one is an immediate
consequence of Theorems~\ref{th:k=2}~and~\ref{th:uniqueness}.
  \begin{corollary}\label{cor:uniq}
    The minimum problem \eqref{mink} {with $k \in \N$} (see {also}
    Remark~\ref{rm:mink}) admits a solution if and only if $k=1$ in
    the case $h=0$, and {if and only if} $k=2$ in the case
    $h>0$. Moreover, the solution is unique and coincides, up to a
    translation in the $x$-direction, with the function $\theta_{min}$
    provided by Theorem~\ref{th:k=2}.
  \end{corollary}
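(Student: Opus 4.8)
The plan is to obtain Corollary~\ref{cor:uniq} as a bookkeeping consequence of Theorem~\ref{th:k=2} (existence) and Theorem~\ref{th:uniqueness} (classification of domain wall solutions). First I would dispose of existence: in the case $h=0$ Theorem~\ref{th:k=2} directly yields a minimizer $\theta_{min}$ of $F$ over $\mathcal A_1$ with $\int_0^1\theta_{min}(0,y)\,dy=\pi/2$, and in the case $h>0$ the same theorem yields a minimizer over $\mathcal A_2$ with $\int_0^1\theta_{min}(0,y)\,dy=\pi$. Hence \eqref{mink} admits a solution for $k=1$ when $h=0$ and for $k=2$ when $h>0$, and this solution is smooth up to the boundary with all derivatives bounded and $\partial_x\theta_{min}<0$.

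Next I would show that \emph{any} minimizer $\theta$ of $F$ over $\mathcal A_k$ is a domain wall solution in the sense of Definition~\ref{def:dws}. A minimizer has finite energy; since adding $t\varphi$ with $\varphi\in H^1_{l}(\Sigma)$ of bounded support does not change membership in $\mathcal A_k$ (the behavior at infinity is unaffected), the first variation of $F$ at $\theta$ vanishes along such $\varphi$, so $\theta$ solves \eqref{wEL2}. By Lemma~\ref{lm:elem}-a) it is then smooth up to $\partial\Sigma$ with bounded derivatives of all orders. Because $F(\theta)<+\infty$, Lemma~\ref{lm:zagara} gives $k_1,k_2\in\Z$ with $\theta(x,\cdot)\to k_1\pi$ and $\theta(x,\cdot)\to k_2\pi$ in $L^2(0,1)$ as $x\to-\infty$ and $x\to+\infty$; comparing with the definition of $\mathcal A_k$ forces $k_2=0$ and $k_1=k$, and Lemma~\ref{lm:elem}-b) promotes this to uniform $C^m$-convergence, so in particular $\ell^+:=\lim_{x\to+\infty}\theta(x,y)=0$ and $\ell^-:=\lim_{x\to-\infty}\theta(x,y)=k\pi$ for all $y\in(0,1)$. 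Since $k\geq 1$ we have $\ell^->\ell^+$, so $\theta$ is a domain wall solution. (In the case $h>0$ with $k$ odd, Lemma~\ref{lm:zagara} already shows $\mathcal A_k$ contains no finite-energy function, so $\inf_{\mathcal A_k}F=+\infty$ and no minimizer exists; this case needs nothing further.)

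Finally I would invoke Theorem~\ref{th:uniqueness}. If $h=0$, part a) provides $k'\in\Z$ and $\lambda\in\R$ with $\ell^+=k'\pi$, $\ell^-=(k'+1)\pi$, and $\theta(\cdot,\cdot)=\theta_{min}(\cdot+\lambda,\cdot)+k'\pi$; matching $\ell^+=0$ forces $k'=0$, and then $\ell^-=k\pi=\pi$ forces $k=1$. If $h>0$, part b) gives $\ell^+=2k'\pi=0$, hence $k'=0$, and $\ell^-=k\pi=2\pi$, hence $k=2$. This simultaneously rules out the existence of minimizers over $\mathcal A_k$ for all the other values $k\in\N$ (a would-be minimizer would have finite energy — the infimum being attained — hence would be a domain wall solution, contradicting the constraint just derived), and identifies the minimizer in the admissible case as $\theta_{min}(\cdot+\lambda,\cdot)$, i.e. unique up to a translation in $x$. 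I do not expect a real obstacle here; the only point requiring mild care is the step in the second paragraph, namely verifying that competitors of $\theta$ genuinely probe the energy (so that the Euler--Lagrange equation holds) and that finiteness of the energy, via Lemmas~\ref{lm:zagara} and \ref{lm:elem}, upgrades the weak $L^2$-behavior at infinity encoded in $\mathcal A_k$ to the pointwise, $C^m$-uniform limits required by Definition~\ref{def:dws}.
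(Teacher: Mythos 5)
Your route matches the paper's intended argument (the paper simply labels the corollary an immediate consequence of Theorems~\ref{th:k=2} and~\ref{th:uniqueness}): establish that any minimizer of $F$ over $\mathcal A_k$ is a domain wall solution in the sense of Definition~\ref{def:dws}, and then let Theorem~\ref{th:uniqueness} pin down the admissible $k$ and identify the minimizer with a translate of $\theta_{min}$. However, there is a genuine gap in your second paragraph: Lemma~\ref{lm:elem} requires $\theta\in L^\infty(\Sigma)$ as a hypothesis (cf.\ also Remark~\ref{rm:wvss}), and boundedness is not automatic for an arbitrary minimizer of $F$ over $\mathcal A_k$. Without it the regularity bootstrap does not start, so you cannot yet conclude that $\theta\in C^\infty(\overline\Sigma)$ with the $C^m$-uniform limits needed to place $\theta$ inside Definition~\ref{def:dws}.

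The repair is the truncation argument used to obtain \eqref{claim1.1} in the proof of Theorem~\ref{th:k=2}. Given a minimizer $\theta\in\mathcal A_k$, set $\theta':=\max\{0,\min\{\theta,k\pi\}\}$. Then $\theta'\in\mathcal A_k$ (truncation only moves values toward the interval $[0,k\pi]$, so the $L^2$-behavior at $\pm\infty$ is preserved), the Dirichlet term does not increase, the boundary term does not increase since $\sin(0)=\sin(k\pi)=0$, and for $h>0$ only even $k$ is relevant (as you note, Lemma~\ref{lm:zagara} gives $F\equiv+\infty$ on $\mathcal A_k$ for odd $k$), in which case $1-\cos(k\pi)=0$ so the Zeeman term does not increase either. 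Hence $\theta'$ is a \emph{bounded} minimizer, to which your second and third paragraphs apply verbatim, giving $\theta'=\theta_{min}(\cdot+\lambda,\cdot)$ for some $\lambda\in\R$. Finally, since $\theta_{min}$ takes values strictly in $(0,k\pi)$ (it lies in $[0,k\pi]$ by construction in Theorem~\ref{th:k=2}, is strictly decreasing in $x$ by Theorem~\ref{t:exist}~a), and has limits $k\pi$ and $0$ at $\mp\infty$), the truncation is in fact inactive and $\theta=\theta'$, which completes the uniqueness statement for the original $\theta$. With this addition the proof is correct and complete.
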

  
  Setting $\check\theta_{min}(x,y):=\theta_{min}(-x,y)$, the previous
  corollary yields immediately the following result.
  \begin{corollary}\label{cor:mineps0}
    Any minimizer $m$ of \eqref{mineps0} coincides, up to {a
      translation} in the $x$-direction, with either {
      $(\cos\theta_{min}, \sin\theta_{min})$, or
      $(\cos\theta_{min}, -\sin\theta_{min})$, or
      $(\cos\check\theta_{min}, \sin\check\theta_{min})$, or
      $(\cos\check\theta_{min}, -\sin\check\theta_{min})$.}
  \end{corollary}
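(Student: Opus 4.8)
The plan is to deduce the statement from Corollary~\ref{cor:infk0}, Corollary~\ref{cor:uniq}, and the reflection symmetry of $\theta_{min}$ established in Theorem~\ref{t:exist}. Let $m=(\cos\theta,\sin\theta)$ be a minimizer of \eqref{mineps0}, and recall $E_0(m)=F(\theta)$ for any lifting $\theta$ of $m$. Since the admissible class for \eqref{mineps0} is nonempty with finite energy, the infimum is finite, so $F(\theta)<+\infty$; by Lemma~\ref{lm:zagara} there are $k_1,k_2\in\Z$ with $k_1\neq k_2$ (and $k_1,k_2\in 2\Z$ if $h>0$) such that $\theta(x,\cdot)\to k_1\pi$ as $x\to-\infty$ and $\theta(x,\cdot)\to k_2\pi$ as $x\to+\infty$ in $L^2(0,1)$. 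Setting $\ell:=k_1-k_2\neq 0$, the function $\theta-k_2\pi$ lies in $\mathcal A_\ell$ and satisfies $F(\theta-k_2\pi)=F(\theta)$ (immediate when $h=0$, and valid when $h>0$ because $k_2$ is even).

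The next step is to pin down $|\ell|$. Since every element of $\mathcal A_k$ with $k\neq 0$ is (a lifting of) an admissible competitor for \eqref{mineps0} with the same energy, Corollary~\ref{cor:infk0} together with Remark~\ref{rm:mink} shows that the infimum in \eqref{mineps0} equals $\mu$, where $\mu:=\min_{\mathcal A_1}F$ if $h=0$ and $\mu:=\min_{\mathcal A_2}F$ if $h>0$ (both attained, by Theorem~\ref{th:k=2}). On the other hand $\mu=F(\theta)=F(\theta-k_2\pi)\geq \inf_{\mathcal A_\ell}F$, and Corollary~\ref{cor:infk0} with Remark~\ref{rm:mink} identifies $\inf_{\mathcal A_\ell}F$ as $|\ell|\,\mu$ when $h=0$ and as $\tfrac{|\ell|}{2}\,\mu$ when $h>0$. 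As $\mu>0$ (any $\theta\in\mathcal A_k$ with $k\neq 0$ is non-constant, hence $F(\theta)>0$), this forces $|\ell|=1$ when $h=0$ and $|\ell|=2$ when $h>0$, and moreover shows that $\theta-k_2\pi$ is itself a minimizer of $F$ over $\mathcal A_\ell$.

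Finally I would invoke Corollary~\ref{cor:uniq}: its minimizer over $\mathcal A_1$ (resp.\ $\mathcal A_2$) is $\theta_{min}$ up to a translation in $x$, and by Remark~\ref{rm:mink} the minimizers over $\mathcal A_{-1}$ (resp.\ $\mathcal A_{-2}$) are the functions $-\theta_{min}(\cdot+\lambda,\cdot)$. Hence $\theta=k_2\pi\pm\theta_{min}(\cdot+\lambda,\cdot)$ for some $\lambda\in\R$ and a choice of sign. If $k_2$ is even --- automatic when $h>0$ --- then $m=(\cos\theta_{min}(\cdot+\lambda,\cdot),\pm\sin\theta_{min}(\cdot+\lambda,\cdot))$, one of the first two listed profiles. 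If $k_2$ is odd, which can happen only when $h=0$, I would use the symmetry $\theta_{min}(x,y)=\pi-\theta_{min}(a-x,y)=\pi-\check\theta_{min}(x-a,y)$ from Theorem~\ref{t:exist} to rewrite $\cos$ and $\sin$ of $k_2\pi\pm\theta_{min}$; after the additional translation $x\mapsto x+\lambda-a$ this identifies $m$ with $(\cos\check\theta_{min},\pm\sin\check\theta_{min})$. In every case $m$ is one of the four profiles up to an $x$-translation. The only delicate point in the whole argument is this last bookkeeping --- tracking the parity of $k_2$ and using the reflection symmetry to pass between the $\theta_{min}$ and $\check\theta_{min}$ profiles; every other step is a direct application of the already proved results. (Incidentally, for $h>0$ the same symmetry in the form $\theta_{min}(x,y)=2\pi-\check\theta_{min}(x-a,y)$ makes the two $\check\theta_{min}$ profiles coincide, up to translation, with the two $\theta_{min}$ profiles, so the list is redundant in that case.)
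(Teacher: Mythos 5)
Your proof is correct and fills in, with care, the details behind the paper's statement that the corollary follows ``immediately'' from Corollary~\ref{cor:uniq}. The route is essentially the paper's intended one: lift $m$, shift by $k_2\pi$ to land in $\mathcal A_\ell$, observe the shifted lifting is a minimizer there, then use uniqueness (together with $\theta\mapsto-\theta$ from Remark~\ref{rm:mink}) to identify it with $\pm\theta_{min}$ up to translation, and finally unwind the shift using the reflection symmetry of $\theta_{min}$ to handle the odd-$k_2$ case via $\check\theta_{min}$. Your use of Corollary~\ref{cor:infk0} to rule out $|\ell|\geq 2$ (resp.\ $|\ell|\geq 4$) is a legitimate and clean way to argue that the shifted lifting actually minimizes over $\mathcal A_\ell$; one could alternatively invoke directly the ``only if'' part of Theorem~\ref{t:exist}/Corollary~\ref{cor:uniq} once minimality in $\mathcal A_\ell$ is established, but the quantitative route via Corollary~\ref{cor:infk0} is if anything more transparent. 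The parity bookkeeping at the end, including the observation that the list is redundant when $h>0$, is accurate.
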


  The next corollary deals with symmetry and decay properties of the
  domain wall profile $\theta_{min}$.
 \begin{corollary}\label{cor:decay}
   In addition to the properties stated in Theorem~\ref{th:k=2}, the
   profile $\theta_{min}$ minimizing
   \eqref{mink} with $k = 1$ for $h = 0$, or $k = 2$ for $h > 0$
   satisfies:
   \begin{itemize}
   \item [a)] (symmetry) $\theta_{min}(x,y)=\theta_{min}(x,1-y)$ and
     $\theta_{min}(x,y)=k\pi-\theta_{min}(-x,y)$ for all
     $(x, y) \in \overline \Sigma$;
   \item [b)] (exponential decay at infinity) for every $m\in \N$
     there exist positive constants $\alpha_m$, $\beta_m$ such that
     $$
     \|\theta_{min}-k\pi\|_{C^m(\overline\Sigma^-_{-t})}\leq \alpha_m
     \mathrm{e}^{-\beta_m t}\quad\text{and}\quad
     \|\theta_{min}\|_{C^m(\overline\Sigma _t^+)}\leq \alpha_m
     \mathrm{e}^{-\beta_m t}
     $$ 
     for all $t>0$ sufficiently large.
   \end{itemize}
 \end{corollary}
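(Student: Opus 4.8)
The plan is to derive both the symmetry and the decay properties as consequences of the uniqueness statement in Theorem~\ref{th:uniqueness} together with the qualitative information already available from Theorem~\ref{th:k=2} and Lemma~\ref{lm:elem}. For part a), the key observation is that the symmetry operations $(x,y) \mapsto (x,1-y)$ and $(x,y) \mapsto (-x,y)$ each transform a domain wall solution into another one with the appropriate limits at infinity. First I would verify that $\widehat\theta(x,y) := \theta_{min}(x,1-y)$ solves \eqref{limitEL2} and lies in $\mathcal A_k$ (this is immediate since the strip $\Sigma$ and the boundary $\pa\Sigma$ are invariant under $y \mapsto 1-y$); by Corollary~\ref{cor:uniq} it must therefore equal $\theta_{min}(\cdot+\lambda,\cdot)$ for some $\lambda \in \R$. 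To pin down $\lambda = 0$, I would integrate in $y$ and use that $\int_0^1 \widehat\theta(x,y)\,dy = \int_0^1 \theta_{min}(x,y)\,dy$ is strictly decreasing in $x$ with the value $k\pi/2$ at $x=0$ (the normalization from Theorem~\ref{th:k=2}), which forces $\lambda=0$. Similarly, $\check\theta_{min}(x,y) = k\pi - \theta_{min}(-x,y)$ solves \eqref{limitEL2} (using that $\sin(2(k\pi-\vartheta)) = -\sin(2\vartheta)$ and $\sin(k\pi-\vartheta) = \pm\sin\vartheta$, the latter relevant only when $h>0$, where $k$ is even) and belongs to $\mathcal A_k$ since the roles of $0$ and $k\pi$ are swapped by both $\vartheta \mapsto k\pi-\vartheta$ and $x \mapsto -x$; again by uniqueness it equals a translate of $\theta_{min}$, and the same $y$-averaging argument evaluated at $x=0$ (where $k\pi - \theta_{min}(0,y)$ averages to $k\pi - k\pi/2 = k\pi/2$) gives zero translation, yielding the reflection identity.

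For part b), the strategy is a standard supersolution/barrier argument exploiting the exponential stability of the constant equilibria. Near $x = +\infty$, by Lemma~\ref{lm:elem}-b) we know $\theta_{min} \to 0$ uniformly in $C^m$, so for $t$ large the profile enters a neighborhood of $0$ on $\overline\Sigma_t^+$ where the nonlinearity is strictly monotone: $\sin(2\vartheta) \geq c_0\vartheta$ and (if $h>0$) $\sin\vartheta \geq c_0\vartheta$ for small $\vartheta > 0$, with $c_0 > 0$. I would then construct an explicit supersolution of the linearized problem of the form $\Psi(x,y) = Ae^{-\beta x}\varphi(y)$ (for instance $\varphi \equiv 1$ when $h=0$, or $\varphi$ the appropriate first eigenfunction of $-\varphi'' = (\beta^2 - h)\varphi$ with Robin conditions $\pm\varphi'(y) = -\gamma c_0 \varphi(y)$ at $y=0,1$ when one wants to capture the boundary term), choosing $\beta>0$ small enough that $\Psi$ is a supersolution of the nonlinear problem on $\overline\Sigma_t^+$; comparing via the comparison principle Lemma~\ref{lm:cp} (or directly via the maximum principle, since $\theta_{min} \leq \Psi$ on the slice $\{x=t\}$ for $A$ large and both decay to zero) gives $0 < \theta_{min}(x,y) \leq Ae^{-\beta x}$ on $\overline\Sigma_t^+$. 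The $C^m$ decay then follows from the pointwise $L^\infty$ decay by interior and boundary Schauder estimates applied on unit-width windows $Q^\tau_1$ sliding to $+\infty$, together with the uniform higher-order bounds \eqref{ofallorder}: a bootstrap on translated windows upgrades the exponential decay of $\|\theta_{min}\|_{L^\infty(Q^\tau_1)}$ to that of $\|\theta_{min}\|_{C^m(Q^\tau_1)}$. The behavior at $x=-\infty$ is handled identically after the substitution $\vartheta \mapsto k\pi - \vartheta$, which by part a) is just the reflection $x \mapsto -x$.

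The main obstacle I anticipate is the construction of the barrier $\Psi$ in part b) in a way that correctly accounts for the boundary reaction term: unlike the classical interior-bistable case, the "restoring force" here sits on $\pa\Sigma$, so the decay rate $\beta_m$ is governed by a boundary eigenvalue problem rather than by the bulk potential (when $h=0$ the bulk is purely harmonic and contributes no decay at all — the exponential localization is driven entirely by the Robin boundary condition). Getting a clean supersolution therefore requires choosing $\beta$ and the $y$-profile $\varphi$ so that simultaneously the interior inequality $\Delta\Psi \leq h\sin\Psi$ (trivially satisfied, with room to spare, for $\beta^2 \leq h$ when $h>0$, or as an equality when $h=0$) and the boundary inequality $\pa_\nu\Psi \geq -\gamma\sin(2\Psi)$, i.e. $-\varphi'(y) \leq \gamma c_0 \varphi(y) $ at $y=0$ (and symmetrically at $y=1$), hold; a symmetric choice $\varphi(y) = \cosh(\kappa(y-1/2))$ with $\kappa$ tuned to the Robin data does the job, and one takes $\beta = \kappa$ when $h=0$ or $\beta = \sqrt{h+\kappa^2}$ adjusted downward if needed. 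Once the right $\Psi$ is in hand, everything else is routine comparison and elliptic regularity.
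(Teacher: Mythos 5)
Your strategy coincides with the paper's. For a), both $\theta_{min}(\cdot,1-\cdot)$ and $k\pi-\theta_{min}(-\cdot,\cdot)$ are domain wall solutions in $\mathcal A_k$ sharing the normalization $\int_0^1\theta(0,y)\,dy=\tfrac{k\pi}{2}$, and Corollary~\ref{cor:uniq} together with your $y$-averaging/monotonicity argument to kill the translation is exactly the paper's reasoning. For b), the barrier-plus-elliptic-regularity plan is likewise the paper's; the paper's $y$-profile is the concave quadratic $\psi(y)=1+\tfrac12\gamma y(1-y)$, for which $\psi''=-\gamma$ immediately gives $\Delta\theta^+<0$ and $\pa_\nu\psi=-\tfrac{\gamma}{2}\psi$ on $\pa\Sigma$ by construction.

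The one place your plan as written would fail is the explicit choice $\varphi(y)=\cosh(\kappa(y-1/2))$. With this profile $\varphi''=\kappa^2\varphi>0$, so $\Delta\Psi=(\beta^2+\kappa^2)\Psi>0$ for $\Psi=Ae^{-\beta x}\varphi(y)>0$; that makes $\Psi$ \emph{sub}harmonic, not superharmonic. The weak supersolution inequality that the comparison with $\varphi_\eta=(\theta_{min}-\Psi-\eta)^+$ actually needs, namely
$\int_{\Sigma_{\bar t}^+}\nabla\Psi\cdot\nabla\varphi\,d^2r+\gamma\int_{\pa\Sigma_{\bar t}^+\cap\pa\Sigma}\tfrac{\Psi}{2}\varphi\,d\H^1\geq 0$
for all $\varphi\geq 0$ vanishing on $\Gamma_{\bar t}$, integrates by parts to $-\int\Delta\Psi\,\varphi+\int_{\pa\Sigma}(\pa_\nu\Psi+\tfrac{\gamma}{2}\Psi)\varphi$, and the first term is strictly negative for $\varphi$ supported in the interior; so the inequality fails when $h=0$ — precisely the case you correctly flag as relying entirely on the boundary term for decay. (For $h>0$ one could scrape by if $\beta^2+\kappa^2\leq h\sin\e_0/\e_0$, but for $h=0$ no positive $\beta,\kappa$ work.) Your own discussion, which asks for $\Delta\Psi=0$ when $h=0$ with $\beta=\kappa$, actually forces $\varphi''=-\kappa^2\varphi$, i.e.\ $\varphi(y)=\cos(\kappa(y-1/2))$; that concave choice, with $\kappa$ small enough that $\kappa\tan(\kappa/2)\leq\gamma$, is correct and plays the same role as the paper's quadratic $\psi$. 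Once this sign is fixed, the rest of your argument — comparison, then bootstrap from the $L^\infty$ exponential decay to $C^m$ decay via the uniform bounds of Lemma~\ref{lm:elem} and interpolation — matches the paper's proof.
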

      
 \begin{proof}
   Observing that $\theta_{min}(\cdot, 1-\cdot)$ is still a domain
   wall solution satisfying the normalization condition
   $\int_0^1\theta_{min}(0,y)\,dy=\frac{k\pi}2$, the first symmetry
   property follows at once from the uniqueness result of
   Theorem~\ref{th:uniqueness}. The second symmetry property is proven
   in a similar way, observing that $k\pi-\theta_{min}(-\cdot,\cdot)$
   is also a domain wall solution satisfying the same normalization
   condition.  This concludes the proof of part a) of the corollary.

   In order to prove the second part, we employ a barrier
   argument. Clearly, by the symmetry property established in part a)
   it is enough to show the exponential decay as $x \to +\infty$.  To
   this aim, we fix $\e_0>0$ so small that \beq\label{bar1}
   \sin(2\theta)\geq\theta\quad\text{for all }\theta\in (0, \e_0)\,,
   \eeq and choose $\bar t>0$ so large that \beq\label{bar2}
   0<\theta_{min}<\e_0\quad\text{in }\overline\Sigma^+_{\bar t}\,.
   \eeq Recall that this is possible due to the fact that
   $\|\theta_{min}\|_{L^\infty(\Sigma_t^+)}\to 0$ as $t\to+\infty$. We
   now define the barrier $\theta^+$ in $\Sigma^+_{\bar t}$ as
$$
\theta^+(x,y):=\e_0\psi(y)\mathrm{e}^{-\alpha(x-\bar t)}\,,
$$
where
$$
\psi(y):=1+{\frac{1}{2} \gamma y (1 - y), }
$$
and $\alpha=\alpha(\gamma)>0$ is a constant sufficiently small so that 
 \begin{align*}
   \Delta\theta^+(x,y)=
   \e_0 \mathrm{e}^{-\alpha(x-\bar t)}[\alpha^2 \psi(y)- \gamma]\leq 
   \e_0 \mathrm{e}^{-\alpha(x-\bar
   t)}\Big[\alpha^2\Big(1+\frac\gamma8\Big) - \gamma\Big]<0\,. 
 \end{align*}
With such a choice of $\alpha$, $\theta^+$ satisfies by construction
\beq\label{theta+}
\begin{cases}
  \Delta\theta^+<0 & \text{in }	 \Sigma^+_{\bar t}\,,\\
  \pa_\nu\theta^+=-\frac\gamma2\theta^+ & \text{on } \pa
  \Sigma^+_{\bar t}\cap\pa\Sigma\,,\\ 
  \theta^+=\e_0\psi\geq\e_0 & \text{on }\Gamma_{\bar t}\,.
\end{cases}
\eeq In particular, \beq\label{wtheta+} \int_{\Sigma^+_{\bar t}}\nabla
\theta^+\cdot \nabla \varphi\, d^2 r+\gamma\int_{\pa\Sigma^+_{\bar
    t}\cap\pa\Sigma}\frac{\theta^+}2\varphi\, d\mathcal H^1\geq 0 \eeq
for all non-negative $\varphi\in H^1(\Sigma^+_{\bar t})$ with bounded
support and vanishing on $\Gamma_{\bar t}$.  For any fixed $\eta>0$,
consider the test function
$\varphi_\eta:=(\theta_{min}-\theta^+-\eta)^+$ defined in
$\Sigma^+_{\bar t}$ and note that thanks to \eqref{bar2} and the last
condition in \eqref{theta+}, $\varphi_\eta=0$ on $\Gamma_{\bar t}$ so
that it can be extended by $0$ to the whole $\Sigma$. Moreover, by the
uniform convergence to $0$ of $\theta_{min}(x, \cdot)-\theta^+$ as
$x \to +\infty$, we have that $\varphi_\eta$ has bounded support in
$\overline\Sigma^+_{\bar t}$. Plugging $\varphi_\eta$ into
\eqref{wEL2}, with $\theta=\theta_{min}$, and also into
\eqref{wtheta+}, and subtracting the two resulting inequalities, we
get
\begin{multline*}
  \int_{\Sigma_{\bar t}^+}|\nabla \varphi_\eta|^2\, d^2 r +
  h\int_{\{\theta_{min}-\theta^+>\eta\}\cap \Sigma^+_{\bar
      t}}\sin(\theta_{min})\varphi_\eta\, d^2 r \\
  +\gamma\int_{\{\theta_{min}-\theta^+>\eta\}\cap (\pa\Sigma^+_{\bar
      t}\cap\pa \Sigma)} \left( \sin(2\theta_{min})-\frac{\theta^+}2
  \right) \varphi_\eta\, d\mathcal H^1\leq0\,.
  \end{multline*}
  Note that both $\sin(\theta_{min})$ and
  $\sin(2\theta_{min})-\frac{\theta^+}2$ are strictly positive in
  $\{\theta_{min}-\theta^+>\eta\} \cap \Sigma^+_{\bar t} $ (if
  nonempty), thanks to \eqref{bar1} and \eqref{bar2}. Thus for the
  above integral inequality to hold it is necessary that
  $\nabla\varphi_\eta\equiv 0$ in $\Sigma_{\bar t}^+$ and that the
  sets $\{\theta_{min}-\theta^+>\eta\}\cap \Sigma^+_{\bar t}$ and
  $\{\theta_{min}-\theta^+>\eta\}\cap (\pa\Sigma^+_{\bar t}\cap\pa
  \Sigma)$ have vanishing measures. Thus, $\varphi_\eta\equiv 0$, that
  is, $\theta_{\min}-\theta^+\leq \eta$ in $\Sigma_{\bar t}^+$. From
  the arbitrariness of $\eta$, we may conclude that
  $\theta_{\min}\leq \theta^+$ in $\Sigma_{\bar t}^+$ and thus
  \beq\label{expinfty} \|\theta_{min}\|_{L^\infty(\Sigma^+_t)}\leq
  \e_0\Big(1+\frac\gamma8\Big) \mathrm{e}^{\alpha \bar t}
  \mathrm{e}^{-\alpha t}\,.  \eeq for $t>\bar t$. The exponential
  decay with respect to any $C^m$-norm follows now from
  \eqref{expinfty} by an interpolation argument, taking into account
  that by Lemma~\ref{lm:elem}-a) for every $m \in \N$ there exists a
  constant $C_m>0$ such that
  $\|\theta_{min}\|_{C^m (\overline\Sigma^+_{\bar t})}\leq C_m$.
  \end{proof}

    \begin{proof}[Proof of Theorem \ref{t:exist}.]
      Finally, combining the results of Corollary \ref{cor:uniq} and
      Corollary \ref{cor:decay} yields the conclusion of Theorem
      \ref{t:exist}.
    \end{proof}

  \subsection{Limiting regimes} We now turn to the analysis of the
  minimizers of $F$ for $h = 0$ in the two extremes of the values of
  $\gamma$ {covered by Theorem \ref{th:glargesmall}}.

  \begin{proof}[Proof of item a) of Theorem \ref{th:glargesmall}] We
    show that as $\gamma \to 0$ we have
    $\theta_{min,\gamma}(x /\sqrt{\gamma},y) \to \pi - 2 \arctan
    (e^{2x})$ {locally uniformly in ${(x, y) \in } \Sigma$.}  Rescaling the {$x$
      coordinate} as $\tilde x = \sqrt{\gamma} x$ and defining
    $\tilde\theta(\tilde x, y) := \theta(x,y)$, we obtain
\begin{align}
  \label{eq:251}
  \tilde F_\gamma(\tilde\theta) :=   \frac{1}{\sqrt{\gamma}} \,
  F(\theta)=\frac12 {\int_0^1  \int_\R} \left( 
  |\partial_{\tilde x} \tilde\theta|^2 + \frac{1}{\gamma}
  |\partial_{y} \tilde\theta|^2 \right) d \tilde x\, d y  + \int_\R
  \left( \sin^2\tilde\theta(\tilde x,0) + \sin^2\tilde\theta(\tilde
  x,1) \right) d\tilde x\,.  
\end{align}
{For $\bar \theta \in H^1_{loc}(\R)$, we} can also define
${G(\bar \theta)}$ as
$$
G (\bar \theta) {:=} \int_\R \left( \frac12 |{\bar\theta}'|^2 +2
  \sin^2 {(\bar \theta)} \right) dx.
$$
Notice that if $\tilde \theta(x, y) = \bar \theta(x)$, then
$\tilde F_\gamma(\tilde \theta) = G(\bar \theta)$. Therefore, if
$\theta_{min, \gamma}$ is a minimizer of the energy $F(\theta)$ for a
fixed ${\gamma > 0}$ and $\theta_{min, \gamma}(0,\cdot)=\frac{\pi}{2}$
then it is clear that $\tilde F_\gamma(\tilde \theta_{min, \gamma})$ {is
  bounded independently of $\gamma$. This implies that}
${|\nabla \tilde \theta_{min, \gamma}|}$ is bounded in
${L^2} (\Sigma)$, and $\partial_y \tilde \theta_{min, \gamma} \to 0$
in $L^2(\Sigma)$ as $\gamma \to 0$. It follows that there is a
subsequence (not relabelled) such that
$\tilde \theta_{min, \gamma} \wto \theta_*$ weakly in
$H^1_{l}(\Sigma)$ and $\tilde \theta_{min, \gamma} \to \theta_*$ in
$L^2_{loc}(\pa\Sigma)$ (see, e.g., \cite{adams}) with
$\theta_* (x,y) = \bar \theta_* (x)$ for some
$\bar \theta_* \in H^1_{loc}(\R)$.
 
We observe that $\bar \theta_*$ is a
minimizer of the energy $G$ in the class
$$
{\mathcal{A}_1^{1d}} := \left\{\bar \theta\in H^1_{loc}(\R) :\,
|\bar \theta'|\in L^2(\R),\ \lim_{x\to+\infty} \bar \theta(x)=0, \
\lim_{x\to-\infty} \bar\theta(x)=\pi, \ \bar\theta(0)=\frac{\pi}{2}
\right\}.
$$ 
Indeed, for any $\bar \theta \in \mathcal{A}_1^{1d}$ and
$\theta(x,y) =\bar \theta(x)$ we have $\theta \in \mathcal A_1$ and
$$
G(\bar \theta)= \liminf_{\gamma \to 0} \tilde F_\gamma (\theta) \geq
\liminf_{\gamma \to 0} \tilde F_\gamma (\tilde\theta_{min,\gamma})
\geq G(\bar \theta_*).
$$
Therefore
$$
\bar\theta_* (x) = \pi - 2 \arctan (e^{2x})
$$
is the unique minimizer of $G$ in ${\mathcal{A}_1^{1d}}$ and we deduce
that $\tilde \theta_{min, \gamma} \to \theta_*$ in $H^1_l(\Sigma)$ for
the whole sequence.

{Finally, we note that by the strong convergence of
  $\tilde \theta_{min,\gamma}$ to $\theta_*$ in
  $L^2_{loc}(\partial \Sigma)$, monotonicity of
  $\tilde \theta_{min, \gamma}(\cdot, 0)$, and continuity and decay at
  infinity of $\theta_*$ we also have that
  $\tilde \theta_{min,\gamma} \to \theta_*$ uniformly in
  $\partial \Sigma$. Therefore, since $\theta_{min,\gamma}$ is
  harmonic in $\Sigma$, with the help of the representation
  \begin{align*}
    \tilde \theta_{min,\gamma} (x, y) = \int_{-\infty}^\infty P_\gamma(x -
    x', y) \, \tilde \theta_{min,\gamma} (x', 0) \, dx'
  \end{align*}
  from \eqref{eq:29}, where
  $P_\gamma(x, y) := \gamma^{-1/2} P(\gamma^{-1/2} x, y)$ and
  $P(x, y)$ is the Poisson kernel given in \eqref{eq:33}, the
  assertion easily follows by observing that
  $\| P_\gamma (\cdot, y) \|_{L^1(\R)} = 1$ and $P_\gamma(\cdot, y)$
  approaches a Dirac delta-function for every $y \in (0,1)$ as
  $\gamma \to 0$, together with uniform bounds on the derivatives of
  \begin{align*}
    \theta_{*, \gamma} (x, y) := \int_{-\infty}^\infty P_\gamma(x -
    x', y) \, \bar \theta_* (x') \, dx'
  \end{align*}
  away from $\partial \Sigma$.  }
\end{proof}

\begin{proof}[Proof of item b) of Theorem \ref{th:glargesmall}] We
  first show that as $\gamma \to \infty$, we have
  $\theta_{min,\gamma}(x, 0) \to \bar \theta_0(x)$ for all $x \in \R$,
  where
  \begin{align}
    \label{eq:32}
    \bar\theta_0(x) :=
    \begin{cases}
      \pi, & x < 0, \\
      {\pi \over 2}, & x = 0, \\
      0, & x > 0.
    \end{cases}
  \end{align}
  To see this, for $0 \leq \eps < \frac12$ consider a test function
  \begin{align}
    \label{eq:27}
    \theta_\eps(x, y) := {\pi \over 2} - \arctan \left( {\sinh (\pi (1
    - 2 \eps)
    x) \over \sin (\pi ((1 - 2 \eps) y + \eps))} \right)
  \end{align}
  Notice that $\theta_\eps \in C^\infty(\overline\Sigma)$ for all
  $0 < \eps < \frac12$ and is harmonic in $\Sigma$. Furthermore, in
  this range of $\eps$ we have $\theta_\eps(x, \cdot) \to 0$
  exponentially as $x \to +\infty$ together with all its derivatives,
  and $\theta_\eps(x, \cdot) \to \pi$ exponentially as
  $x \to -\infty$. In particular, $\theta_\eps \in \mathcal A_1$ for
  $0 < \eps < \frac12$, and using symmetry of $\theta_\eps$ we have
  \begin{align}
    \label{eq:28}
    F(\theta_\eps) = \int_{-\infty}^\infty \int_0^{1/2}
    |\nabla \theta_\eps(x, y)|^2 dy \, dx + 2 \gamma \int_{-\infty}^\infty
    \sin^2 \theta_\eps (x, 0) \, dx \\ \notag
    =  \int_{-\infty}^\infty \left( \frac{\pi}{2} - \theta_\eps(x, 0) 
    \right) \partial_y
    \theta_\eps(x, 0) \, dx + 2 \gamma \int_{-\infty}^\infty
    \sin^2 \theta_\eps (x, 0) \, dx,  
  \end{align}
  where to go to the second line we integrated by parts.

  By an explicit computation we get
  \begin{align}
    \theta_\eps(x, 0)
    & = {\pi \over 2} - \arctan \left( {\sinh (\pi (1
      - 2 \eps)
      x) \over \sin (\pi \eps)} \right), \\
    \partial_y \theta_\eps(x, 0)
    & = -\frac{2 \pi  (1-2 \eps) \cos (\pi  \eps)
      \sinh (\pi  (1-2 \eps) x)}{\cos (2 \pi \eps)-\cosh (2 \pi  (1-2
      \eps) x)}, \\
    \sin^2 \theta_\eps(x, 0)
    & = \frac{1}{\csc ^2(\pi  \eps) \sinh ^2(\pi
      (1-2 \eps) x)+1} . 
  \end{align}
  In particular, as $\eps \to 0$ there holds
    \begin{align}
      \theta_\eps(\eps x, 0)
      & \simeq {\pi \over 2} - \arctan x, \\
      \eps \partial_y \theta_\eps(\eps x, 0)
      & \simeq  {x \over 1 + x^2}, \\
      \sin^2 \theta_\eps(\eps x, 0)
      & \simeq \frac{1}{1 + x^2} . 
    \end{align}
    Therefore, by standard asymptotic techniques for integrals we
    obtain as $\eps \to 0$:
    \begin{align}
      \label{eq:30}
      F(\theta_\eps) \simeq \pi \log \eps^{-1} + 2 \pi
      \gamma \eps,
    \end{align}
    and choosing $\eps = \gamma^{-1}$ yields
    \begin{align}
      \label{eq:31}
      2 \gamma \int_{-\infty}^\infty \sin^2 \theta_{min,\gamma}(x, 0) \, dx
      \leq F(\theta_{min,\gamma}) \leq F(\theta_\eps) \leq 2 \pi \log
      \gamma,
    \end{align}
    for all $\gamma$ sufficiently large. Thus in view of monotonicity
    of $\theta_{min,\gamma}$ we have
    $\theta_{min,\gamma}(x, 0) \to \bar\theta_0(x)$ for all $x \in \R$
    as $\gamma \to \infty$.  Furthermore, this convergence is locally
    uniform in ${\overline\R} \setminus \{0\}$.

    Notice that $\theta_0$ defined in \eqref{eq:27} is the harmonic
    extension of $\bar \theta_0$ from $\partial \Sigma$ to
    $\Sigma$. Furthermore, by direct computation
    \begin{align}
      \label{eq:34}
      \theta_0(x, y) = \int_{-\infty}^\infty P(x - x', y) \, \bar
      \theta_0(x') \, dx',
    \end{align}
    where $P(x, y)$ is the Poisson kernel defined in
    \eqref{eq:33}. Notice that
    $P(x, y) \simeq {y \over \pi (x^2 + y^2)}$ for $|x|, |y| \ll 1$,
    and $P(\cdot, y)$ decays exponentially at infinity for all
    $y \in (0,1)$. Therefore, by the representation
    \begin{align}
      \label{eq:35}
      \theta_{min,\gamma}(x, y) = \int_{-\infty}^\infty P(x - x', y) \,
      \theta_{min,\gamma}(x', 0) \, dx' 
    \end{align}
    from \eqref{eq:29} and locally uniform convergence of
    $\theta_{min,\gamma}(x, 0)$ to $\bar\theta_0(x)$ in
    ${\overline\R} \setminus \{0\}$, we conclude that
    {$\theta_{min,\gamma} \to \theta_0$ locally uniformly in
      $\Sigma$} as $\gamma \to \infty$.
\end{proof}

\section{The reduced two-dimensional micromagnetic model}
\label{sec:mag}

We now turn to the analysis of the relationship between the minimizers
of the reduced micromagnetic model introduced in \eqref{epsilone} and
those of the thin film limit model in \eqref{limitingE}.  In what
follows it is understood that both $E_\e$ and $E_0$ are defined for
any function in $L^2_{loc}(\Sigma; \mathbb S^1)$ simply by setting
them equal to $+\infty$ outside $\Sp$ and
$H^1_{l}(\Sigma; \mathbb S^1)$, respectively. Note that
$\{m\in L^2_{loc}(\Sigma; \mathbb S^1):\, E_\e(m)<+\infty\}$ is a
strict subset of $H^1_{l}(\Sigma; \mathbb S^1)$, and the same is true
for $E_0$.

In what follows, we assume that, if not otherwise specified, $C$ is a
positive constant that might depend only on $\gamma$, $h$ and
$\|\eta'\|_{\infty}$. {We also denote by $\mathscr F(f)$ the
  Fourier transform of {$f \in L^2(\R^2)$, defined as}
\begin{align}
  \label{eq:16}
  \mathscr F(f)(\mathbf k) := \int_{\mathbb R^2} e^{-i \mathbf k \cdot
  \mathbf r} f(\mathbf r) \, d^2 r,
\end{align}
for $f \in {L^1(\mathbb R^2) \cap L^2(\R^2)}$.
}

We start with several simple lemmas which will be useful in handling
an {\em unbounded} domain $\Sigma$. We provide proofs for the reader's
convenience. Recall that $[ w ]_{\mathring{H}^{1/2}(\R)}$ refers to
the Gagliardo seminorm of $w$ defined in \eqref{Gsemi}.

\begin{lemma}
  \label{lem:trace}
  There exists $C > 0$ such that for all $w \in {H}^1(\Sigma)$ and all
  $y \in [0,1]$ there holds:
  \begin{align}
    [ w(\cdot, y)]^2_{\mathring{H}^{1/2}{(\R)}} + \| w(\cdot, y) \|_{L^2(\R)}^2 
    & \leq C
      (\|\nabla w \|_{L^2{(\Sigma)}}^2 + \|w
      \|_{L^2{(\Sigma)}}^2), \label{eq:trace12} \\
    \|w \|_{L^2(\Sigma)}^2
    & \leq C (\|\nabla
      w \|_{L^2(\Sigma)}^2+\|w(\cdot, y) \|^2_{L^2(\R)}), \label{eq:poin} 
  \end{align}
  where $w(\cdot, y)$ is understood in the sense of trace.
\end{lemma}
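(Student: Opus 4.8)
The two inequalities are standard trace/Poincaré estimates on the strip $\Sigma = \mathbb R\times(0,1)$, but with constants uniform in $y\in[0,1]$ and independent of the (infinite) $x$-extent. The natural approach is to reduce to the one-dimensional case in the $y$-variable by fixing $x$, then integrate in $x$ using the Fourier transform to control the Gagliardo seminorm. I would proceed by density, assuming $w\in C^\infty(\overline\Sigma)\cap H^1(\Sigma)$.

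For \eqref{eq:poin}, the idea is the elementary identity $w(x,y') = w(x,y) + \int_y^{y'}\partial_s w(x,s)\,ds$ for $y,y'\in[0,1]$, so that $|w(x,y')|^2 \leq 2|w(x,y)|^2 + 2\int_0^1|\partial_s w(x,s)|^2\,ds$ by Cauchy–Schwarz (using $|y'-y|\le 1$). Integrating this in $y'\in(0,1)$ and then in $x\in\mathbb R$ gives $\|w\|_{L^2(\Sigma)}^2 \le 2\|w(\cdot,y)\|_{L^2(\mathbb R)}^2 + 2\|\partial_y w\|_{L^2(\Sigma)}^2 \le 2\|w(\cdot,y)\|_{L^2(\mathbb R)}^2 + 2\|\nabla w\|_{L^2(\Sigma)}^2$, which is \eqref{eq:poin} with $C=2$.

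For \eqref{eq:trace12}, the $L^2$ part of the trace bound follows by the same computation as above read in the opposite direction: $|w(x,y)|^2 \le 2|w(x,y')|^2 + 2\int_0^1|\partial_s w(x,s)|^2\,ds$, integrate in $y'$ and $x$ to get $\|w(\cdot,y)\|_{L^2(\mathbb R)}^2 \le 2\|w\|_{L^2(\Sigma)}^2 + 2\|\nabla w\|_{L^2(\Sigma)}^2$. For the Gagliardo seminorm I would pass to the Fourier transform in $x$: writing $\widehat w(\xi,y)$ for the $x$-Fourier transform, one has (up to a universal constant) $[w(\cdot,y)]^2_{\mathring H^{1/2}(\mathbb R)} \simeq \int_{\mathbb R}|\xi|\,|\widehat w(\xi,y)|^2\,d\xi$. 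Then, using $|\widehat w(\xi,y)|^2 \le 2|\widehat w(\xi,y')|^2 + 2|\xi|^{-1}\int_0^1|\partial_s\widehat w(\xi,s)|^2\,ds$ — which comes from the same one-dimensional fundamental-theorem-of-calculus argument applied to $\widehat w(\xi,\cdot)$ together with the elementary bound, valid when $|\xi|\ge 1$, that multiplying by $|\xi|^{-1}$ is harmless, while for $|\xi|\le 1$ one simply uses $|\xi|\,|\widehat w(\xi,y)|^2 \le |\widehat w(\xi,y)|^2$ and the already-established $L^2$ trace bound — I multiply by $|\xi|$, integrate in $y'$ and $\xi$, and Plancherel converts the right-hand side into $\|\partial_x w\|_{L^2(\Sigma)}^2 + \|\partial_y w\|_{L^2(\Sigma)}^2 + \|w\|_{L^2(\Sigma)}^2 \le C(\|\nabla w\|_{L^2(\Sigma)}^2 + \|w\|_{L^2(\Sigma)}^2)$. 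Summing the seminorm and $L^2$ estimates gives \eqref{eq:trace12}.

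The only delicate point — and the one I would be most careful about — is splitting the Fourier integral into the regions $|\xi|\le 1$ and $|\xi|\ge 1$ so that the weight $|\xi|$ is correctly compared to $1$ in each regime, and making sure the constant $C$ does not depend on $y$: this is automatic here because every step treats $y$ as a frozen parameter and the only use of its range is the bound $|y'-y|\le 1$. A final remark: the whole argument is clean on $C^\infty(\overline\Sigma)\cap H^1(\Sigma)$ functions, and one extends to general $w\in H^1(\Sigma)$ by approximation, noting that the trace map $H^1(\Sigma)\to L^2(\mathbb R)$ for each fixed $y$ is continuous (which is itself a consequence of the estimates just proved on smooth functions).
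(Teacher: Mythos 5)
Your argument for \eqref{eq:poin} is exactly the paper's (same fundamental-theorem-of-calculus estimate, Young/Jensen, integrate over the second $y$-variable and over $x$), so no issues there.

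For \eqref{eq:trace12}, however, there is a genuine gap in the key inequality you rely on for the Gagliardo seminorm. You claim, for $|\xi|\ge 1$,
\begin{equation*}
  |\widehat w(\xi,y)|^2 \;\le\; 2|\widehat w(\xi,y')|^2 + 2|\xi|^{-1}\int_0^1 |\partial_s \widehat w(\xi,s)|^2\,ds,
\end{equation*}
and justify the factor $|\xi|^{-1}$ by saying that ``multiplying by $|\xi|^{-1}$ is harmless when $|\xi|\ge 1$.'' This is backwards: for $|\xi|\ge 1$ we have $|\xi|^{-1}\le 1$, so inserting $|\xi|^{-1}$ makes the right-hand side \emph{smaller}, and the inequality does not follow from the fundamental theorem of calculus (which only gives the factor $|y-y'|\le 1$, not $|\xi|^{-1}$). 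In fact the inequality as stated, for arbitrary fixed $y'$, is false: take $\widehat w(\xi,\cdot)$ supported near $y$ and vanishing at $y'$, and let $|\xi|\to\infty$. Without the $|\xi|^{-1}$, the second term picks up an extra factor $|\xi|$ after you multiply through, and integrating in $\xi$ would then require $\partial_y w$ to lie in $H^{1/2}$ in $x$, which is not assumed. The standard repair is to \emph{not} fix $y'$ arbitrarily but to average it over a window of length $\sim|\xi|^{-1}$ (i.e.\ apply the one-dimensional interpolation inequality $|f(y)|^2 \le a\|f'\|_{L^2(0,1)}^2 + C(1+a^{-1})\|f\|_{L^2(0,1)}^2$ to $f=\widehat w(\xi,\cdot)$ with the $\xi$-dependent choice $a=|\xi|^{-1}$). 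That yields $|\xi|\,|\widehat w(\xi,y)|^2 \le C\bigl((1+|\xi|^2)\int_0^1|\widehat w(\xi,y')|^2\,dy' + \int_0^1|\partial_y\widehat w(\xi,s)|^2\,ds\bigr)$, which integrates correctly. With this fix your route goes through, and it is a different (and somewhat more elementary) route than the paper's, which instead extends $w$ across $\partial\Sigma$ by reflection and cutoff to $\tilde w\in H^1(\R^2)$, passes to the full two-dimensional Fourier transform, and uses Cauchy--Schwarz with the weight $1+k^2+s^2$ together with $\int_\R \frac{ds}{1+k^2+s^2}=\frac{\pi}{\sqrt{1+k^2}}$ to obtain all of \eqref{eq:trace12} at once, without any high/low frequency split.
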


\begin{proof}
  By a reflection with respect to the lines $y = 0$ and $y = 1$
  followed by a multiplication by a smooth cutoff function $\phi(y)$
  that vanishes outside $[-2,2]$, we may extend $w$ to a function
  $\tilde w \in H^1(\R^2)$ such that $w = \tilde w$ in $\Sigma$ and
  $\| \tilde w \|_{H^1(\R^2)} \leq C \| w \|_{H^1(\Sigma)}$ for some
  universal $C > 0$. Therefore, by a density argument we may assume
  that $w \in C^\infty_c(\R^2)$ throughout the rest of the proof.

  To prove \eqref{eq:trace12}, without loss of generality we may
  assume that $y = 0$.  {Letting $\hat w := \mathscr F(w)$} and
  using the Fourier inversion formula, we get
$$
w(x,0) = \frac{1}{(2\pi)^2} \int_{\R^2} e^{i k_1 x } \hat w(k_1, k_2)
\, dk_1 \, dk_2 = \frac{1}{(2\pi)^2} \int_\R e^{i k_1  x}\left( \int_\R \hat
  w(k_1, k_2) \, dk_2 \right) \, dk_1.
$$
Therefore, the one-dimensional Fourier transform $\hat v(k)$ of
$v(x) := w(x, 0)$ equals
$$
\hat v(k) := \int_\R e^{-i k x} w(x, 0) \, dx = \frac{1}{2\pi} \int_\R
\hat w(k, s) \, ds.
$$
Using Cauchy-Schwarz inequality, we thus obtain
$$
\left| \hat v(k) \right|^2 = \frac{1}{(2\pi)^2} \left| \int_\R \hat w(k,
  s)\, ds \right|^2 \leq \frac{1}{(2\pi)^2} \int_\R \frac{ds}{1+ k^2 +
  s^2} \int_\R |\hat w(k, s)|^2 (1+ k^2 + s^2) \, ds.
$$
In turn, using the fact that
$ \int_\R \frac{ds}{1+ k^2 + s^2} = \frac{\pi}{\sqrt{1+ k^2}}$ we
deduce that
$$
(1+ |k|)\left| \hat v(k) \right|^2 \leq 2 \sqrt{1+ k^2}\left| \hat
  v(k) \right|^2 \leq \frac{1}{2\pi} \int_\R |\hat w(k, s)|^2 (1+ k^2
+ s^2) \, ds.
$$
Finally, integrating the above inequality in $k$ and using the Fourier
representations of the $H^1(\R^2)$ and $H^{1/2}(\R)$ norms
{\cite{lieb-loss}} we obtain the desired inequality.

We now turn to \eqref{eq:poin}. By Young's and Jensen's inequalities,
for every $x \in \mathbb R$ and $y' \in [0,1]$ we have
$$
|w(x,y')|^2 = \left| w(x,y) + \int_y^{y'} \partial_s w(x,s)\, ds
\right|^2 \leq 2 |w(x,y)|^2 + 2 \int_0^1 |\partial_s w(x,s)|^2 \, ds.
$$
Therefore, integrating over $x$ and $y'$ yields \eqref{eq:poin}.
\end{proof}

\begin{lemma}\label{lm:tab} For any $a, b>0$ we have
$$
\int_0^{\infty}\frac{e^{-a\sqrt{x^2+b^2}}}{\sqrt{x^2+b^2}}\, dx=K_0(ab)\,,
$$
where $K_0(z)$ is the modified Bessel function of the second kind of
order zero.
\end{lemma}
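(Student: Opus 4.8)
The plan is to match the integral on the left-hand side with one of the classical integral representations of $K_0$ after a single change of variables. Recall the standard formula $K_0(z) = \int_0^\infty e^{-z\cosh t}\, dt$ for $z > 0$ (equivalently, $K_0(z) = \int_1^\infty e^{-zs}(s^2-1)^{-1/2}\, ds$), which is one of the usual definitions of the modified Bessel function of the second kind of order zero and can be found in any standard reference on special functions. So the only thing to do is to bring our integral into this form.

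First I would substitute $x = b\sinh t$, with $t \in (0,\infty)$, in the integral $\int_0^\infty \frac{e^{-a\sqrt{x^2+b^2}}}{\sqrt{x^2+b^2}}\, dx$. This substitution is a smooth increasing bijection of $(0,\infty)$ onto itself, with $\sqrt{x^2+b^2} = b\cosh t$ and $dx = b\cosh t\, dt$. Hence the integrand transforms as $\frac{e^{-ab\cosh t}}{b\cosh t}\cdot b\cosh t\, dt = e^{-ab\cosh t}\, dt$, and the integral becomes exactly $\int_0^\infty e^{-ab\cosh t}\, dt = K_0(ab)$, which is the claim. Alternatively, one may substitute $\sqrt{x^2+b^2} = bs$, i.e. $x = b\sqrt{s^2-1}$ with $s \in (1,\infty)$, after which the left-hand side becomes $\int_1^\infty e^{-abs}(s^2-1)^{-1/2}\, ds = K_0(ab)$; either route works.

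There is no real obstacle here: the change of variables is elementary, the map is a bijection with the indicated endpoints, and the integrands on both sides decay exponentially, so that all integrals are absolutely convergent and no limiting or regularity argument is needed. The only external ingredient is the quoted integral representation of $K_0$, which is classical; the rest is bookkeeping.
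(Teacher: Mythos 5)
Your proof is correct and matches the paper's argument exactly: the paper also substitutes $x = b\sinh t$ and invokes the classical integral representation $K_0(z) = \int_0^\infty e^{-z\cosh t}\,dt$ (citing Gradshteyn--Ryzhik 8.432-1). Nothing further is needed.
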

\begin{proof}
  The identity follows from the integral representation
  \cite[8.432-1]{gradshteyn} of $K_0(z)$ by the change of variable
  $x=b\sinh t$.
\end{proof}

\begin{lemma}\label{lm:tab2}  For any $a>0$ we have
$$
\mathscr{F}\Big(\frac{e^{-a|\mathbf r|}}{2 \pi |\mathbf
  r|}\Big)(\mathbf k)=\frac{1}{\sqrt{a^2+|\mathbf k|^2}}\,.
$$
\end{lemma}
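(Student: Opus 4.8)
The plan is to reduce the two-dimensional Fourier integral to a one-dimensional cosine transform by integrating out one variable with the help of Lemma~\ref{lm:tab}, and then to evaluate that transform by an elementary computation. First I would note that $g(\mathbf r):=\frac{e^{-a|\mathbf r|}}{2\pi|\mathbf r|}$ lies in $L^1(\R^2)$, with $\int_{\R^2}g\,d^2r=a^{-1}$ (compute in polar coordinates), so that $\mathscr F(g)$ is well defined pointwise through the integral formula \eqref{eq:16} even though $g\notin L^2(\R^2)$; by the rotational invariance of $g$ it then suffices to compute $\mathscr F(g)(\rho,0)$ with $\rho:=|\mathbf k|\geq 0$.

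Next I would integrate over the second variable. For fixed $x\neq 0$, Lemma~\ref{lm:tab}, applied with its parameter $b$ replaced by $|x|$ and its integration variable in the role of $y$, together with evenness in $y$, gives
\[
\int_{\R}\frac{e^{-a\sqrt{x^2+y^2}}}{2\pi\sqrt{x^2+y^2}}\,dy=\frac{1}{\pi}K_0(a|x|).
\]
Since $K_0(a|\cdot|)\in L^1(\R)$ (Tonelli even gives $\int_\R\frac1\pi K_0(a|x|)\,dx=a^{-1}$), we may use this and the evenness of $K_0$ to write
\[
\mathscr F(g)(\rho,0)=\frac1\pi\int_\R e^{-i\rho x}K_0(a|x|)\,dx=\frac2\pi\int_0^\infty K_0(ax)\cos(\rho x)\,dx.
\]

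It remains to show $\int_0^\infty K_0(ax)\cos(\rho x)\,dx=\frac{\pi}{2\sqrt{a^2+\rho^2}}$. Here I would insert the integral representation $K_0(z)=\int_0^\infty e^{-z\cosh t}\,dt$ (this is \cite[8.432-1]{gradshteyn}, already invoked in the proof of Lemma~\ref{lm:tab}) and interchange the order of integration, which is legitimate since $\int_0^\infty\!\int_0^\infty e^{-ax\cosh t}\,dx\,dt=\int_0^\infty\frac{dt}{a\cosh t}=\frac{\pi}{2a}<\infty$. Using $\int_0^\infty e^{-\beta x}\cos(\rho x)\,dx=\beta/(\beta^2+\rho^2)$ with $\beta=a\cosh t$, this yields $\int_0^\infty\frac{a\cosh t}{a^2\cosh^2 t+\rho^2}\,dt$, and the substitution $u=\sinh t$ turns the integrand into $\frac{a}{a^2u^2+(a^2+\rho^2)}$, whose integral over $(0,\infty)$ equals $\frac{\pi}{2\sqrt{a^2+\rho^2}}$. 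Combining the last two displays gives $\mathscr F(g)(\mathbf k)=\frac2\pi\cdot\frac{\pi}{2\sqrt{a^2+|\mathbf k|^2}}=\frac1{\sqrt{a^2+|\mathbf k|^2}}$, as claimed.

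There is no real obstacle in this argument: it is essentially a computation. The only points that warrant a word of care are that $g\in L^1\setminus L^2$, so $\mathscr F(g)$ must be understood via the integral \eqref{eq:16}, and the single interchange of integration, which is reduced to the elementary bound displayed above. Alternatively, one could compute $\mathscr F(g)$ directly in polar coordinates, obtaining $\int_0^\infty e^{-ar}J_0(\rho r)\,dr$ and quoting the standard value of this Laplace transform of $J_0$; the route via Lemma~\ref{lm:tab} is preferred here since it stays within the toolbox already set up.
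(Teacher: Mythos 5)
Your proof is correct, but it takes a genuinely different route from the paper's. The paper passes to polar coordinates, recognizes the angular integral $\frac{1}{2\pi}\int_0^{2\pi}e^{-ir|\mathbf k|\cos\theta}\,d\theta$ as the Bessel function $J_0(r|\mathbf k|)$, and then quotes the Laplace transform $\int_0^\infty J_0(r|\mathbf k|)e^{-ar}\,dr=(a^2+|\mathbf k|^2)^{-1/2}$ directly from the tables (\cite[6.611-1]{gradshteyn}). You instead stay in Cartesian coordinates, integrate out one variable via Lemma~\ref{lm:tab} to arrive at the cosine transform $\frac2\pi\int_0^\infty K_0(ax)\cos(\rho x)\,dx$, and then evaluate that transform elementarily by inserting the integral representation of $K_0$, interchanging the order of integration, and substituting $u=\sinh t$. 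Your route is somewhat longer but has two advantages: it lets Lemma~\ref{lm:tab} do double duty rather than being used only once, and it replaces the citation of a second table entry with a short self-contained computation whose only ingredient is the same representation \cite[8.432-1]{gradshteyn} already invoked in Lemma~\ref{lm:tab}. The $L^1$ versus $L^2$ point you flag, and the justification of the Fubini interchange, are both handled correctly. The polar-coordinate shortcut you mention at the end as an alternative is precisely what the paper does, and is slightly more economical.
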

\begin{proof} Denoting by $J_0(z)$ the Bessel function of the first
  kind of order zero, recall that for every $t\in \R$ we have
$$
J_0(t)=\frac{1}{2\pi}\int_0^{2\pi}e^{-i t \cos\theta}\, d\theta\,,
$$   
see \cite[8.411]{gradshteyn}. Therefore,
\begin{align*}
  \mathscr{F}\Big(\frac{e^{-a|\mathbf r|}}{2 \pi |\mathbf
  r|}\Big)(\mathbf k) 
  &=\frac{1}{2\pi}\int_0^{\infty} \left( \int_0^{2\pi}e^{-i r|\mathbf k|
    \cos\theta}\, d\theta\right) e^{-ar}\,dr\\ 
  &=\int_0^{\infty}J_0(r|\mathbf k|)\,\, e^{-ar}\, dr=
    \frac{1}{\sqrt{a^2+|\mathbf k|^2}}\,,
\end{align*}
where the last equality follows from \cite[6.611-1]{gradshteyn}.
\end{proof}

{We now proceed towards the proof of Theorem \ref{th:micromin}.} We
first establish the following result.
\begin{proposition}\label{prop:biscottina} There exists $\e_0>0$ and
  $C>0$ depending only on $\|\eta'\|_\infty$ such that for all
  $\e\in (0, \e_0)$ and {$m\in \Sp$}, the following inequality
  holds: \beq \label{biscottina2}
\begin{split}
  \frac{1}{|\ln \e|} {\int_{\R^2}
    \frac{|\mathscr{F}\big({\Div(\eta_\e m)\big)}|^2}{2 \pi |\mathbf
      k|} \, d^2 k} &\geq 2(1-\beta) \Big(\int_\R m^2_{2} (x,0)\, dx +
  \int_\R m^2_{2} (x,1)\, dx\Big)
  \\
  &- \frac{C}{\beta|\ln \e|}(\|\nabla m\|^2_{L^2(\Sigma)} +
  \|m_{2}\|^2_{L^2(\Sigma)})
\end{split}
\eeq for all $\beta\in (0, 1)$. 
\end{proposition}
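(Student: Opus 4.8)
The plan is to pass to Fourier space, reading the left-hand side of \eqref{biscottina2} as $|\ln\e|^{-1}$ times $\|\Div(\eta_\e m)\|_{\mathring H^{-1/2}(\R^2)}^2=\int_{\R^2}\frac{|\mathscr F(\Div(\eta_\e m))(\mathbf k)|^2}{2\pi|\mathbf k|}\,d^2k$, with $\Div(\eta_\e m)$ extended by zero to $\R^2$ (legitimate since $\eta_\e=0$ on $\pa\Sigma$). The crucial structural choice is to use the decomposition
$$\Div(\eta_\e m)=\eta_\e\,\pa_x m_1+\pa_y(\eta_\e m_2),$$
keeping the two summands apart: $\eta_\e\pa_x m_1$ is a ``regular'' term, supported in the unit-width strip and pointwise dominated by $|\nabla m|$, whereas $\pa_y(\eta_\e m_2)$ carries the logarithmically divergent self-interaction of the boundary traces $m_2(\cdot,0)$ and $m_2(\cdot,1)$. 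From $|a+b|^2\ge(1-\beta)|a|^2-\beta^{-1}|b|^2$ applied to the Fourier transforms and integrated against $(2\pi|\mathbf k|)^{-1}$,
$$\|\Div(\eta_\e m)\|_{\mathring H^{-1/2}(\R^2)}^2\ge(1-\beta)\|\pa_y(\eta_\e m_2)\|_{\mathring H^{-1/2}(\R^2)}^2-\beta^{-1}\|\eta_\e\pa_x m_1\|_{\mathring H^{-1/2}(\R^2)}^2,$$
so it would suffice (dividing by $|\ln\e|$, taking $\e_0$ with $|\ln\e_0|\ge1$, and using $(1-\beta)+\beta^{-1}\le2\beta^{-1}$) to prove, with $C=C(\|\eta'\|_\infty)$,
\begin{align*}
\|\eta_\e\pa_x m_1\|_{\mathring H^{-1/2}(\R^2)}^2&\le C\big(\|\nabla m\|_{L^2(\Sigma)}^2+\|m_2\|_{L^2(\Sigma)}^2\big),\\
\|\pa_y(\eta_\e m_2)\|_{\mathring H^{-1/2}(\R^2)}^2&\ge 2|\ln\e|\big(\|m_2(\cdot,0)\|_{L^2(\R)}^2+\|m_2(\cdot,1)\|_{L^2(\R)}^2\big)-C\big(\|\nabla m\|_{L^2(\Sigma)}^2+\|m_2\|_{L^2(\Sigma)}^2\big).
\end{align*}

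For the regular estimate I would split the $\mathbf k$-integral into $\{|\mathbf k|\ge1\}$, where $|\mathbf k|^{-1}\le1$ and Plancherel together with $|\eta_\e\pa_x m_1|\le|\nabla m|$ give $\le C\|\nabla m\|_{L^2(\Sigma)}^2$, and $\{|\mathbf k|<1\}$, where $(2\pi|\mathbf k|)^{-1}\,d^2k$ has finite mass and it is enough to bound $\mathscr F(\eta_\e\pa_x m_1)$ at low frequencies. Writing $m=(\cos\theta,\sin\theta)$, so $\pa_x m_1=-m_2\pa_x\theta$ with $|m_2|\le1$ and $|\nabla\theta|=|\nabla m|$, the partial Fourier transform in $x$ is bounded by $\|m_2(\cdot,y)\pa_x\theta(\cdot,y)\|_{L^1(\R)}$; combining this with the trace inequality \eqref{eq:trace12} of Lemma~\ref{lem:trace} and Cauchy--Schwarz in $y$ should control the low-frequency contribution by $C(\|\nabla m\|_{L^2(\Sigma)}^2+\|m_2\|_{L^2(\Sigma)}^2)$ --- with some care needed to obtain a bound that is genuinely \emph{linear}, rather than quadratic, in the energy.

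The singular estimate is the heart of the matter. Writing $g:=\pa_y(\eta_\e m_2)=\eta_\e'm_2+\eta_\e\pa_y m_2$ and passing to the partial Fourier transform $\check g(k_1,y)=\mathscr F_x(g(\cdot,y))(k_1)$, I would use the Bessel-kernel identity $\|g\|_{\mathring H^{-1/2}(\R^2)}^2=\pi^{-1}\int_\R\int_0^1\int_0^1\check g(k_1,y)\overline{\check g(k_1,y')}\,K_0(|k_1|\,|y-y'|)\,dy\,dy'\,dk_1$ valid for strip-supported $g$ (which follows from $\int_{-\infty}^\infty(k_1^2+k_2^2)^{-1/2}e^{ik_2 s}\,dk_2=2K_0(|k_1|\,|s|)$, in the spirit of Lemmas~\ref{lm:tab}--\ref{lm:tab2}), together with the logarithmic singularity $K_0(z)=\ln(1/z)+O(1)$ as $z\to0$. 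The term $\eta_\e'\check m_2$ is the dominant one: near $y=0$ it equals $\e^{-1}\eta'(y/\e)\check m_2(k_1,0)$ up to an error controlled by $\pa_y m_2$, and $\e^{-1}\eta'(\cdot/\e)$ has unit integral, so its contribution is $\int_\R|\check m_2(k_1,0)|^2\big(\int_{|k_2|\lesssim\e^{-1}}(2\pi\sqrt{k_1^2+k_2^2})^{-1}dk_2\big)dk_1\simeq\pi^{-1}|\ln\e|\int_\R|\check m_2(k_1,0)|^2dk_1=2|\ln\e|\,\|m_2(\cdot,0)\|_{L^2(\R)}^2$, and analogously at $y=1$. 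All remaining contributions --- the $\eta_\e\pa_y m_2$ term, the cross terms, the interaction between the two boundary layers, and the $y$-variation of $\check m_2$ --- are $O(1)$ and should be absorbed into $C(\|\nabla m\|_{L^2(\Sigma)}^2+\|m_2\|_{L^2(\Sigma)}^2)$ using $0\le\eta_\e\le1$, $|\eta_\e'|\le\e^{-1}\|\eta'\|_\infty$, $\eta_\e m_2|_{\pa\Sigma}=0$ and Lemma~\ref{lem:trace}.

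The main obstacle is exactly this last step: extracting the \emph{sharp} constant $2$ in front of $|\ln\e|$ (and not merely $O(|\ln\e|)$) while keeping the remainder genuinely $O(1)$ and \emph{linearly} controlled by $\|\nabla m\|_{L^2(\Sigma)}^2+\|m_2\|_{L^2(\Sigma)}^2$, uniformly in $\e$ and in $m\in\Sp$. The underlying subtlety --- and the reason one must use the decomposition $\eta_\e\pa_x m_1+\pa_y(\eta_\e m_2)$ rather than the more obvious $\eta_\e\Div m+\eta_\e'm_2$ --- is that the $\mathring H^{-1/2}(\R^2)$ norm of a strip-supported function is \emph{not} controlled by its $L^2$ norm; the chosen decomposition confines all of the dangerous low-frequency behaviour to $\pa_y(\eta_\e m_2)$, where the outer $y$-derivative acting on a function that vanishes on $\pa\Sigma$ renders it tractable via the Bessel representation and the integration by parts it affords.
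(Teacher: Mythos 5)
Your high-level strategy --- isolate a ``regular'' and a ``singular'' contribution to $\Div(\eta_\e m)$ via Young's inequality in Fourier space, then show that the singular part carries the $2|\ln\e|$ leading term while the regular part is $O(1)$ --- is the same strategy as in the paper. But the execution differs in two important places, and one of these is a genuine gap.

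First, the decomposition. You use $\Div(\eta_\e m)=\eta_\e\,\partial_x m_1+\partial_y(\eta_\e m_2)$ and assert that the ``more obvious'' split $\eta_\e\Div m+\eta_\e'm_2$ cannot work. In fact the paper uses exactly that ``obvious'' split (their $I_1,2I_2,I_3$ correspond to $\eta_\e\Div m$, the cross term, and $\nabla\eta_\e\cdot m=\eta_\e'm_2$), and it works. What makes it work is a step you omit: before expanding, the paper replaces $|\mathbf k|^{-1}$ by $(a^2+|\mathbf k|^2)^{-1/2}$ for a fixed $a\in(0,1]$, which only lowers the nonlocal term. In real space this replaces $1/|\mathbf r|$ by the Yukawa kernel $e^{-a|\mathbf r|}/|\mathbf r|$, which is in $L^1(\R^2)$. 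That integrability is what allows the ``regular'' self-interaction $I_1$ to be bounded by Young's convolution inequality purely in terms of $\|\Div m\|_{L^2(\Sigma)}^2\le\|\nabla m\|_{L^2(\Sigma)}^2$, and what makes the Bessel-kernel argument depend on the fixed parameter $a$ rather than on the dual variable $k_1$.

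Second, the gap this creates in your argument. Without the regularization, your low-frequency estimate for $f:=\eta_\e\partial_x m_1$ does not close. On $\{|\mathbf k|<1\}$ you bound $\int\frac{|\hat f|^2}{2\pi|\mathbf k|}d^2k\le\|\hat f\|_{L^\infty}^2\le\|f\|_{L^1(\Sigma)}^2$, and with $\partial_x m_1=-m_2\partial_x\theta$ and Cauchy--Schwarz you get $\|f\|_{L^1}\le\|m_2\|_{L^2(\Sigma)}\|\partial_x\theta\|_{L^2(\Sigma)}$. Squaring yields a bound \emph{quadratic} in $\|\nabla m\|_{L^2}^2+\|m_2\|_{L^2}^2$, not the required linear one, and this is not a cosmetic defect: for a head-to-head wall, $\hat f(0)=\int_0^1\eta_\e(y)[m_1(+\infty,y)-m_1(-\infty,y)]\,dy\approx\pm2$ no matter how large or small the energy is, so the low-frequency contribution to $\|\eta_\e\partial_x m_1\|_{\mathring H^{-1/2}}^2$ is bounded away from $0$ and cannot be made small by making $\|\nabla m\|_{L^2}^2+\|m_2\|_{L^2}^2$ small. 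You flag this (``some care needed to obtain a bound that is genuinely linear\ldots''), but the care required is precisely the regularization trick, and without it the claimed regular estimate is false as a statement about all $m\in\Sp$. A similar (milder) complication enters your singular estimate: your Bessel kernel $K_0(|k_1||y-y'|)$ has a $k_1$-dependent argument, producing an extra $-\ln|k_1|$ weight in the leading term that must then be separately controlled by the $\mathring H^{1/2}$ trace seminorm; the paper's version has $K_0(a|y-y'|)$ with fixed $a$ and avoids this entirely. In short: your decomposition is fine and your Bessel-kernel computation of the leading $2|\ln\e|$ constant is essentially right, but the omission of the kernel regularization leaves a real hole in the $O(1)$ control of the regular term.
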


%\begin{proposition}\label{prop:biscottina-2}
%Moreover, if $m_\e \wto m$ weakly in
%$H^1_{l}(\Sigma; \mathbb S^1)$, with
%$\sup_{\e\in (0,\e_0)}E_\e(m_\e)<+\infty$, then \beq\label{seconda}
%\liminf_{\e \to 0} \frac{1}{|\ln \e|} \int_{\R^2} \int_{\R^2}
%\frac{\Div(\eta_\e m_\e)(\mathbf r) \, \Div(\eta_\e m_\e)(\mathbf
%  r')}{ | \mathbf r - \mathbf r'|} \, d^2 r \, d^2 r' \geq 2 \int_\R
%m^2_2(x,0)\, dx \ +\ 2 \int_\R m^2_2(x,1) \, dx\,.  \eeq Finally, for
%any $m\in \Sp \cap H^1_{l}(\Sigma; \mathbb S^1)$ with $E_0 (m) <+\infty$ and
%\ed{the set} $\{x\in \Sigma:\, \nabla m(x)\neq 0\}$ \ed{essentially}
%bounded we have \beq\label{terza} \limsup_{\e \to 0} \frac{1}{|\ln
%  \e|} \int_{\R^2} \int_{\R^2} \frac{\Div(\eta_\e m)(\mathbf r) \,
%  \Div(\eta_\e m)(\mathbf r')}{ | \mathbf r - \mathbf r'|} \, d^2 r \,
%d^2 r' \leq 2 \int_\R m^2_2(x,0)\, dx \ +\ 2 \int_\R m^2_2(x,1) \,
%dx\,.  \eeq
%\end{proposition}
%\ed{*** should we split it into 2 propositions (lower and upper
% bounds) and 1 corollary (compactness)? ***}
\begin{proof} 
  We first note that extending $m$ by zero outside $\Sigma$ we have
  $m\eta_\e \in H^1_{loc} (\R^2; \R^2)$. Furthermore, due to our
  assumptions on $m$ we get
  $\Div\, (\eta_\e m) = \eta_\e \Div\, m + \eta_\e' m_2 \in L^2
  (\R^2)$ and, therefore, its Fourier transform makes sense in
  $L^2(\R^2)$ {\cite{lieb-loss}}. We next fix $0<a \leq 1$ to
  obtain \beq \label{Fdiv} \int_{\R^2}
  \frac{|\mathscr{F}\big({\Div(\eta_\e m)\big)}|^2}{|\mathbf k|} \,
  {d^2 k \over (2 \pi)^2} \geq \int_{\R^2}
  \frac{|\mathscr{F}\big({\Div(\eta_\e m)\big)}|^2}{\sqrt{|\mathbf
      k|^2 + a^2}} \, {d^2 k \over (2 \pi)^2} .  \eeq Thus, using
  Lemma~\ref{lm:tab2}, we have {\cite[Theorem 5.8]{lieb-loss}}
  \beq \label{expriesz} {\int_{\R^2}
    \frac{|\mathscr{F}\big({\Div(\eta_\e m)\big)}|^2}{2 \pi |\mathbf
      k|} \, d^2 k} \geq \int_{\R^2} \int_{\R^2} \Div(\eta_\e
  m)(\mathbf r) \, \Div(\eta_\e m)(\mathbf r') \frac{e^{-a |\mathbf r
      - \mathbf r'|}}{|\mathbf r - \mathbf r'|} \, d^2 r \, d^2
  r'. \eeq The above trick allows us to control the behavior of the
  expression under the integral at infinity and significantly
  simplifies the subsequent analysis of the magnetostatic energy,
  essentially reducing it to the analysis on compact domains.

  We now define \beq \mathcal{K}_a(\mathbf r-\mathbf r') :=
  \frac{e^{-a|\mathbf r-\mathbf r '|}}{| \mathbf r - \mathbf r'|} \eeq
  and proceed to write the integral in the right-hand side of
  \eqref{expriesz} as \beq\label{perterza} \int_{\R^2} \int_{\R^2}
  \Div(\eta_\e m)(\mathbf r) \, \Div(\eta_\e m)(\mathbf r')
  \mathcal{K}_a(\mathbf r -\mathbf r ') \, d^2 r \, d^2 r' = I_1+2
  I_2+I_3, \eeq where \beq \begin{split} I_1 & := \int_{\R^2}
    \int_{\R^2}\eta_\e({\mathbf r}) \Div(m)(\mathbf r) \,
    \eta_\e({\mathbf r}') \Div(m)(\mathbf r') \mathcal{K}_a(\mathbf r
    -\mathbf r
    ') \,  d^2 r \, d^2 r',   \\
    I_2 & := \int_{\R^2} \int_{\R^2} \eta_\e({\mathbf r})
    \Div(m)(\mathbf r) \, (\nabla \eta_\e\cdot m)(\mathbf r')
    \mathcal{K}_a(\mathbf r
    -\mathbf r ') \,  d^2 r \, d^2 r',  \\
    I_3 & := \int_{\R^2} \int_{\R^2} (\nabla \eta_\e\cdot m)(\mathbf
    r) \, (\nabla \eta_\e\cdot m)(\mathbf r') \mathcal{K}_a(\mathbf r
    -\mathbf r ') \, d^2 r \, d^2 r' .
\end{split}
\eeq Using the Fourier representation and Young's inequality, one can
see that 
$$
-\frac{1}{\beta} I_1 - \beta I_3\leq 2 I_2 \leq \frac{1}{\beta} I_1
+\beta I_3,
$$
for any $\beta>0$. Therefore, we have \beq\label{vallissimo}
(1-\beta^{-1}) I_1+ (1-\beta) I_3\leq \int_{\R^2} \int_{\R^2}
\Div(\eta_\e m)(\mathbf r) \, \Div(\eta_\e m)(\mathbf r')
\mathcal{K}_a(\mathbf r -\mathbf r ') \, d^2 r \, d^2 r' \leq
(1+\beta^{-1}) I_1+ (1+\beta) I_3\,.  \eeq Using Young's inequality
for convolutions, we can estimate \beq I_1 \leq
\|\mathcal{K}_a\|_{L^1(\R^2)}\| \Div \, m\|^2_{L^2(\Sigma)} \leq
\frac{4\pi}{a} \| \nabla m\|^2_{L^2(\Sigma)}.  \eeq In order to
estimate $I_3$ we write \beq\label{vallissimissimo} I_3 =
J_1+2J_2+J_3, \eeq where \beq \begin{split} J_1 & :=\frac{1}{\e^2}
  \int_{\R\times[0,\e]}\int_{\R\times[0, \e]} \eta'(y/\e) m_2 (\mathbf
  r) \, \eta'(y'/\e) m_2(\mathbf r') \mathcal{K}_a(\mathbf r
  -\mathbf r ') \, d^2 r \, d^2 r', \\
  J_2 & := \frac{1}{\e^2} \int_{\R\times[0,\e]}\int_{\R\times[1-\e,1]}
  \eta'(y/\e) m_2 (\mathbf r) \, \eta'(y'/\e) m_2(\mathbf r')
  \mathcal{K}_a(\mathbf r
  -\mathbf r ') \, d^2 r \, d^2 r', \\
  J_3 & := \frac{1}{\e^2}
  \int_{\R\times[1-\e,1]}\int_{\R\times[1-\e,1]} \eta'(y/\e) m_2
  (\mathbf r) \, \eta'(y'/\e) m_2(\mathbf r') \mathcal{K}_a(\mathbf r
  -\mathbf r ') \, d^2 r \, d^2 r'.
\end{split}
\eeq We would like to show that $J_2$ is negligible compared to $J_1$
and $J_3$. Using Young's inequality for convolutions, it is
straightforward to see that for $\e$ sufficiently small  \beq\label{J2}
\begin{split}
  J_2 &\leq \frac{C}{\e^2} \int_0^\e \int_{1-\e}^1\int_{\R}\int_{\R}
  |m_2 (x,y)| \, |m_2 (x', y')| \frac{e^{-a|x-x'|}}{\sqrt{|x-x'|^2
      +1/2}} \, dx \, dx' dy \, dy'\,  \\
  &\leq \frac{C}{a\e^2} \int_0^\e \| m_2 (\cdot, y) \|_{L^2(\R)} \,
  dy\int_{1-\e}^1\| m_2 (\cdot, y') \|_{L^2(\R)} \, dy'.
\end{split}
\eeq Hence by Lemma \ref{lem:trace} we have \beq J_2 \leq \frac{C}a
(\|\nabla m_2\|^2_{L^2{(\Sigma)}} + \|m_2\|^2_{L^2{(\Sigma)}} ).  \eeq

It is clear that the integrals $J_1$ and $J_3$ are similar. Therefore,
we provide an estimate for $J_1$ only. We write
\beq\label{vallissimissimissimo}
\begin{split}
  J_1 &=H_1+H_2:=\frac{1}{\e^2} \int_{\R\times(0,\e)}\int_{\R\times(0,
    \e)} \eta'(y/\e) m_2 (x,y) \, \eta'(y'/\e) m_2(x,y')
  \mathcal{K}_a(\mathbf r -\mathbf r') \, \, {d^2 r \, d^2 r'}  \\
  &+ \frac{1}{\e^2}\int_{\R\times(0,\e)}\int_{\R\times(0, \e)}
  \eta'(y/\e) m_2 (x,y) \, \eta'(y'/\e) (m_2(x',y') - m_2(x,y'))
  \mathcal{K}_a(\mathbf r -\mathbf r') \, {d^2 r \, d^2 r'} .
\end{split}
\eeq We now estimate $H_2$ as follows: 
\beq\label{H2}
\begin{split}
  H_2&\leq \frac{C}{\e^2} \int_0^\e \int_0^\e\int_{\R}\int_{\R} |m_2
  (x,y)| {e^{-a|x-x'|}}\, \frac{ |m_2(x',y') - m_2(x,y')| }{|x-x'|}\,
  dx\, dx'\, dy\, dy'\\
  &\leq \frac{C}{\e^2} \int_0^\e \int_0^\e \left(\int_{\R}\int_{\R}
    |m_2 (x,y)|^2 {e^{-2a|x-x'|}}\, dx\, dx'\right)^{\frac12}
  [m_2(\cdot, y')]_{{\mathring{H}}^{\frac12}{(\R)}}\, dy\, dy'\\
  &\leq \frac{C}{\e^2 {\sqrt{a}} } \int_0^\e \|m_2(\cdot,
  y)\|_{L^2{(\R)}}\, dy \int_0^\e[m_2(\cdot,
  y')]_{{\mathring{H}}^{\frac12}{(\R)}}\, dy'\,, 
\end{split}
\eeq where to obtain the second line we used Cauchy-Schwarz
inequality. Using again Lemma \ref{lem:trace} and Young's inequality,
from \eqref{H2} we may conclude that
$$
H_2\leq \frac{C}{\sqrt{a}} (\|\nabla m_2\|^2_{L^2{(\Sigma)}} +
\|m_2\|^2_{L^2{(\Sigma)}} )\,.
$$

Concerning $H_1$, integrating first in $x'$ and using
Lemma~\ref{lm:tab}, we get \beq\label{splitH1}
\begin{split}
  H_1 &=\frac2{\e^2}\int_0^\e\int_0^\e \int_{\R} \eta'(y/\e) m_2 (x,y)
  \,  \eta'(y'/\e) m_2(x,y') K_0(a|y-y'|)\,dx\, dy\, dy' \\ 
  &= 2\int_0^1\int_0^1 \int_{\R} \eta'(y) m_2 (x,\e y) \,  \eta'(y')
  m_2(x,\e y') K_0(a\e |y-y'|)\,dx\, dy\, dy'\\ 
  & = 2 \int_0^1\int_0^1 \int_{\R} \eta'(y) \eta'(y') m^2_2 (x,0) \,
  K_0(a\e |y-y'|)\,dx\, dy\, dy'+2 H_{1,1}\,,
\end{split}
\eeq where \beq\label{H11} H_{1,1}:=\int_0^1\int_0^1 \int_{\R}
\eta'(y) \eta'(y') (m_2 (x,\e y)-m_2 (x,0)) \, (m_2 (x,\e y')+m_2
(x,0)) K_0(a\e |y-y'|)\,dx\, dy\, dy' \,.  \eeq Note that for all $\e$
sufficiently small and $t \in (0,a\e)$ we have
$K_0(t) \leq 2 |\ln(t)|$ and hence
\beq\label{H11est0}
\begin{split}
  H_{1,1}&\leq2 \int_0^1\int_0^1 \int_{\R} |m_2 (x,\e y)-m_2 (x,0)| \,
  |m_2 (x,\e y')+m_2 (x,0)| \, |\ln(a\e|y-y'|)|\, \,dx\, dy\, dy' \\
  &\leq C|\ln (a\e)| \int_{\R} \left(\int_0^1 |m_2 (x,\e y)-m_2
    (x,0)|^2 \, \,dy\right)^{\frac12}\, \left(\int_0^1 |m_2 (x,\e
    y')+m_2 (x,0)|^2 \, \,dy'\right)^{\frac12}\ dx\\
    &+ C\int_{\R} \left(\int_0^1 |m_2 (x,\e y)-m_2
    (x,0)|^2 \, \,dy\right)^{\frac12}\, \left(\int_0^1 |m_2 (x,\e
    y')+m_2 (x,0)|^2 \, \,dy'\right)^{\frac12}\ dx,
\end{split}
\eeq where for the last line we used Young's inequality for
convolutions. It is clear that for $\e$ small enough we can absorb the
expression in the last line to the expression in the second line
above. Moreover, for a.e $x \in \R$ we can estimate
$$
\int_0^1 |m_2 (x,\e y)-m_2
    (x,0)|^2 \, \,dy \leq \e \int_0^1 |\nabla m_2(x,y)|^2\, dy
$$
and
$$
\int_0^1 |m_2 (x,\e y')+m_2 (x,0)|^2 \, \,dy' \leq C \left(
  |m_2(x,0)|^2 +\e \int_0^1 |\nabla m_2(x,y)|^2\, dy \right).
$$
Therefore, using Cauchy-Schwarz and Young's inequalities, by
Lemma~\ref{lem:trace} we obtain \beq\label{H11est}
\begin{split}   
  H_{1,1} &\leq C |\ln (a\e)|\sqrt{\e}\int_{\R}
  \|\nabla m_2(x,\cdot)\|_{L^2(0,1)}\, \left(|m_2(x,0)|^2 +\e
    \|\nabla m_2(x,\cdot)\|_{L^2(0,1)}^2 \right)^{\frac12}\ dx \\ 
  & \leq C |\ln (a\e)|\sqrt{\e} (\|\nabla m_2\|^2_{L^2(\Sigma)} +
  \|m_2\|^2_{L^2(\Sigma)})\,.
\end{split}
\eeq 
Now we note
that, for $\e$ small enough and $t \in (0,a\e)$, we have
$|K_0(t) + \ln(t)| \leq C $ and we get
\begin{align*}
  \left| \int_0^1\int_0^1 \int_{\R} \eta'(y) \eta'(y') m^2_2 (x,0) \,
  K_0(a\e |y-y'|)\,dx\, dy\, {\, dy'} \right.
  & \left.- |\ln \e| \int_\R m^2_2
    (x,0)\, dx \right| \\
  &\leq
    C(|\ln
    a| +1)
    (\|\nabla
    m_2\|^2_{L^2{(\Sigma)}}
    +
    \|m_2\|^2_{L^2{(\Sigma)}}). 
\end{align*}
Finally, combining the above estimates we obtain \eqref{biscottina2},
and we establish the proposition.
\end{proof}

\begin{corollary}\label{cor:biscottina}
  Assume {$m_\e \in \Sp$} and
  $\limsup_{\e \to 0} E_\e(m_\e)<+\infty$. Then
\begin{itemize}
\item $\limsup_{\e\to 0} \|m_{2, \e}\|^2_{L^2(\pa \Sigma)} < \infty$;
\item $\limsup_{\e\to 0} \|m_{2, \e}\|^2_{L^2(\Sigma)} < \infty$.
\end{itemize}
\end{corollary}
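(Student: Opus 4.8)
The plan is to combine the lower bound on the (rescaled) nonlocal energy from Proposition~\ref{prop:biscottina} with the Poincar\'e-type inequality \eqref{eq:poin} of Lemma~\ref{lem:trace}; the crucial feature is that the error term in \eqref{biscottina2} carries a prefactor $|\ln\e|^{-1}$ that vanishes as $\e\to0$, which will allow an absorption argument. First I would record the consequences of the energy bound: since all three terms in the expression \eqref{epsiloneF} for $E_\e$ are nonnegative, the hypothesis $\limsup_{\e\to0}E_\e(m_\e)<+\infty$ implies at once that $\limsup_{\e\to0}\|\nabla m_\e\|^2_{L^2(\Sigma)}<\infty$ and $\limsup_{\e\to0}\tfrac{1}{|\ln\e|}\int_{\R^2}|\mathscr F(\Div(\eta_\e m_\e))|^2(2\pi|\mathbf k|)^{-1}\,d^2k<\infty$. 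To lighten the notation set $A_\e:=\|\nabla m_\e\|^2_{L^2(\Sigma)}$, $B_\e:=\|m_{2,\e}\|^2_{L^2(\Sigma)}$, $T_\e:=\|m_{2,\e}\|^2_{L^2(\pa\Sigma)}$, and let $N_\e$ denote the rescaled nonlocal term just mentioned; thus $\limsup_\e A_\e<\infty$, $\limsup_\e N_\e<\infty$, and --- since $m_\e\in\Sp$ --- also $B_\e<\infty$ for each fixed $\e$.

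Next I would apply Proposition~\ref{prop:biscottina} with $\beta=\tfrac12$, obtaining
\[
T_\e\leq N_\e+\frac{C}{|\ln\e|}\bigl(A_\e+B_\e\bigr)
\]
for all sufficiently small $\e$, and then apply \eqref{eq:poin} to $w=m_{2,\e}$ (taking $y=0$), using $\|\nabla m_{2,\e}\|_{L^2(\Sigma)}\leq\|\nabla m_\e\|_{L^2(\Sigma)}$, to get $B_\e\leq C(A_\e+T_\e)$. Substituting the first estimate into the second yields
\[
B_\e\leq C\Bigl(A_\e+N_\e+\tfrac{1}{|\ln\e|}\bigl(A_\e+B_\e\bigr)\Bigr).
\]
For $\e$ small enough the coefficient of $B_\e$ on the right is at most $\tfrac12$, so, since $B_\e$ is finite, it may be absorbed into the left-hand side, giving $B_\e\leq C(A_\e+N_\e)$ for all small $\e$; hence $\limsup_{\e\to0}B_\e<\infty$, which is the second assertion. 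Feeding this back into the displayed bound for $T_\e$ and using $|\ln\e|^{-1}\to0$ gives $\limsup_{\e\to0}T_\e\leq\limsup_{\e\to0}N_\e<\infty$, which is the first assertion.

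The only delicate point is the absorption step: the inequality \eqref{biscottina2} controls the boundary $L^2$-norm of $m_{2,\e}$ only up to an error proportional to $\|m_{2,\e}\|^2_{L^2(\Sigma)}$, i.e.\ to one of the quantities we are trying to estimate. It is precisely the vanishing prefactor $|\ln\e|^{-1}$ in front of this error term, together with the a priori finiteness of $\|m_{2,\e}\|_{L^2(\Sigma)}$ furnished by the assumption $m_\e\in\Sp$, that makes the argument legitimate --- without this finiteness the absorption could not be performed.
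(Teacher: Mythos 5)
Your proposal is correct and follows essentially the same route as the paper: applying Proposition~\ref{prop:biscottina} with $\beta=\tfrac12$, combining it with the Poincar\'e-type inequality \eqref{eq:poin}, and exploiting the vanishing prefactor $|\ln\e|^{-1}$ to absorb the error term. The only cosmetic difference is that you perform the absorption on $\|m_{2,\e}\|^2_{L^2(\Sigma)}$ and then deduce the boundary bound, whereas the paper absorbs directly into $\|m_{2,\e}\|^2_{L^2(\partial\Sigma)}$ and obtains the interior bound afterwards via \eqref{eq:poin}; both orderings rely on the same a priori finiteness guaranteed by $m_\e\in\Sp$, which you rightly flag as the hypothesis that legitimizes the absorption.
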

\begin{proof}
  Using Proposition~\ref{prop:biscottina} with $\beta=\frac12$ and
  inequality \eqref{eq:poin}, we have
%\marginnote{\color{red}{don't see Trace Theorem here, it's Poincare }}
\begin{align*}
  E_\e(m_\e)
  &\geq  {\frac{\gamma}{2}} \Big(\int_\R m^2_{2, \e} (x,0)\, dx +
    \int_\R m^2_{2, \e} 
    (x,1)\, dx\Big) - \frac{C \gamma}{|\ln \e|}(\|\nabla m_{
    \e}\|^2_{L^2{(\Sigma)}} + 
    \|m_{2, \e}\|^2_{L^2{(\Sigma)}}) 
  \\
  &\geq {\frac{\gamma}{2}}  \left(1- \frac{2CC'}{|\ln \e|} \right)  \|m_{2,
    \e}\|^2_{L^2(\pa \Sigma)} - \frac{C {\gamma}  (1+C')}{|\ln
    \e|}\|\nabla m_{\e}\|^2_{L^2{(\Sigma)}}. 
\end{align*}
Recalling that by our assumption $\{\nabla m_\e\}$ is bounded in
$L^2 {(\Sigma)}$ independently of $\e$, we obtain
$$
\limsup_{\e\to 0} \|m_{2, \e}\|^2_{L^2(\pa \Sigma)} < \infty.
$$
The second conclusion now follows again by \eqref{eq:poin}.
\end{proof}

We now prove the $\liminf$ and $\limsup$ inequalities for the
magnetostatic energy term.
\begin{proposition}\label{prop:biscottina-2}
  Assume that {$m_\e \in \Sp$} and that
  $\limsup_{\e\to 0}E_\e(m_\e)<+\infty$. If $m_\e \wto m$ weakly in
  $H^1_{l}(\Sigma; \mathbb S^1)$ then \beq\label{seconda} \liminf_{\e
    \to 0} \frac{1}{|\ln \e|} {\int_{\R^2}
    \frac{|\mathscr{F}\big({\Div(\eta_\e m)\big)}|^2}{2 \pi |\mathbf
      k|} \, d^2 k}\geq 2 \int_\R m^2_2(x,0)\, dx \ +\ 2 \int_\R
  m^2_2(x,1) \, dx\,.  \eeq Moreover, for any
  ${m\in H^1_{l}(\Sigma; \mathbb S^1)}$ with $E_0 (m) <+\infty$
  {such that} the set
  $\{\mathbf r \in \Sigma:\, {m_2}(\mathbf r)\neq 0\}$ {is}
  essentially bounded we have \beq\label{terza} \limsup_{\e \to 0}
  \frac{1}{|\ln \e|} {\int_{\R^2}
    \frac{|\mathscr{F}\big({\Div(\eta_\e m)\big)}|^2}{2 \pi |\mathbf
      k|} \, d^2 k} \leq 2 \int_\R m^2_2(x,0)\, dx \ +\ 2 \int_\R
  m^2_2(x,1) \, dx\,.  \eeq
\end{proposition}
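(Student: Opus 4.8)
The plan is to read \eqref{seconda} off from Proposition~\ref{prop:biscottina}, and to establish \eqref{terza} by a sharpening of the kernel--splitting argument used in its proof, now keeping track of the precise constant in front of the boundary term.

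First, \eqref{seconda}. From $\limsup_{\e\to0}E_\e(m_\e)<+\infty$ and nonnegativity of the magnetostatic and Zeeman parts we get $\|\nabla m_\e\|_{L^2(\Sigma)}$ bounded for small $\e$, and Corollary~\ref{cor:biscottina} gives $\|m_{2,\e}\|_{L^2(\Sigma)}$ bounded as well. Applying inequality \eqref{biscottina2} to $m_\e$ and fixing $\beta\in(0,1)$, the error term $\tfrac{C}{\beta|\ln\e|}\big(\|\nabla m_\e\|_{L^2(\Sigma)}^2+\|m_{2,\e}\|_{L^2(\Sigma)}^2\big)$ tends to $0$, so it remains to pass to the limit from below in $\int_\R m_{2,\e}^2(x,0)\,dx+\int_\R m_{2,\e}^2(x,1)\,dx$. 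For a bounded interval $I\subset\R$ pick $\rho$ with $I\subset(-\rho,\rho)$; since $m_\e\wto m$ in $H^1(Q_\rho)$, compactness of the trace $H^1(Q_\rho)\to L^2(\partial Q_\rho)$ yields $m_{2,\e}(\cdot,0)\to m_2(\cdot,0)$ and $m_{2,\e}(\cdot,1)\to m_2(\cdot,1)$ strongly in $L^2(I)$, whence $\liminf_\e\int_\R m_{2,\e}^2(x,0)\,dx\ge\int_I m_2^2(x,0)\,dx$ and similarly at $y=1$. Letting $I\uparrow\R$ and then $\beta\to0$ gives \eqref{seconda}.

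Now \eqref{terza}. The crucial preliminary observation is that, since $\{m_2\ne0\}$ is essentially bounded, $m$ is $\mathbb S^1$-valued with $\nabla m=0$ a.e.\ outside some strip $[-R,R]\times[0,1]$: on each connected set $\{x>R\}\times(0,1)$ and $\{x<-R\}\times(0,1)$ one has $m_2\equiv0$, hence $m_1\equiv\pm1$ is constant and $\nabla m\equiv0$. Therefore $\Div(\eta_\e m)=\eta_\e\Div m+\eta_\e'm_2$ is supported in $[-R,R]\times[0,1]$ and lies in $L^1(\R^2)\cap L^2(\R^2)$, so by \eqref{eq:39} the nonlocal term equals $\int_{\R^2}\!\int_{\R^2}\tfrac{\Div(\eta_\e m)(\mathbf r)\,\Div(\eta_\e m)(\mathbf r')}{|\mathbf r-\mathbf r'|}\,d^2r\,d^2r'=I_1+2I_2+I_3$ with $I_1,I_2,I_3$ as in the proof of Proposition~\ref{prop:biscottina} but with the \emph{undamped} Riesz kernel (admissible because everything is compactly supported in a fixed bounded set). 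Since $|\mathbf r|^{-1}\mathbf 1_{\{|\mathbf r|\le D\}}\in L^1(\R^2)$ for $D=D(R)$, Young's inequality gives $I_1\le C\|\nabla m\|_{L^2(\Sigma)}^2$, and Cauchy--Schwarz for the positive-definite Riesz form gives $|I_2|\le\sqrt{I_1 I_3}$; hence if $I_3=O(|\ln\e|)$ then $\tfrac1{|\ln\e|}(I_1+2I_2)\to0$ and the limit is carried entirely by $I_3$. Splitting $I_3=J_1+2J_2+J_3$ as in that proof, $J_2$ stays bounded, so it suffices to handle $J_1$ (the contribution near $y=0$), $J_3$ being identical. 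Writing $\tfrac1t=\tfrac{e^{-t}}{t}+\tfrac{1-e^{-t}}{t}$ for $0<t=|\mathbf r-\mathbf r'|\le D$: the $\tfrac{1-e^{-|\cdot|}}{|\cdot|}$-contribution is a quadratic form in $\eta_\e'm_2$ against a positive-definite kernel with $L^1(\R^2)$ Fourier symbol $\tfrac{2\pi}{|\mathbf k|}-\tfrac{2\pi}{\sqrt{1+|\mathbf k|^2}}$, hence it is $\le C\|\eta_\e'm_2\|_{L^1(\R^2)}^2\le C$, using $\|\eta_\e'm_2\|_{L^1(\R^2)}=\int_0^1\!\int_\R|\eta'(s)|\,|m_2(x,\e s)|\,dx\,ds$ and the uniform bound $\sup_{s\in[0,1]}\|m_2(\cdot,\e s)\|_{L^2(\R)}\le C$ (which follows from $\|m_2(\cdot,\e s)-m_2(\cdot,0)\|_{L^2(\R)}\le\sqrt\e\,\|\nabla m\|_{L^2(\Sigma)}$ and the compact support). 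For the $\tfrac{e^{-|\cdot|}}{|\cdot|}$-part, integrating first in $x'$ via Lemma~\ref{lm:tab} yields exactly the kernel $K_0(\e|s-s'|)$ and the decomposition \eqref{splitH1}--\eqref{H11}; the oscillation term $H_{1,1}$ is $O(\sqrt\e\,|\ln\e|)$ by \eqref{H11est} and hence negligible, while in the leading term $K_0(\e|s-s'|)=|\ln\e|+R_\e(s,s')$ with $R_\e(s,s')=-\ln|s-s'|+O(1)$ and $\sup_\e\int_0^1\!\int_0^1|R_\e|\,ds\,ds'<\infty$; combining this with $\int_\R m_2(x,\e s)m_2(x,\e s')\,dx\to\int_\R m_2^2(x,0)\,dx$ uniformly in $s,s'\in[0,1]$ and $\big(\int_0^1\eta'(s)\,ds\big)^2=(\eta(1)-\eta(0))^2=1$ gives $\tfrac1{|\ln\e|}J_1\to2\int_\R m_2^2(x,0)\,dx$. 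Adding the analogous limit for $J_3$ and the vanishing of the $I_1$, $I_2$, $J_2$ and remainder contributions, the left-hand side of \eqref{terza} in fact converges to its right-hand side.

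The main obstacle is \eqref{terza}, and within it the extraction of the sharp constant $2$: one has to check that \emph{every} term besides the leading logarithm in $J_1$ and $J_3$ is $o(|\ln\e|)$, which is where the uniform trace and Poincar\'e estimates of Lemma~\ref{lem:trace} on the $\e$-rescaled boundary layers, the asymptotics $K_0(t)=-\ln t+O(1)$ near $t=0$ (controlled in $L^1$), the $O(\sqrt\e\,|\ln\e|)$ bound \eqref{H11est} on the oscillation term, and the $L^1$-integrability of the auxiliary symbol $|\mathbf k|^{-1}-(1+|\mathbf k|^2)^{-1/2}$ all enter. A more routine but indispensable preliminary is the reduction making \eqref{eq:39} applicable, namely that essential boundedness of $\{m_2\ne0\}$ forces $\Div(\eta_\e m)$ to be compactly supported (hence in $L^1\cap L^2$), so that the Fourier-side nonlocal energy coincides with the Riesz double integral.
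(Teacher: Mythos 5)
Your proof of \eqref{seconda} follows the paper's argument essentially verbatim: apply \eqref{biscottina2}, use compactness of the trace on bounded pieces $Q_\rho$ to get $L^2_{loc}$ convergence of the traces and hence the $\liminf$ of the boundary integrals, absorb the error term, then let $\beta\to 0$.

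Your proof of \eqref{terza} reaches the same conclusion via a genuinely different technical route. First, where the paper uses the Young--type decomposition $(1+\beta^{-1})I_1+(1+\beta)I_3$ (with a $\beta\to 0$ limit at the end), you control the cross term via Cauchy--Schwarz for the positive-definite Riesz form, $|I_2|\le\sqrt{I_1 I_3}$, which gives $I_2 = o(|\ln\e|)$ directly once $I_1=O(1)$ and $I_3=O(|\ln\e|)$ are known, and removes the need for the auxiliary parameter. Second, to isolate the leading $2|\ln\e|\int m_2^2$ from $J_1$ and $J_3$, you split the undamped Riesz kernel as $\tfrac1t=\tfrac{e^{-t}}{t}+\tfrac{1-e^{-t}}{t}$, bound the correction by an $L^1$-Fourier-symbol argument applied to the quadratic form in $\eta_\e' m_2$, and reduce the damped part to the $K_0$ Bessel asymptotics already prepared in Lemma~\ref{lm:tab} and used in Proposition~\ref{prop:biscottina}. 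The paper instead keeps the undamped kernel throughout and splits the $x'$-integration range at scale $\delta$, with the pointwise estimate $\int_{-R}^R dx'/\sqrt{|x-x'|^2+\e^2|y-y'|^2}\le C|\ln(\e|y-y'|)|$ in \eqref{H1primo}. Your kernel decomposition is arguably cleaner for pinning down the constant $2$. Both approaches also require the preliminary observation (which you state) that essential boundedness of $\{m_2\neq 0\}$ forces $\nabla m=0$ and $\Div(\eta_\e m)=0$ outside a fixed bounded strip, so that the Fourier representation coincides with the compactly supported Riesz double integral.

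One step you gloss over: the phrase ``integrating first in $x'$ yields exactly $K_0(\e|s-s'|)$'' is only valid after first replacing $m_2(x',y')$ by $m_2(x,y')$ in the damped part of $J_1$ --- i.e.\ the split $J_1 = H_1 + H_2$ from \eqref{vallissimissimissimobis}; the difference term $H_2$ is $O(1)$ by the Gagliardo trace bound of Lemma~\ref{lem:trace} (cf.\ \eqref{H2bis}). You refer to \eqref{splitH1}--\eqref{H11} and invoke Lemma~\ref{lem:trace} in your closing remarks, so you are clearly aware of the issue, but you should spell out that $H_2$ is bounded before passing to the leading Bessel term. With that fleshed out, the argument is correct and slightly streamlined relative to the paper.
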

\begin{proof}
  Using Proposition~\ref{prop:biscottina}, we can take the limit as
  $\e \to 0$ in \eqref{biscottina2}. Employing
  Corollary~\ref{cor:biscottina} and the fact that
$$
\liminf_{\e\to 0} \Big(\int_\R m^2_{2, \e} (x,0)\, dx + \int_\R
m^2_{2, \e} (x,1)\, dx\Big)\geq \int_\R m^2_{2} (x,0)\, dx + \int_\R
m^2_{2} (x,1)\, dx,
$$
we obtain
$$
\frac{1}{|\ln \e|} {\int_{\R^2}
  \frac{|\mathscr{F}\big({\Div(\eta_\e m)\big)}|^2}{2 \pi |\mathbf k|}
  \, d^2 k} \geq 2(1-\beta) \Big(\int_\R m^2_{2} (x,0)\, dx + \int_\R
m^2_{2} (x,1)\, dx\Big).
$$
Finally, taking the limit as $\beta \to 0$ we obtain \eqref{seconda}.

We are left with showing the {second} part of the statement.  We note
that by our assumptions on $m$ there exists $R> {1}$ such that
\beq\label{BRuno} \{\nabla m\neq 0\}\subset Q_R, \eeq where
$Q_R:=(-R, R)\times (0,1) \subset \Sigma$. Moreover, since by
assumption $m_2 \in L^2(\Sigma)$, we have \beq\label{BR} m_2 = 0
\text{ a.e.  in } \Sigma\setminus Q_R\,.  \eeq We start by splitting
the magnetostatic energy as in \eqref{perterza}, with $\mathcal{K}_a$
replaced by the original kernel
{$\mathcal{K}_0(\mathbf r)=\frac{1}{|\mathbf r|}$ after passing to
  the limit $a \to 0$}.  With the same notation for $I_1$, $I_2$,
$I_3$ (and taking $a \to 0$), it is straightforward to see that the
second inequality in \eqref{vallissimo} still holds.

Using the Young's inequality for convolutions, we can estimate
\beq\label{I1bis} I_1 \leq \|\mathcal{K}_0\|_{L^1(\Sigma\cap
  Q_{2R})}\| {\Div \, m}\|^2_{L^2(\Sigma)} {\leq C \| \nabla
  m\|_{L^2(\Sigma)}^2} \,, \eeq {for some $C > 0$ depending only on
  $R$.} We now proceed by splitting $I_3$ as
$$
I_3 = J_1+2J_2+J_3,
$$
with the same notation as in \eqref{vallissimissimo} (and with
$a=0$). Using \eqref{BR}, the estimate in \eqref{J2} (with $a=0$) may
be replaced by
$$
J_2\leq \frac{1}{\e^2} \Bigl\|
\tfrac{1}{\sqrt{|\cdot|^2+\frac12}}\Bigr\|_{L^1({-2R, 2R})}
\int_0^\e \| m_2 (\cdot, y) \|_{L^2{(\Sigma)}} \, dy\int_{1-\e}^1\|
m_2 (\cdot, y') \|_{L^2{(\Sigma)}} \, dy'\,,
$$
and by \eqref{eq:trace12}, we obtain \beq\label{J2bis} J_2\leq {C}
|\ln R| (\|\nabla m_2\|^2_{L^2{(\Sigma)}} + \|m_2\|^2_{L^2{(\Sigma)}}
)\,.  \eeq

Taking into account \eqref{BR}, we can split $J_1$ as
 
 \beq\label{vallissimissimissimobis}
\begin{split}
  J_1 &=H_1+H_2:=\frac{1}{\e^2} \int_{(-R, R)\times(0,\e)}\int_{(-R,
    R)\times(0, \e)} \eta'(y/\e) m_2 (x,y) \,  \eta'(y'/\e) m_2(x,y')
  \frac{1}{|\mathbf r -\mathbf r'|} \, \, {d^2 r \, d^2 r'}  \\
  &+ \frac{1}{\e^2} \int_{(-R, R)\times(0,\e)}\int_{(-R, R)\times(0,
    \e)} \eta'(y/\e) m_2 (x,y) \, \eta'(y'/\e) (m_2(x',y') -
  m_2(x,y')) \frac{1}{|\mathbf r -\mathbf r'|} \, {d^2 r \, d^2 r'} .
\end{split}
\eeq We can estimate $H_2$ as in \eqref{H2}, with $a=0$ but taking
advantage of the fact that \eqref{BR} holds, to get
 $$
 H_2\leq \frac{C\sqrt{R}}{\e^2} \int_0^\e \|m_2(\cdot, y)\|_{L^2(\R)}\, dy
 \int_0^\e[m_2(\cdot, y')]_{{\mathring{H}}^{\frac12}{(\R)}}\,
 dy'.
 $$
 In turn, using \eqref{eq:trace12} we obtain \beq \label{H2bis}
 H_2\leq {C\sqrt{R}} (\|\nabla m_2\|^2_{L^2{(\Sigma)}} +
 \|m_2\|^2_{L^2{(\Sigma)}} )\,.  \eeq
 
 Concerning $H_1$, by integrating first with respect to $x'$ over
 $(-R, R)$, we can argue similarly to \eqref{splitH1} and write
 \beq\label{splitH1bis} H_1 = H_1'+ H_{1,1}:= \int_0^1\int_0^1
 \int_{-R}^R \eta'(y) \eta'(y') m^2_2 (x,0) \,
 \int_{-R}^R\frac{dx'}{\sqrt{|x-x'|^2+\e^2|y-y'|^2}}\,dx\, dy\, dy'+
 H_{1,1}\,, \eeq where $H_{1,1}$ is defined as in \eqref{H11}, with
 $K_0(a\e |y-y'|)$ replaced by
 $ \int_{-R}^R\frac{dx'}{\sqrt{|x-x'|^2+\e^2|y-y'|^2}}$ and with the
 integral in $dx$ running over $(-R, R)$ instead of $\R$. Observe that
$$
\int_{-R}^R\frac{dx'}{\sqrt{|x-x'|^2+\e^2|y-y'|^2}} \leq
2\int_{0}^{2R}\frac{ds}{\sqrt{s^2+\e^2|y-y'|^2}}\,.
$$
By computing explicitly the {right-hand side}, one can easily see
that there exists a constant $C=C(R)>0$ such that for $\e$ small
enough
$$
\int_{-R}^R\frac{dx'}{\sqrt{|x-x'|^2+\e^2|y-y'|^2}} \leq
C|\ln(\e|y-y'|)|\,.
$$ 
With this estimate at hand, we can now argue similarly to
\eqref{H11est} to obtain \beq\label{H11estbis} H_{1,1}\leq C |\ln
\e|\sqrt{\e} (\|\nabla m_2\|^2_{L^2} + \|m_2\|^2_{L^2})\,.  \eeq

It remains to {estimate} $H_1'$. To this aim, we observe that for
any fixed $\de\in (0, R)$  we have
\begin{align*}
  \int_{-R}^R\frac{dx'}{\sqrt{|x-x'|^2+\e^2|y-y'|^2}}
  & {\leq} \int_{(-R,
    R)\cap\{|x-x'|>\de\}}\frac{dx'}{\sqrt{|x-x'|^2+\e^2|y-y'|^2}}+
    2\int_0^\de\frac{ds}{\sqrt{s^2+\e^2|y-y'|^2}}\\ 
  &\leq \frac{2R}\de+ 2\int_0^\de\frac{ds}{\sqrt{s^2+\e^2|y-y'|^2}},
\end{align*}
from which we easily deduce that
 \beq\label{H1primo}
\begin{split}
  H_1' &\leq \frac{C(R)}\de\|m_2(\cdot, 0)\|^2_{L^2}+ 2
  \int_0^1\int_0^1 \int_{-R}^R \eta'(y) \eta'(y') m^2_2 (x,0) \,
  \int_0^\de\frac{ds}{\sqrt{s^2+\e^2|y-y'|^2}}\,dx\, dy\, dy'\\
  &\leq \frac{C'(R)}\de\|m_2(\cdot, 0)\|^2_{L^2}+ 2 \int_0^1\int_0^1
  \int_{-R}^R \eta'(y) \eta'(y') m^2_2 (x,0) |\ln(\e|y-y'|)|\,dx\,
  dy\, dy'\\
  &\leq \Big(\frac{C'(R)}\de+ {C''}\Big)\|m_2(\cdot, 0)\|^2_{L^2}+
  2|\ln \e| \int_{-R}^R m^2_2 (x,0) \,dx\,,
\end{split}
\eeq 
provided that $\delta$ is sufficiently small. Note that the second inequality can
be easily obtained by computing explicitly the innermost integral and
by estimating the result (for $\de$ sufficiently small) with
$ |\ln(\e|y-y'|)|+\tilde C$ for a suitable $\tilde C>0$, while the
third inequality can be obtained by integrating out
$|\ln|y-y'||$. Combining \eqref{I1bis} and
\eqref{J2bis}--\eqref{H1primo} and the completely analogous estimates
for $J_3$, we obtain \eqref{terza}.
\end{proof}

%\begin{remark}
%We would like to point out that the leading order term in the right hand side of the above inequality is independent of the constant $a$ appearing in the definition of the potential $\mathcal{K}_a$. 
%\end{remark}
As a straightforward consequence of \eqref{biscottina2} in
Proposition~\ref{prop:biscottina} we have the following
corollary.
\begin{corollary}\label{cor:biscottina2}
  For any $M>0$, let $\e_0>0$ be as in
  Proposition~\ref{prop:biscottina}. Then, for all $\e\in (0, \e_0)$
  and {$m\in \Sp$} such that $E_\e (m) \leq M$ we have
$$
E_0(m)\leq C {M},
$$
for a suitable $C>0$ independent of $\e$, $M$ and $m$.
\end{corollary}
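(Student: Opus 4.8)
The plan is to exploit the fact that $E_0(m)$ differs from $E_\e(m)$ only in the magnetostatic contribution: $E_0$ carries the boundary penalty $\gamma\|m_2\|_{L^2(\pa\Sigma)}^2$ in place of the nonlocal term $\frac{\gamma}{2|\ln\e|}\int_{\R^2}\frac{|\mathscr F(\Div(\eta_\e m))|^2}{2\pi|\mathbf k|}\,d^2k$, while the Dirichlet and Zeeman terms coincide. Since $E_\e(m)\le M$ forces both $\tfrac12\|\nabla m\|_{L^2(\Sigma)}^2\le M$ and $h\int_\Sigma(1-m_1)\le M$ (the nonlocal term being nonnegative), we have
$$
E_0(m)=\tfrac12\|\nabla m\|_{L^2(\Sigma)}^2+h\int_\Sigma(1-m_1)\,d^2r+\gamma\|m_2\|_{L^2(\pa\Sigma)}^2\le 2M+\gamma\|m_2\|_{L^2(\pa\Sigma)}^2,
$$
so it suffices to show $\gamma\|m_2\|_{L^2(\pa\Sigma)}^2\le C M$.

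To bound the boundary term I would reproduce verbatim the chain of inequalities already used in the proof of Corollary~\ref{cor:biscottina}: applying Proposition~\ref{prop:biscottina} with $\beta=\tfrac12$ and the Poincar\'e-type inequality \eqref{eq:poin} (the latter to absorb the $\|m_2\|_{L^2(\Sigma)}^2$ term appearing on the right of \eqref{biscottina2}), and discarding the nonnegative Dirichlet and Zeeman contributions to $E_\e$, one obtains
$$
M\ge E_\e(m)\ge \frac{\gamma}{2}\Bigl(1-\frac{2CC'}{|\ln\e|}\Bigr)\|m_2\|_{L^2(\pa\Sigma)}^2-\frac{C\gamma(1+C')}{|\ln\e|}\|\nabla m\|_{L^2(\Sigma)}^2 .
$$
Choosing $\e_0$ small enough (further decreasing the one from Proposition~\ref{prop:biscottina} if necessary) so that $1-\tfrac{2CC'}{|\ln\e|}\ge\tfrac12$ and $|\ln\e|\ge 1$ for all $\e\in(0,\e_0)$, and inserting $\|\nabla m\|_{L^2(\Sigma)}^2\le 2M$, I get $\tfrac{\gamma}{4}\|m_2\|_{L^2(\pa\Sigma)}^2\le\bigl(1+2C\gamma(1+C')\bigr)M$, hence $\gamma\|m_2\|_{L^2(\pa\Sigma)}^2\le C_1 M$ with $C_1$ depending only on $\gamma$, $h$ and $\|\eta'\|_\infty$.

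Combining the two displays yields $E_0(m)\le(2+C_1)M=:CM$ with $C$ independent of $\e$, $M$ and $m$, as claimed. There is essentially no genuine obstacle here, since all the substantive work is contained in Proposition~\ref{prop:biscottina} and in the estimates of Corollary~\ref{cor:biscottina}; the only point requiring mild care is the bookkeeping of $\e_0$, namely ensuring that $|\ln\e|$ is bounded below uniformly on $(0,\e_0)$ so that the coefficient of $\|m_2\|_{L^2(\pa\Sigma)}^2$ stays bounded away from zero and the coefficient of $\|\nabla m\|_{L^2(\Sigma)}^2$ stays bounded.
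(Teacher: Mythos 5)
Your proof is correct and follows exactly the argument the paper intends (the paper gives no separate proof, calling the corollary a ``straightforward consequence'' of \eqref{biscottina2}): you reuse the chain of inequalities from the proof of Corollary~\ref{cor:biscottina} with $\beta=\tfrac12$ and \eqref{eq:poin}, and observe that the Dirichlet and Zeeman terms of $E_\e$ already dominate the corresponding terms in $E_0$. The one point you rightly flag is that the coefficient $1-\tfrac{2CC'}{|\ln\e|}$ must be bounded away from zero and $|\ln\e|$ bounded below, which may require shrinking the $\e_0$ produced in Proposition~\ref{prop:biscottina}; the paper's phrasing ``let $\e_0$ be as in Proposition~\ref{prop:biscottina}'' tacitly assumes $\e_0$ was already taken that small, and your explicit remark fixes this harmless imprecision.
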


%Throughout the rest of the paper we will assume \eqref{thetau}.

Before analysing the asymptotic behavior of $E_\e$ as $\e\to 0$, let
us show that for $\e>0$ small enough $E_\e$ admits a global minimizer
in the class of magnetizations with nontrivial ``winding''.

\begin{proposition}\label{prop:mineps}
  There exists $\e_1>0$ such that for all $\e\in (0, \e_1)$ the
  following problem: \beq\label{mineps} \min\left\{E_\e(m):\,
    m=(\cos\theta, \sin\theta)\in {\Sp} \text{ with }\theta \text{
      satisfying \eqref{lemma1} for some }k_1,\, k_2\in \Z,\, k_1\neq
    k_2\right\}\footnote{Note that here we are not fixing $k_1$ and
    $k_2$, but we are also minimizing with respect to $k_1$ and $k_2$,
    with $k_1\neq k_2$.}  \eeq admits a solution.
% Moreover, we have
%  \beq
%  \sup_{\e \in (0, \e_1)} \min E_\e <\infty.
%  \eeq
\end{proposition}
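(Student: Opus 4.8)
The plan is to argue by the direct method, following the scheme of the proof of Theorem~\ref{th:k=2}, the delicate point being the lack of compactness on the unbounded strip. Write $\mu_\e$ for the infimum in \eqref{mineps}. First I would check that $0<\mu_\e<+\infty$: finiteness follows by using as a competitor a smooth lifting of the minimizer $\theta_{min}$ from Theorem~\ref{th:k=2}, truncated so that its second component has compact support, exactly as in the construction around \eqref{finalclaim}; positivity follows from Corollary~\ref{cor:biscottina2}, which for $\e$ small gives $E_\e(m)\ge C^{-1}E_0(m)=C^{-1}F(\theta)$, combined with Corollary~\ref{cor:infk0} and Lemma~\ref{lm:zagara}: any admissible $m$ with winding $k=|k_1-k_2|$ then satisfies $E_\e(m)\ge C^{-1}c_0\,k$, where $c_0:=\min_{\mathcal A_1}F>0$ if $h=0$ and $c_0:=\min_{\mathcal A_2}F>0$ if $h>0$, so $\mu_\e\ge C^{-1}c_0>0$.

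Next I would take an admissible minimizing sequence $m_n=(\cos\theta_n,\sin\theta_n)\in\Sp$, $E_\e(m_n)\to\mu_\e$, and recenter it. Since $E_0(m_n)\le C E_\e(m_n)<+\infty$ by Corollary~\ref{cor:biscottina2}, Lemma~\ref{lm:zagara} gives windings $k_1^{(n)}\neq k_2^{(n)}$; normalizing the lifting so that $\theta_n(x,\cdot)\to0$ as $x\to+\infty$, the previous bound forces $\{k_1^{(n)}\}$ to be bounded, hence equal to a fixed $K\neq0$ along a subsequence. With $\bar\theta_n(x):=\int_0^1\theta_n(x,y)\,dy$ (continuous, with limits $K\pi$ at $-\infty$ and $0$ at $+\infty$), I would pick $\lambda_n$ with $\bar\theta_n(\lambda_n)=K\pi/2$ and replace $\theta_n$ by $\theta_n(\cdot+\lambda_n,\cdot)$, which leaves $E_\e$ unchanged and yields $\int_0^1\theta_n(0,y)\,dy=K\pi/2$. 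From the energy bound $\nabla m_n$ is bounded in $L^2(\Sigma)$ and, by Corollary~\ref{cor:biscottina}, so is $m_{2,n}$; together with a Poincar\'e inequality this makes $\theta_n$ bounded in $H^1(Q_R)$ for every $R$. Passing to a further subsequence, $\theta_n\rightharpoonup\theta_*$ in $H^1_l(\Sigma)$, $m_n\to m_*:=(\cos\theta_*,\sin\theta_*)$ in $L^2_{loc}(\Sigma)$ and a.e., and $\Div(\eta_\e m_n)\rightharpoonup\Div(\eta_\e m_*)$ weakly in $L^2(\R^2)$ (each of the three terms in \eqref{eq:40} converges weakly, after multiplying by the bounded functions $\eta_\e,\eta_\e'$). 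Weak lower semicontinuity of the Dirichlet term, Fatou for the Zeeman term, and lower semicontinuity of the $\mathring{H}^{-1/2}(\R^2)$-seminorm in the Fourier form \eqref{epsiloneF} then give $E_\e(m_*)\le\liminf_{n} E_\e(m_n)=\mu_\e$.

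The crucial and hardest step is to show that the weak limit $m_*$ is admissible, i.e.\ that $\theta_*$ has distinct limits at $\pm\infty$, since a priori the winding might escape to infinity. From $E_\e(m_*)<+\infty$, Corollary~\ref{cor:biscottina2} and Lemma~\ref{lm:zagara}, $\theta_*$ has limits $\ell^\pm\in\pi\Z$ (resp.\ $2\pi\Z$ if $h>0$). Since the averages $\bar\theta_n$ are bounded in $H^1_{loc}(\R)$ they converge locally uniformly, so $\bar\theta_*(0)=K\pi/2\neq0$; hence $m_*$ is non-constant and $\tfrac12\|\nabla m_*\|^2_{L^2(\Sigma)}>0$. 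Assume for contradiction that $\ell^-=\ell^+$. The boundary term in $E_\e(m_n)$, bounded below through Proposition~\ref{prop:biscottina}, controls the measure of the set on $\partial\Sigma$ where $|m_{2,n}|$ is bounded away from zero, so the winding of $m_n$ is carried in finitely many transition regions of uniformly bounded length; as $\ell^-=\ell^+$, at least one such region sits at a position $\tau_n$ with $|\tau_n|\to\infty$ and carries winding $\ge1$, so along a subsequence $m_n(\cdot+\tau_n,\cdot)\to m_{**}$ with $m_{**}$ admissible and thus $E_\e(m_{**})\ge\mu_\e$. Because $|\tau_n|\to\infty$, the Dirichlet and Zeeman terms of $E_\e(m_n)$ split in the limit (lower semicontinuity on the two far-apart regions) and the magnetostatic cross-interaction between the two profiles vanishes by the decay of the kernel $|\mathbf r-\mathbf r'|^{-1}$, so that
$$\mu_\e=\lim_{n} E_\e(m_n)\ \ge\ E_\e(m_*)+E_\e(m_{**})\ \ge\ E_\e(m_*)+\mu_\e\ >\ \mu_\e,$$
a contradiction. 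Hence $\ell^-\neq\ell^+$, $m_*$ is admissible, and by the previous step $E_\e(m_*)=\mu_\e$, so $m_*$ solves \eqref{mineps}.

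I expect the main obstacle to be precisely this last step --- preventing the winding from running off to infinity. The two ingredients that should make it work are the recentering, which forces the weak limit to be genuinely non-constant and hence to carry strictly positive energy, and the boundary penalty from Proposition~\ref{prop:biscottina}, which concentrates the winding into transition regions of bounded length whose mutual magnetostatic interaction decouples as they move apart; quantifying these two facts, especially the splitting of the nonlocal energy into the contributions of the two separating profiles plus a vanishing cross term, is where the real work lies.
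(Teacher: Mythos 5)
Your proposal follows the same overall strategy as the paper (direct method on a recentered minimizing sequence), and the two proofs agree on the preliminary steps: finiteness of the infimum via a fixed competitor, bounds on $F(\theta_n)$ and on $\|\nabla m_n\|_{L^2}$ and $\|m_{2,n}\|_{L^2}$ via Corollary~\ref{cor:biscottina2} and Corollary~\ref{cor:biscottina}, the recentering using the continuous averaged phase $\bar\theta_n$, and weak convergence in $H^1_l(\Sigma)$. The essential difference is in how the two arguments prevent the winding from escaping to infinity. The paper normalizes $\bar\theta_n(0)=-\pi/2$ and then invokes Lemma~1 of \cite{doering14}, a compactness result that delivers, in the limit, the constraints \eqref{felice}: $\bar\theta_\infty(0)=-\pi/2$, $\limsup_{x\to-\infty}\bar\theta_\infty\le -\pi/2$ and $\liminf_{x\to+\infty}\bar\theta_\infty\ge -\pi/2$. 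Combined with Lemma~\ref{lm:zagara}, this forces $j_1<0\le j_2$, and admissibility of the weak limit is immediate. You instead attempt to re-derive this conclusion by a Lions-type dichotomy and energy-splitting argument, and you explicitly flag that the splitting is the hard step you are only planning, not proving.

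Two specific claims in your splitting argument do not hold as stated and constitute genuine gaps. First, you assert that boundedness of $\gamma\int_{\partial\Sigma}m_{2,n}^2\,d\H^1$ (after applying Proposition~\ref{prop:biscottina}) forces the winding to sit in ``finitely many transition regions of uniformly bounded length.'' This only controls the measure of $\{|m_{2,n}|>\delta\}\cap\partial\Sigma$ for fixed $\delta$; it does not prevent the winding from being distributed over arbitrarily long stretches with $|m_{2,n}|$ small. So the decomposition of $\Div(\eta_\e m_n)$ into compactly supported pieces of uniformly bounded size, which your cross-term estimate relies on, is unjustified. Second, even granting that decomposition, the vanishing of the magnetostatic cross-interaction is delicate: the Riesz kernel $|\mathbf r-\mathbf r'|^{-1}$ only decays like $1/\dist$, the cross term is not sign-definite, and to show it tends to zero you would need $L^1$ (or bounded-support $L^2$) control on the separated pieces of $\Div(\eta_\e m_n)$ that, again, depends on the unproven first claim. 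The paper deliberately avoids all of this machinery by citing the external compactness lemma, which constrains the limit pointwise and makes the splitting argument unnecessary. If you want to pursue your route, you would need to replace both claims by quantitative statements (e.g., an $\e$-dependent lower bound on the energy in terms of the $\dot H^{-1/2}$ mass of $\Div(\eta_\e m)$ localized near $\partial\Sigma$, and an explicit estimate of the cross term using the $L^2$ norm of the pieces together with the bounded-length control), which is considerably more work than the paper does.

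A minor point: in justifying $\mu_\e>0$ you cite Corollary~\ref{cor:biscottina2}, but that corollary gives an upper bound $E_0(m)\le CM$ under $E_\e(m)\le M$, not a lower bound $E_\e(m)\ge C^{-1}E_0(m)$. What you want is Proposition~\ref{prop:biscottina} (applied as in the proof of Corollary~\ref{cor:infkeps}), which bounds $E_\e$ from below by $F$ up to small corrections. Also, the paper does not bother to establish positivity of $\mu_\e$ in this proposition (it is not needed to show the infimum is attained), so this entire step is extra.

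Finally, you introduce an intermediate reduction to a fixed winding $K$ before recentering at $K\pi/2$; the paper dispenses with this and simply recenters so that $\bar\theta_n(0)=-\pi/2$, which works uniformly in the (possibly varying) winding $k_n$ and meshes directly with the conclusion of the cited lemma.
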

\begin{proof} Denote by $i_\e$ the infimum of the problem in
  \eqref{mineps} and observe that 
$$
M:={1 + } \sup_{\e\in (0, \frac12)}i_\e <+\infty\,.
$$
Indeed, it is enough to consider a fixed test function
$m=(\cos\theta, \sin\theta)\in {\Sp}$ such that {the set}
$\{{\mathbf r} \in \Sigma:\, {m_2}({\mathbf r})\neq 0\}$ is
bounded and $\theta$ satisfies the proper boundary conditions at
infinity. By Proposition~\ref{prop:biscottina-2} we easily get
$$
i_\e\leq E_\e(m)\leq C\,.
$$

Let $\e_1:= \e_0({M})$, where $\e_0({M})$ is {as} in in
Proposition~\ref{prop:biscottina}, and {let} $\e\in (0, \e_1)$.
{If} $m_n=(\cos\theta_n, \sin\theta_n)$ {is} a minimizing
sequence for \eqref{mineps}, by Corollary \ref{cor:biscottina2}
we have
$$
F(\theta_n)\leq C
$$
for every $n$ {large enough,} with $C$ independent of $n$.  Set
\beq\label{defbar} \bar\theta_n(x):=\int_0^1\theta(x, y)\, dy\,.  \eeq
By shifting and flipping the $\theta_n$'s if needed, in view also of
Lemma~\ref{lm:zagara} we may assume that
$$
\lim_{x\to-\infty}\bar\theta_n(x)=k_n\pi\text{ and }
\lim_{x\to+\infty}\bar\theta_n(x)=0
$$ 
for some $k_n\in {-\mathbb N}$.

Observe also that
$$
\sup_{n}\int_{\R }|\bar\theta'_n|^2\, dx<+\infty \text{ and }
\sup_{n}\int_{\Sigma}|\nabla\theta_n|^2\, dx<+\infty\,.
$$
By replacing $\theta_n$ with $\theta_n(\cdot-\tau_n, \cdot)$,
$\bar \theta_n$ with $\bar \theta_n(\cdot-\tau_n)$ and not renaming
the minimizing sequence, we can use the continuity of
$\bar \theta_n(x)$ and conditions at infinity to make sure that
$\bar \theta_n(0) =-\frac{\pi}{2}$.  It follows that
$$
|\bar \theta_n(x) - \bar \theta_n(0)| = \left| \int_0^x
  \bar\theta_n'(s) \, ds \right| \leq \sqrt{x}\, \| \bar\theta_n'
\|_{L^2(\R)} \leq C \sqrt{x}.
$$
Therefore, we have that $\bar \theta_n$ is bounded in
$L^2_{loc} (\R)$. Employing the Poincare inequality we deduce that
$\theta_n$ is bounded in $L^2_{loc}(\Sigma)$.  Thus we may apply
\cite[Lemma 1]{doering14} to deduce that there exists
$\theta_\infty\in H^1_{l}(\Sigma)$ and a subsequence (not relabelled)
such that $\theta_n\wto \theta_\infty$ weakly in $H^1_{l}(\Sigma)$,
$\bar\theta_n\wto \bar\theta_\infty$ weakly in $H^1_{l}(\R)$, and
\beq\label{felice} \bar\theta_\infty(0) = -{\pi \over 2}, \quad
\limsup_{x\to-\infty}\bar\theta_\infty(x)\leq -\frac{\pi}2\quad \text{
  and } \quad \liminf_{x\to+\infty}\bar\theta_\infty(x)\geq
-\frac{\pi}2\,, \eeq Furthermore, testing $\theta_n$ with
$\phi(x, y) = \psi(x)$, where $\psi \in C^\infty_c(\R)$, and passing
to the limit, it is easy to see that
$\bar\theta_\infty(x) = \int_0^1 \theta_\infty(x, y) \, dy$ for a.e.
$x \in \R$. In addition, using weak lower semicontinuity of the energy
$F$ we also have
 $$
 F(\theta_\infty)\leq C.
 $$
 In turn, by Lemma~\ref{lm:zagara} there exist $j_1, j_2\in \Z$ such
 that $\theta_\infty$ satisfies \eqref{lemma1}, with $k_1$, $k_2$
 replaced by $j_1$, $j_2$, respectively, and
 $$
 \lim_{x\to-\infty}\bar\theta_\infty(x)=j_1\pi, \qquad
 \lim_{x\to+\infty}\bar\theta_\infty(x)=j_2\pi\,.
 $$
 Taking into account \eqref{felice}, it is clear that $j_1\neq
 j_2$. It is now easy to check that
 $m_\infty:=(\cos\theta_\infty, \sin\theta_\infty)$ is a solution to
 \eqref{mineps}.  
\end{proof}

We are now ready to state the main $\Gamma$-convergence result showing
that \eqref{limitingE} is the limiting energy of \eqref{epsilone}.

\begin{theorem}\label{th:gamma} {Let $\gamma > 0$ and $h \geq 0$, and
    let $E_\e$ and $E_0$ be defined by \eqref{epsiloneF} and
    \eqref{limitingE}, respectively, on
    $L^2_{loc}(\Sigma; \mathbb S^2)$. Then the following two
    statements are true:}
\begin{itemize}
\item[(i)] \emph{($\Gamma$-$\liminf$ inequality)} Let
  {$m_\e \in \Sp$} and $m_\e\to m$ strongly in
  $L^2_{loc}(\Sigma; {\R^2})$ {as $\e \to 0$}. Then
  \beq\label{liminf} \liminf_{\e\to 0} E_\e(m_\e)\geq E_0(m)\,.  \eeq
\item[(ii)] \emph{($\Gamma$-$\limsup$ inequality)} Let
  {$m\in H^1_{l}(\Sigma; \mathbb S^1)$} be such that
  $E_0(m)<+\infty$. {Then there exists $m_\e \in \Sp$ such that
    $m_\e \to m$ in $L^2_{loc}(\Sigma; \R^2)$ as $\e \to 0$ and}
$$
\limsup_{\e\to 0} E_\e(m_\e)\leq E_0(m)\,.
$$
{Furthermore, if $\theta$ and $\theta_\e$ are such that
  $m = (\cos \theta, \sin \theta)$ and
  $m_\e = (\cos \theta_\e, \sin \theta_\e)$, then for every $\e$
  sufficiently small we have
$$
\lim_{x\to - \infty}\|\theta_\e(x,\cdot)- k_1 \pi \|_{L^{2}(0,1)}=0
\quad \text{and} \quad \lim_{x\to - \infty}\|\theta_\e(x,\cdot)- k_2
\pi \|_{L^{2}(0,1)}=0 ,
$$
where $k_1, k_2 \in \mathbb Z$ are as in \eqref{lemma1}.  }
\end{itemize}
\end{theorem}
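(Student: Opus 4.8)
The plan is to deduce both inequalities of Theorem~\ref{th:gamma} from the sharp bounds on the nonlocal energy term already established in Proposition~\ref{prop:biscottina-2}, treating the local terms by classical lower semicontinuity and constructing the recovery sequence by truncation together with a diagonal argument.

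\textbf{The $\liminf$ inequality.} If $\liminf_{\e\to0}E_\e(m_\e)=+\infty$ there is nothing to prove, so we pass to a subsequence along which the $\liminf$ is attained and $E_\e(m_\e)\le C$. Since the Zeeman and nonlocal terms, the latter written as in \eqref{epsiloneF}, are nonnegative, this forces $\|\nabla m_\e\|_{L^2(\Sigma)}\le C$; combined with $|m_\e|=1$ and the assumed $L^2_{loc}(\Sigma;\R^2)$ convergence, it yields $m_\e\wto m$ weakly in $H^1_l(\Sigma;\R^2)$, with $|m|=1$ a.e. and, along a further subsequence, $m_\e\to m$ a.e. The Dirichlet term passes to the $\liminf$ by weak lower semicontinuity on each $Q_R$ and then $R\to\infty$; the term $h\int_\Sigma(1-m_{1,\e})\,d^2r$ by Fatou's lemma, its integrand being nonnegative and pointwise convergent a.e.; and the nonlocal term by the first part of Proposition~\ref{prop:biscottina-2}, which gives $\liminf_{\e\to0}\frac{1}{|\ln\e|}\int_{\R^2}\frac{|\mathscr F(\Div(\eta_\e m_\e))|^2}{2\pi|\mathbf k|}\,d^2k\ge 2\int_\R m_2^2(x,0)\,dx+2\int_\R m_2^2(x,1)\,dx=2\int_{\pa\Sigma}m_2^2\,d\H^1$. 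Adding these three bounds with the weights appearing in $E_\e$ (in particular a factor $\gamma/2$ on the last) gives \eqref{liminf}.

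\textbf{The $\limsup$ inequality, base case.} Assume first that $m=(\cos\theta,\sin\theta)\in H^1_l(\Sigma;\mathbb S^1)$ with $E_0(m)<+\infty$ and $\{\mathbf r\in\Sigma:\,m_2(\mathbf r)\neq0\}$ essentially bounded. Then $\nabla m\in L^2(\Sigma)$ and $m_2\in L^2(\pa\Sigma)$, so \eqref{eq:poin} gives $m_2\in L^2(\Sigma)$ and hence $m\in\Sp$. Take the constant sequence $m_\e:=m$. The local part of $E_\e(m)$ coincides with the local part of $E_0(m)$ for every $\e$, while the second part of Proposition~\ref{prop:biscottina-2} gives $\limsup_{\e\to0}\frac{1}{|\ln\e|}\int_{\R^2}\frac{|\mathscr F(\Div(\eta_\e m))|^2}{2\pi|\mathbf k|}\,d^2k\le 2\int_{\pa\Sigma}m_2^2\,d\H^1$; hence $\limsup_{\e\to0}E_\e(m)\le E_0(m)$. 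Finally, since $F(\theta)=E_0(m)<+\infty$, Lemma~\ref{lm:zagara} produces $k_1,k_2\in\Z$ with $\theta(x,\cdot)\to k_1\pi$ in $L^2(0,1)$ as $x\to-\infty$ and $\theta(x,\cdot)\to k_2\pi$ in $L^2(0,1)$ as $x\to+\infty$, and since $\theta_\e=\theta$ the behaviour at infinity asserted in (ii) holds.

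\textbf{Truncation and diagonalization.} For a general $m=(\cos\theta,\sin\theta)\in H^1_l(\Sigma;\mathbb S^1)$ with $E_0(m)<+\infty$ we apply to $\theta$ the truncation of the proof of Theorem~\ref{th:k=2} (the construction of $\theta_n$ and claim \eqref{finalclaim}): for each $j\in\N$ there is $\theta^{(j)}\in H^1_l(\Sigma)$ equal to the constants $k_1\pi$, $k_2\pi$ of \eqref{lemma1} outside a bounded set, with $\theta^{(j)}\to\theta$ in $L^2_{loc}(\Sigma)$ and $F(\theta^{(j)})\to F(\theta)=E_0(m)$. Then $m^{(j)}:=(\cos\theta^{(j)},\sin\theta^{(j)})\in\Sp$ has $m_2^{(j)}$ of essentially bounded support, so the base case applies to each $m^{(j)}$; choosing a strictly decreasing sequence $\e_j\in(0,1/j)$ with $E_{\e_j}(m^{(j)})\le E_0(m^{(j)})+1/j$ and setting $m_{\e_j}:=m^{(j)}$, we obtain $m_{\e_j}\to m$ in $L^2_{loc}(\Sigma;\R^2)$ and $\limsup_{j\to\infty}E_{\e_j}(m_{\e_j})\le\limsup_{j\to\infty}\bigl(E_0(m^{(j)})+1/j\bigr)=E_0(m)$, while every $\theta^{(j)}$ attains $k_1\pi$ and $k_2\pi$ at $\mp\infty$, so the required behaviour at infinity holds along the recovery sequence, completing (ii). I expect the main obstacle to be exactly this truncation step: it must preserve the limiting phases $k_1\pi,k_2\pi$, and hence the head-to-head (for $h=0$) or winding (for $h>0$) character of the profile, which is why one recycles the construction of Theorem~\ref{th:k=2} rather than a generic cut-off; the analytically heaviest ingredient, namely the matching $\liminf/\limsup$ bounds for the logarithmically normalized nonlocal term, is already supplied by Propositions~\ref{prop:biscottina} and~\ref{prop:biscottina-2}.
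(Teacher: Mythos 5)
Your proposal is correct and follows essentially the same route as the paper's proof: the $\liminf$ inequality is obtained from boundedness of the Dirichlet term, lower semicontinuity of the local terms, and the lower bound \eqref{seconda} of Proposition~\ref{prop:biscottina-2}; and the $\limsup$ inequality is proved first for profiles whose $m_2$ has essentially bounded support (taking the constant recovery sequence $m_\e=m$ and invoking the upper bound \eqref{terza}), then extended by the same truncation construction underlying \eqref{finalclaim} and a diagonal argument. Your write-up merely spells out a few steps the paper leaves implicit, such as the $L^2(\Sigma)$-membership of $m_2$ via \eqref{eq:poin} and the Fatou argument for the Zeeman term, but the decomposition and the key lemmas coincide.
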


\begin{proof}
  Let us first prove the $\Gamma$-liminf inequality. If
  $\liminf_{\e \to 0} E_\e(m_\e)=+\infty$ there is nothing to
  prove. Hence we may assume without loss of generality that (after
  passing to a subsequence)
$$
\liminf_{\e {\to 0}} E_\e(m_\e)=\lim_{\e {\to 0}} E_\e(m_\e)<+\infty.
$$
Then, in particular,
${\lim\sup}_{\e {\to 0}} \|\nabla m_\e\|_{L^2 {(\Sigma)}}<+\infty$ and
thus $m_\e\wto m \in H^1_{l}(\Sigma; \mathbb S^1)$ weakly in
$H^1_{l}(\Sigma; {\R^2})$. Inequality \eqref{liminf} then follows from
the Proposition~\ref{prop:biscottina-2} (see \eqref{seconda}) and from
the lower semicontinuity of the local terms in the energies.

Let us now establish the upper bound.  Let $m$ and $\theta$ be as in
the second part of the statement. {Then by Lemma \ref{lem:trace} we
  have $m \in \Sp$, and} by Lemma~\ref{lm:zagara} there exists
{$k_1, k_2 \in \Z$ such that \eqref{lemma1} holds true.}  Now, {arguing as
  in the proof of \eqref{finalclaim}} one can construct a sequence
$\{\theta_n\}$ with the following properties:
\begin{itemize}
\item[i)] for every $n$ there exists $M_n>0$ such that 
$$
\theta_n(x, y)= {k_1 \pi \text{ if $x\leq -M_n$ and } \theta_n(x, y)=k_2 \pi
\text{ if $x\geq M_n$, }}
$$
\item[ii)] {$m_n\to m \in L^2_{loc}(\Sigma; \R^2)$},
\item[iii)] setting $m_n:=(\cos\theta_n, \sin\theta_n)$, {we have}
$$
E_0(m_n)=F(\theta_n)\to E_0(m)=F(\theta)\text{ as }n\to\infty\,.
$$
\end{itemize}
Therefore, by a standard diagonal argument it is enough to prove the
upper bound under the following additional assumption: there exists
$M>0$ such that
$$
{\theta(x, y)= k_1 \pi \text{ if $x\leq -M$ and } \theta(x, y)=k_2 \pi \text{ if
  $x\geq M$.}}
$$
Under such an assumption, the conclusion follows simply by taking
$m_\e= m$ for all $\e$ and observing that
$$
\limsup_{\e\to 0}E_\e(m)\leq E_0(m)\,,
$$
 thanks to Proposition~\ref{prop:biscottina-2} (see \eqref{terza}). 
\end{proof}

\begin{corollary}\label{cor:infkeps}
Let $k \in \Z \setminus\{0\}$.
Then
$$
\lim_{\e \to 0} \inf_{{m \in \mathcal A_k^0}} E_\e({m}) =
\inf_{\theta \in \mathcal A_k} F (\theta),
$$
{where
  $\mathcal A_k^0 := \{ m \in \Sp \, : \, m = (\cos \theta, \sin
  \theta), \ \theta \in \mathcal A_k \}$.}
\end{corollary}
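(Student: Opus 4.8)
The plan is to prove the two one-sided estimates
\[
\limsup_{\e\to 0}\ \inf_{m\in\mathcal A_k^0}E_\e(m)\ \le\ \inf_{\theta\in\mathcal A_k}F(\theta)
\qquad\text{and}\qquad
\liminf_{\e\to 0}\ \inf_{m\in\mathcal A_k^0}E_\e(m)\ \ge\ \inf_{\theta\in\mathcal A_k}F(\theta)
\]
separately. The upper bound is an immediate consequence of the $\Gamma$-$\limsup$ inequality of Theorem~\ref{th:gamma}(ii), \emph{provided} one checks that the recovery sequence does not change the winding at $x=\pm\infty$; this is exactly what the final clause of that theorem is designed to deliver. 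The lower bound cannot be obtained by naively pairing the $\Gamma$-$\liminf$ inequality with a compactness argument, because $L^2_{loc}$-convergence need not preserve the behaviour at infinity, so a near-optimal sequence could a priori drop part of its winding. Instead I would derive, from Proposition~\ref{prop:biscottina}, a uniform quantitative comparison $E_\e(m)\ge(1-o(1))E_0(m)$ valid on all of $\Sp$, and then combine it with the trivial inequality $\inf_{\mathcal A_k^0}E_0\ge\inf_{\mathcal A_k}F$.

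\emph{Upper bound.} We may assume $\inf_{\mathcal A_k}F<+\infty$. Fix $\theta\in\mathcal A_k$ with $F(\theta)<+\infty$ and set $m:=(\cos\theta,\sin\theta)$; then $m\in H^1_l(\Sigma;\mathbb S^1)$ and $E_0(m)=F(\theta)<+\infty$. By Theorem~\ref{th:gamma}(ii) there is $m_\e=(\cos\theta_\e,\sin\theta_\e)\in\Sp$ with $m_\e\to m$ in $L^2_{loc}(\Sigma;\R^2)$ and $\limsup_{\e\to 0}E_\e(m_\e)\le E_0(m)=F(\theta)$. Since for $\theta\in\mathcal A_k$ the integers $k_1,k_2$ of Lemma~\ref{lm:zagara} are $k_1=k$ and $k_2=0$, the final clause of Theorem~\ref{th:gamma}(ii) gives $\theta_\e\in\mathcal A_k$ for all $\e$ small, i.e.\ $m_\e\in\mathcal A_k^0$. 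Hence $\inf_{\mathcal A_k^0}E_\e\le E_\e(m_\e)$ for $\e$ small, so $\limsup_{\e\to 0}\inf_{\mathcal A_k^0}E_\e\le F(\theta)$, and taking the infimum over $\theta\in\mathcal A_k$ yields the claim.

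\emph{Lower bound.} Rewriting the estimate of Proposition~\ref{prop:biscottina} for the nonlocal term occurring in $E_\e$, we obtain for every $m\in\Sp$, every $\beta\in(0,1)$ and every small $\e$
\[
E_\e(m)\ \ge\ E_0(m)-\beta\,\gamma\,\|m_2\|^2_{L^2(\pa\Sigma)}-\frac{C}{\beta|\ln\e|}\bigl(\|\nabla m\|^2_{L^2(\Sigma)}+\|m_2\|^2_{L^2(\Sigma)}\bigr).
\]
Each correction term is dominated by $E_0(m)$: indeed $\gamma\|m_2\|^2_{L^2(\pa\Sigma)}\le E_0(m)$, $\|\nabla m\|^2_{L^2(\Sigma)}\le 2E_0(m)$, and by \eqref{eq:poin} (applied with $y=0$, using $\|\nabla m_2\|_{L^2(\Sigma)}\le\|\nabla m\|_{L^2(\Sigma)}$ and $\|m_2(\cdot,0)\|^2_{L^2(\R)}\le\gamma^{-1}E_0(m)$) also $\|m_2\|^2_{L^2(\Sigma)}\le C'E_0(m)$ for some $C'=C'(\gamma)$. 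Choosing $\beta=(C_1/|\ln\e|)^{1/2}$ with a suitable $C_1=C_1(\gamma)$ (admissible once $\e$ is small) yields
\[
E_\e(m)\ \ge\ \bigl(1-C_2|\ln\e|^{-1/2}\bigr)\,E_0(m)\qquad\text{for all }m\in\Sp,\ \e\text{ small}.
\]
On the other hand, every $m\in\mathcal A_k^0$ admits a lifting $\theta\in\mathcal A_k$ with $E_0(m)=F(\theta)\ge\inf_{\mathcal A_k}F$, so $\inf_{\mathcal A_k^0}E_0\ge\inf_{\mathcal A_k}F$. Therefore, for $\e$ small enough that $1-C_2|\ln\e|^{-1/2}>0$,
\[
\inf_{\mathcal A_k^0}E_\e\ \ge\ \bigl(1-C_2|\ln\e|^{-1/2}\bigr)\inf_{\mathcal A_k^0}E_0\ \ge\ \bigl(1-C_2|\ln\e|^{-1/2}\bigr)\inf_{\mathcal A_k}F,
\]
and letting $\e\to 0$ gives $\liminf_{\e\to 0}\inf_{\mathcal A_k^0}E_\e\ge\inf_{\mathcal A_k}F$. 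The degenerate case $h>0$ with $k$ odd is trivial: Lemma~\ref{lm:zagara} forces $\inf_{\mathcal A_k}F=+\infty$, while any $m\in\mathcal A_k^0$ has $m_1(x,\cdot)\to\cos(k\pi)=-1$ in $L^2(0,1)$ as $x\to-\infty$, whence $h\int_\Sigma(1-m_1)=+\infty$ and $\inf_{\mathcal A_k^0}E_\e=+\infty$, so the identity holds.

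\emph{Main obstacle.} The only genuinely delicate point is the mismatch between the constrained classes $\mathcal A_k$ and the $L^2_{loc}$-topology of the $\Gamma$-convergence. For the $\limsup$ inequality one must verify that the winding is not altered when passing to the recovery sequence — furnished by the last clause of Theorem~\ref{th:gamma}(ii) — and for the $\liminf$ inequality one cannot afford to extract an $L^2_{loc}$-limit, which could carry strictly less winding; this is why the argument runs through the pointwise-in-$m$ inequality $E_\e\ge(1-o(1))E_0$ coming from Proposition~\ref{prop:biscottina} rather than through $\Gamma$-$\liminf$ together with a compactness argument.
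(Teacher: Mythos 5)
Your proof is correct, and it rests on the same two pillars as the paper's: the $\Gamma$-$\limsup$ inequality of Theorem~\ref{th:gamma}(ii), with its final clause about winding preservation, for the upper bound, and Proposition~\ref{prop:biscottina} for the lower bound. The substantive difference lies in how the error terms from Proposition~\ref{prop:biscottina} are handled in the lower bound. The paper passes to a near-minimizing sequence $m_\e$ with $E_\e(m_\e)\to\liminf_\e\inf_{\mathcal A_k^0}E_\e$, invokes Corollary~\ref{cor:biscottina} to get \emph{uniform} bounds on $\|\nabla\theta_\e\|_{L^2(\Sigma)}$, $\|\sin\theta_\e\|_{L^2(\Sigma)}$ and $\|\sin\theta_\e\|_{L^2(\pa\Sigma)}$, fixes $\beta$, sends $\e\to 0$, and only at the very end lets $\beta\to 0$. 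You instead absorb every error term into $E_0(m)$ itself — via $\gamma\|m_2\|^2_{L^2(\pa\Sigma)}\le E_0(m)$, $\|\nabla m\|^2_{L^2(\Sigma)}\le 2E_0(m)$, and the Poincaré-type estimate \eqref{eq:poin} for $\|m_2\|^2_{L^2(\Sigma)}$ — and then optimize $\beta=\beta(\e)$, which yields the pointwise, sequence-free inequality $E_\e(m)\ge(1-C|\ln\e|^{-1/2})E_0(m)$ on all of $\Sp$. Taking infima over $\mathcal A_k^0$ then gives the lower bound at once, with no need to select a near-minimizer or appeal to Corollary~\ref{cor:biscottina}. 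Your route is a little cleaner and has the side benefit of an explicit $O(|\ln\e|^{-1/2})$ rate, while the paper's route avoids the $E_0$-domination observation at the cost of an extra limit passage; the two are otherwise of equivalent strength. Your handling of the degenerate case $h>0$, $k$ odd (both sides $+\infty$) is correct and makes explicit what the paper leaves implicit.
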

\begin{proof}
  For simplicity of the presentation we provide the proof for
  $k \in \N$ only. Using Theorem~\ref{th:gamma}, we know that for any
  fixed $\bar \theta \in \mathcal A_k$ such that
  $F(\bar \theta)<+\infty$ we may find
  {$m_\e = (\cos \theta_\e, \sin \theta_\e) \in \mathcal A_k^0$}
  such that $\theta_\e\wto \bar \theta$ weakly in $H^1_{l}(\Sigma)$
  and $E_\e ({m_\e}) \to F(\bar \theta)$. Thus,
$$
\limsup_{\e \to 0} \inf_{m \in \mathcal A_k^0} E_\e({m}) \leq
\lim_{\e \to 0} E_\e({m_\e}) = F(\bar \theta).
$$
By the arbitrariness of $\bar \theta\in \mathcal A_k$, we obtain
$$
\limsup_{\e \to 0} \inf_{\theta \in \mathcal A_k} E_\e(\cos\theta,
\sin\theta) \leq \inf_{\theta \in \mathcal A_k} F(\theta).
$$

For the reverse inequality, let
{$m_\e = (\cos \theta_\e, \sin \theta_\e) \in \mathcal A_k^{0}$}
be a sequence such that \beq\label{infimizing} \lim_{{\e \to 0}}
E_{\e}({m_\e}) = \liminf_{\e\to 0}\inf_{m \in \mathcal A_k^0}
E_\e({m}).  \eeq Then for {$\e$ small} enough we may use
Corollary~\ref{cor:biscottina} to get
$$
{\limsup_{\e \to 0}} \big(\|\nabla \theta_{\e}\|_{L^2(\Sigma)} +
\|\sin\theta_{\e}\|_{L^2(\Sigma)} +
\|\sin\theta_{\e}\|_{L^2(\partial\Sigma)}\big)<+\infty\,.
$$
On the other hand, for any $\beta\in (0,1)$, using inequality
\eqref{biscottina2} from Proposition~\ref{prop:biscottina} and
denoting by $C$ a positive constant independent of $j$ and $\beta$
(that may change from inequality to inequality) we have
\begin{multline*}
  E_{\e}({m_\e}) \geq F(\theta_{\e}) - \frac{C}{\beta |\ln
    {\e}| } \left(\|\nabla \theta_{\e}\|^2_{L^2(\Sigma)} + \|
    \sin\theta_{\e}\|^2_{L^2(\Sigma)}
  \right) - C \beta \| \sin\theta_{\e} \|^2_{L^2 (\partial \Sigma)}  \\
  \geq \inf_{\theta \in \mathcal A_k} F(\theta) - \frac{C}{\beta |\ln
    {\e}| } - C \beta.
\end{multline*}
Taking the limit as ${\e \to 0}$ and recalling \eqref{infimizing},
we obtain
$$
\liminf_{\e \to 0} \inf_{m \in \mathcal A_k^0} E_\e({m}) \geq
\inf_{\theta \in \mathcal A_k} F(\theta) - C \beta.
$$
The conclusion then follows from the arbitrariness of $\beta$.
\end{proof}

\begin{corollary}\label{cor:mincon} Let $\e \to 0$ and let ${\{m_\e\}}$
  be a sequence of minimizers for problem \eqref{mineps}. Then, after
  suitable translations in the $x$-variable and up to a subsequence
  (not relabelled), we have
  {$m_\e\to m_0 \in H^1_{l}(\Sigma; \mathbb S^1)$ strongly} in
  $H^1_{l}(\Sigma{; \R^2})$, where $m_0$ is a solution to
  \eqref{mineps0}.  Moreover,
$$
\lim_{\e \to 0} E_\e(m_\e) = E_0(m_0).
$$ 
\end{corollary}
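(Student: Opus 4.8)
The plan is to treat $\{m_\e\}$ merely as a sequence of magnetizations of uniformly bounded energy, repeat verbatim the compactness argument from the proof of Proposition~\ref{prop:mineps}, and then close the gap between $\liminf$ and $\limsup$ of the energies by invoking the $\Gamma$-convergence of Theorem~\ref{th:gamma}. First I would record a uniform energy bound. Denoting by $i_\e$ the minimum value of \eqref{mineps} and fixing once and for all a competitor $\tilde m=(\cos\tilde\theta,\sin\tilde\theta)\in\Sp$ with nonzero winding and with $\{\tilde m_2\neq 0\}$ bounded, the $\limsup$ inequality \eqref{terza} of Proposition~\ref{prop:biscottina-2} gives $\limsup_{\e\to 0}E_\e(\tilde m)\le E_0(\tilde m)<+\infty$, hence $E_\e(m_\e)=i_\e\le E_\e(\tilde m)\le C$ for all $\e$ small. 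By Corollary~\ref{cor:biscottina2} this forces $F(\theta_\e)\le C$ uniformly, where $m_\e=(\cos\theta_\e,\sin\theta_\e)$; in particular $\|\nabla m_\e\|_{L^2(\Sigma)}$, $\|\sin\theta_\e\|_{L^2(\Sigma)}$ and $\|\sin\theta_\e\|_{L^2(\partial\Sigma)}$ are bounded independently of $\e$.

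Next I would perform the normalization and compactness exactly as in Proposition~\ref{prop:mineps}: after a translation in $x$ and a possible flip, and using Lemma~\ref{lm:zagara}, arrange that $\bar\theta_\e(x):=\int_0^1\theta_\e(x,y)\,dy$ satisfies $\bar\theta_\e(0)=-\tfrac\pi2$ and has limits $k_1^\e\pi$ at $-\infty$ and $0$ at $+\infty$; since $\|\bar\theta_\e'\|_{L^2(\R)}\le C$, this makes $\bar\theta_\e$, and hence by the Poincar\'e inequality $\theta_\e$, bounded in $L^2_{loc}$. Then \cite[Lemma 1]{doering14} yields a subsequence (not relabelled) and $\theta_0\in H^1_l(\Sigma)$ with $\theta_\e\wto\theta_0$ in $H^1_l(\Sigma)$, $\bar\theta_\e\wto\bar\theta_0$ in $H^1_l(\R)$, $\bar\theta_0=\int_0^1\theta_0(\cdot,y)\,dy$, and the one-sided bounds \eqref{felice}. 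By Rellich's theorem $\theta_\e\to\theta_0$ strongly in $L^2_{loc}(\Sigma)$, hence $m_\e\to m_0:=(\cos\theta_0,\sin\theta_0)$ strongly in $L^2_{loc}(\Sigma;\R^2)$. Weak lower semicontinuity gives $F(\theta_0)\le C<+\infty$, so Lemma~\ref{lm:zagara} produces $j_1,j_2\in\Z$ with $\lim_{x\to-\infty}\bar\theta_0(x)=j_1\pi$, $\lim_{x\to+\infty}\bar\theta_0(x)=j_2\pi$, and \eqref{felice} forces $j_1\neq j_2$; thus $m_0$ is admissible for \eqref{mineps0}.

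It then remains to identify $m_0$ as a minimizer and to upgrade the convergence. The $\Gamma$-$\liminf$ inequality of Theorem~\ref{th:gamma}(i) gives $E_0(m_0)\le\liminf_\e E_\e(m_\e)=\liminf_\e i_\e$. Conversely, for any $m$ admissible for \eqref{mineps0} with $E_0(m)<+\infty$, the recovery sequence from Theorem~\ref{th:gamma}(ii) lies in $\Sp$ and, by the ``furthermore'' part of that statement, has the same behaviour at $\pm\infty$, hence the same nonzero winding, as $m$; it is therefore a competitor for $i_\e$, so $\limsup_\e i_\e\le E_0(m)$, and taking the infimum over such $m$ yields $\limsup_\e i_\e\le\inf_{\eqref{mineps0}}E_0\le E_0(m_0)$. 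Hence all inequalities are equalities: $m_0$ solves \eqref{mineps0} and $E_\e(m_\e)\to E_0(m_0)$ (along the whole sequence, the limit being the sequence-independent value $\inf_{\eqref{mineps0}}E_0$). Finally, splitting $E_\e(m_\e)$ into its Dirichlet, nonlocal and Zeeman parts, the three $\liminf$'s are bounded below, respectively, by weak lower semicontinuity on each $Q_R$ followed by $R\to\infty$, by \eqref{seconda} of Proposition~\ref{prop:biscottina-2}, and by Fatou's lemma, and their sum is $E_0(m_0)=\lim_\e E_\e(m_\e)$; so each is an equality, and in particular $\|\nabla m_\e\|_{L^2(\Sigma)}\to\|\nabla m_0\|_{L^2(\Sigma)}$. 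Combined with $\nabla m_\e\wto\nabla m_0$ in $L^2(\Sigma)$, this gives $\nabla m_\e\to\nabla m_0$ strongly in $L^2(\Sigma)$, and therefore $m_\e\to m_0$ strongly in $H^1_l(\Sigma;\R^2)$. The main obstacle I anticipate is the compactness step: one must choose the $x$-translations so that the limit stays a genuine domain wall with distinct limits at $\pm\infty$ rather than collapsing to a constant, and this is exactly where the averaged phase $\bar\theta_\e$, the $L^2$-bound on $\bar\theta_\e'$ from Lemma~\ref{lm:zagara}, and the one-sided estimates \eqref{felice} are needed; everything else is a routine assembly of the $\Gamma$-convergence and lower-semicontinuity facts already available.
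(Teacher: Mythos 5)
Your proposal is correct and follows essentially the same route as the paper's proof: uniform energy bound via a fixed competitor plus Corollary~\ref{cor:biscottina2}, compactness and normalization exactly as in Proposition~\ref{prop:mineps}, identification of the limit and energy convergence via the $\Gamma$-convergence of Theorem~\ref{th:gamma}, and the upgrade to strong $H^1_{l}(\Sigma;\R^2)$ convergence from the convergence of energies. The paper is terser (it simply invokes ``a standard $\Gamma$-convergence argument'' and states that energy convergence implies strong convergence), while you spell out the details — in particular the role of the ``furthermore'' clause in Theorem~\ref{th:gamma}(ii) guaranteeing that recovery sequences have the same nontrivial winding and hence are admissible competitors, and the term-by-term lower-semicontinuity argument yielding $\|\nabla m_\e\|_{L^2(\Sigma)}\to\|\nabla m_0\|_{L^2(\Sigma)}$ — but these are precisely the details the paper leaves implicit, not a different argument.
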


\begin{proof}
  Note that by Corollary~\ref{cor:biscottina} we have \beq
  \limsup_{\e\to 0} E_0(m_\e)<+\infty\,.  \eeq In turn, by
  Lemma~\ref{lm:zagara}, without loss of generality, we may associate
  to each $m_\e$ a phase function $\theta_\e$ satisfying
  \eqref{thetau}.

We may now argue exactly as in Proposition~\ref{prop:mineps} (with
$\theta_\e$ in place of $\theta_n$) to deduce the existence of
$\theta_0\in H^1_{l}(\Sigma)$ and of $j_1$, $j_2\in \Z$, $j_1\neq j_2$
such that \eqref{lemma1} holds with $\theta$, $k_1$, $k_2$ replaced by
$\theta_0$, $j_1$, $j_2$, respectively, and
$$
\theta_\e\wto \theta_0\quad\text{weakly in
}H^1_{l}(\Sigma)\,,
$$
up to a subsequence (not relabelled).  Set now
$m_0:=(\cos\theta_0, \sin\theta_0)$. The fact that $m_0$ is a solution
of \eqref{mineps0} and the convergence of energies follows from a
standard $\Gamma$-convergence argument in view of
Theorem~\ref{th:gamma}. In turn, the convergence of energies implies
strong convergence of $m_\eps$ in $H^1_{l}(\Sigma; \R^2)$. 
 \end{proof}
%\begin{remark} We note that if $m_\e$ solves the minimization problem \eqref{mineps} then 

%This follows by the standard $\Gamma$-convergence argument. Indeed, since $m_\e \wto m_0$ in $H^1_{loc}(\Sigma; \mathbb S^1)$ using Theorem~\ref{th:gamma} we know that 
%$$
%\liminf E_\e(m_\e) \geq E_0 (m_0).
%$$
%On the other hand using Theorem~\ref{th:gamma} we can construct a sequence $\tilde m_\e \in H^1_{loc}(\Sigma; \mathbb S^1)$ such that 
%$\limsup E_\e(\tilde m_\e) \leq E_0 (m_0)$. However, since $m_\e$ is a minimizer it is clear that $E_\e (m_\e) \leq  E_\e(\tilde m_\e)$ and therefore
%$$
%\limsup E_\e(m_\e) \leq\limsup E_\e(\tilde m_\e) \leq  E_0 (m_0).
%$$
%\end{remark}

 Corollary~\ref{cor:mincon} combined with
 Corollaries~\ref{cor:mineps0}, \ref{cor:infk0} and \ref{cor:infkeps}
 easily yields that for $\e$ small enough the minimization in
 \eqref{mineps} is achieved by at most single winding. Precisely,
 we have:
  \begin{corollary}\label{cor:mineps}
    There exists $\e_1>0$ such that for $\e\in (0, \e_1)$ any
    minimizer $m_\e=(\cos\theta_\e,\sin\theta_\e)$ of \eqref{mineps}
    is such that $\theta_\e$ satisfies \eqref{lemma1} for some
    $k_1(\e), k_2(\e)\in \Z$, with $|k_1(\e)-k_2(\e)|=k$, where $k=1$
    if $h=0$ or $k=2$ if $h>0$. {Moreover, after suitable
      translations we have
      \begin{align}
        \label{eq:42}
        \text{sgn}(k_1(\e) - k_2(\e)) (\theta_\e - k_2(\e) \pi) \to
        \theta_{min} \ \text{strongly in} \ H^1_{l}(\Sigma) \
        \text{as} \ \eps \to 0, 
      \end{align}      
      where $\theta_{min}$ is the unique (up to translations)
      minimizer from Theorem \ref{t:exist}.}
  \end{corollary}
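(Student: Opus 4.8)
The plan is to read off the statement from the $\Gamma$-convergence package already in place --- Corollary~\ref{cor:mincon} and Corollary~\ref{cor:infkeps} --- together with the exact dependence of the limiting infima on the winding number (Corollary~\ref{cor:infk0}) and the uniqueness of the limit profile (Corollary~\ref{cor:uniq}, Corollary~\ref{cor:mineps0}). Existence of a minimizer $m_\e$ of \eqref{mineps} for $\e$ small is Proposition~\ref{prop:mineps}. Write $i_\e$ for the infimum in \eqref{mineps}. Since every $\theta\in\mathcal A_k$ with $k\ge1$ satisfies \eqref{lemma1} with $k_1=k$, $k_2=0$, the class $\mathcal A_k^0$ is admissible in \eqref{mineps}, so $i_\e\le\inf_{m\in\mathcal A_k^0}E_\e(m)$, and by Corollary~\ref{cor:infkeps} the right-hand side tends to $\inf_{\mathcal A_k}F=\min_{\mathcal A_k}F$ (the last equality and attainment being Corollary~\ref{cor:uniq}); hence $\limsup_{\e\to0}i_\e\le\min_{\mathcal A_k}F$. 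Conversely, applying Corollary~\ref{cor:mincon} to any family of minimizers gives $i_\e=E_\e(m_\e)\to E_0(m_0)$ with $m_0$ a minimizer of \eqref{mineps0}; by Corollary~\ref{cor:mineps0}, $m_0$ is, up to $x$-translation, one of the profiles built from $\theta_{min}$, all of energy $F(\theta_{min})$, and these realize the global minimum of $E_0$ over all non-trivial winding classes because, by Corollary~\ref{cor:infk0}, $j\mapsto\inf_{\mathcal A_j}F$ is strictly increasing in $|j|$ over the admissible winding numbers. Therefore $\lim_{\e\to0}i_\e=\min_{\mathcal A_k}F$, a positive number since $\theta_{min}$ is non-constant.

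Next I would fix the winding of an \emph{arbitrary} minimizer $m_\e=(\cos\theta_\e,\sin\theta_\e)$. By Lemma~\ref{lm:zagara} there are $k_1(\e),k_2(\e)\in\Z$ (in $2\Z$ if $h>0$) with the prescribed limits, so $n(\e):=|k_1(\e)-k_2(\e)|\ge k$ always. Assume for contradiction that $n(\e_j)>k$ along some $\e_j\to0$; then in fact $n(\e_j)\ge2k$, the odd values being excluded when $h>0$. Subtracting $k_2(\e_j)\pi$ from $\theta_{\e_j}$ and flipping sign produces $\tilde\theta_{\e_j}\in\mathcal A_{n(\e_j)}$ with $F(\tilde\theta_{\e_j})=F(\theta_{\e_j})$, hence $F(\theta_{\e_j})\ge\inf_{\mathcal A_{n(\e_j)}}F\ge2\min_{\mathcal A_k}F$ by Corollary~\ref{cor:infk0}. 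Since $i_{\e_j}$ is bounded by the first step, Corollary~\ref{cor:biscottina} gives uniform bounds on $\|\nabla m_{\e_j}\|_{L^2(\Sigma)}$, $\|m_{2,\e_j}\|_{L^2(\Sigma)}$ and $\|m_{2,\e_j}\|_{L^2(\partial\Sigma)}$, and then inequality \eqref{biscottina2}, used exactly as in the proof of Corollary~\ref{cor:infkeps}, gives for every $\beta\in(0,1)$
\[
i_{\e_j}=E_{\e_j}(m_{\e_j})\ \ge\ F(\theta_{\e_j})-C\beta-\frac{C}{\beta|\ln\e_j|}\ \ge\ 2\min_{\mathcal A_k}F-C\beta-\frac{C}{\beta|\ln\e_j|}.
\]
Letting $j\to\infty$ and then $\beta\to0$ yields $\min_{\mathcal A_k}F=\lim_j i_{\e_j}\ge2\min_{\mathcal A_k}F$, contradicting $\min_{\mathcal A_k}F>0$. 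Hence $n(\e)=k$ for all $\e$ small, which is the first assertion.

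For the convergence \eqref{eq:42}, apply Corollary~\ref{cor:mincon} to $\{m_\e\}$: after $x$-translations and along a subsequence, $m_\e\to m_0$ strongly in $H^1_{l}(\Sigma;\R^2)$ with $m_0$ a minimizer of \eqref{mineps0}, so by Corollary~\ref{cor:mineps0} and Corollary~\ref{cor:uniq}, $m_0$ is, up to $x$-translation, one of the finitely many profiles associated with $\theta_{min}$. The renormalization $\tilde\theta_\e:=\text{sgn}(k_1(\e)-k_2(\e))(\theta_\e-k_2(\e)\pi)$ turns the phase of $m_\e$ into an element of $\mathcal A_k$ and commutes with $x$-translation; tracking it through the convergence and using that $\tilde\theta_\e\in\mathcal A_k$ imposes the decreasing orientation, one checks that the only compatible limit profile is $\theta_{min}$ itself (any other would carry strictly larger total variation, hence by the $\Gamma$-$\liminf$ inequality of Theorem~\ref{th:gamma}(i) strictly larger limiting energy than $\min_{\mathcal A_k}F=\lim_\e E_\e(m_\e)$), so that $\tilde\theta_\e\to\theta_{min}$ up to a translation. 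Strong $H^1_{l}(\Sigma)$ convergence of $\tilde\theta_\e$ then follows from the convergence of energies exactly as in Corollary~\ref{cor:mincon}, and uniqueness of $\theta_{min}$ up to translation upgrades the subsequential statement to a statement for the whole family.

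The main obstacle is the second step. Corollary~\ref{cor:mincon} only controls the \emph{subsequential limit} of the minimizers, so excluding extra winding in the $\e$-minimizer itself genuinely requires the quantitative bridge \eqref{biscottina2} between $E_\e$ and the local energy $F$ together with the exact additivity $\inf_{\mathcal A_j}F=\tfrac{|j|}{2}\min_{\mathcal A_2}F$ (resp. $|j|\min_{\mathcal A_1}F$) from Corollary~\ref{cor:infk0}, which makes each additional $2\pi$ of rotation cost at least $2\min_{\mathcal A_k}F$ in the limit. A secondary technical point, in the last step, is pinning down the orientation of the limit among the profiles of Corollary~\ref{cor:mineps0}; this is handled by the normalization into $\mathcal A_k$ and the energy comparison just indicated, together with the $y$-reflection symmetry of $\theta_{min}$.
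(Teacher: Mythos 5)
Your proof is correct and follows essentially the same route as the paper: fix the limiting infimum $\lim_{\e\to0}i_\e=\min_{\mathcal A_k}F$ via Corollary~\ref{cor:infkeps}/\ref{cor:mincon}, exclude extra winding by a contradiction argument based on the additivity $\inf_{\mathcal A_j}F$ in Corollary~\ref{cor:infk0} and the quantitative bridge \eqref{biscottina2}, and then invoke Corollary~\ref{cor:mincon} together with uniqueness (Corollary~\ref{cor:uniq}) for the convergence \eqref{eq:42}. Where you diverge from the paper is in the contradiction step: the paper separates the argument into the case of bounded winding $|k_1(\e_k)-k_2(\e_k)|\to n$ --- where it can pass through Corollary~\ref{cor:infkeps} for the \emph{fixed} class $\mathcal A_n$ --- and the case $|k_1(\e_k)-k_2(\e_k)|\to\infty$, which it handles by a separate truncation argument replacing $\theta_{\e_k}$ by $\min\{n\pi,\theta_{\e_k}\}$. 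Your argument unifies the two by going straight to the $F$-level lower bound $F(\theta_{\e_j})\ge\inf_{\mathcal A_{n(\e_j)}}F\ge2\min_{\mathcal A_k}F$ (valid for any $n(\e_j)\ge2k$, bounded or not, by the monotonicity implicit in Corollary~\ref{cor:infk0}) and then transferring it down to $E_{\e_j}$ via \eqref{biscottina2} together with the uniform a priori bounds of Corollary~\ref{cor:biscottina} --- exactly the mechanism already used in the proof of Corollary~\ref{cor:infkeps}, but applied without pinning the winding class. This is cleaner and avoids the truncation.

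One minor inaccuracy in the last paragraph: the competing limit profiles listed in Corollary~\ref{cor:mineps0} (e.g.\ $(\cos\check\theta_{min},\sin\check\theta_{min})$) do \emph{not} have ``strictly larger total variation'' or strictly larger energy --- they all have energy exactly $F(\theta_{min})$, being related to $\theta_{min}$ by energy-preserving symmetries. What actually selects $\theta_{min}$ among them is the normalization $\tilde\theta_\e\in\mathcal A_k$ (monotone decreasing orientation with $\tilde\theta_\e\to0$ as $x\to+\infty$), which is incompatible with $\check\theta_{min}$, together with the $y$-reflection symmetry $\theta_{min}(x,y)=\theta_{min}(x,1-y)$ of Corollary~\ref{cor:decay} collapsing the remaining ambiguity. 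This is precisely the ``uniqueness of the minimizer of the limit problem'' the paper appeals to. Since your argument invokes Corollary~\ref{cor:uniq} and the normalization anyway, this is a misstatement of the reason rather than a gap in the conclusion.
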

\begin{proof}
  We provide a proof for $h=0$ only. Let $\e>0$ be small enough and
  $m_\e=(\cos\theta_\e, \sin\theta_\e)$ be a minimizer of
  \eqref{mineps}. Using Corollary~\ref{cor:biscottina2} and
  Lemma~\ref{lm:zagara}, we know that there exist
  $k_1(\e), k_2(\e) \in \N$ such that \beq
  \lim_{x\to-\infty}\|\theta_\e(x,\cdot)-k_1(\e)\pi\|_{L^{2}(0,1)}=0
  \text{ and }
  \lim_{x\to+\infty}\|\theta_\e(x,\cdot)-k_2(\e)\pi\|_{L^{2}(0,1)}=0\,.
  \eeq Employing Corollary~\ref{cor:mincon}, we also know that (after
  a suitable translation) $m_\e \to m_0$ strongly in
  $H^1_{l} (\Sigma)$ {for a subsequence of $\e \to 0$}, where $m_0$
  is a minimizer of \eqref{mineps0}. We want to show that
  $|k_1(\e) - k_2(\e)| \to 1$. Assume this is not the case, then there
  exists a further subsequence $\e_k \to 0$ such that either: (a)
  $|k_1(\e_k) - k_2(\e_k)| \to n \in \Z_+ \setminus \{1\}$ or (b)
  $|k_1(\e_k) - k_2(\e_k)| \to \infty$.

  In case (a), we see that there exists $\e_1>0$ such that for all
  $\e_k <\e_1$ we have $|k_1(\e_k) - k_2(\e_k)| = n$ and therefore
  (after a suitable shift of $\theta_{\e_k}$ by $k_2(\e_k) \pi$) we
  obtain $m_{\e_k} (\cos \theta_{\e_k}, \sin \theta_{\e_k})$ with
  $\theta_{\e_k} \in \mathcal A_n$. Since $m_{\e_k}$ minimizes
  \eqref{mineps}, we cannot have $n=0$. Furthermore, since $m_{\e_k}$
  is a minimizer of \eqref{mineps} we obtain
  $\lim_{k \to \infty} E_{\e_k}(m_{\e_k}) = \lim_{k \to \infty}
  \inf_{m \in \mathcal A_n} E_{\e_k} (m)$. Using
  Corollary~\ref{cor:infkeps} and Corollary~\ref{cor:infk0}, we obtain
  that
$$
\lim_{k \to \infty} E_{\e_k}(m_{\e_k}) = \inf_{\theta \in \mathcal
  A_n} F(\theta)= n F(\theta_{min}) > F(\theta_{min})>0,
$$
contradicting the convergence of energies in
Corollary~\ref{cor:mincon}.

In case (b), we assume without loss of generality that $k_2(\e)=0$ and
$k_1(\e)>0$.  We note that since $k_1(\e) \to \infty$, the function
$\bar\theta_{\e_k} = \min \{ n \pi, \theta_{\e_k}\}$ for any fixed
$n \in \N$ yields $F(\theta_{\e_k}) \geq F(\bar \theta_{\e_k})$. Using
Corollary~\ref{cor:mincon}, we know that $E_{\e_k} (m_{\e_k})<C$ and
therefore by employing Proposition~\ref{prop:biscottina} and
Corollary~\ref{cor:biscottina} we obtain
$$
\liminf_{k \to \infty} E_{\e_k} (m_{\e_k}) \geq (1-\beta) \liminf_{k
  \to \infty} F(\theta_{\e_k}) \geq (1-\beta) \liminf_{k \to \infty}
F(\bar\theta_{\e_k})
$$
for any $\beta \in (0,1)$. Finally, taking $n=2$ and noting that
$\bar\theta_{\e_k} \in \mathcal A_n$ we deduce, using
Corollary~\ref{cor:infk0}, that
$\liminf_{k \to \infty} E_{\e_k} (m_{\e_k}) \geq 2(1-\beta)
F(\theta_{min}) > F(\theta_{min})$ for $\beta$ small enough, and again
we have a contradiction with the convergence of energies in
Corollary~\ref{cor:mincon}.

{Finally, strong convergence of
  $\text{sgn}(k_1(\e) - k_2(\e)) (\theta_\e - k_2(\e) \pi)$ to
  $\theta_{min}$ in $H^1_{l}(\Sigma)$ follows from
  Corollary~\ref{cor:mincon} and uniqueness of the minimizer of the
  limit problem. }
\end{proof}

{
  \begin{proof}[Proof of Theorem \ref{th:micromin}]
    To conclude, the assertion of Theorem \ref{th:micromin} is an
    immediate consequence of Corollary \ref{cor:mineps}.
  \end{proof}
}
 
 \appendix

 \section{Poisson kernel, Dirichlet-to-Neumann map and Dirichlet
   energy on a strip}
 \label{s:app}

 Here we provide the details of the computation that leads to
 \eqref{eq:9} and \eqref{eq:35} for the convenience of the reader (see
 also, e.g., \cite{widder61}). We start by noting that the symmetry of
 minimizers with respect to the $y = \frac12$ line follows by a
 standard reflection argument (see also Corollary
 \ref{cor:decay}). Hence we may assume that
 \begin{align}
   \label{eq:13}
   \mathcal F(\theta) = \int_{\mathbb R} \int_0^{\frac12} |\nabla \theta|^2 dy
   \, dx + 2 \gamma \int_{\mathbb R} \sin^2 \bar \theta \, dx,
 \end{align}
 with $\theta$ satisfying
 \begin{align}
   \label{eq:14}
   \theta(x, 0) = \bar \theta(x) \qquad \text{and} \qquad
   \partial_y \theta\left(x, \tfrac12 \right) = 0 \qquad \forall x \in
   \mathbb R.  
 \end{align}

 We next minimize $\mathcal F$ in \eqref{eq:13} with respect to
 $\theta$ satisfying the first of \eqref{eq:14} with a fixed
 $\bar\theta \in C^\infty(\mathbb R)$ obeying \eqref{eq:36}. This
 amounts to choosing $\theta$ to be the harmonic extension of
 $\bar \theta$ in $\R \times (0, \frac12)$ that satisfies the boundary
 conditions in \eqref{eq:14}. Notice that by standard elliptic
 regularity theory we have $\theta \in C^m(\R \times [0,1])$ for every
 $m \in \mathbb N$ under our assumption on $\bar \theta$, and
 $\theta(x, y)$ decays exponentially to the respective limits together
 with all its derivatives as $x \to \pm \infty$.

 Let
 \begin{align}
   \label{eq:17}
   \hat \theta(k, y) := \int_{\mathbb R} e^{-i k x} \theta(x, y) \, dx
 \end{align}
 be the one-dimensional Fourier transform of $\theta$ in the
 $x$-variable, understood in the sense of tempered distributions. The
 function $\hat \theta(k, y)$ solves
 \begin{align}
   \label{eq:18}
   \partial^2_y  \hat \theta (k, y) - k^2 \hat \theta (k, y) =
   0, \qquad \partial_y \hat \theta \left( k, \tfrac12 \right) = 0
   \qquad \forall (k, y) \in \R \times 
   (0,\tfrac12),
 \end{align}
 where we noted that the regularity and decay of $\theta$ allows us to
 interchange the order of differentiation and an application of the
 Fourier transform distributionally. The solution of the above
 boundary value problem in terms of the boundary data is
 \begin{align}
   \label{eq:19}
   \hat \theta (k, y) = {\cosh ( k ( \tfrac12 - y)
   ) \over \cosh (\tfrac12 k)} \, \hat \theta (k, 0),
 \end{align}
 and upon inverting the Fourier transform we can write
 \begin{align}
   \label{eq:29}
   \theta(x, y) = \int_{\R} P(x - x', y) \, \bar \theta(x') \, dx', 
 \end{align}
 where
 \begin{align}
   \label{eq:33}
   P(x, y) := \frac{2 \cosh (\pi  x) \sin (\pi  y)}{\cosh (2 \pi  x) -
   \cos (2 \pi  y)} 
 \end{align}
 is the Poisson kernel for $\R \times (0,\frac12)$ with Neumann
 boundary condition at $y = \frac12$. Notice that by direct inspection
 the formula in \eqref{eq:29} remains valid if, for example,
 $\bar\theta \in C(\R) \cap L^\infty(\R)$ (see also \cite{widder61}).

 By square integrability of $|\nabla \theta(\cdot, y)|$ for $\theta$
 given by \eqref{eq:29} and Fubini theorem, we conclude that the
 Plancherel identity holds for every $y \in [0,\frac12]$ in the
 gradient squared term. Therefore, by \eqref{eq:19} we can write
 \begin{align}
   \label{eq:15}
   \mathcal F(\theta) = -\int_{\mathbb R} \hat K(k) |\hat \theta(k,
   0)|^2 \, {dk \over 2 \pi}  + 2 \gamma 
   \int_{\mathbb R} \sin^2 \bar \theta(x) \, dx,
 \end{align}
 where 
 \begin{align}
   \label{eq:20}
   \hat K(k) := -k \tanh(\tfrac12 k).  
 \end{align}
 Notice that by \eqref{eq:19} we have
 $\partial_y \hat \theta(k, 0) = \hat K(k) \hat \theta(k, 0)$, i.e.,
 $\hat K$ is the Fourier symbol of the Dirichlet-to-Neumann map at
 $y = 0$.
 
 To obtain a real space representation of \eqref{eq:15}, we regularize
 $\hat K(k)$ for $0 < \e < \frac12$:
 \begin{align}
   \label{eq:37}
   \hat K_\e(k) := k \left( \sinh (k \e) - \tanh ( \tfrac12 k) \cosh (k \e)
   \right),
 \end{align}
 and note that $0 > \hat K_\e(k) \searrow \hat K(k)$ as
 $\e \searrow 0$. Also, in view of the fact that
 $\hat K_\e(k) \hat \theta(k, 0) = \partial_y \hat \theta(k, \e)$ and
 \eqref{eq:29}, the inverse Fourier transform of $\hat K_\e(k)$ reads
 \begin{align}
   \label{eq:21}
   K_\e(x) & = \frac{2 \pi  \cos (\pi  \e) \cosh (\pi  x) (\cos (2 \pi
             \e)+\cosh (2 \pi  x)-2)}{(\cos (2 
             \pi  \e)-\cosh (2 \pi  x))^2}. 
 \end{align}
 In particular, passing to the limit $\e \to 0$ yields
 $K(x) = \lim_{\e \to 0} K_\e(x)$, where $K$ is given by
 \eqref{eq:10}.

 Lastly, the expression for the energy in \eqref{eq:9} follows from
 \eqref{eq:15} by an appropriate limiting argument with the help of an
 observation that
 \begin{align}
   \label{eq:22}
   \int_{\mathbb R} \hat K_\e (k)
   & |\hat \theta(k, 0)|^2 \,
     {dk \over 2
     \pi} \\
   & = \int_{\mathbb R} \int_{\mathbb R} K_\e(x - x')
     \bar\theta(x) \bar\theta(x') \, dx \, dx' = -\frac12  \int_{\mathbb
     R} \int_{\mathbb R} K_\e(x - x') (\bar\theta(x) -
     \bar\theta(x'))^2 \, dx \, dx',   \notag
 \end{align}
 where we noted that $\int_{\R} K_\eps(x) \, dx = \hat K_\e(0) = 0$
 and, hence, the function
 $v_\eps(x) := \int_{\R} K_\e(x - x') \, \bar\theta(x') \, dx'$ is
 smooth and exhibits exponential decay as $|x| \to \infty$. Indeed, we
 can pass to the limit in the left-hand side of \eqref{eq:22} by
 monotone convergence theorem to obtain the first term in the
 right-hand side of \eqref{eq:15}. At the same time, as can be easily
 seen we have $|K_\e(x)| \leq K(x)$ for all $0 < \e <
 \frac12$. Therefore, we can pass to the limit in the right-hand side
 of \eqref{eq:22} with the help of the dominated convergence theorem.

 %%%%%%%%%%%%%%%%%%%%%%%%%%%%%%%%%%%%%%%%%%%%%%%%%%%%%%%%%%%%%%%%%%%%%%%

\bibliographystyle{plain}
\bibliography{../../nonlin,../../mura}

\begin{thebibliography}{10}

\bibitem{adams}
R.~A. Adams and J.~J.~F. Fournier.
\newblock {\em Sobolev spaces}.
\newblock Pure and Applied Mathematics. Academic Press, 2nds edition, 2003.

\bibitem{allwood05}
D.~A. Allwood, G.~Xiong, C.~C. Faulkner, D.~Atkinson, D.~Petit, and R.~P.
  Cowburn.
\newblock Magnetic domain-wall logic.
\newblock {\em Science}, 309:1688--1692, 2005.

\bibitem{apalkov16}
D.~Apalkov, B.~Dieny, and J.~M. Slaughter.
\newblock Magnetoresistive random access memory.
\newblock {\em Proc. IEEE}, 104:1796--1830, 2016.

\bibitem{arrieta99}
J.~M. Arrieta, A.~N. Carvalho, and A.~Rodr{\'{\i}}guez-Bernal.
\newblock Parabolic problems with nonlinear boundary conditions and critical
  nonlinearities.
\newblock {\em J. Differential Equations}, 156:376--406, 1999.

\bibitem{bader10}
S.~D. Bader and S.~S.~P. Parkin.
\newblock Spintronics.
\newblock {\em Ann. Rev. Cond. Mat. Phys.}, 1:71--88, 2010.

\bibitem{berestycki91}
H.~Berestycki and L.~Nirenberg.
\newblock On the method of moving planes and the sliding method.
\newblock {\em Bol. Soc. Brasil. Mat. (N.S.)}, 22:1--37, 1991.

\bibitem{berestycki92}
H.~Berestycki and L.~Nirenberg.
\newblock Traveling fronts in cylinders.
\newblock {\em Ann. Inst. H. Poincar\'e Anal. Non Lin\'eaire}, 9:497--572,
  1992.

\bibitem{bourgain01}
J.~Bourgain, H.~Brezis, and P.~Mironescu.
\newblock Another look at {Sobolev} spaces.
\newblock In E.~Rofman J.~L.~Menaldi and A.~Sulem, editors, {\em Optimal
  Control and Partial Differential Equations}, A volume in honour of A.
  Bensoussan's 60th birthday, pages 439--455. IOS Press, 2001.

\bibitem{bourgain05}
J.~Bourgain, H.~Brezis, and P.~Mironescu.
\newblock Lifting, degree, and distributional {J}acobian revisited.
\newblock {\em Comm. Pure Appl. Math.}, 58:529--551, 2005.

\bibitem{cabre05}
X.~Cabr{\'e} and J.~Sol{\`a}-Morales.
\newblock Layer solutions in a half-space for boundary reactions.
\newblock {\em Comm. Pure Appl. Math.}, 58:1678--1732, 2005.

\bibitem{cmy:mmnp17}
K.-S. Chen, C.~B. Muratov, and X.~Yan.
\newblock Layer solutions for a one-dimensional nonlocal model of
  {Ginzburg-Landau} type.
\newblock {\em Math. Model. Nat. Phenom.}, 12:68--90, 2017.

\bibitem{cm:non13}
M.~Chermisi and C.~B. Muratov.
\newblock One-dimensional {N\'eel} walls under applied external fields.
\newblock {\em Nonlinearity}, 26:2935--2950, 2013.

\bibitem{consul96}
N.~C\`onsul.
\newblock On equilibrium solutions of diffusion equations with nonlinear
  boundary conditions.
\newblock {\em Z. Angew. Math. Phys.}, 47:194--209, 1996.

\bibitem{dennis02}
C.~L. Dennis, R.~P. Borges, L.~D. Buda, U.~Ebels, J.~F. Gregg, M.~Hehn,
  E.~Jouguelet, K.~Ounadjela, I.~Petej, I.~L. Prejbeanu, and M.~J. Thornton.
\newblock The defining length scales of mesomagnetism: {A} review.
\newblock {\em J. Phys. -- Condensed Matter}, 14:R1175--R1262, 2002.

\bibitem{desimone06r}
A.~DeSimone, R.~V. Kohn, S.~M\"uller, and F.~Otto.
\newblock Recent analytical developments in micromagnetics.
\newblock In G.~Bertotti and I.~D. Mayergoyz, editors, {\em The Science of
  Hysteresis}, volume~2 of {\em Physical Modelling, Micromagnetics, and
  Magnetization Dynamics}, pages 269--381. Academic Press, Oxford, 2006.

\bibitem{dmrs:sima20}
G.~Di~Fratta, C.~B. Muratov, F.~N. Rybakov, and V.~V. Slastikov.
\newblock Variational principles of micromagnetics revisited.
\newblock {\em SIAM J. Math. Anal.}, 52:3580--3599, 2020.

\bibitem{dms21}
G.~Di~Fratta, C.~B. Muratov, and V.~V. Slastikov.
\newblock Reduced energy for thin ferromagnetic films with perpendicular
  anisotropy.
\newblock (In preparation).

\bibitem{dinezza12}
E.~Di~Nezza, G.~Palatucci, and E.~Valdinoci.
\newblock Hitchhiker's guide to the fractional {S}obolev spaces.
\newblock {\em Bull. Sci. Math.}, 136:521--573, 2012.

\bibitem{doering14}
L.~D\"oring, R.~Ignat, and F.~Otto.
\newblock A reduced model for domain walls in a reduced model for domain walls
  in soft ferromagnetic films at the cross-over from symmetric to asymmetric
  wall types.
\newblock {\em J. Eur. Math. Soc.}, 16:1377--1422, 2014.

\bibitem{evans}
L.~C. Evans and R.~L. Gariepy.
\newblock {\em Measure Theory and Fine Properties of Functions}.
\newblock CRC, Boca Raton, revised edition, 2015.

\bibitem{fukami09}
S.~{Fukami}, T.~{Suzuki}, K.~{Nagahara}, N.~{Ohshima}, Y.~{Ozaki}, S.~{Saito},
  R.~{Nebashi}, N.~{Sakimura}, H.~{Honjo}, K.~{Mori}, C.~{Igarashi},
  S.~{Miura}, N.~{Ishiwata}, and T.~{Sugibayashi}.
\newblock Low-current perpendicular domain wall motion cell for scalable
  high-speed {MRAM}s.
\newblock In {\em 2009 Symposium on VLSI Technology}, pages 230--231, 2009.

\bibitem{gaididei17}
Y.~Gaididei, A.~Goussev, V.~P. Kravchuk, O.~V. Pylypovskyi, J.~M. Robbins,
  V.~Slastikov D.~D.~Sheka, and S.~Vasylkevych.
\newblock Magnetization in narrow ribbons: curvature effects.
\newblock {\em J. Phys. A: Mat. Theor.}, 50:385401, 2017.

\bibitem{gilbarg}
D.~Gilbarg and N.~S. Trudinger.
\newblock {\em Elliptic Partial Differential Equations of Second Order}.
\newblock Classics in Mathematics. Springer, Berlin, 2001.

\bibitem{gradshteyn}
I.~Gradshteyn and I.~Ryzhik.
\newblock {\em Table of integrals, series, and products}.
\newblock Elsevier/Academic Press, Amsterdam, 7th edition, 2007.

\bibitem{grisvard}
P.~Grisvard.
\newblock {\em Elliptic problems in nonsmooth domains}, volume~24 of {\em
  Monographs and Studies in Mathematics}.
\newblock Pitman (Advanced Publishing Program), Boston, MA, 1985.

\bibitem{harutyunyan14}
D.~Harutyunyan.
\newblock Scaling laws and the rate of convergence in thin magnetic films.
\newblock {\em J. Math. Anal. Appl.}, 420:1744--1761, 2014.

\bibitem{harutyunyan16}
D.~Harutyunyan.
\newblock On the existence and stability of minimizers in ferromagnetic
  nanowires.
\newblock {\em J. Math. Anal. Appl.}, 434:1719--1739, 2016.

\bibitem{heinze}
S.~Heinze.
\newblock A variational approach to traveling waves.
\newblock Technical Report~85, Max Planck Institute for Mathematical Sciences,
  Leipzig, 2001.

\bibitem{hubert}
A.~Hubert and R.~Sch\"afer.
\newblock {\em Magnetic Domains}.
\newblock Springer, Berlin, 1998.

\bibitem{ignat10}
R.~Ignat and H.~Kn{\"u}pfer.
\newblock Vortex energy and {$360^\circ$} {N}\'eel walls in thin-film
  micromagnetics.
\newblock {\em Comm. Pure Appl. Math.}, 63:1677--1724, 2010.

\bibitem{ignat17}
R.~Ignat and R.~Moser.
\newblock N{\'e}el walls with prescribed winding number and how a nonlocal term
  can change the energy landscape.
\newblock {\em J. Differential Equations}, 263:5846--5901, 2017.

\bibitem{jang12}
Y.~Jang, S.~R. Bowden, M.~Mascaro, J.~Unguris, and C.~A. Ross.
\newblock Formation and structure of 360 and 540 degree domain walls in thin
  magnetic stripes.
\newblock {\em Appl. Phys. Lett.}, 100:062407, 2012.

\bibitem{klaui08b}
M.~Kl{\"a}ui.
\newblock Head-to-head domain walls in magnetic nanostructures.
\newblock {\em J. Phys. -- Condensed Matter}, 20:313001, 2008.

\bibitem{klaui04}
M.~Kl{\"a}ui, C.~A.~F. Vaz, J.~A.~C. Bland, L.~J. Heyderman, F.~Nolting,
  A.~Pavlovska, E.~Bauer, S.~Cherifi, S.~Heun, and A.~Locatelli.
\newblock Head-to-head domain-wall phase diagram in mesoscopic ring magnets.
\newblock {\em Appl. Phys. Lett.}, 85:5637--5639, 2004.

\bibitem{kmn:arma19}
H.~Kn\"upfer, C.~B. Muratov, and F.~Nolte.
\newblock Magnetic domains in thin ferromagnetic films with strong
  perpendicular anisotropy.
\newblock {\em Arch. Rat. Mech. Anal.}, 232:727--761, 2019.

\bibitem{knupfer20}
H.~Kn\"upfer and W.~Shi.
\newblock {$\Gamma$}-limit for two-dimensional charged magnetic zigzag domain
  walls.
\newblock arXiv:2005.02857, 2020.

\bibitem{kohn05arma}
R.~V. Kohn and V.~V. Slastikov.
\newblock Another thin-film limit of micromagnetics.
\newblock {\em Arch. Ration. Mech. Anal.}, 178:227--245, 2005.

\bibitem{kuehn06}
K.~K\"uhn.
\newblock Scaling laws of domain walls in magnetic nanowires.
\newblock Technical Report~58, Max Planck Institute for Mathematical Sciences,
  2006.

\bibitem{kuehn07}
K.~K\"uhn.
\newblock {\em Reversal modes in magnetic nanowires}.
\newblock PhD thesis, Max Planck Institute for Mathematics in the Sciences,
  2007.

\bibitem{kunz09}
A.~Kunz.
\newblock Field induced domain wall collisions in thin magnetic nanowires.
\newblock {\em Appl. Phys. Lett.}, 94:132502, 2009.

\bibitem{kurzke06}
M.~Kurzke.
\newblock Boundary vortices in thin magnetic films.
\newblock {\em Calc. Var. Partial Differential Equations}, 26:1--28, 2006.

\bibitem{landau8}
L.~D. Landau and E.~M. Lifshitz.
\newblock {\em Course of Theoretical Physics}, volume~8.
\newblock Pergamon Press, London, 1984.

\bibitem{laufenberg06}
M.~Laufenberg, D.~Backes, W.~B{\"u}hrer, D.~Bedau, M.~Kl{\"a}ui,
  U.~R{\"u}diger, C.~A.~F. Vaz, J.~A.~C. Bland, L.~J. Heyderman, F.~Nolting,
  S.~Cherifi, A.~Locatelli, R.~Belkhou, S.~Heun, and E.~Bauer.
\newblock Observation of thermally activated domain wall transformations.
\newblock {\em Appl. Phys. Lett.}, 88:052507, 2006.

\bibitem{li01}
S.~P. Li, D.~Peyrade, M.~Natali, A.~Lebib, Y.~Chen, U.~Ebels, L.~D. Buda, and
  K.~Ounadjela.
\newblock Flux closure structures in cobalt rings.
\newblock {\em Phys. Rev. Lett.}, 86(6):1102--1105, Feb 2001.

\bibitem{lieb-loss}
E.~H. Lieb and M.~Loss.
\newblock {\em Analysis}.
\newblock American Mathematical Society, Providence, RI, 2010.

\bibitem{lms:jns20}
R.~G. Lund, C.~B. Muratov, and V.~V. Slastikov.
\newblock Edge domain walls in ultrathin exchange-biased films.
\newblock {\em J. Nonlinear Sci.}, 30:1165--1205, 2018.

\bibitem{lms:non18}
R.~G. Lund, C.~B. Muratov, and V.~V. Slastikov.
\newblock One-dimensional in-plane edge domain walls in ultrathin ferromagnetic
  films.
\newblock {\em Nonlinearity}, 31:728--754, 2018.

\bibitem{manipatruni18}
S.~Manipatruni, D.~E. Nikonov, and I.~A. Young.
\newblock Beyond {CMOS} computing with spin and polarization.
\newblock {\em Nature Phys.}, 14:338--343, 2018.

\bibitem{mcmichael97}
R.~D. McMichael and M.~J. Donahue.
\newblock Head to head domain wall structures in thin magnetic strips.
\newblock {\em {IEEE} Trans. Magn.}, 33:4167--4169, 1997.

\bibitem{morrey66}
C.~B. Morrey, Jr.
\newblock {\em Multiple integrals in the calculus of variations}.
\newblock Die Grundlehren der mathematischen Wissenschaften, Band 130.
  Springer-Verlag New York, Inc., New York, 1966.

\bibitem{mo:jcp06}
C.~B. Muratov and V.~V. Osipov.
\newblock Optimal grid-based methods for thin film micromagnetics simulations.
\newblock {\em J. Comp. Phys.}, 216:637--653, 2006.

\bibitem{mo:ieeetm09}
C.~B. Muratov and V.~V. Osipov.
\newblock Bit storage by $360^\circ$ domain walls in ferromagnetic nanorings.
\newblock {\em IEEE Trans. Magn.}, 45:3207--3209, 2009.

\bibitem{ms:prsla17}
C.~B. Muratov and V.~V. Slastikov.
\newblock Domain structure of ultrathin ferromagnetic elements in the presence
  of {Dzyaloshinskii-Moriya} interaction.
\newblock {\em Proc. R. Soc. Lond. Ser. A}, 473:20160666, 2017.

\bibitem{nakatani05}
Y.~Nakatani, A.~Thiaville, and J.~Miltat.
\newblock Head-to-head domain walls in soft nano-strips: a refined phase
  diagram.
\newblock {\em J. Magn. Magn. Mater.}, 290-291:750--753, 2005.

\bibitem{necas}
Jind\v{r}ich Ne\v{c}as.
\newblock {\em Direct methods in the theory of elliptic equations}.
\newblock Springer Monographs in Mathematics. Springer, Heidelberg, 2012.

\bibitem{palatucci13}
G.~Palatucci, O.~Savin, and E.~Valdinoci.
\newblock Local and global minimizers for a variational energy involving a
  fractional norm.
\newblock {\em Annali di Matematica}, 192:673--718, 2013.

\bibitem{parkin08}
S.~S.~P. Parkin, M.~Hayashi, and L.~Thomas.
\newblock Magnetic domain-wall racetrack memory.
\newblock {\em Science}, 320:190--194, 2008.

\bibitem{pucci04}
P.~Pucci and J.~Serrin.
\newblock The strong maximum principle revisited.
\newblock {\em J. Differ. Equations}, 196:1--66, 2004.

\bibitem{ross_patent05}
C.~A. Ross and F.~J. Castano.
\newblock Magnetic memory elements using $360^\circ$ walls.
\newblock US Patent 6,906,369 B2, 2005.

\bibitem{sharard12}
M.~{Sharad}, C.~{Augustine}, G.~{Panagopoulos}, and K.~{Roy}.
\newblock Spin-based neuron model with domain-wall magnets as synapse.
\newblock {\em IEEE Trans. Nanotechnol.}, 11:843--853, 2012.

\bibitem{slastikov12}
V.~V. Slastikov and C.~Sonnenberg.
\newblock Reduced models for ferromagnetic nanowires.
\newblock {\em IMA J. Appl. Math.}, 77:220--235, 2012.

\bibitem{stepanova}
M.~Stepanova and S.~Dew, editors.
\newblock {\em Nanofabrication: Techniques and Principles}.
\newblock Springer-Verlag, Wien, 2012.

\bibitem{tchernyshyov05}
O.~Tchernyshyov and G.-W. Chern.
\newblock Fractional vortices and composite domain walls in flat nanomagnets.
\newblock {\em Phys. Rev. Lett.}, 95:197204, 2005.

\bibitem{thiaville09}
A.~Thiaville and Y.~Nakatani.
\newblock Chapter 6 - micromagnetics of domain-wall dynamics in soft
  nanostrips.
\newblock In T.~Shinjo, editor, {\em Nanomagnetism and Spintronics}, pages
  231--276. Elsevier, Amsterdam, 2009.

\bibitem{toland97}
J.~F. Toland.
\newblock The {Peierls-Nabarro} and {Benjamin-Ono} equations.
\newblock {\em J. Funct. Anal.}, 145:136--150, 1997.

\bibitem{widder61}
G.~N. Widder.
\newblock Functions harmonic in a strip.
\newblock {\em Proc. Amer. Math. Soc.}, 12:67--72, 1961.

\bibitem{zhang15}
J.~Zhang, S.~A. Siddiqui, P.~Ho, J.~A. Currivan-Incorvia, L.~Tryputen, E.~Lage,
  D.~C. Bono, M.~A. Baldo, and C.~A. Ross.
\newblock 360$^\circ$ domain walls: Stability, magnetic field and electric
  current effects.
\newblock {\em New J. Phys.}, 18:053028, 2015.

\bibitem{zhu06rev}
J.-G. Zhu and C.~Park.
\newblock Magnetic tunnel junctions.
\newblock {\em Materials Today}, 9:36--45, 2006.

\bibitem{zhu03}
X.~Zhu and J.-G. Zhu.
\newblock A vertical {MRAM} free of write disturbance.
\newblock {\em IEEE Trans. Magn.}, 39:2854--2856, 2003.

\end{thebibliography}
 
\end{document}